\newtheorem{thm}{Theorem}[section]
\newtheorem{lem}[thm]{Lemma}
\newtheorem{cor}[thm]{Corollary}
\newtheorem{prop}[thm]{Proposition}
\theoremstyle{definition}
\newtheorem{example}[thm]{Example}
\newtheorem{defn}[thm]{Definition}
\newtheorem{rem}[thm]{Remark}
\numberwithin{equation}{thm}
\begin{document}
\title[Abelian quotients of the categories of short exact sequences]
{Abelian quotients of the categories of short exact sequences}

\author{Zengqiang Lin}
\address{ School of Mathematical sciences, Huaqiao University,
Quanzhou\quad 362021,  China.} \email{zqlin@hqu.edu.cn}

\thanks{This work was supported  by the Natural Science Foundation of Fujian Province (Grant No. JZ160405)}

\subjclass[2010]{16G70, 16G10, 18G05, 18E10, 18E30}

\keywords{short exact sequence; Auslander-Reiten sequence; Auslander-Reiten duality; exact category; quotient category}

\begin{abstract}
We mainly investigate abelian  quotients of the categories of short exact sequences. The natural framework to consider the question is via identifying quotients of morphism categories as modules categories. These ideas not only can be used to recover the abelian quotients produced by cluster-tilting subcategories of both exact categories and triangulated categories, but also can be used to reach our goal. Let $(\mathcal{C},\mathcal{E})$ be an exact category. We denote by $\mathcal{E}(\mathcal{C})$ the category of bounded complexes whose objects are given by short exact sequences in $\mathcal{E}$ and by $S\mathcal{E}(\mathcal{C})$  the full subcategory formed by split short exact sequences. In general, $\mathcal{E}(\mathcal{C})$ is just an exact category, but the quotient $\mathcal{E}(\mathcal{C})/[S\mathcal{E}(\mathcal{C})]$ turns out to be abelian. In particular, if $(\mathcal{C},\mathcal{E})$ is Frobenius, we present three equivalent abelian quotients of $\mathcal{E}(\mathcal{C})$ and point out that the equivalences are actually given by left and right rotations. The abelian quotient $\mathcal{E}(\mathcal{C})/[S\mathcal{E}(\mathcal{C})]$ admits some nice properties. We explicitly describe the abelian structure, projective objects, injective objects and simple objects, which provide a new viewpoint to understanding Hilton-Rees Theorem and Auslander-Reiten theory. Furthermore, we present some analogous results both for $n$-exact versions and for triangulated versions.
\end{abstract}

\maketitle

\section{Introduction}

Cluster-tilting theory provides a way to construct abelian quotient categories. Let $\mathcal{C}$ be a triangulated category and $\mathcal{T}$ be a cluster-tilting subcategory of $\mathcal{C}$, then the quotient $\mathcal{C}/[\mathcal{T}]$ is abelian; related works see \cite{[BMR],[KR],[IY]} and \cite{[KZ]}. The version of exact categories see \cite{[DL]}. Different methods for understanding the abelian quotients have been investigated  further, for example, via localisations \cite{[BM1],[BM2]}, via cotorsion pairs \cite{[Na1],[Na2],[Liu]}, via homotopical algebra \cite{[Pa]} and so on.

 Let $\mathcal{C}$ be an abelian category. Denote by $\mathcal{E}(\mathcal{C})$ the category of all short exact sequences in $\mathcal{C}$. It is well known that $\mathcal{E}(\mathcal{C})$ is an exact category but it is not abelian in general. Denote by Mor$(\mathcal{C})$ the morphism category of $\mathcal{C}$, by Mono$(\mathcal{C})$ the monomorphism category of $\mathcal{C}$, and by Epi$(\mathcal{C})$ the epimorphism category of $\mathcal{C}$. Then the three categories $\mathcal{E}(\mathcal{C})$, Mono$(\mathcal{C})$ and Epi$(\mathcal{C})$ are equivalent. Note that in the case when $\mathcal{C}$ is the module category over a ring, then the monomorphism category Mono$(\mathcal{C})$ is known as the submodule category. The structure of submodule categories has been studied intensively by Ringel and Schmidmeier \cite{[RS1],[RS2]}. Let $\mathcal{S}(A)$ be the submodule category of an artin $k$-algebra $A$. If $A=k[t]/\langle t^n\rangle$, Ringel and Zhang established two abelian quotients of $\mathcal{S}(A)$ \cite[Theorem 1]{[RZ]}.  Denote by $\mathcal{U}_1$ (resp. $\mathcal{U}_2$) the full subcategory of $\mathcal{S}(A)$ formed by objects of the form $(X\xrightarrow{1} X)\oplus (0\rightarrow Y)$ (resp. $(X\xrightarrow{1} X)\oplus (Y\rightarrow P)$ with $P$ projective-injective). They showed that the quotient categories $\mathcal{S}(A)/[\mathcal{U}_1]$ and $\mathcal{S}(A)/[\mathcal{U}_2]$ are equivalent to mod-$\Pi_{n-1}$ where $\Pi_{n-1}$ is the preprojective algebra of type $A_{n-1}$. Recently due to Eir\'{\i}ksson  \cite[Theorem1]{[Eir]}, the above result was generalized for any self-injective algebra of finite representation type by replacing $\Pi_{n-1}$ with $\underline{B}$,  the stable Auslander algebra of $A$.

The present paper mainly studies the abelian quotients of the categories of short exact sequences. Our approach to understanding abelian quotients is via morphism categories. The following is a basic proposition.

\begin{prop}
Let $\mathcal{C}$ be an additive category, then we have the following equivalences.

\textup{(a)} $\textup{Mor}(\mathcal{C})/[\mathcal{U}]\cong \textup{mod-}\mathcal{C}$, where $\mathcal{U}$ is the full subcategory of $\textup{Mor}(\mathcal{C})$ consisting of $(X\xrightarrow{1} X)\oplus (Y\rightarrow 0)$.

\textup{(b)} $\textup{Mor}(\mathcal{C})/[\mathcal{U'}]\cong (\textup{mod-}\mathcal{C}^{\textup{op}})^{\textup{op}}$, where $\mathcal{U'}$ is the full subcategory of $\textup{Mor}(\mathcal{C})$ consisting of $(X\xrightarrow{1} X)\oplus (0\rightarrow Y)$.
\end{prop}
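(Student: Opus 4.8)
The plan is to realize both equivalences through the classical \emph{cokernel functor}. For part (a), define $C\colon \textup{Mor}(\mathcal{C}) \to \textup{mod-}\mathcal{C}$ by sending an object $f\colon X \to Y$ to the cokernel of the induced natural transformation $\mathcal{C}(-,f)\colon \mathcal{C}(-,X) \to \mathcal{C}(-,Y)$, and sending a commutative square to the induced map on cokernels. By the very definition of $\textup{mod-}\mathcal{C}$, every finitely presented functor $F$ admits a projective presentation $\mathcal{C}(-,X)\to \mathcal{C}(-,Y)\to F \to 0$, and the Yoneda lemma identifies the first map with $\mathcal{C}(-,f)$ for a unique $f\colon X\to Y$; hence $F\cong C(f)$ and $C$ is dense.

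Next I would show $C$ is full. Given $\varphi\colon C(f) \to C(f')$, I use projectivity of the representable $\mathcal{C}(-,Y)$ to lift the composite $\mathcal{C}(-,Y) \twoheadrightarrow C(f) \xrightarrow{\varphi} C(f')$ along $\mathcal{C}(-,Y')\twoheadrightarrow C(f')$; by Yoneda this lift is $\mathcal{C}(-,b)$ for some $b\colon Y\to Y'$. The composite $\mathcal{C}(-,X)\to \mathcal{C}(-,Y)\xrightarrow{b_*}\mathcal{C}(-,Y')\twoheadrightarrow C(f')$ then vanishes, so $\mathcal{C}(-,X)\to \mathcal{C}(-,Y')$ factors through the image of $\mathcal{C}(-,f')$; lifting once more along the projective $\mathcal{C}(-,X)$ yields $a\colon X\to X'$ with $f'a=bf$. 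Thus $(a,b)$ is a morphism in $\textup{Mor}(\mathcal{C})$ with $C(a,b)=\varphi$.

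The crux is to identify the morphisms killed by $C$ with $[\mathcal{U}]$. The containment $[\mathcal{U}]\subseteq \ker C$ is immediate, since $C$ annihilates $(X\xrightarrow{1}X)$ (the identity on representables) and $(Y\to 0)$ (target the zero functor). For the reverse, suppose $(a,b)\colon (X\xrightarrow{f}Y)\to (X'\xrightarrow{f'}Y')$ satisfies $C(a,b)=0$. Unwinding the cokernel condition via Yoneda produces $s\colon Y\to X'$ with $f's=b$. I then split $(a,b)=(sf,b)+(a-sf,0)$: the first summand factors as $(X\xrightarrow{f}Y)\to (Y\xrightarrow{1}Y)\to (X'\xrightarrow{f'}Y')$ through an identity object, while commutativity $f'a=bf$ forces $f'(a-sf)=0$, so the second summand factors as $(X\xrightarrow{f}Y)\to (X\to 0)\to (X'\xrightarrow{f'}Y')$ through a zero-target object. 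Hence $(a,b)$ factors through $(Y\xrightarrow{1}Y)\oplus(X\to 0)\in\mathcal{U}$, giving $\ker C=[\mathcal{U}]$. Since a full dense additive functor induces an equivalence from the quotient by its kernel, $\textup{Mor}(\mathcal{C})/[\mathcal{U}]\cong\textup{mod-}\mathcal{C}$.

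Finally, I would deduce part (b) from (a) by duality. The canonical isomorphism $\textup{Mor}(\mathcal{C})^{\textup{op}}\cong\textup{Mor}(\mathcal{C}^{\textup{op}})$ carries $\mathcal{U}'$ to the subcategory $\mathcal{U}$ of $\textup{Mor}(\mathcal{C}^{\textup{op}})$, since an object $0\to Y$ becomes $Y\to 0$. Applying (a) over $\mathcal{C}^{\textup{op}}$ gives $\textup{Mor}(\mathcal{C}^{\textup{op}})/[\mathcal{U}]\cong\textup{mod-}\mathcal{C}^{\textup{op}}$, and passing to opposite categories and transporting back along the isomorphism yields $\textup{Mor}(\mathcal{C})/[\mathcal{U}']\cong(\textup{mod-}\mathcal{C}^{\textup{op}})^{\textup{op}}$. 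The main obstacle throughout is the kernel computation in part (a): density and fullness are standard consequences of the Yoneda lemma together with projectivity of representables, whereas the explicit splitting $(a,b)=(sf,b)+(a-sf,0)$ is the one step that must genuinely be found by hand.
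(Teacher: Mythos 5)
Your proof is correct and follows essentially the same route as the paper: the cokernel functor $f\mapsto \textup{Coker}\,\mathcal{C}(-,f)$, Yoneda for density and fullness, and an explicit factorization showing that the kernel ideal is exactly $[\mathcal{U}]$ (the paper packages your splitting $(a,b)=(sf,b)+(a-sf,0)$ as a single factorization through $(X\to 0)\oplus(X'\xrightarrow{1}X')$, and proves (b) by running the dual argument rather than by your formal opposite-category reduction, but these are only presentational differences). No gaps.
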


Using Proposition 1.1, we realize some abelian quotient categories constructed by cluster-tilting subcategories. For example, we can reprove \cite[Theorem 3.2]{[DL]}, \cite[Theorem 3.5]{[DL]} and \cite[Corollary 4.4]{[KZ]}.

Let $(\mathcal{C},\mathcal{E})$ be an exact category. We denote by $C^b(\mathcal{C})$ the category of bounded complexes over $\mathcal{C}$, by $\mathcal{E}(\mathcal{C})$ the full subcategory of $C^b(\mathcal{C})$ consisting of short exact sequences in $\mathcal{E}$, by $S\mathcal{E}(\mathcal{C})$ the full subcategory of $\mathcal{E}(\mathcal{C})$ formed by split short exact sequences over $\mathcal{C}$. A short exact sequence $0\rightarrow X_1\xrightarrow{f_1} X_2\xrightarrow{f_2} X_3\rightarrow 0$ is denoted by $(X_1\rightarrow X_2\rightarrow X_3)$ for short. The following is our main theorem.

\begin{thm}\label{thm1.1}
Let $(\mathcal{C},\mathcal{E})$ be an exact category and $X_\bullet:0\rightarrow X_1\xrightarrow{f_1} X_2\xrightarrow{f_2} X_3\rightarrow 0$ be a short exact sequence in $\mathcal{E}$.

\textup{(a)} If $(\mathcal{C},\mathcal{E})$ has enough projectives, denote by $\mathcal{P}$ the full subcategory of $\mathcal{C}$ formed by all projectives,
 then we have the following equivalences:
$$\alpha_1:\mathcal{E}(\mathcal{C})/[S\mathcal{E(}\mathcal{C})]\cong \textup{mod-}\mathcal{C}/[\mathcal{P}],\ \ X_\bullet \mapsto \textup{Coker}(\mathcal{C}/[\mathcal{P}](-,f_2))$$
$$\alpha_2:\mathcal{E}(\mathcal{C})/[P\mathcal{E}(\mathcal{C})]\cong (\textup{mod-}(\mathcal{C}/[\mathcal{P}])^{\textup{op}})^{\textup{op}}, \ \ X_\bullet \mapsto \textup{Coker}(\mathcal{C}/[\mathcal{P}](f_2,-))$$ where $P\mathcal{E}(\mathcal{C})$ is the full subcategory of $\mathcal{E}(\mathcal{C})$ formed by $(0\rightarrow X\rightarrow X)\oplus(\Omega Y\rightarrow P\rightarrow Y)$.

\textup{(b)} If $(\mathcal{C},\mathcal{E})$ has enough injectives, denote by $\mathcal{I}$ the full subcategory of $\mathcal{C}$ formed by all injectives,
then we have the following equivalences:
$$\beta_1:\mathcal{E}(\mathcal{C})/[S\mathcal{E}(\mathcal{C})]\cong (\textup{mod-}(\mathcal{C}/[\mathcal{I}])^{\textup{op}})^{\textup{op}}, \ \ X_\bullet\mapsto \textup{Coker}(\mathcal{C}/[\mathcal{I}](f_1,-))$$
$$\beta_2:\mathcal{E}(\mathcal{C})/[I\mathcal{E}(\mathcal{C})]\cong \textup{mod-}\mathcal{C}/[\mathcal{I}], \ \ X_\bullet\mapsto\textup{Coker}(\mathcal{C}/[\mathcal{I}](-,f_1))$$ where $I\mathcal{E}(\mathcal{C})$ is the full subcategory of $\mathcal{E}(\mathcal{C})$ formed by $(X\rightarrow X\rightarrow 0)\oplus( Y\rightarrow I\rightarrow \Omega^{-1}Y)$.
\end{thm}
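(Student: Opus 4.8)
The plan is to reduce everything to the morphism category $\textup{Mor}(\mathcal{C})$ and to feed the result into Proposition 1.1. The first step is to realise $\mathcal{E}(\mathcal{C})$ as a full subcategory of $\textup{Mor}(\mathcal{C})$. Sending a short exact sequence $X_\bullet$ to its deflation $f_2\colon X_2\to X_3$ (resp. to its inflation $f_1\colon X_1\to X_2$) is fully faithful, because in an exact category an admissible epimorphism determines its kernel—and hence the whole conflation—functorially, so a commuting pair $(h_2,h_3)$ extends uniquely to a morphism of conflations $(h_1,h_2,h_3)$. Thus $\mathcal{E}(\mathcal{C})$ is equivalent to the full subcategory of deflations (resp. of inflations) in $\textup{Mor}(\mathcal{C})$. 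Under the deflation description a split sequence $0\to X_1\to X_1\oplus X_3\to X_3\to 0$ becomes the projection $(X_1\to 0)\oplus(X_3\xrightarrow{1}X_3)$, so $S\mathcal{E}(\mathcal{C})$ corresponds exactly to the subcategory $\mathcal{U}$ of Proposition 1.1(a); dually, under the inflation description $S\mathcal{E}(\mathcal{C})$ corresponds to $\mathcal{U'}$ of Proposition 1.1(b).

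For the two equivalences built on the split-exact quotient, namely $\alpha_1$ and $\beta_1$, I would simply restrict the equivalences of Proposition 1.1. For $\alpha_1$, restricting $\textup{Mor}(\mathcal{C})/[\mathcal{U}]\cong\textup{mod-}\mathcal{C}$, $g\mapsto\textup{Coker}(\mathcal{C}(-,g))$, to deflations gives a full embedding of $\mathcal{E}(\mathcal{C})/[S\mathcal{E}(\mathcal{C})]$ into $\textup{mod-}\mathcal{C}$ whose essential image I must identify. The key observation is that $\textup{Coker}(\mathcal{C}(-,f_2))$ vanishes on every projective $P$, since $\mathcal{C}(P,f_2)$ is surjective; conversely a finitely presented functor vanishing on $\mathcal{P}$ is the same as a finitely presented functor on $\mathcal{C}/[\mathcal{P}]$, and because every map into $X_3$ that factors through a projective already factors through the deflation $f_2$, the cokernel computed over $\mathcal{C}$ agrees with $\textup{Coker}(\mathcal{C}/[\mathcal{P}](-,f_2))$. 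This pins the image down as $\textup{mod-}\mathcal{C}/[\mathcal{P}]$ and turns the functor into the stated $\alpha_1$. The equivalence $\beta_1$ is the exact dual, obtained from Proposition 1.1(b) applied to inflations, with injectivity in place of projectivity.

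The remaining two equivalences, $\alpha_2$ and $\beta_2$, use the opposite variance and need one extra move. For $\alpha_2$ I would send $X_\bullet$, still viewed through its deflation $f_2$, to $\textup{Coker}(\mathcal{C}(f_2,-))\in(\textup{mod-}\mathcal{C}^{\textup{op}})^{\textup{op}}$ via Proposition 1.1(b), and then post-compose with the stabilisation $\mathcal{C}\to\mathcal{C}/[\mathcal{P}]$ to land in $(\textup{mod-}(\mathcal{C}/[\mathcal{P}])^{\textup{op}})^{\textup{op}}$, yielding $\textup{Coker}(\mathcal{C}/[\mathcal{P}](f_2,-))$. Here the quotient subcategory changes: a syzygy sequence $(\Omega Y\to P\to Y)$ has $f_2\colon P\to Y$ with $P$ projective, and every map out of a projective vanishes in $\mathcal{C}/[\mathcal{P}]$, so such objects—together with the trivial sequences $(0\to X\xrightarrow{1}X)$—go to zero; the content is that these generate the whole kernel, i.e. the kernel of the composite functor is exactly $[P\mathcal{E}(\mathcal{C})]$. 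The equivalence $\beta_2$ is dual, with $f_1$, $\mathcal{C}/[\mathcal{I}]$ and $I\mathcal{E}(\mathcal{C})$.

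Three points will require the real work. Density is handled by a modification trick: given a finitely presented functor over $\mathcal{C}/[\mathcal{P}]$, present it by a morphism $\bar f_2$, lift $\bar f_2$ to $f_2\colon X_2\to X_3$ in $\mathcal{C}$, and replace $X_2$ by $X_2\oplus P$ with a projective deflation $\pi\colon P\to X_3$; the combined map $(f_2,\pi)$ is again a deflation by the usual matrix factorisation and represents the same morphism in $\mathcal{C}/[\mathcal{P}]$, so its kernel realises the functor. Fullness is inherited from Proposition 1.1, because the subcategories $\mathcal{U}$ and $\mathcal{U'}$ already lie among the deflations, resp. inflations. The main obstacle is the faithfulness statement for $\alpha_2$ and $\beta_2$: identifying the kernel of the stabilised functor precisely with $[P\mathcal{E}(\mathcal{C})]$ (resp. $[I\mathcal{E}(\mathcal{C})]$) requires showing that a morphism of conflations dies after stabilisation only if it factors through a split sequence together with a syzygy (resp. cosyzygy) sequence, and this is the one computation I expect to be genuinely delicate.
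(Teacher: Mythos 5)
Your route is essentially the paper's. The paper proves Theorem \ref{thm4.1} by identifying $\mathcal{E}(\mathcal{C})$ with the category $\textup{Ad-Epi}(\mathcal{C})$ of admissible epimorphisms via the deflation functor (Lemma \ref{3.2.5} and Remark \ref{cor3.4}), matching $S\mathcal{E}(\mathcal{C})$ with $\mathcal{U}$ and $P\mathcal{E}(\mathcal{C})$ with $\mathcal{V}=\{(X\xrightarrow{1}X)\oplus(P\to Y)\}$, and then invoking Theorem \ref{thm3.2.1}, which is exactly ``stabilise and apply Proposition \ref{prop3.1.3}.'' The only cosmetic difference is in $\alpha_1$: you apply Proposition \ref{prop3.1.3}(a) over $\mathcal{C}$ first and then cut down to the functors vanishing on $\mathcal{P}$ via Proposition \ref{prop2.1}(b), whereas the paper passes to $\textup{Mor}(\mathcal{C}/[\mathcal{P}])/\mathcal{R}$ first (Lemma \ref{lem3.2.3}) and applies Lemma \ref{prop3.1} over $\mathcal{C}/[\mathcal{P}]$; the two agree because $[\mathcal{P}](-,X_3)\subseteq\textup{Im}\,\mathcal{C}(-,f_2)$, as you observe, and your density trick $(f_2,\pi):X_2\oplus P\to X_3$ is literally the one used in the paper.

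The one genuine gap is the step you flag and defer, and it really is the crux for $\alpha_2$ (dually $\beta_2$): you need that for a morphism $\varphi_\bullet:X_\bullet\to Y_\bullet$ of conflations, $\underline{\varphi_2}$ factors through $\underline{f_2}$ in $\mathcal{C}/[\mathcal{P}]$ \emph{if and only if} $(\varphi_2,\varphi_3)$ factors through an object of $\mathcal{V}$; without this you have a dense functor to $(\textup{mod-}(\mathcal{C}/[\mathcal{P}])^{\textup{op}})^{\textup{op}}$ but no identification of its kernel ideal with $[P\mathcal{E}(\mathcal{C})]$. This is Lemma \ref{lem3.2.2} ((a)$\Leftrightarrow$(b)) in the paper, and it requires an actual construction: writing $\varphi_2-pf_2=a_1a_2$ with $a_1:P\to Y_2$ an admissible epimorphism from a projective, producing $b':X_3\to Y_3$ with $b'f_2=g_2a_1a_2$ from the cokernel property of $f_2$, checking $\varphi_3=g_2p+b'$, and exhibiting the explicit factorisation through $\bigl(X_3\oplus P\to X_3\oplus Y_3\bigr)\in\mathcal{V}$. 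A second, smaller omission: your ``fullness is inherited from Proposition 1.1'' argument covers $\alpha_1,\beta_1$, but for $\alpha_2$ you must also lift a commuting square in $\textup{Mor}(\mathcal{C}/[\mathcal{P}])$ back to a genuine morphism of deflations in $\mathcal{C}$; this needs the standard adjustment $(a+wu,b)$ using projectivity of $P$ against the admissible epimorphism $f_2'$ (the fullness step in the proof of Lemma \ref{lem3.2.3}). Both points are fillable, but neither is bookkeeping, so the proposal as written stops short of a proof of the $\alpha_2$ and $\beta_2$ equivalences.
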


In particular, if $(\mathcal{C},\mathcal{E})$ is a Frobenius category, then  the quotient categories $\mathcal{E}(\mathcal{C})/[S\mathcal{E}(\mathcal{C})]$, $\mathcal{E}(\mathcal{C})/[P\mathcal{E}(\mathcal{C})]$  and $\mathcal{E}(\mathcal{C})/[I\mathcal{E}(\mathcal{C})]$ are equivalent to abelian category $\textup{mod-}\mathcal{C}/[\mathcal{P}]$. As we can see in Remark \ref{rem4.1}, the equivalences between the three quotient categories are given by left rotations and right rotations of short exact sequences.

If $(\mathcal{C},\mathcal{E})$ is Frobenius, we can show that the three categories $\mathcal{E}(\mathcal{C})$, Mono$(\mathcal{C})$ and Epi$(\mathcal{C})$ are equivalent. Therefore, we have the following result, which generalizes \cite[Theorem 1]{[Eir]}.

\begin{cor}
Let $(\mathcal{C},\mathcal{E})$ be a Frobenius  category. Denote by $\mathcal{P}$ the full subcategory of projective-injective objects in $\mathcal{C}$, by $\mathcal{U}_1$ the full subcategory of $\textup{Mono}(\mathcal{C})$ consisting of $(X\xrightarrow{1} X)\oplus (0\rightarrow Y)$, by $\mathcal{U}_2$ the full subcategory of $\textup{Mono}(\mathcal{C})$ consisting of $(X\xrightarrow{1} X)\oplus (Y\rightarrow P)$ with $P\in\mathcal{P}$ and by $\mathcal{U}_3$ the full subcategory of $\textup{Mono}(\mathcal{C})$ consisting of $(0\rightarrow X)\oplus (Y\rightarrow P)$ with $P\in\mathcal{P}$. Then each of the quotient categories $\textup{Mono}(\mathcal{C})/[\mathcal{U}_1]$, $\textup{Mono}(\mathcal{C})/[\mathcal{U}_2]$  and $\textup{Mono}(\mathcal{C})/[\mathcal{U}_3]$ is equivalent to $\textup{mod-}\mathcal{C}/[\mathcal{P}]$.
\end{cor}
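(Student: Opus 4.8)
The plan is to transport everything through the equivalence $\mathcal{E}(\mathcal{C})\cong\textup{Mono}(\mathcal{C})$ mentioned just before the statement, and then read off all three quotients from the Frobenius case of Theorem \ref{thm1.1}. First I would make the equivalence explicit. Define $\Phi:\mathcal{E}(\mathcal{C})\to\textup{Mono}(\mathcal{C})$ by sending a short exact sequence $(X_1\xrightarrow{f_1}X_2\xrightarrow{f_2}X_3)$ to its inflation $(X_1\xrightarrow{f_1}X_2)$ and a morphism of sequences to its first two components; here $\textup{Mono}(\mathcal{C})$ is understood as the category of admissible monomorphisms. A quasi-inverse $\Psi$ completes an admissible monomorphism to a conflation by taking its cokernel, which exists and is functorial up to natural isomorphism because $(\mathcal{C},\mathcal{E})$ is exact. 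Since $\Phi$ and $\Psi$ are additive and mutually quasi-inverse, $\Phi$ is an additive equivalence.

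Next I would identify the images of the three relevant subcategories. An additive equivalence carries the ideal $[\mathcal{X}]$ of morphisms factoring through a subcategory $\mathcal{X}$ onto the ideal $[\Phi(\mathcal{X})]$, and hence descends to an equivalence of additive quotients $\mathcal{E}(\mathcal{C})/[\mathcal{X}]\cong\textup{Mono}(\mathcal{C})/[\mathcal{Y}]$ whenever $\Phi(\mathcal{X})$ and $\mathcal{Y}$ have the same closure under isomorphism and direct summands. It therefore suffices to check three matchings at the level of generating objects. A split sequence is a sum of $(X\xrightarrow{1}X\to 0)$ and $(0\to Y\xrightarrow{1}Y)$, whose inflations are $(X\xrightarrow{1}X)$ and $(0\to Y)$, so $\Phi(S\mathcal{E}(\mathcal{C}))=\mathcal{U}_1$. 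The generators $(X\xrightarrow{1}X\to 0)$ and $(Y\to I\to\Omega^{-1}Y)$ of $I\mathcal{E}(\mathcal{C})$ have inflations $(X\xrightarrow{1}X)$ and $(Y\to I)$; since injectives coincide with $\mathcal{P}$ in a Frobenius category, this gives $\Phi(I\mathcal{E}(\mathcal{C}))=\mathcal{U}_2$. Finally the generators $(0\to X\xrightarrow{1}X)$ and $(\Omega Y\to P\to Y)$ of $P\mathcal{E}(\mathcal{C})$ have inflations $(0\to X)$ and $(\Omega Y\to P)$, and as $Y$ ranges over $\mathcal{C}$ the pair $(\Omega Y\to P)$ ranges exactly over all admissible monomorphisms into projectives, so $\Phi(P\mathcal{E}(\mathcal{C}))=\mathcal{U}_3$.

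Combining these identifications with the additive equivalence then yields
$$\textup{Mono}(\mathcal{C})/[\mathcal{U}_1]\cong\mathcal{E}(\mathcal{C})/[S\mathcal{E}(\mathcal{C})],\quad \textup{Mono}(\mathcal{C})/[\mathcal{U}_2]\cong\mathcal{E}(\mathcal{C})/[I\mathcal{E}(\mathcal{C})],\quad \textup{Mono}(\mathcal{C})/[\mathcal{U}_3]\cong\mathcal{E}(\mathcal{C})/[P\mathcal{E}(\mathcal{C})].$$
By the Frobenius consequence of Theorem \ref{thm1.1} (where $\mathcal{I}=\mathcal{P}$), each right-hand side is equivalent to $\textup{mod-}\mathcal{C}/[\mathcal{P}]$, which gives the claim.

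I expect the only real subtlety to lie not in the homological input, which is already packaged in Theorem \ref{thm1.1}, but in the bookkeeping of the second step: one must verify that each $\mathcal{U}_i$ is closed under isomorphism and summands so that $\Phi(\mathcal{X})$ and $\mathcal{U}_i$ generate the same ideal, and in the $\mathcal{U}_3$ case that every admissible monomorphism into a projective genuinely arises as $\Omega Y\to P$ (its cokernel being $Y$ and its source being the syzygy $\Omega Y$). Both points are routine given that $(\mathcal{C},\mathcal{E})$ has enough projectives and injectives.
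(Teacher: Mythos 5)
Your argument is correct and is essentially the paper's own proof: the paper likewise obtains this corollary by transporting Corollary~\ref{cor4.2} (the Frobenius case of Theorem~\ref{thm4.1}) along the equivalence sending a short exact sequence to its inflation, under which $S\mathcal{E}(\mathcal{C})$, $I\mathcal{E}(\mathcal{C})$ and $P\mathcal{E}(\mathcal{C})$ correspond to $\mathcal{U}_1$, $\mathcal{U}_2$ and $\mathcal{U}_3$ respectively. The one point you pass over by fiat is that the corollary is stated for $\textup{Mono}(\mathcal{C})$, the category of \emph{all} monomorphisms, whereas your functor $\Phi$ lands in the admissible ones; the paper bridges this gap by invoking the dual of Lemma~\ref{3.2.5} (the identification of $\textup{Ad-Epi}$ with $\textup{Epi}$), so you should either cite that step or restrict the statement to admissible monomorphisms.
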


Our second part of the paper is to studying the properties of $\mathcal{E}(\mathcal{C})/[S\mathcal{E(}\mathcal{C})]$. We show that the abelian structure is given by pullback and pushout diagrams; see Theorem \ref{thm4.3.1}. We characterize the simple objects in $\mathcal{E}(\mathcal{C})/[S\mathcal{E(}\mathcal{C})]$ as the Auslander-Reiten sequences in $\mathcal{C}$; see Theorem \ref{thm4.4.1}. We describe the projective objects and injective objects in $\mathcal{E}(\mathcal{C})/[S\mathcal{E}(\mathcal{C})]$; see Proposition \ref{prop1}. In particular, if $(\mathcal{C},\mathcal{E})$ has enough projectives, then each projective object in $\mathcal{E}(\mathcal{C})/[S\mathcal{E}(\mathcal{C})]$ is of the form
$P_X: 0\rightarrow \Omega X\rightarrow P\rightarrow X\rightarrow 0$ for some object $X$ in $\mathcal{C}$.

As applications, our results provide a new viewpoint to understanding Hilton-Reees Theorem and Auslander-Reiten theory.  Now we assume that $(\mathcal{C},\mathcal{E})$ is an exact category with enough projectives and injectives. By Theorem \ref{thm1.1}, we have a duality $$\Phi:\mbox{mod-}\mathcal{C}/[\mathcal{P}]\rightarrow\mbox{mod-}(\mathcal{C}/[\mathcal{I}])^{\textup{op}},\ \ \ \delta^\ast\mapsto \delta_\ast$$
where $\delta$ is a short exact sequence in $\mathcal{E}$, $\delta^\ast$ is the contravariant defect and $\delta_\ast$ is the covariant defect.
 Moreover, by restrictions and Proposition \ref{prop1}, we obtain the following two dualities
$$\Phi:\mbox{proj-}\mathcal{C}/[\mathcal{P}]\rightarrow\mbox{inj-}(\mathcal{C}/[\mathcal{I}])^{\textup{op}}, \ \ \ \mathcal{C}/[\mathcal{P}](-,X)\mapsto \mbox{Ext}^1_\mathcal{C}(X,-).$$
$$\Phi:\mbox{inj-}\mathcal{C}/[\mathcal{P}]\rightarrow\mbox{proj-}(\mathcal{C}/[\mathcal{I}])^{\textup{op}}, \ \ \ \mbox{Ext}^1_\mathcal{C}(-,X)\mapsto\mathcal{C}/[\mathcal{I}](X,-).$$
Hence, the following result seems natural.

\begin{thm} (Hilton-Rees Theorem, see \cite{[HR],[Ma]}) \label{thm1.4}
Let $(\mathcal{C},\mathcal{E})$ be an exact category with enough projectives and injectives.

\textup{(a)} There is an isomorphism between $\mathcal{C}/[\mathcal{P}](Y,X)$ and the group of natural transformations from $\textup{Ext}^1_\mathcal{C}(X,-)$ to $\textup{Ext}^1_\mathcal{C}(Y,-)$.

\textup{(b)} There is an isomorphism between $\mathcal{C}/[\mathcal{I}](X,Y)$ and the group of natural transformations from $\textup{Ext}^1_\mathcal{C}(-,X)$ to $\textup{Ext}^1_\mathcal{C}(-,Y)$.
\end{thm}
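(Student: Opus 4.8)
The plan is to deduce both statements purely formally, with no further manipulation of short exact sequences, by combining the two dualities displayed just before the theorem with the Yoneda lemma. The inputs I will use are: that the representable functors $\mathcal{C}/[\mathcal{P}](-,X)$ are exactly the projective objects of $\textup{mod-}\mathcal{C}/[\mathcal{P}]$ (dually, the $\textup{Ext}^1_\mathcal{C}(-,X)$ are the injective objects); that the restricted functors $\Phi$ are contravariant equivalences; and that for a functor annihilated by all morphisms factoring through injectives (resp.\ projectives), a natural transformation over $\mathcal{C}$ is the same datum as a morphism in $\textup{mod-}(\mathcal{C}/[\mathcal{I}])^{\textup{op}}$ (resp.\ $\textup{mod-}\mathcal{C}/[\mathcal{P}]$).

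For part (a), I would first apply the Yoneda lemma in $\textup{mod-}\mathcal{C}/[\mathcal{P}]$ to obtain
$$\mathcal{C}/[\mathcal{P}](Y,X)\cong \textup{Hom}_{\textup{mod-}\mathcal{C}/[\mathcal{P}]}\big(\mathcal{C}/[\mathcal{P}](-,Y),\,\mathcal{C}/[\mathcal{P}](-,X)\big).$$
Since $\Phi\colon \textup{proj-}\mathcal{C}/[\mathcal{P}]\to\textup{inj-}(\mathcal{C}/[\mathcal{I}])^{\textup{op}}$ is a contravariant equivalence sending $\mathcal{C}/[\mathcal{P}](-,X)$ to $\textup{Ext}^1_\mathcal{C}(X,-)$, applying $\Phi$ reverses the two arguments and yields
$$\textup{Hom}_{\textup{mod-}\mathcal{C}/[\mathcal{P}]}\big(\mathcal{C}/[\mathcal{P}](-,Y),\,\mathcal{C}/[\mathcal{P}](-,X)\big)\cong \textup{Hom}_{\textup{mod-}(\mathcal{C}/[\mathcal{I}])^{\textup{op}}}\big(\textup{Ext}^1_\mathcal{C}(X,-),\,\textup{Ext}^1_\mathcal{C}(Y,-)\big).$$
Composing the two isomorphisms gives the claim, once the right-hand side is identified with the group of natural transformations $\textup{Ext}^1_\mathcal{C}(X,-)\to \textup{Ext}^1_\mathcal{C}(Y,-)$.

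Part (b) is entirely dual. Here I would start from the other restricted duality $\Phi\colon \textup{inj-}\mathcal{C}/[\mathcal{P}]\to \textup{proj-}(\mathcal{C}/[\mathcal{I}])^{\textup{op}}$, $\textup{Ext}^1_\mathcal{C}(-,X)\mapsto \mathcal{C}/[\mathcal{I}](X,-)$. Viewing $\textup{Ext}^1_\mathcal{C}(-,X)$ and $\textup{Ext}^1_\mathcal{C}(-,Y)$ as (injective) objects of $\textup{mod-}\mathcal{C}/[\mathcal{P}]$ and applying $\Phi$ converts $\textup{Hom}$ between them into $\textup{Hom}_{\textup{mod-}(\mathcal{C}/[\mathcal{I}])^{\textup{op}}}(\mathcal{C}/[\mathcal{I}](Y,-),\,\mathcal{C}/[\mathcal{I}](X,-))$; the Yoneda lemma in $\textup{mod-}(\mathcal{C}/[\mathcal{I}])^{\textup{op}}$ then identifies the latter with $\mathcal{C}/[\mathcal{I}](X,Y)$. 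Tracking variances, the composite is the asserted isomorphism.

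The one point that requires care — and what I expect to be the main obstacle — is the identification of $\textup{Hom}$ in the two functor categories with the stated groups of natural transformations of the $\textup{Ext}^1$ functors regarded on all of $\mathcal{C}$. The resolution is that $\textup{Ext}^1_\mathcal{C}(X,-)$ annihilates every morphism factoring through an injective, hence factors through $\mathcal{C}/[\mathcal{I}]$ (and dually for $\textup{Ext}^1_\mathcal{C}(-,X)$ and $\mathcal{C}/[\mathcal{P}]$); therefore a natural transformation over $\mathcal{C}$ coincides with one over the quotient, which is precisely a morphism in the relevant functor category. Beyond this, I should check that the composite isomorphisms are natural in $X$ and $Y$ and additive, but this is automatic, since every ingredient (the Yoneda isomorphism, the equivalence $\Phi$, and the factorization through the stable quotients) is natural and additive.
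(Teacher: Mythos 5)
Your proposal is correct and follows essentially the same route as the paper: the paper derives the theorem directly from the restricted dualities $\Phi\colon\textup{proj-}\mathcal{C}/[\mathcal{P}]\to\textup{inj-}(\mathcal{C}/[\mathcal{I}])^{\textup{op}}$ and $\Phi\colon\textup{inj-}\mathcal{C}/[\mathcal{P}]\to\textup{proj-}(\mathcal{C}/[\mathcal{I}])^{\textup{op}}$ of Proposition \ref{prop4.3.1}, exactly as you do via Yoneda plus full faithfulness of $\Phi$. You merely spell out the details (including the identification of natural transformations over $\mathcal{C}$ with morphisms in the quotient functor categories, which is Proposition \ref{prop2.1}) that the paper leaves implicit in the phrase ``implied in Proposition \ref{prop4.3.1}''.
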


If furthermore, $\mathcal{C}$ is a dualizing $k$-variety, then $\mathcal{C}/[\mathcal{P}]$ and $\mathcal{C}/[\mathcal{I}]$ are also dualizing $k$-varieties. Thus we have two dualities $\Phi:\mbox{mod-}\mathcal{C}/[\mathcal{P}]\rightarrow \mbox{mod-}(\mathcal{C}/[\mathcal{I}])^{\textup{op}}$ and $D:\mbox{mod-}(\mathcal{C}/[\mathcal{I}])^{\textup{op}}\rightarrow\mbox{mod-}\mathcal{C}/[\mathcal{I}]$. The composition of $\Phi$ and $D$ defines an equivalence
$$\Theta: \ \mbox{proj-}\mathcal{C}/[\mathcal{P}] \xrightarrow{\Phi}  \mbox{inj-}(\mathcal{C}/[\mathcal{I}])^{\textup{op}} \xrightarrow{D}\mbox{proj-}\mathcal{C}/[\mathcal{I}].$$
Now we have the following generalized Auslander-Reiten duality and  defect formula.

\begin{thm}\label{thm1.5}
Let $(\mathcal{C},\mathcal{E})$ be an Ext-finite exact category with enough projectives and injectives. Assume that $\mathcal{C}$ is a dualizing $k$-variety. Then there is an equivalence $\tau:\mathcal{C}/[\mathcal{P}]\cong\mathcal{C}/[\mathcal{I}]$ satisfying the following properties:

\textup{(a)} $D\textup{Ext}^1_\mathcal{C}(-,X)\cong\mathcal{C}/[\mathcal{P}](\tau^{-1} X,-)$, $D\textup{Ext}^1_\mathcal{C}(X,-)\cong\mathcal{C}/[\mathcal{I}](-,\tau X)$.

\textup{(b)} $D\delta_\ast=\delta^\ast\tau^{-1}$, $D\delta^\ast=\delta_\ast\tau$ for each short exact sequence $\delta$ in $\mathcal{E}$.

\noindent Therefore, $\mathcal{C}$ has Auslander-Reiten sequences.
\end{thm}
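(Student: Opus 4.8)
The plan is to read both parts off the two dualities already available: the duality $\Phi\colon\textup{mod-}\mathcal{C}/[\mathcal{P}]\to\textup{mod-}(\mathcal{C}/[\mathcal{I}])^{\textup{op}}$ carrying $\delta^\ast$ to $\delta_\ast$ (Theorem \ref{thm1.1}), the $k$-duality $D$ of the dualizing variety, and the equivalence $\Theta=D\Phi\colon\textup{proj-}\mathcal{C}/[\mathcal{P}]\to\textup{proj-}\mathcal{C}/[\mathcal{I}]$ displayed before the statement. First I would define $\tau$: Yoneda gives equivalences $\mathcal{C}/[\mathcal{P}]\simeq\textup{proj-}\mathcal{C}/[\mathcal{P}]$, $X\mapsto\mathcal{C}/[\mathcal{P}](-,X)$, and likewise for $\mathcal{I}$, and transporting $\Theta$ across them yields an equivalence $\tau\colon\mathcal{C}/[\mathcal{P}]\to\mathcal{C}/[\mathcal{I}]$. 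Since $\Theta(\mathcal{C}/[\mathcal{P}](-,X))=D\,\textup{Ext}^1_\mathcal{C}(X,-)$ by the restricted duality $\mathcal{C}/[\mathcal{P}](-,X)\mapsto\textup{Ext}^1_\mathcal{C}(X,-)$, this equivalence satisfies $\mathcal{C}/[\mathcal{I}](-,\tau X)\cong D\,\textup{Ext}^1_\mathcal{C}(X,-)$ naturally in $X$, which is the second isomorphism of (a); I then take $\tau^{-1}$ to be a quasi-inverse of $\tau$.

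Next I would prove the defect formula (b), from which the first isomorphism of (a) will follow. For $\delta\colon 0\to X_1\to X_2\xrightarrow{f_2}X_3\to 0$ the defect $\delta^\ast=\textup{Coker}(\mathcal{C}/[\mathcal{P}](-,f_2))$ has the projective presentation $\mathcal{C}/[\mathcal{P}](-,X_2)\to\mathcal{C}/[\mathcal{P}](-,X_3)\to\delta^\ast\to 0$. Applying the exact duality $\Phi$ (which sends the representables to $\textup{Ext}^1_\mathcal{C}(X_i,-)$ and $\delta^\ast$ to $\delta_\ast$) gives the left-exact sequence $0\to\delta_\ast\to\textup{Ext}^1_\mathcal{C}(X_3,-)\to\textup{Ext}^1_\mathcal{C}(X_2,-)$, and applying $D$ together with the isomorphism of (a) produces $\mathcal{C}/[\mathcal{I}](-,\tau X_2)\to\mathcal{C}/[\mathcal{I}](-,\tau X_3)\to D\delta_\ast\to 0$, whence $D\delta_\ast\cong\textup{Coker}(\mathcal{C}/[\mathcal{I}](-,\tau f_2))$. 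Restricting the presentation of $\delta^\ast$ along $\tau^{-1}$ and invoking the natural isomorphism $\mathcal{C}/[\mathcal{P}](\tau^{-1}-,X_i)\cong\mathcal{C}/[\mathcal{I}](-,\tau X_i)$ identifies $\delta^\ast\tau^{-1}$ with the same cokernel, so $D\delta_\ast\cong\delta^\ast\tau^{-1}$. The companion $D\delta^\ast\cong\delta_\ast\tau$ is then formal: $D$ is computed pointwise, hence commutes with restriction along $\tau^{-1}$, so applying $D$ to $D\delta_\ast\cong\delta^\ast\tau^{-1}$ and using $DD\cong\textup{id}$ and $\tau^{-1}\tau\cong\textup{id}$ gives it.

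To obtain the first isomorphism of (a) I would specialise (b) to an injective copresentation $I_Y\colon 0\to Y\to I\to\Omega^{-1}Y\to 0$. By the restricted duality $\textup{Ext}^1_\mathcal{C}(-,Y)\mapsto\mathcal{C}/[\mathcal{I}](Y,-)$ one has $(I_Y)^\ast\cong\textup{Ext}^1_\mathcal{C}(-,Y)$ and $(I_Y)_\ast\cong\mathcal{C}/[\mathcal{I}](Y,-)$; substituting into $D\delta^\ast\cong\delta_\ast\tau$ gives $D\,\textup{Ext}^1_\mathcal{C}(-,Y)\cong\mathcal{C}/[\mathcal{I}](Y,\tau-)\cong\mathcal{C}/[\mathcal{P}](\tau^{-1}Y,-)$, as required. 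Finally, for the existence of almost split sequences I would appeal to the duality just established: for an indecomposable non-projective $X$ the projective $\mathcal{C}/[\mathcal{P}](-,X)$ has local endomorphism ring $\mathcal{C}/[\mathcal{P}](X,X)$ and hence a simple top, which under $\alpha_1$ and Theorem \ref{thm4.4.1} is an almost split sequence ending at $X$; the dualizing variety hypothesis ensures such simple objects exist.

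I expect the main obstacle to be the naturality bookkeeping in (b): one must verify that the two cokernel presentations of $D\delta_\ast$ and of $\delta^\ast\tau^{-1}$ coincide as the single map $\mathcal{C}/[\mathcal{I}](-,\tau f_2)$, which requires the isomorphisms of (a) to be natural and compatible with the induced maps $\textup{Ext}^1_\mathcal{C}(f_2,-)$, and the identifications $(I_Y)^\ast\cong\textup{Ext}^1_\mathcal{C}(-,Y)$, $(I_Y)_\ast\cong\mathcal{C}/[\mathcal{I}](Y,-)$ to be canonical enough to substitute into the defect formula.
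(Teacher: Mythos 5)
Your argument is correct and follows essentially the same route as the paper: you build $\tau$ by transporting $\Theta=D\Phi$ across the Yoneda embeddings of $\mathcal{C}/[\mathcal{P}]$ and $\mathcal{C}/[\mathcal{I}]$ into their projectives, obtain the second isomorphism of (a) by construction, deduce (b) from the agreement of $D\Phi$ with restriction along $\tau^{-1}$ (your presentation-chasing is just a more explicit version of the paper's assertion $D\Phi=\tau^{-1}_\ast$), and recover the first isomorphism of (a) by specialising to $\delta=I_X$. The only divergence is the closing claim that $\mathcal{C}$ has Auslander--Reiten sequences, which the paper delegates to the cited duality results of Lenzing--Zuazua and Liu--Ng--Paquette, while you sketch a direct argument via simple tops of the indecomposable projectives $\mathcal{C}/[\mathcal{P}](-,X)$ and Theorem \ref{thm4.4.1}; both are legitimate.
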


We point out that for $n$-exact categories and triangulated categories, by considersing quotients of the categories of $n$-exact sequences and quotients of the categories of triangles, we obtain some analogous results.

This paper is organized as follows.

In Section 2, we make some preliminaries. We collect some definitions and facts on morphism categories, exact categories, quotient categories and functor categories.

In Section 3, we provide techniques to identify quotients of morphism categories as module categories. Subsection 3.1 is devoted to proving Proposition 1.1. In subsection 3.2, we apply  Proposition 1.1 to exact categories. We show that some quotient categories of epimorphism categories are equivalent to module categories, see Theorem \ref{thm3.2.1}, which can be used to prove \cite[Theorem 3.2]{[DL]} and \cite[Theorem 3.2]{[DL]}; see Corollary \ref{cor3.1}. We obtain certain recollements of abelian categories from the viewpoint of morphism categories, which implies Auslander's formula; see Corollary \ref{cor3.2}. In subsection 3.3, we apply Proposition 1.1 to triangulated categories; see Proposition \ref{prop3.0} and Corollary \ref{cor3.3}. In subsection 3.4, we give some examples.

In Section 4, we study the abelian quotients of the categories of short exact sequences. In subsection 4.1 we realize some quotients of these categories as module categories; see Theorem \ref{thm1.1}. In subsection 4.2 we describe the abelian structure of the quotients; see Theorem \ref{thm4.3.1}. In subsection 4.3 we study the projective objects and injective objects, which are applied to prove Hilton-Rees Theorem; see Proposition \ref{prop1} and Theorem \ref{thm1.4}. In subsection 4.4, we will restrict our attention to the connection to Auslander-Reiten theory. We will prove Theorem \ref{thm4.4.1} and Theorem \ref{thm1.5}. Subsection 4.5 is devoted to listing the higher versions on the abelian quotients of the categories of $n$-exact sequences.

In Section 5, we consider the abelian quotients of the categories of triangles. 
There are some parallel results. 

\section{Preliminaries}

In this section, we make some preliminaries.
Let $\mathcal{C}$ be an additive category. We denote by $\mathcal{C}(X,Y)$ the set of morphisms from $X$ to $Y$ in $\mathcal{C}$. The composition of $f\in\mathcal{C}(X,Y)$ and $g\in\mathcal{C}(Y,Z)$ is denoted by $gf$.

\subsection{Morphism categories}
Assume that $\mathcal{C}$ is an additive category. The {\em morphism category} of $\mathcal{C}$ is the category $\textup{Mor}(\mathcal{C})$ defined by the following data.
The objects of $\textup{Mor}(\mathcal{C})$ are all  the morphisms $f:X\rightarrow Y$ in $\mathcal{C}$. The morphisms from $f:X\rightarrow Y$ to $f':X'\rightarrow Y'$ are pairs $(a,b)$ where $a:X\rightarrow X'$ and $b:Y\rightarrow Y'$ such that $bf=f'a$. The composition of morphisms is componentwise. We denote by $\textup{Mono}(\mathcal{C})$ the full subcategory of $\textup{Mor}(\mathcal{C})$ consisting of monomorphisms in $\mathcal{C}$, which is called the {\em monomorphism category of $\mathcal{C}$}. Dually, we define {\em epimorphism category} $\textup{Epi}(\mathcal{C})$ of $\mathcal{C}$. In particular, if $\mathcal{C}$ is abelian, then $\textup{Mor}(\mathcal{C})$ is an abelian category. In this case, $\textup{Mono}(\mathcal{C})$ is an additive category of $\textup{Mor}(\mathcal{C})$ which is closed under extensions, thus it becomes an exact category. Moreover, $\textup{Mono}(\mathcal{C})$ is isomorphic to $\textup{Epi}(\mathcal{C})$, where the isomorphism is given by cokernel functor.

\subsection{Exact categories}

We recall the notion of exact categories from \cite{[B]}.
Let $\mathcal{C}$ be an additive category. A {\em kernel-cokernel pair} $(i,p)$ in $\mathcal{C}$ is a pair of composable morphisms $X\xrightarrow{i} Y\xrightarrow{p} Z$ such that $i$ is a kernel of $p$ and $p$ is a cokernel of $i$. Assume that $\mathcal{E}$ is a class of kernel-cokernel pairs. A kernel-cokernel pair $(i,p)$ in $\mathcal{E}$ is called a {\em short exact sequence} in $\mathcal{E}$, which is denoted by $0\rightarrow X\xrightarrow{i} Y\xrightarrow{p} Z\rightarrow 0$.   A morphism $p:Y\rightarrow Z$ is called {\em admissible epimorphism} if there exists a morphism $i:X\rightarrow Y$ such that $(i,p)\in\mathcal{E}$. {\em Admissible monomorphisms} are defined dually.

A class of  kernel-cokernel pairs $\mathcal{E}$ is called an {\em exact structure} of $\mathcal{C}$ if $\mathcal{E}$ is closed under isomorphisms and satisfies the following axioms:

(E0) Identity morphisms are admissible epimorphisms.

(E0)$^{\textup{\tiny {op}}}$ Identity morphisms are  admissible monomorphisms.

(E1) The composition of two admissible epimorphisms  is an admissible epimorphism.

(E1)$^{\textup{\tiny {op}}}$ The composition of two admissible monomorphisms is an admissible monomorphism.

(E2) Given a short exact sequence  $0\rightarrow X\xrightarrow{i} Y\xrightarrow{p} Z\rightarrow 0$ in $\mathcal{E}$ and a morphisms $\varphi:X\rightarrow X'$ in $\mathcal{C}$, there exists a commutative diagram
$$\xymatrix
{0\ar[r] & X\ar[r]^{i}\ar[d]^{\varphi} & Y\ar[r]^{p}\ar[d]^{\varphi'} & Z\ar[r] \ar@{=}[d] & 0\\
0\ar[r] & X'\ar[r]^{i'} & Y' \ar[r]^{p'} & Z\ar[r] & 0
}$$
such that the second row belongs to $\mathcal{E}$. In this case, $(0\rightarrow X\xrightarrow{\left(
                                                                                  \begin{smallmatrix}
                                                                                    i \\
                                                                                    \varphi \\
                                                                                  \end{smallmatrix}
                                                                                \right)
} Y\oplus X'\xrightarrow{(\varphi',-i')} Y'\rightarrow 0)\in\mathcal{E}$.

(E2)$^{\textup{\tiny {op}}}$  Given a short exact sequence  $0\rightarrow X\xrightarrow{i} Y\xrightarrow{p} Z\rightarrow 0$ in $\mathcal{E}$ and a morphisms $\phi: Z'\rightarrow Z$ in $\mathcal{C}$, there exists a commutative diagram
$$ \xymatrix
{0\ar[r] & X\ar[r]^{i'}\ar@{=}[d] & Y'\ar[r]^{p'} \ar[d]^{\phi'} & Z'\ar[r] \ar[d]^{\phi} & 0\\
0\ar[r] & X\ar[r]^{i} & Y \ar[r]^{p} & Z\ar[r] & 0
}$$
such that the first row belongs to $\mathcal{E}$. In this case, $(0\rightarrow Y'\xrightarrow{\left(
                                                                                  \begin{smallmatrix}
                                                                                    p' \\
                                                                                    \phi' \\
                                                                                  \end{smallmatrix}
                                                                                \right)
} Z'\oplus Y\xrightarrow{(\phi,-p)} Z\rightarrow 0)\in\mathcal{E}$.

An {\em exact category} is an additive category $\mathcal{C}$ admits an exact structure $\mathcal{E}$, which is denoted by $(\mathcal{C},\mathcal{E})$.

For example, an additive category is an exact category with respect to the class of split short exact sequences, which are isomorphic to $0\rightarrow X\xrightarrow{\left(
                \begin{smallmatrix}
                  1 \\
                  0 \\
                \end{smallmatrix}
              \right)
} X\oplus Y\xrightarrow{(0,1)} Y\rightarrow 0$ for some $X,Y\in\mathcal{C}$. An abelian category $\mathcal{C}$ is an exact category where the exact structure is given by all the kernel-cokernel pairs in $\mathcal{C}$.

An object $P$ of an exact category $(\mathcal{C},\mathcal{E})$ is called {\em projective} if for each admissible epimorphism $p:Y\rightarrow Z$ and each morphism $f:P\rightarrow Z$, there exists a morphism $g:P\rightarrow Y$ such that $f=pg$. The full subcategory of projectives is denoted by $\mathcal{P}$. We say an exact category $(\mathcal{C},\mathcal{E})$ {\em has enough projective objects} if for each object $X\in\mathcal{C}$ there is an admissible epimorphism $p: P\rightarrow X$ with $P\in\mathcal{P}$. Dually, we can define injective objects. The full subcategory of injectives is denoted by $\mathcal{I}$. An exact category is {\em Frobenius} provided that it has enough projectives and injectives and, moreover, the classes of projectives and injectives coincide. If an exact category $(\mathcal{C},\mathcal{E})$ has enough projectives, then we can consider the projective resolutions and Ext functors as right derived functors of Hom as in abelian categories. Hence, Ext$^1_\mathcal{C}(Z,X)$ parameterizes the short exact sequences $0\rightarrow X\rightarrow Y\rightarrow Z\rightarrow 0$ in $\mathcal{E}$ up to equivalence.

\subsection{Quotient categories}

Let $\mathcal{C}$ be an additive category. An {\em {ideal}} $\mathcal{I}$ of $\mathcal{C}$ is a class of additive subgroups $\mathcal{I(}X,Y)$ of $\mathcal{C(}X,Y)$ such that $hgf\in \mathcal{I}(X,W)$ for each $f\in\mathcal{C}(X,Y), g\in \mathcal{I}(Y,Z)$ and $h\in\mathcal{C}(Z,W)$.  Assume that $\mathcal{I}$ is an ideal of $\mathcal{C}$, then by definition, the {\em quotient category} $\mathcal{C}/\mathcal{I}$ has the same objects as $\mathcal{C}$ and has morphisms $\mathcal{C}/\mathcal{I}(X,Y)=\mathcal{C}(X,Y)/\mathcal{I}(X,Y)$. For example, the Jacobson radical $J_\mathcal{C}$ of $\mathcal{C}$ is an ideal of $\mathcal{C}$. Suppose that $\mathcal{D}$ is a full subcategory of $\mathcal{C}$. We denote by $[\mathcal{D}](X,Y)$ the subset of morphisms of $\mathcal{C}(X,Y)$ which factor through an object in $\mathcal{D}$. It is easy to see that $[\mathcal{D}]$ is an ideal of $\mathcal{C}$, thus we have a quotient category $\mathcal{C}/[\mathcal{D}]$ and a quotient functor $Q:\mathcal{C}\rightarrow\mathcal{C}/[\mathcal{D}]$. Let $f:X\rightarrow Y$ be a morphism in $\mathcal{C}$. The image of $f$ under $Q$ is denoted by $\underline{f}$. It is well known that for each additive functor $F:\mathcal{C}\rightarrow\mathcal{E}$, if $F(\mathcal{D})=0$, then there is a unique functor $F':\mathcal{C}/[\mathcal{D}]\rightarrow\mathcal{E}$ such that $F'Q=F$.

Let $F:\mathcal{C}\rightarrow \mathcal{D}$ be a full and dense functor. If each morphism $f\in\mathcal{C}(X,Y)$ with $F(f)=0$ factors through an object $Z$ with $F(Z)=0$, then the functor $F$ is called {\em objective} (see \cite{[RZ]}). In this case, there is an equivalence $\mathcal{C}/[\textup{Ker}F]\cong \mathcal{D}$, where $\textup{Ker}F$ is the full subcategory of $\mathcal{C}$ formed by $X$ with $F(X)=0$.

\subsection{Functor categories}
Let $\mathcal{C}$ be an additive category. A right {\em $\mathcal{C}$-module} is a contravariantly additive functor $F:\mathcal{C}\rightarrow Ab$ where $Ab$ is the category of abelian groups.  Denote by Mod-$\mathcal{C}$  the category of right $\mathcal{C}$-modules.  It is well known that Mod-$\mathcal{C}$ is an abelian category. The $\mathcal{C}$-module $\mathcal{C}(-,X)$ is a projective object of Mod-$\mathcal{C}$ for each object $X\in\mathcal{C}$. Moreover, each projective object is a direct summand of $\mathcal{C}(-,X)$ for some $X\in\mathcal{C}$. By definition, a $\mathcal{C}$-module $F$ is called {\em{finitely presented}} (or {\em coherent}) if there exists an exact sequence $\mathcal{C}(-,X)\rightarrow\mathcal{C}(-,Y)\rightarrow F\rightarrow 0$. We denote by mod-$\mathcal{C}$ the full subcategory of Mod-$\mathcal{C}$ formed by finitely presented $\mathcal{C}$-modules,  by proj-$\mathcal{C}$ (resp. inj-$\mathcal{C}$) the full subcategory of mod-$\mathcal{C}$ consisting of projective (resp. injective) objects. It is known that mod-$\mathcal{C}$ is closed under cokernels and extensions. Moreover, we have the following result.

\begin{prop} (\cite{[Aus]}, \cite[Lemma 4.1]{[Kr]})
Let $\mathcal{C}$ be an additive category. Then $\textup{mod-}\mathcal{C}$ is abelian if and only if $\mathcal{C}$ admits weak kernels.
\end{prop}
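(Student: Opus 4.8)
The plan is to realize $\textup{mod-}\mathcal{C}$ as an abelian \emph{sub}category of the abelian category $\textup{Mod-}\mathcal{C}$. Since we are given that $\textup{mod-}\mathcal{C}$ is closed under cokernels and extensions, the entire statement collapses to one point: $\textup{mod-}\mathcal{C}$ is closed under kernels (formed in $\textup{Mod-}\mathcal{C}$) if and only if $\mathcal{C}$ has weak kernels. Indeed, once closure under kernels is known, for any $\phi\colon F\to G$ in $\textup{mod-}\mathcal{C}$ both $\operatorname{Ker}\phi$ and $\operatorname{Coker}\phi$ lie in $\textup{mod-}\mathcal{C}$, hence so do $\operatorname{im}\phi=\operatorname{Ker}(\operatorname{Coker}\phi)$ and $\operatorname{coim}\phi=\operatorname{Coker}(\operatorname{Ker}\phi)$, and the canonical map $\operatorname{coim}\phi\to\operatorname{im}\phi$, being invertible in $\textup{Mod-}\mathcal{C}$, is invertible in $\textup{mod-}\mathcal{C}$. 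As $\textup{mod-}\mathcal{C}$ is full, all the relevant universal properties are inherited, so $\textup{mod-}\mathcal{C}$ is abelian. The bridge to weak kernels is the representable case: for $f\colon X\to Y$, the kernel of $\mathcal{C}(-,f)$ in $\textup{Mod-}\mathcal{C}$ sends $Z$ to $\{g\in\mathcal{C}(Z,X):fg=0\}$, and a morphism $k\colon W\to X$ is a weak kernel of $f$ exactly when $\operatorname{im}\mathcal{C}(-,k)=\operatorname{Ker}\mathcal{C}(-,f)$, i.e. when $\mathcal{C}(-,W)\twoheadrightarrow\operatorname{Ker}\mathcal{C}(-,f)$; thus $f$ has a weak kernel iff $\operatorname{Ker}\mathcal{C}(-,f)$ is finitely generated, and if $\mathcal{C}$ has weak kernels throughout, applying this twice gives a presentation $\mathcal{C}(-,W')\to\mathcal{C}(-,W)\to\operatorname{Ker}\mathcal{C}(-,f)\to 0$, so $\operatorname{Ker}\mathcal{C}(-,f)\in\textup{mod-}\mathcal{C}$.

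For the direction that $\textup{mod-}\mathcal{C}$ abelian forces weak kernels, I would argue as follows. Given $f\colon X\to Y$, the morphism $\mathcal{C}(-,f)$ has a kernel $\kappa\colon K\to\mathcal{C}(-,X)$ in the abelian category $\textup{mod-}\mathcal{C}$. Since $K$ is finitely presented, hence finitely generated, there is an epimorphism $e\colon\mathcal{C}(-,W)\twoheadrightarrow K$; writing $\kappa e=\mathcal{C}(-,k)$ via Yoneda, I claim $k$ is a weak kernel of $f$. Now $fk=0$ follows from $\mathcal{C}(-,f)\kappa=0$ and Yoneda. For the factorization property, any $h\colon Z\to X$ with $fh=0$ yields $\mathcal{C}(-,f)\,\mathcal{C}(-,h)=0$, so $\mathcal{C}(-,h)$ factors through $\kappa$ by the universal property of the kernel in $\textup{mod-}\mathcal{C}$; lifting the resulting map $\mathcal{C}(-,Z)\to K$ along $e$ using projectivity of the representable $\mathcal{C}(-,Z)$ in $\textup{Mod-}\mathcal{C}$, and applying Yoneda, produces $r\colon Z\to W$ with $kr=h$.

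For the converse, assuming weak kernels, I must upgrade the representable computation to an arbitrary $\phi\colon F\to G$; this is where the work lies. I would pick a representable epimorphism $q\colon Q_0\twoheadrightarrow G$ and form the pullback $T=F\times_G Q_0$. Pulling back the presentation of $G$ exhibits $T$ as an extension of $F$ by $\operatorname{Ker}q$, both of which lie in $\textup{mod-}\mathcal{C}$ (the latter is the image of a representable map, finitely presented by the representable case), so $T\in\textup{mod-}\mathcal{C}$ by closure under extensions. The pullback identifies $\operatorname{Ker}\phi$ with the kernel of the map $T\to Q_0$, whose target is representable. Covering $T$ by a representable and invoking the snake lemma then presents $\operatorname{Ker}\phi$ as the cokernel of an inclusion of one finitely presented functor (the kernel of a representable map) into another (a finitely generated syzygy), so closure under cokernels gives $\operatorname{Ker}\phi\in\textup{mod-}\mathcal{C}$. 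Together with the reduction of the first paragraph, this proves $\textup{mod-}\mathcal{C}$ abelian.

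I expect the main obstacle to be precisely this last step: passing from kernels of representable maps to kernels of arbitrary morphisms of $\textup{mod-}\mathcal{C}$. The kernel has to be assembled by hand out of a pullback, a representable cover, and the snake lemma, and throughout one must verify that every auxiliary functor produced — images and syzygies of representable maps — is again finitely presented, which is exactly what the weak-kernel hypothesis ensures. By contrast, the reduction of the abelian axioms to closure under kernels and the entire forward implication are formal consequences of Yoneda and the projectivity of representables.
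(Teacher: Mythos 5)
Your argument is correct in substance, but note that the paper itself offers no proof of this proposition: it is quoted from Auslander and from Krause, and the only trace of an argument in the paper is Remark~\ref{rem2.2}, which records the standard construction of kernels --- lift $\alpha\colon F_1\to F_2$ to a morphism of presentations and apply weak kernels twice to the resulting two-term complexes, yielding $\operatorname{Ker}\alpha=\operatorname{Coker}(\mathcal{C}(-,Z_2)\to\mathcal{C}(-,Z_1))$. Your forward direction (abelian $\Rightarrow$ weak kernels) coincides with the usual one: take the kernel of $\mathcal{C}(-,f)$ in $\textup{mod-}\mathcal{C}$, cover it by a representable, and use Yoneda plus projectivity of representables; this is fine. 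Where you genuinely diverge is the converse: instead of the direct presentation-lifting computation of Remark~\ref{rem2.2}, you assemble $\operatorname{Ker}\phi$ from a pullback $T=F\times_G Q_0$ along a representable cover of $G$, closure under extensions to see $T\in\textup{mod-}\mathcal{C}$, and a kernel--cokernel (snake) sequence for a representable cover of $T$. This is a valid alternative; it trades the explicit weak-kernel formula (which the paper actually uses later, in Remark~\ref{rem2.2}) for a more structural argument leaning on extension-closure. One small correction in your last step: the exact sequence $0\to\operatorname{Ker}\pi\to\operatorname{Ker}(p_Q\pi)\to\operatorname{Ker}p_Q\to 0$ exhibits $\operatorname{Ker}\phi$ as the cokernel of the inclusion of the finitely generated syzygy $\operatorname{Ker}\pi$ \emph{into} the finitely presented kernel of the representable map $p_Q\pi$, not the other way around as your prose has it; after covering the syzygy by a representable, closure under cokernels then applies as you intend.
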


Recall that a morphism $f:X\rightarrow Y$ in $\mathcal{C}$ is a {\em weak kernel} of $g: Y\rightarrow Z$ if $gf=0$ and for each morphism $h:W\rightarrow Y$ such that $gh=0$, there exists a morphism $p:W\rightarrow X$ such that $fp=h$.

\begin{rem}\label{rem2.2}
Assume that  $\mathcal{C}$ admits weak kernels. For later use, we recall the abelian structure of mod-$\mathcal{C}$. Let $\alpha: F_1\rightarrow F_2$ be a morphism in mod-$\mathcal{C}$ with the following presentation:
$$\xymatrix{\mathcal{C}(-,X_1)\ar[r] \ar[d] & \mathcal{C}(-,Y_1)\ar[r]\ar[d] & F_1 \ar[r]\ar[d]^{\alpha} & 0\\
\mathcal{C}(-,X_2)\ar[r]  & \mathcal{C}(-,Y_2)\ar[r] & F_2 \ar[r] & 0
}$$ Then Coker$(\mathcal{C}(-,Y_1\oplus X_2)\rightarrow \mathcal{C}(-,Y_2))$ is a cokernel of $\alpha$. Suppose that $Z_1\rightarrow Y_1\oplus X_2$ is a weak kernel of $Y_1\oplus X_2\rightarrow Y_2$ and  $Z_2\rightarrow Z_1\oplus X_1$ is a weak kernel of $Z_1\oplus X_1\rightarrow Y_1$, then  Coker$(\mathcal{C}(-,Z_2)\rightarrow \mathcal{C}(-,Z_1))$ is a kernel of $\alpha$.
\end{rem}

Let $\mathcal{D}$ be a full subcategory of $\mathcal{C}$. A morphism $f:D\rightarrow X$ is called a {\em right $\mathcal{D}$-approximation} of $X$ if $D\in \mathcal{D}$ and each morphism $g:D'\rightarrow X$ with $D'\in\mathcal{D}$ factors through $f$. The category $\mathcal{D}$ is called {\em contravariantly finite} if each object in $\mathcal{C}$ admits a right $\mathcal{D}$-approximation. A contravariantly finite and covariantly finite subcategory is called {\em functorially finite}.

\begin{example}
(a) Let $\mathcal{C}$ be an abelian category, then mod-$\mathcal{C}$ is abelian.

(b) Let $\mathcal{C}$ be an exact category with enough projectives. Denote by $\mathcal{P}$ the subcategory of projectives. If $\mathcal{M}$ is a contravariantly finite subcategory of $\mathcal{C}$, then mod-$\mathcal{M}$ is abelian. Moreover, if $\mathcal{M}$ contains $\mathcal{P}$, then mod-$\mathcal{M}/[\mathcal{P}]$ is still abelian (see \cite[Lemma 2.3]{[DL]}). In particular, mod-$\mathcal{C}/[\mathcal{P}]$ is abelian.

(c) Let $\mathcal{C}$ be a triangulated category, then mod-$\mathcal{C}$ is abelian.
\end{example}

The following result generalizes \cite[Proposition 4.1]{[AHK]} slightly.

\begin{prop}\label{prop2.1}
Let $\mathcal{C}$ be an additive category and $\mathcal{D}$ be a contravariently finite subcategory. Then

\textup{(a)} $\textup{Mod-}\mathcal{C}/[\mathcal{D}]\cong \{F\in\textup{Mod-}\mathcal{C}|F(\mathcal{D})=0\}.$

\textup{(b)} $ \textup{mod-}\mathcal{C}/[\mathcal{D}]\cong \textup{mod-}_0\mathcal{C}=\{F\in\textup{mod-}\mathcal{C}|F(\mathcal{D})=0\}.$
\end{prop}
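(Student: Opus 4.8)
The plan is to realize both equivalences through a single functor, namely restriction along the quotient. Write $\mathcal{B}=\mathcal{C}/[\mathcal{D}]$ and let $Q\colon\mathcal{C}\to\mathcal{B}$ be the quotient functor, which is the identity on objects and surjective on each morphism group. Restriction along $Q$ gives a functor $Q^{\ast}\colon\textup{Mod-}\mathcal{B}\to\textup{Mod-}\mathcal{C}$ with $(Q^{\ast}G)(Z)=G(Z)$ and $(Q^{\ast}G)(f)=G(\underline{f})$; since (co)limits of modules are computed objectwise and $Q$ is bijective on objects, $Q^{\ast}$ is exact. I claim $Q^{\ast}$ is the asserted equivalence in each part. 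The first point to record is that every $D\in\mathcal{D}$ becomes a zero object of $\mathcal{B}$, because $\underline{\textup{id}_{D}}=0$; hence any additive functor on $\mathcal{B}$ annihilates $\mathcal{D}$, so $Q^{\ast}$ really lands in the prescribed subcategory.

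For part (a) I would first verify that $Q^{\ast}$ is fully faithful. As $Q$ is surjective on morphisms, a family $\{\theta_{Z}\colon G(Z)\to G'(Z)\}$ is natural with respect to every $\mathcal{C}$-morphism $f$ exactly when it is natural with respect to the corresponding $\mathcal{B}$-morphism $\underline{f}$; thus natural transformations $Q^{\ast}G\to Q^{\ast}G'$ coincide with natural transformations $G\to G'$. For essential surjectivity, given $F\in\textup{Mod-}\mathcal{C}$ with $F(\mathcal{D})=0$, note that $F$ annihilates the ideal $[\mathcal{D}]$: any $f=\beta\alpha$ factoring through some $D\in\mathcal{D}$ gives $F(f)=F(\alpha)F(\beta)$, which vanishes because $F(D)=0$. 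By the universal property of the quotient functor recalled in \S2.3 (applied to $\mathcal{C}^{\textup{op}}$), $F$ factors uniquely as $F=Q^{\ast}G$. This settles (a), and, notably, uses no finiteness hypothesis.

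For part (b) the extra input is contravariant finiteness, which I would use to control finite presentation. Choosing a right $\mathcal{D}$-approximation $d\colon D\to X$, one checks $[\mathcal{D}](-,X)=\textup{Im}\,\mathcal{C}(-,d)$, whence the exact sequence $\mathcal{C}(-,D)\xrightarrow{\mathcal{C}(-,d)}\mathcal{C}(-,X)\to\mathcal{C}/[\mathcal{D}](-,X)\to 0$ exhibits the representable $\mathcal{B}(-,X)$ as a finitely presented $\mathcal{C}$-module after restriction. Since $Q^{\ast}$ is exact and $\textup{mod-}\mathcal{C}$ is closed under cokernels, $Q^{\ast}$ carries $\textup{mod-}\mathcal{B}$ into $\textup{mod-}_0\mathcal{C}$. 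Conversely, take $F\in\textup{mod-}\mathcal{C}$ with $F(\mathcal{D})=0$ and a presentation $\mathcal{C}(-,X)\xrightarrow{\mathcal{C}(-,u)}\mathcal{C}(-,Y)\to F\to 0$, and set $G=\textup{Coker}\bigl(\mathcal{B}(-,\underline{u})\bigr)\in\textup{mod-}\mathcal{B}$. Applying the exact $Q^{\ast}$ identifies $Q^{\ast}G$ with the quotient of $\mathcal{C}(-,Y)$ by $R+[\mathcal{D}](-,Y)$, where $R=\textup{Im}\,\mathcal{C}(-,u)$; but $F(\mathcal{D})=0$ forces $\mathcal{C}(-,Y)\to F$ to annihilate $[\mathcal{D}](-,Y)$, so $[\mathcal{D}](-,Y)\subseteq R$ and hence $Q^{\ast}G\cong F$. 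Combined with the full faithfulness from (a), this yields the equivalence $\textup{mod-}\mathcal{B}\cong\textup{mod-}_0\mathcal{C}$.

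The bookkeeping I would relegate to short checks includes the exactness and objectwise description of $Q^{\ast}$, the well-definedness of the descended module in (a), and the identity $[\mathcal{D}](-,X)=\textup{Im}\,\mathcal{C}(-,d)$. The step I expect to be the genuine crux is the finiteness control in (b): both that $Q^{\ast}$ preserves finite presentation — which is precisely where contravariant finiteness enters, through the approximation producing a finite presentation of $\mathcal{C}/[\mathcal{D}](-,X)$ — and, in the converse direction, that the naively descended presentation over $\mathcal{B}$ recovers $F$ itself rather than a proper quotient, for which the inclusion $[\mathcal{D}](-,Y)\subseteq R$ coming from $F(\mathcal{D})=0$ is the decisive observation.
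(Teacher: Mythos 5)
Your proposal is correct and follows essentially the same route as the paper: part (a) via the universal property of the quotient functor (which you make explicit through the restriction functor $Q^{\ast}$), and part (b) by using a right $\mathcal{D}$-approximation to present $\mathcal{C}/[\mathcal{D}](-,X)$ as a finitely presented $\mathcal{C}$-module for one inclusion, and descending a presentation of $F$ to the quotient for the other. Your observation that $[\mathcal{D}](-,Y)\subseteq R$ is exactly what makes the paper's descended sequence $\mathcal{C}/[\mathcal{D}](-,X_1)\to\mathcal{C}/[\mathcal{D}](-,X_2)\to F\to 0$ exact, a point the paper leaves implicit.
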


\begin{proof} (a) It follows from the universal property of quotient functors. For convenience, we identify Mod-$\mathcal{C}/[\mathcal{D}]$ and $\{F\in\textup{Mod-}\mathcal{C}|F(\mathcal{D})=0\}$.

(b) For each object $X\in\mathcal{C}$, we assume that $f:D\rightarrow X$ is a right $\mathcal{D}$-approximation of $X$. Since Im$\mathcal{C}(-,f)=[\mathcal{D}](-,X)$, we have the following exact sequence
$$\mathcal{C}(-,D)\xrightarrow{\mathcal{C}(-,f)}\mathcal{C}(-,X)\rightarrow \mathcal{C}(-,X)/[\mathcal{D}](-,X)\rightarrow 0.$$
It follows that $\mathcal{C}/[\mathcal{D}](-,X)\in\textup{mod-}\mathcal{C}$, since $\mathcal{C}/[\mathcal{D}](Y,X)=\mathcal{C}(Y,X)/[\mathcal{D}](Y,X)$ for each $Y\in\mathcal{C}$. Thus $\mathcal{C}/[\mathcal{D}](-,X)\in\textup{mod-}_0\mathcal{C}$ since $f$ is a right $\mathcal{D}$-approximation. Consequently, mod-$\mathcal{C}/[\mathcal{D}]\subseteq \textup{mod-}_0\mathcal{C}$. On the other hand, for each $F\in\textup{mod-}_0\mathcal{C}$, there is an exact sequence $\mathcal{C}(-,X_1)\rightarrow\mathcal{C}(-,X_2)\rightarrow F\rightarrow 0$ with $F(\mathcal{D})=0$.
The following exact sequence
$$\mathcal{C}(-,X_1)/[\mathcal{D}](-,X_1)\rightarrow
\mathcal{C}(-,X_2)/[\mathcal{D}](-,X_2)\rightarrow F\rightarrow 0$$
 shows that $F\in \textup{mod-}\mathcal{C}/[\mathcal{D}]$.
\end{proof}

Let $k$ be a commutative artinian ring and $E$ be the injective envelope of $k$. Set $D=\textup{Hom}_k(-,E)$. A $k$-linear additive category $\mathcal{C}$ is called {\em dualizing $k$-variety} if the functor $D:\textup{Mod-}\mathcal{C}\rightarrow \textup{Mod-}\mathcal{C}^{\textup{op}}$ given by $D(F)(X):=D(F(X))$, induces a duality $D:\textup{mod-}\mathcal{C}\rightarrow \textup{mod-}\mathcal{C}^{\textup{op}}$.

\begin{example}\label{ex2.1}
(a) Let $A$ be an artin $k$-algebra. Denote by mod-$A$ the category of finitely presented right $A$-modules, and by proj-$A$ the full subcategory of mod-$A$ formed by projective $A$-modules. Then both mod-$A$ and proj-$A$ are dualizing $k$-varieties.

(b)  Let $\mathcal{C}$ be a dualizing $k$-variety, then mod-$\mathcal{C}$ is a dualizing $k$-variety. Moreover, mod-$\mathcal{C}$ is an abelian category with enough projectives and enough injectives.

(c) Any functorially finite subcategory of a dualizing $k$-variety is also a dualizing $k$-variety.

(d) Let $\mathcal{C}$ be a dualizing $k$-variety and $\mathcal{D}$ be a contravariantly finite subcategory, then $\mathcal{C}/[\mathcal{D}]$ is a dualizing $k$-variety.
\end{example}

\begin{proof}
Since one can find (a) and (b) in \cite{[AR]} and find (c) in \cite{[AS]}, we only prove (d).
Let $F\in\textup{mod-}\mathcal{C}/[\mathcal{D}]$, then by Proposition \ref{prop2.1}, $DF$ can be viewed as a finitely presented $\mathcal{C}^{\textup{op}}$-module which vanishes on $\mathcal{D}$. Thus $DF\in\textup{mod-}(\mathcal{C}/[\mathcal{D}])^{\textup{op}}$. Conversely, we can show that if $F\in\textup{mod-}(\mathcal{C}/[\mathcal{D}])^{\textup{op}}$, then $DF\in\textup{mod-}\mathcal{C}/[\mathcal{D}]$.
\end{proof}

\section{Identifying quotients of morphism categories as module categories}

Our approach to understanding the categories of short exact sequences will be based on viewing them as morphism categories, which we are able to identify with certain module categories. In this section we provide techniques needed for such identifications.

\subsection{Basic case: additive categories}
Let $\mathcal{C}$ be an additive category. For two objects $f:X\rightarrow Y$ and $f':X'\rightarrow Y'$ in $\textup{Mor}(\mathcal{C})$, we define $\mathcal{R}(f,f')$ (resp. $\mathcal{R'}(f,f'))$ to be the set of morphisms $(a,b)$ such that there is some morphism $p:Y\rightarrow X'$ such that $f'p=b$ (resp. $pf=a$). Then $\mathcal{R}$ and $\mathcal{R}'$ are ideals of $\textup{Mor}(\mathcal{C})$.

\begin{lem}\label{prop3.1}
Let $\mathcal{C}$ be an additive category, then we have the following equivalences.

\textup{(a)} $\textup{Mor}(\mathcal{C})/\mathcal{R}\cong \textup{mod-}\mathcal{C}$.

\textup{(b)} $\textup{Mor}(\mathcal{C})/\mathcal{R'}\cong (\textup{mod-}\mathcal{C}^{\textup{op}})^{\textup{op}}$.
\end{lem}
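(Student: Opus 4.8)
The plan is to construct the equivalence in (a) explicitly via the cokernel functor and then deduce (b) by a standard duality argument. First I would define a functor $\textup{Cok}:\textup{Mor}(\mathcal{C})\rightarrow\textup{mod-}\mathcal{C}$ by sending an object $f:X\rightarrow Y$ to $\textup{Coker}\bigl(\mathcal{C}(-,f)\bigr)$, i.e.\ the finitely presented module $F_f$ appearing in the exact sequence $\mathcal{C}(-,X)\xrightarrow{\mathcal{C}(-,f)}\mathcal{C}(-,Y)\rightarrow F_f\rightarrow 0$. On morphisms, a pair $(a,b):f\rightarrow f'$ induces a commutative ladder between the two presentations (using $bf=f'a$), hence a well-defined map $F_f\rightarrow F_{f'}$; functoriality is routine. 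This functor is dense because every $F\in\textup{mod-}\mathcal{C}$ is by definition a cokernel of some $\mathcal{C}(-,f)$, and it is full by the projectivity of representable modules, which lets one lift any natural transformation $F_f\rightarrow F_{f'}$ to a map of the chosen presentations and thence to a pair $(a,b)$.

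The heart of the matter is identifying the kernel ideal of $\textup{Cok}$ with $\mathcal{R}$; the equivalence $\textup{Mor}(\mathcal{C})/\mathcal{R}\cong\textup{mod-}\mathcal{C}$ then follows from the universal property of quotient functors, provided one checks $\textup{Cok}$ is objective in the sense recalled in Subsection~2.3. The key computation is that $\textup{Cok}(a,b)=0$ if and only if the map $\mathcal{C}(-,Y)\rightarrow F_{f'}$ precomposed with $b$ factors through $\mathcal{C}(-,f')$, which by the Yoneda lemma is exactly the existence of $p:Y\rightarrow X'$ with $f'p=b$; that is precisely the defining condition for $(a,b)\in\mathcal{R}(f,f')$. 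I would make this into an \emph{iff} by reading the Yoneda correspondence in both directions. For objectivity I must exhibit, for each $(a,b)\in\mathcal{R}$, a factorization through an object annihilated by $\textup{Cok}$; the natural candidate is the object $\bigl(\,1:X'\rightarrow X'\,\bigr)$ (whose cokernel module is zero), using the witnessing morphism $p$ to build the factorization $f\xrightarrow{(a,p)}(X'\xrightarrow{1}X')\xrightarrow{(p\cdot?,\,b)}f'$ — arranging the two componentwise pieces so that their composite is $(a,b)$.

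I expect the main obstacle to be the objectivity verification: fullness and density are formal consequences of Yoneda and projectivity, and the identification of $\mathcal{R}$ with $\ker\textup{Cok}$ is a direct Yoneda computation, but showing that every morphism killed by $\textup{Cok}$ factors through the kernel subcategory requires choosing the factoring object and the two pairs of morphisms correctly so that the relation $bf=f'a$ is respected at each stage. Once (a) is established, part (b) follows by applying (a) to $\mathcal{C}^{\textup{op}}$: the morphism category satisfies $\textup{Mor}(\mathcal{C}^{\textup{op}})\cong\textup{Mor}(\mathcal{C})^{\textup{op}}$, and under this identification the ideal $\mathcal{R}$ for $\mathcal{C}^{\textup{op}}$ corresponds to $\mathcal{R}'$ for $\mathcal{C}$ (the roles of the two factorization conditions $f'p=b$ and $pf=a$ being swapped by reversing arrows). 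Dualizing the equivalence from (a) then yields $\textup{Mor}(\mathcal{C})/\mathcal{R}'\cong(\textup{mod-}\mathcal{C}^{\textup{op}})^{\textup{op}}$, as claimed.
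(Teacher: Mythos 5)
Your core argument coincides with the paper's: define $\textup{Cok}:\textup{Mor}(\mathcal{C})\rightarrow\textup{mod-}\mathcal{C}$ by $f\mapsto\textup{Coker}(\mathcal{C}(-,f))$, observe that it is dense and full by Yoneda and the projectivity of representables, identify its kernel ideal with $\mathcal{R}$ by reading the Yoneda correspondence in both directions, and deduce (b) by applying (a) to $\mathcal{C}^{\textup{op}}$ (the paper only says ``similarly''; your explicit check that $\mathcal{R}$ on $\mathcal{C}^{\textup{op}}$ corresponds to $\mathcal{R}'$ on $\mathcal{C}$ under $\textup{Mor}(\mathcal{C}^{\textup{op}})\cong\textup{Mor}(\mathcal{C})^{\textup{op}}$ is correct). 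That much is exactly the paper's proof and it suffices.

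The one genuine misstep is the ``objectivity'' detour. Objectivity, as recalled in Subsection~2.3, is the criterion for quotients by ideals of the form $[\mathcal{D}]$ generated by a subcategory of \emph{objects}; here the quotient is by the ideal $\mathcal{R}$ itself, so once $\textup{Cok}$ is full and dense with kernel ideal exactly $\mathcal{R}$, the equivalence $\textup{Mor}(\mathcal{C})/\mathcal{R}\cong\textup{mod-}\mathcal{C}$ follows at once from the universal property of the quotient by an ideal; no factorization through objects is required. Worse, the factorization you propose does not exist in general: for $(a,p)$ to be a morphism from $f:X\rightarrow Y$ to $X'\xrightarrow{1}X'$ in $\textup{Mor}(\mathcal{C})$ one needs $pf=a$, which is the defining condition of $\mathcal{R}'$, not of $\mathcal{R}$ (where the witness satisfies $f'p=b$). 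The correct factoring object --- which the paper uses in the \emph{separate} Lemma~3.2 to prove $\mathcal{R}=[\mathcal{U}]$, the statement your objectivity step is really after --- is $(X\rightarrow 0)\oplus(X'\xrightarrow{1}X')$, i.e.\ $X\oplus X'\xrightarrow{(0,1)}X'$, with the factorization $(a,b)=\bigl((a-pf,1),f'\bigr)\circ\bigl(\left(\begin{smallmatrix}1\\ pf\end{smallmatrix}\right),p\bigr)$. In short: delete the objectivity step and your proof of the lemma as stated is complete; retain it and it needs the repair above.
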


\begin{proof}  (a) We define a functor $\alpha:\textup{Mor}(\mathcal{C})\rightarrow \textup{mod-}\mathcal{C}$ by mapping $f:X\rightarrow Y$ to $F=\textup{Coker}(\mathcal{C}(-,f):\mathcal{C}(-,X)\rightarrow \mathcal{C}(-,Y))$. The functor $\alpha$ is dense and full by Yoneda's lemma. Suppose that $(a,b)$ is a morphism from $f:X\rightarrow Y$ to $f':X'\rightarrow Y'$. If $\alpha(a,b)=0$, then the following diagram
$$\xymatrix{
\mathcal{C}(-,X)\ar[r]^{\mathcal{C}(-,f)}\ar[d]^{\mathcal{C}(-,a)} & \mathcal{C}(-,Y)\ar[r]\ar[d]^{\mathcal{C}(-,b)} & F \ar[r]\ar[d]^{0} & 0 \\
\mathcal{C}(-,X')\ar[r]^{\mathcal{C}(-,f')} & \mathcal{C}(-,Y')\ar[r] & F'\ar[r] & 0 \\
}$$ is commutative and each row is exact. There exists a morphism $\mathcal{C}(-,p):\mathcal{C}(-,Y)\rightarrow\mathcal{C}(-,X')$ such that $\mathcal{C}(-,f')\mathcal{C}(-,p)=\mathcal{C}(-,b)$, that is, $f'p=b$. Therefore, the functor $\alpha$ induces an equivalence $\textup{Mor}(\mathcal{C})/\mathcal{R}\cong \textup{mod-}\mathcal{C}$. We can show (b) similarly.
\end{proof}

We denote by $\mathcal{U}$  the full subcategory of $\textup{Mor}(\mathcal{C})$ consisting of $(X\xrightarrow{1} X)\oplus (Y\rightarrow 0)$ and by $\mathcal{U'}$ the full subcategory of $\textup{Mor}(\mathcal{C})$ consisting of $(X\xrightarrow{1} X)\oplus (0\rightarrow Y)$.

\begin{lem}\label{rem3.1}
Let $(a,b)$ be a morphism from $f:X\rightarrow Y$ to $f':X'\rightarrow Y'$. Then the following holds.

\textup{(a)} The morphism $b$ factors through $f'$  if and only if $(a,b)$ factors through some object in $\mathcal{U}$.

\textup{(b)} The morphism $a$ factors through $f$  if and only if $(a,b)$ factors through some object in $\mathcal{U}'$.
\end{lem}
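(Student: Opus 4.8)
The plan is to unwind the definition of ``factoring through an object of $\mathcal{U}$'' and to exploit that every object of $\mathcal{U}$ is, as a morphism of $\mathcal{C}$, a split projection: an object $(P\xrightarrow{1}P)\oplus(Q\rightarrow 0)$ of $\mathcal{U}$ is the projection $u=\pi_P:P\oplus Q\rightarrow P$, which admits the section $\iota_P$ with $\pi_P\iota_P=1_P$. Since (b) is the statement obtained from (a) by working in $\mathcal{C}^{\mathrm{op}}$---the isomorphism $\textup{Mor}(\mathcal{C})^{\mathrm{op}}\cong\textup{Mor}(\mathcal{C}^{\mathrm{op}})$ interchanges $\mathcal{U}$ with $\mathcal{U}'$, reverses all arrows, and converts ``$b$ factors through $f'$'' (post-composition) into ``$a$ factors through $f$'' (pre-composition)---I would prove (a) in full and then dualize to obtain (b).

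For the easy direction of (a), suppose $(a,b)=(a_2,b_2)(a_1,b_1)$ with $(a_1,b_1):f\rightarrow u$ and $(a_2,b_2):u\rightarrow f'$, where $u=\pi_P$ as above. The commutativity of the second square reads $f'a_2=b_2\pi_P$; composing on the right with the section $\iota_P$ gives $b_2=f'(a_2\iota_P)$. Hence $b=b_2b_1=f'(a_2\iota_P b_1)$, so $b$ factors through $f'$ via $p:=a_2\iota_P b_1$. This step is routine.

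The substance of the lemma is the converse. Assume $b=f'p$ for some $p:Y\rightarrow X'$; the commutativity $bf=f'a$ then forces $f'(a-pf)=0$, i.e.\ the ``defect'' $a-pf$ is annihilated by $f'$. The difficulty is that any factorization of $(a,b)$ must reproduce \emph{both} components simultaneously: the choices $b_1=p$, $b_2=f'$ immediately take care of $b$, but they only produce $pf$ in the $a$-slot, leaving the defect $a-pf$ to be accounted for. My remedy is to enlarge the middle term by the \emph{source} of $f$: I would factor $(a,b)$ through the object $(X'\xrightarrow{1}X')\oplus(X\rightarrow 0)$ of $\mathcal{U}$, whose morphism is $\pi_{X'}:X'\oplus X\rightarrow X'$, taking $a_1=\left(\begin{smallmatrix}pf\\1_X\end{smallmatrix}\right)$ and $b_1=p$ into the middle, and $a_2=(1_{X'}\;\;a-pf)$ and $b_2=f'$ out of it. Here the extra summand $X$ is precisely what carries the defect: the identity in $a_1$ feeds $X$ forward and the entry $a-pf$ in $a_2$ absorbs it, while $f'(a-pf)=0$ is exactly the condition that makes $(a_2,b_2)$ a morphism of $\textup{Mor}(\mathcal{C})$. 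A direct check then gives $a_2a_1=pf+(a-pf)=a$ and $b_2b_1=f'p=b$. I expect the verifications that these data define morphisms (the two commuting squares) and that their composite is $(a,b)$ to be the only computation, and that locating the correct middle object---the one whose free summand is the source of $f$---will be the single genuinely creative step.
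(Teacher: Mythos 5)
Your proof is correct and matches the paper's argument essentially verbatim: the paper also factors $(a,b)$ through $(X\rightarrow 0)\oplus(X'\xrightarrow{1}X')$ using the maps $\bigl(\left(\begin{smallmatrix}1\\ pf\end{smallmatrix}\right),p\bigr)$ followed by $\bigl((a-pf,\,1),\,f'\bigr)$, which are your $a_1,b_1,a_2,b_2$ up to the order of the summands, and recovers $p$ in the converse by composing with the evident section, exactly as you do. Part (b) is likewise handled by duality in both.
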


\begin{proof}
We only prove (a). Suppose that there is a morphism $p:Y\rightarrow X'$ such that $f'p=b$, then $(a,b)$ factors through $X\oplus X'\xrightarrow{(0,1)}X'$ as $(a,b)=((a-pf,1),f')(\left(
                                                            \begin{smallmatrix}
                                                              1 \\
                                                              pf \\
                                                            \end{smallmatrix}
                                                          \right)
,p)$. Conversely, if $(a,b)$ factors through some object $A\oplus B\xrightarrow{(0,1)} B$ in $\mathcal{U}$. Assume that $(a,b)=((a_2,a_2'),b_2)(\left(
                                                            \begin{smallmatrix}
                                                              a_1 \\
                                                              a_1' \\
                                                            \end{smallmatrix}
                                                          \right)
,b_1)$, then the morphism $p=a_2'b_1:Y\rightarrow X'$ satisfies $f'p=b_2b_1=b$.
\end{proof}

Lemma \ref{prop3.1} and Lemma \ref{rem3.1} imply the following proposition.

\begin{prop}\label{prop3.1.3}
Let $\mathcal{C}$ be an additive category, then the following holds.

\textup{(a)} $\textup{Mor}(\mathcal{C})/[\mathcal{U}]\cong \textup{mod-}\mathcal{C}$.

\textup{(b)} $\textup{Mor}(\mathcal{C})/[\mathcal{U'}]\cong (\textup{mod-}\mathcal{C}^{\textup{op}})^{\textup{op}}$.
\end{prop}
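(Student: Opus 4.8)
The plan is to combine the two lemmas just established, since Proposition \ref{prop3.1.3} is essentially their conjunction. The key observation is that the ideal $[\mathcal{U}]$ coincides with the ideal $\mathcal{R}$ defined in Subsection 3.1, and similarly $[\mathcal{U'}]$ coincides with $\mathcal{R'}$. Once this identification is made, part (a) follows immediately from Lemma \ref{prop3.1}(a) and part (b) from Lemma \ref{prop3.1}(b).

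Concretely, for part (a) I would argue as follows. Recall that $\mathcal{R}(f,f')$ consists of those morphisms $(a,b)$ for which there exists $p:Y\rightarrow X'$ with $f'p=b$; that is, $(a,b)\in\mathcal{R}$ precisely when the component $b$ factors through $f'$. But Lemma \ref{rem3.1}(a) states that $b$ factors through $f'$ if and only if $(a,b)$ factors through some object in $\mathcal{U}$, which is exactly the condition $(a,b)\in[\mathcal{U}]$. Hence $\mathcal{R}=[\mathcal{U}]$ as ideals of $\textup{Mor}(\mathcal{C})$, and therefore
\[
\textup{Mor}(\mathcal{C})/[\mathcal{U}]=\textup{Mor}(\mathcal{C})/\mathcal{R}\cong\textup{mod-}\mathcal{C},
\]
where the equivalence is Lemma \ref{prop3.1}(a). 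Part (b) is entirely dual: $\mathcal{R'}(f,f')$ consists of $(a,b)$ such that $a$ factors through $f$, which by Lemma \ref{rem3.1}(b) is equivalent to $(a,b)\in[\mathcal{U'}]$, so $\mathcal{R'}=[\mathcal{U'}]$ and the equivalence follows from Lemma \ref{prop3.1}(b).

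There is no serious obstacle here, since all the real content has already been isolated into the two lemmas: Lemma \ref{prop3.1} supplies the equivalences $\textup{Mor}(\mathcal{C})/\mathcal{R}\cong\textup{mod-}\mathcal{C}$ and $\textup{Mor}(\mathcal{C})/\mathcal{R'}\cong(\textup{mod-}\mathcal{C}^{\textup{op}})^{\textup{op}}$ via the cokernel functor and Yoneda's lemma, while Lemma \ref{rem3.1} identifies the factorization ideals $\mathcal{R},\mathcal{R'}$ with the factorization-through-$\mathcal{U}$ (resp.\ $\mathcal{U'}$) ideals. The only point requiring care is to verify the equality of ideals in both directions of the \emph{if and only if} in Lemma \ref{rem3.1}; the nontrivial direction (that factoring through an object of $\mathcal{U}$ forces $b$ to factor through $f'$) is precisely what the explicit computation with the projection $p=a_2'b_1$ in the proof of Lemma \ref{rem3.1} delivers. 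Once the ideal identifications $\mathcal{R}=[\mathcal{U}]$ and $\mathcal{R'}=[\mathcal{U'}]$ are in hand, the proposition is a formal consequence and the proof is a single line in each part.
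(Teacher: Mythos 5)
Your proof is correct and follows exactly the paper's route: the paper likewise deduces the proposition by combining Lemma \ref{prop3.1} with Lemma \ref{rem3.1}, the latter giving the identification of the ideals $\mathcal{R}=[\mathcal{U}]$ and $\mathcal{R}'=[\mathcal{U}']$. Nothing is missing.
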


\subsection{Second case: exact categories}

In this subsection we assume that $(\mathcal{C},\mathcal{E})$ is an exact category with enough projectives. We denote by $\mathcal{P}$ the full subcategory of $\mathcal{C}$ consisting of projectives. Assume that $\mathcal{M}$ is a full subcategory of $\mathcal{C}$. We denote by $\Omega\mathcal{M}$ the full subcategory of $\mathcal{C}$ formed by objects $\Omega M$ such that there is a short exact sequence $0\rightarrow \Omega M\rightarrow P\rightarrow M\rightarrow 0$ in $\mathcal{E}$ with $M\in\mathcal{M}$ and $P\in\mathcal{P}$,
by $\mathcal{M}_L$ the full subcategory of $\mathcal{C}$ consisting of objects $X$ such that there is a short exact sequence $0\rightarrow X\rightarrow M_1\rightarrow M_2\rightarrow 0$ in $\mathcal{E}$ with $M_i\in\mathcal{M}$.

For convenience, we fix some notations.  We denote by $\mathcal{U}$ the full subcategory of $\textup{Epi}(\mathcal{M})$ consisting of $(M\xrightarrow{1} M)\oplus (M'\rightarrow 0)$, by $\mathcal{V}$ the full subcategory of $\textup{Epi}(\mathcal{M})$ consisting of $(M\xrightarrow{1} M)\oplus (P\rightarrow M')$ with $P\in\mathcal{P}$, by $\mathcal{U'}$  the full subcategory of $\textup{Mono}(\mathcal{M})$ consisting of $(0\rightarrow M)\oplus (M'\xrightarrow{1} M')$  and by $\mathcal{V'}$ the subcategory of $\textup{Mono}(\mathcal{M})$ consisting of $(0\rightarrow M)\oplus (\Omega M'\rightarrow P)$ with $P\in\mathcal{P}$. We denote by Ad-Epi$(\mathcal{M})$ the full subcategory of Epi$(\mathcal{M})$ consisting of admissible epimorphisms $f:M_1\rightarrow M_2$ with $M_i\in\mathcal{M}$.


\begin{defn}
A full subcategory $\mathcal{M}$ of $\mathcal{C}$ is called {\em rigid} if $\textup{Ext}^1_\mathcal{C}(M,M')=0$ for each objects $M,M'\in\mathcal{M}$.
\end{defn}

\begin{rem}\label{rem3.2}
Let $\mathcal{M}$ be a rigid subcategory of $\mathcal{C}$. If $0\rightarrow X\xrightarrow{k} M_1\xrightarrow{f} M_2\rightarrow 0$ is a short exact sequence with $M_i\in\mathcal{M}$, then $k$ is a left $\mathcal{M}$-approximation of $X$.
\end{rem}

\begin{proof}
For each $M\in\mathcal{M}$, applying $\mathcal{C}(-, M)$ to the exact sequence $0\rightarrow X\xrightarrow{k} M_1\xrightarrow{f} M_2\rightarrow 0$,
we have the following exact sequence $$0\rightarrow\mathcal{C}(M_2,M)\rightarrow\mathcal{C}(M_1,M)\rightarrow\mathcal{C}(X,M)\rightarrow \textup{Ext}^1_\mathcal{C}(M_2,M)=0.$$
Hence, $k$ is a left $\mathcal{M}$-approximation of $X$.
\end{proof}

\begin{lem}\label{lem3.2.1}
Let $\mathcal{M}$ be a full subcategory of $\mathcal{C}$ containing $\mathcal{P}$. Assume that the following diagram
$$\xymatrix{
0\ar[r]& X\ar[r]^{k}\ar[d]^{g} & M_1 \ar[r]^{f}\ar[d]^{a} & M_2\ar[r]\ar[d]^{b} & 0 \\
0\ar[r] & X'\ar[r]^{k'} & M_1' \ar[r]^{f'} &  M_2' \ar[r] & 0
}$$
is commutative with rows in $\mathcal{E}$ and $M_i, M_i'\in\mathcal{M}$. Consider the following statements:

\textup{(a)} The morphism $\underline{b}$ in $\mathcal{M}/[\mathcal{P}]$ factors through $\underline{f}'$.

\textup{(b)} The morphism $b$ factors through $f'$.

\textup{(c)} The morphism $(a,b)$ factors through some object in $\mathcal{U}$.

\textup{(d)} The morphism $g$ factors through $k$.

\textup{(e)} The morphism $g$ factors through some object in $\mathcal{M}$.

Then \textup{(a)}$\Leftrightarrow$ \textup{(b})$\Leftrightarrow$ \textup{(c)}$\Leftrightarrow$ \textup{(d)}$\Rightarrow$ \textup{(e)}. Moreover, if $\mathcal{M}$ is rigid, then all the statements are equivalent.
\end{lem}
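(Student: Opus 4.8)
The plan is to establish the cycle of equivalences (a)$\Leftrightarrow$(b)$\Leftrightarrow$(d), to read off (b)$\Leftrightarrow$(c) directly from Lemma \ref{rem3.1}, to note that (d)$\Rightarrow$(e) is trivial, and finally to close the loop with (e)$\Rightarrow$(d) under the rigidity hypothesis. The heart of the matter is the equivalence (b)$\Leftrightarrow$(d), a diagram chase that transports a factorization at the right-hand end of the morphism of short exact sequences to one at the left-hand end; everything else is either formal or a direct appeal to results already in hand.

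First, (b)$\Leftrightarrow$(c) is Lemma \ref{rem3.1}(a) read inside $\textup{Epi}(\mathcal{M})$: the intermediate object $M_1\oplus M_1'\xrightarrow{(0,1)} M_1'$ produced there is isomorphic to $(M_1'\xrightarrow{1} M_1')\oplus(M_1\rightarrow 0)$, which lies in $\mathcal{U}$ precisely because $M_1,M_1'\in\mathcal{M}$; conversely any factorization through $\mathcal{U}$ yields the factorization of $b$ through $f'$ exactly as in that lemma. For (b)$\Leftrightarrow$(d) I would exploit that both rows are kernel-cokernel pairs, so $f=\operatorname{coker}k$ and $k'=\ker f'$, while $f$ is epic and $k'$ is monic. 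Assuming (d), say $g=hk$, from $ak=k'g=k'hk$ we get $(a-k'h)k=0$, hence $a-k'h=pf$ for some $p:M_2\rightarrow M_1'$; composing with $f'$ and using $f'k'=0$ and $f'a=bf$ gives $f'pf=bf$, and cancelling the epimorphism $f$ yields $f'p=b$. Conversely, assuming (b) with $b=f'p$, we have $f'(a-pf)=bf-bf=0$, so $a-pf=k'h$ for some $h:M_1\rightarrow X'$; composing with $k$ and using $fk=0$ and $ak=k'g$ gives $k'hk=k'g$, and cancelling the monomorphism $k'$ yields $hk=g$.

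Next, (b)$\Rightarrow$(a) is immediate, and (a)$\Rightarrow$(b) is where the inclusion $\mathcal{P}\subseteq\mathcal{M}$ enters: if $\underline{b}=\underline{f'}\,\underline{q}$ in $\mathcal{M}/[\mathcal{P}]$, then $b-f'q$ factors as $M_2\xrightarrow{u} P\xrightarrow{v} M_2'$ with $P\in\mathcal{P}$; since $f'$ is an admissible epimorphism and $P$ is projective, $v$ lifts to $w:P\rightarrow M_1'$ with $f'w=v$, whence $b=f'(q+wu)$ factors through $f'$. The implication (d)$\Rightarrow$(e) is trivial since $M_1\in\mathcal{M}$. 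Finally, when $\mathcal{M}$ is rigid I would prove (e)$\Rightarrow$(d): by Remark \ref{rem3.2}, $k$ is a left $\mathcal{M}$-approximation of $X$, so if $g=g_2g_1$ factors through some $M\in\mathcal{M}$, then $g_1:X\rightarrow M$ factors as $g_1=rk$, and hence $g=(g_2r)k$ factors through $k$. This closes the loop and makes all five statements equivalent.

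The only step requiring genuine care, rather than bookkeeping, is (b)$\Leftrightarrow$(d): one must be certain that the factorizations through $\operatorname{coker}k$ and $\ker f'$ are legitimate, which rests exactly on the kernel-cokernel pair structure of the two rows, and that $f$ is epic and $k'$ monic so the final cancellations go through. The two hypotheses are used in isolated, transparent places: projectivity of the objects of $\mathcal{P}$ drives (a)$\Leftrightarrow$(b), while rigidity, via the left-approximation property of Remark \ref{rem3.2}, is exactly what is needed, and all that is needed, to upgrade (e) back to (d).
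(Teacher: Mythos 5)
Your proposal is correct and follows essentially the same route as the paper: (b)$\Leftrightarrow$(c) via Lemma \ref{rem3.1}, (a)$\Rightarrow$(b) by lifting along the admissible epimorphism $f'$ using projectivity of $P\in\mathcal{P}\subseteq\mathcal{M}$, and (e)$\Rightarrow$(d) via the left $\mathcal{M}$-approximation property from Remark \ref{rem3.2} under rigidity. The only difference is that you spell out the kernel--cokernel diagram chase for (b)$\Leftrightarrow$(d), which the paper dismisses as easy; your chase is correct.
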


\begin{proof}
We note that (b)$\Leftrightarrow$(c) follows from Lemma \ref{rem3.1}, (b)$\Leftrightarrow$(d) is easy, (b)$\Rightarrow$(a) and (d)$\Rightarrow$(e) are trivial. We prove (a)$\Rightarrow$(b).
Suppose that there is a morphism $\underline{p}:M_2\rightarrow M_1'$ such that $\underline{f}'\underline{p}=\underline{b}$. There exist two morphisms $u:M_2\rightarrow P$ and $v: P\rightarrow M_2'$ such that $P\in\mathcal{P}$ and $b-f'p=vu$. Since $f'$ is an admissible epimorphism and $P$ is projective, there is a morphism $w: P\rightarrow M_1'$ such that $f'w=v$. Thus, $f'(p+wu)=f'p+vu=b$.

Now assume that $\mathcal{M}$ is rigid. It remains to prove (e)$\Rightarrow$(d). Suppose that there exist two morphisms $g_1:X\rightarrow M$ and $g_2:M\rightarrow X'$ with $M\in\mathcal{M}$ such that $g=g_2g_1$. Since $k:X\rightarrow M_1$ is a left $\mathcal{M}$-approximation of $X$ by Remark \ref{rem3.2}, $g_1$ factors through $k$, thus $g$ factors through $k$.
\end{proof}

\begin{lem}\label{lem3.2.2}
Let $\mathcal{M}$ be a full subcategory of $\mathcal{C}$ containing $\mathcal{P}$. Assume that the following diagram
$$\xymatrix{
0\ar[r]& X\ar[r]^{k}\ar[d]^{g} & M_1 \ar[r]^{f}\ar[d]^{a} & M_2\ar[r]\ar[d]^{b} & 0 \\
0\ar[r] & X'\ar[r]^{k'} & M_1' \ar[r]^{f'} &  M_2' \ar[r] & 0
}$$
is commutative with rows in $\mathcal{E}$ and $M_i,M_i'\in\mathcal{M}$. Consider the following statements:

\textup{(a)} The morphism $\underline{a}$ in $\mathcal{M}/[\mathcal{P}]$ factors through $\underline{f}$.

\textup{(b)} The morphism $(a,b)$ factors through some object in $\mathcal{V}$.

\textup{(c)} The morphism $g$ factors through some object in $\Omega\mathcal{M}$.

Then \textup{(a)}$\Leftrightarrow$\textup{(b)}$\Rightarrow$\textup{(c)}. Moreover, if $\mathcal{M}$ is rigid, then all the statements are equivalent.
\end{lem}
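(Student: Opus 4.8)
The plan is to mirror the proof of Lemma~\ref{lem3.2.1}: translate each of the three factorization conditions into an explicit equation between morphisms, and then invoke rigidity only at the very last implication. For $(\mathrm{b})\Rightarrow(\mathrm{a})$ I would write a general object of $\mathcal{V}$ as $\mathrm{diag}(1_N,h)\colon N\oplus P\to N\oplus M'$ with $h\colon P\to M'$ an admissible epimorphism and $P\in\mathcal{P}$, expand the two factoring morphisms of $(a,b)$ in block form, and read off the $M_1$-to-$M_1'$ component: it equals $pf$ plus a morphism routed through the summand $P$, where $p$ is the composite through the $N$-summand. Hence $\underline{a}=\underline{p}\,\underline{f}$ in $\mathcal{M}/[\mathcal{P}]$, which is $(\mathrm{a})$.

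For the converse $(\mathrm{a})\Rightarrow(\mathrm{b})$ I would start from an equation $a=pf+vu$ with $u\colon M_1\to P_0$, $v\colon P_0\to M_1'$ and $P_0\in\mathcal{P}$, and build the $\mathcal{V}$-object by hand. Fixing an admissible epimorphism $\pi\colon Q\to M_2'$ with $Q\in\mathcal{P}$, I would take
$$v_0=(M_2\xrightarrow{1}M_2)\oplus\Big(P_0\oplus Q\xrightarrow{(f'v,\,\pi)}M_2'\Big),$$
which lies in $\mathcal{V}$ once one checks that $(f'v,\pi)$ is an admissible epimorphism; this follows from the factorization $(f'v,\pi)=(f'v,1_{M_2'})\circ(1_{P_0}\oplus\pi)$ as a composite of two admissible epimorphisms (the left factor is split), hence admissible by (E1). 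The factoring maps are then essentially forced: into $v_0$ one uses the components $f,u,0$, and out of $v_0$ the row $(p,v,w)$, where $w\colon Q\to M_1'$ is a lift of $\pi$ through $f'$. The existence of such $w$ is the only non-formal point, and it holds because $Q$ is projective; verifying that the composite recovers $(a,b)$ is then a routine block computation.

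For $(\mathrm{a})\Rightarrow(\mathrm{c})$ I would exploit that $fk=0$: from $a=pf+vu$ one gets $k'g=ak=v(uk)$ and, applying $f'$, that $f'v\cdot uk=f'k'g=0$. Fixing a syzygy sequence $0\to\Omega M_2'\xrightarrow{\iota}Q\xrightarrow{\pi}M_2'\to 0$ with $Q\in\mathcal{P}$, so $\Omega M_2'\in\Omega\mathcal{M}$, I would lift $f'v$ through $\pi$ to $\phi\colon P_0\to Q$, giving $\pi\phi uk=0$ and hence a factorization $\phi uk=\iota\psi$ through $\Omega M_2'$; lifting $\pi$ through $f'$ to $\phi_1\colon Q\to M_1'$ induces $\phi_0\colon\Omega M_2'\to X'$ on kernels. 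A short chase, using that $k'$ is monic, then yields $g=\phi_0\psi-\rho(uk)$ for some $\rho\colon P_0\to X'$. Since $\phi_0\psi$ factors through $\Omega M_2'$ and $\rho(uk)$ through $P_0\in\mathcal{P}\subseteq\Omega\mathcal{M}$ (take $M=0$ in the definition of $\Omega\mathcal{M}$), and $\Omega\mathcal{M}$ is closed under finite direct sums, $g$ factors through $\Omega M_2'\oplus P_0\in\Omega\mathcal{M}$, which is $(\mathrm{c})$.

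The remaining implication $(\mathrm{c})\Rightarrow(\mathrm{a})$ under rigidity is where I expect the real work to lie. Writing $g=g_2g_1$ with $g_1\colon X\to\Omega N$ and $g_2\colon\Omega N\to X'$, together with a syzygy sequence $0\to\Omega N\xrightarrow{\iota}P\xrightarrow{\pi}N\to 0$, the goal is to realize $g$ as the kernel component of a chain map $\text{(top)}\to\text{(syzygy)}\to\text{(bottom)}$ whose middle term factors through the projective $P$. Rigidity is used exactly twice. First, by Remark~\ref{rem3.2} the map $k$ is a left $\mathcal{M}$-approximation, so $\iota g_1\colon X\to P$ (note $P\in\mathcal{P}\subseteq\mathcal{M}$) factors through $k$, producing a chain map $(h,c)$ from the top row to the syzygy lifting $g_1$. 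Second, since $\mathrm{Ext}^1_\mathcal{C}(N,M_1')=0$, the map $k'g_2$ extends along the admissible monomorphism $\iota$ to some $H\colon P\to M_1'$, producing a chain map $(H,d)$ from the syzygy to the bottom row lifting $g_2$. Both the composite $(Hh,dc)$ and the given $(a,b)$ restrict to $g$ on kernels, so $(a-Hh)k=0$ and therefore $a-Hh=pf$ for a unique $p\colon M_2\to M_1'$; as $Hh$ factors through $P\in\mathcal{P}$ this gives $\underline{a}=\underline{p}\,\underline{f}$, i.e.\ $(\mathrm{a})$. The main obstacle is securing both one-sided lifts at once: neither is automatic, and it is precisely the two faces of rigidity---the approximation property of $k$ on one side and the vanishing $\mathrm{Ext}^1_\mathcal{C}(N,M_1')=0$ on the other---that make them simultaneously available.
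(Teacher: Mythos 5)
Your proof is correct and follows essentially the same route as the paper's: the same block computation for $(\mathrm{b})\Rightarrow(\mathrm{a})$, the same idea of assembling a $\mathcal{V}$-object from projectives for $(\mathrm{a})\Rightarrow(\mathrm{b})$, and the identical two-fold use of rigidity for $(\mathrm{c})\Rightarrow(\mathrm{a})$ (the approximation property of $k$ to lift $\iota g_1$, and the vanishing of $\textup{Ext}^1_\mathcal{C}(N,M_1')$ to extend $k'g_2$ along $\iota$). The only cosmetic differences are your choice of $P_0\oplus Q$ rather than a single projective cover of $M_1'$ in the $\mathcal{V}$-object, and your direct diagram chase for $(\mathrm{a})\Rightarrow(\mathrm{c})$ where the paper simply passes to kernels in the factorization of $(\mathrm{b})$.
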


\begin{proof}
(a)$\Rightarrow$(b). Suppose that there is a morphism $\underline{p}:M_2\rightarrow M_1'$ such that $\underline{p}\underline{f}=\underline{a}$.
Since $\mathcal{C}$ has enough projectives, there is an admissible epimorphism $a_1:P\rightarrow M_1'$ with $P\in\mathcal{P}$. Since  $a-pf$ factors through $a_1$, we assume that $a-pf=a_1a_2$ where $a_2:M_1\rightarrow P$. Now we have the following commutative diagram
$$\xymatrix{
0\ar[r]&X\ar[r]^{k}& M_1\ar[r]\ar[d]^{a_2}\ar[r]^{f} & M_2\ar@{-->}[d]^{b'} \ar[r] & 0\\
&  & P\ar[r]^{f'a_1}\ar[d]^{a_1} & M'_2 \ar@{=}[d]\ar[r] & 0 \\
0\ar[r]&X'\ar[r]^{k'}& M_1'\ar[r]^{f'} & M_2' \ar[r] & 0
}$$ with exact rows. 
Since $f'a_1a_2k=f'(a-pf)k=bfk-f'pfk=0$, there exists a morphism $b':M_2\rightarrow M_2'$ such that $b'f=f'a_1a_2$.
Since $bf=f'a=f'(a-a_1a_2)+f'a_1a_2=(f'p+b')f$ and $f$ is an epimorphism, we have $b=f'p+b'$. Thus  the following diagram
$$\xymatrix{ M_1\ar[rr]^{f}\ar[dd]^{a}\ar[rd]^{\left(
                      \begin{smallmatrix}
                        f \\
                        a_2 \\
                      \end{smallmatrix}
                    \right)
}& & M_2 \ar[dd]^{b}\ar[rd]^{\left(
                      \begin{smallmatrix}
                        1 \\
                        b' \\
                      \end{smallmatrix}
                    \right)
} &\\
& M_2\oplus P \ar[rr]^{\ \ \ \ \ \ \ \ \ \left(
                      \begin{smallmatrix}
                        1 & 0 \\
                        0 & f'a_1 \\
                      \end{smallmatrix}
                    \right)
}\ar[ld]^{(p,a_1)} & &M_2\oplus M_2'\ar[ld]^{(f'p,1)}\\
 M_1'\ar[rr]^{f'}& & M_2'&
}$$ is commutative. In other words, $(a,b)$ factors through $(M_2\oplus P\xrightarrow{\left(
                      \begin{smallmatrix}
                        1 & 0 \\
                        0 & f'a_1 \\
                      \end{smallmatrix}
                    \right)} M_2\oplus M_2')\in \mathcal{V}$.

(b)$\Rightarrow$(a). Assume hat the morphism $(a,b)$ factors through $(M\oplus P\xrightarrow{\left(
                      \begin{smallmatrix}
                        1 & 0 \\
                        0 & \pi \\
                      \end{smallmatrix}
                    \right)} M\oplus M')\in\mathcal{V}$. Suppose that the following diagram
$$\xymatrix{ M_1\ar[rr]^{f}\ar[dd]^{a}\ar[rd]^{\left(
                      \begin{smallmatrix}
                        a_1 \\
                        a'_1 \\
                      \end{smallmatrix}
                    \right)
}& & M_2 \ar[dd]^{b}\ar[rd]^{\left(
                      \begin{smallmatrix}
                        b_1 \\
                        b_1' \\
                      \end{smallmatrix}
                    \right)
} &\\
& M\oplus P \ar[rr]^{\ \ \ \ \ \ \ \ \ \left(
                      \begin{smallmatrix}
                        1 & 0 \\
                        0 & \pi \\
                      \end{smallmatrix}
                    \right)
}\ar[ld]^{(a_2,a_2')} & &M\oplus M'\ar[ld]^{(b_2,b_2')}\\
 M_1'\ar[rr]^{f'}& & M_2'&
}$$ is commutative. Let $p=a_2b_1$, then $pf=a_2b_1f=a_2a_1$, thus $\underline{a}=\underline{a_2}\underline{a_1}=\underline{p}\underline{f}$.

(b)$\Rightarrow$(c) is trivial.

Now assume that $\mathcal{M}$ is rigid. It remains to prove (c)$\Rightarrow$(a). Suppose that $g$ has a factorization $X\xrightarrow{g_1}\Omega M\xrightarrow{g_2} X'$. Then by Remark \ref{rem3.2} we complete the following commutative diagram
$$\xymatrix{
0\ar[r]& X\ar[r]^{k}\ar[d]^{g_1} & M_1 \ar[r]^{f}\ar@{-->}[d]^{a_1} & M_2\ar[r]\ar@{-->}[d]^{b_1} & 0 \\
0\ar[r]& \Omega M \ar[r]^{i}\ar[d]^{g_2} & P \ar[r]^{\pi}\ar@{-->}[d]^{a_2} & M\ar[r]\ar@{-->}[d]^{b_2} & 0\\
0\ar[r] & X'\ar[r]^{k'} & M_1' \ar[r]^{f'} &  M_2' \ar[r] & 0
}$$ with exact rows and $P\in\mathcal{P}$. Since $(a-a_2a_1)k=k'(g-g_2g_1)=0$, there exists a morphism $p:M_2\rightarrow M_1'$ such that $a-a_2a_1=pf$. Therefore, $\underline{a}=\underline{p}\underline{f}$.
\end{proof}

\begin{lem}\label{lem3.2.3}
Let $\mathcal{M}$ be a full subcategory of $\mathcal{C}$ containing $\mathcal{P}$, then

\textup{(a)} $\textup{Ad-Epi}(\mathcal{M})/[\mathcal{U}]\cong \textup{Mor}(\mathcal{M}/[\mathcal{P}])/\mathcal{R}$.

\textup{(b)} $\textup{Ad-Epi}(\mathcal{M})/[\mathcal{V}]\cong \textup{Mor} (\mathcal{M}/[\mathcal{P}])/\mathcal{R}'$.
\end{lem}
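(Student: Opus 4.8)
The plan is to build a single full and dense functor from $\textup{Ad-Epi}(\mathcal{M})$ to $\textup{Mor}(\mathcal{M}/[\mathcal{P}])$ and then read off both equivalences by quotienting the target by $\mathcal{R}$ and by $\mathcal{R}'$, letting Lemmas \ref{lem3.2.1} and \ref{lem3.2.2} supply the identification of the kernel ideals. Concretely, I define $\Phi:\textup{Ad-Epi}(\mathcal{M})\rightarrow\textup{Mor}(\mathcal{M}/[\mathcal{P}])$ by sending an admissible epimorphism $f:M_1\rightarrow M_2$ to the object $\underline{f}:M_1\rightarrow M_2$ of the morphism category, and a morphism $(a,b)$ to $(\underline{a},\underline{b})$; this is well defined and functorial since $bf=f'a$ forces $\underline{b}\,\underline{f}=\underline{f'}\,\underline{a}$.

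First I would check that $\Phi$ is dense. Given an object $\underline{g}:N_1\rightarrow N_2$ of $\textup{Mor}(\mathcal{M}/[\mathcal{P}])$, choose a representative $g:N_1\rightarrow N_2$ in $\mathcal{C}$ and, using enough projectives, an admissible epimorphism $\pi:P\rightarrow N_2$ with $P\in\mathcal{P}$. The key technical point is that $(g,\pi):N_1\oplus P\rightarrow N_2$ is again an admissible epimorphism: it factors as $(g,1_{N_2})\circ(1_{N_1}\oplus\pi)$, where $1_{N_1}\oplus\pi$ is a direct sum of admissible epimorphisms and $(g,1_{N_2})$ is a split epimorphism, so the claim follows from closure of admissible epimorphisms under composition (E1). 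Since $\mathcal{M}$ is additive and contains $\mathcal{P}$, the source $N_1\oplus P$ lies in $\mathcal{M}$, and since $P$ becomes a zero object in $\mathcal{M}/[\mathcal{P}]$, the object $\Phi(g,\pi)=\underline{(g,\pi)}$ is isomorphic to $\underline{g}$ via $N_1\cong N_1\oplus P$; this gives density. Next I would check fullness: given $(\underline{a},\underline{b}):\underline{f}\rightarrow\underline{f'}$, choose representatives $a,b$; then $bf-f'a$ factors through a projective $P$, say $bf-f'a=vu$, and lifting $v$ along the admissible epimorphism $f'$ to some $w$ with $f'w=v$ yields $a':=a+wu$ with $f'a'=bf$ and $\underline{a'}=\underline{a}$. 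Thus $(a',b)$ is an honest morphism of $\textup{Ad-Epi}(\mathcal{M})$ with $\Phi(a',b)=(\underline{a},\underline{b})$ (this is the same computation used in the proof of Lemma \ref{lem3.2.1}).

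To finish (a), I would compose $\Phi$ with the quotient functor $\textup{Mor}(\mathcal{M}/[\mathcal{P}])\rightarrow\textup{Mor}(\mathcal{M}/[\mathcal{P}])/\mathcal{R}$ to obtain a full and dense functor $G$. A full and dense additive functor induces an equivalence from the quotient of its source by the ideal of morphisms it annihilates onto its target (fullness and density are inherited, and faithfulness holds by construction of that quotient). It therefore remains only to identify this ideal, which is $\Phi^{-1}(\mathcal{R})$. A morphism $(a,b)$ lies in $\Phi^{-1}(\mathcal{R})$ exactly when $\underline{b}$ factors through $\underline{f'}$, which by the equivalence (a)$\Leftrightarrow$(c) of Lemma \ref{lem3.2.1} is precisely the condition that $(a,b)$ factors through an object of $\mathcal{U}$; hence $\Phi^{-1}(\mathcal{R})=[\mathcal{U}]$ and $\textup{Ad-Epi}(\mathcal{M})/[\mathcal{U}]\cong\textup{Mor}(\mathcal{M}/[\mathcal{P}])/\mathcal{R}$. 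For (b) I would repeat the argument verbatim with $\mathcal{R}'$ in place of $\mathcal{R}$: now $(a,b)\in\Phi^{-1}(\mathcal{R}')$ means $\underline{a}$ factors through $\underline{f}$, which by the equivalence (a)$\Leftrightarrow$(b) of Lemma \ref{lem3.2.2} is exactly factorization through an object of $\mathcal{V}$, giving $\Phi^{-1}(\mathcal{R}')=[\mathcal{V}]$.

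I expect the main (though still routine) obstacle to be the density step, specifically the verification that $(g,\pi)$ is an admissible epimorphism, since this is the only place where the exact structure and enough projectives are genuinely used; everything else is either formal (the general fact about full dense functors and preimages of ideals) or already packaged in Lemmas \ref{lem3.2.1} and \ref{lem3.2.2}, which carry the real weight of the faithfulness computation.
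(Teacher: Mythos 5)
Your proposal is correct and follows essentially the same route as the paper: the same functor $f\mapsto\underline{f}$, the same density argument via $(g,\pi):N_1\oplus P\to N_2$, the same lifting computation for fullness, and the same appeal to Lemma \ref{lem3.2.1} (a)$\Leftrightarrow$(c) and Lemma \ref{lem3.2.2} (a)$\Leftrightarrow$(b) to identify the annihilated ideals with $[\mathcal{U}]$ and $[\mathcal{V}]$. Your extra verification that $(g,\pi)$ is an admissible epimorphism (as a composite of a direct sum of admissible epimorphisms with a split epimorphism) is a welcome detail the paper leaves implicit.
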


\begin{proof}
Define a functor $$\alpha: \textup{Ad-Epi}(\mathcal{M})\rightarrow\textup{Mor}(\mathcal{M}/[\mathcal{P}]),\ \ (M_1\xrightarrow{f}M_2) \mapsto (M_1\xrightarrow{\underline{f}}M_2).$$ For each object $\underline{f}:M_1\rightarrow M_2$ in $\textup{Mor}(\mathcal{M}/[\mathcal{P}])$, there is an admissible epimorphism $\pi:P\rightarrow M_2$ with $P\in\mathcal{P}$ since $\mathcal{C}$ has enough projectives. Thus $(f,\pi):M_1\oplus P\rightarrow M_2$ is an object in $\textup{Ad-Epi}(\mathcal{M})$ such that $\alpha(f,\pi)=\underline{f}$. Therefore, $\alpha$ is dense.

Assume that $f:M_1\rightarrow M_2$ and $f':M'_1\rightarrow M'_2$ are objects in $\textup{Ad-Epi}(\mathcal{M})$  and $(\underline{a},\underline{b})$ is a morphism in $\textup{Mor}(\mathcal{M}/[\mathcal{P}])$ from $\underline{f}$ to $\underline{f'}$.
Then $\underline{b}\underline{f}=\underline{f'}\underline{a}$, thus $bf-f'a$ factors through some object $P\in\mathcal{P}$. Assume that $bf-f'a=vu$ where $u:M_1\rightarrow P$ and $v:P\rightarrow M_2'$. Since $f'$ is an admissible epimorphism and $P$ is projective, there exists a morphism $w:P\rightarrow M_1'$ such that $f'w=v$. Now $(a+wu,b)$ is a morphism in $\textup{Ad-Epi}(\mathcal{M})$ from $f$ to $f'$ since $bf=f'(a+wu)$. Thus, $\alpha(a+wu,b)=(\underline{a},\underline{b})$ and the functor $\alpha$ is full.

(a) The  functor $\alpha$ induces a full and dense functor $\widetilde{\alpha}: \textup{Ad-Epi}(\mathcal{M})\rightarrow \textup{Mor}(\mathcal{M}/[\mathcal{P}])/\mathcal{R}$.
By the equivalence of (a) and (c) in Lemma \ref{lem3.2.1}, we have $\textup{Ad-Epi}(\mathcal{M})/[\mathcal{U}]\cong \textup{Mor}(\mathcal{M}/[\mathcal{P}])/\mathcal{R}$.

(b) The functor $\alpha$ induces a full and dense functor $\widehat{\alpha}: \textup{Ad-Epi}(\mathcal{M})\rightarrow \textup{Mor}(\mathcal{M}/[\mathcal{P}])/\mathcal{R}'$.
By the equivalence of (a) and (b) in Lemma \ref{lem3.2.2}, we have $\textup{Ad-Epi}(\mathcal{M})/[\mathcal{V}]\cong \textup{Mor}(\mathcal{M}/[\mathcal{P}])/\mathcal{R}'$.
\end{proof}

 Lemma \ref{lem3.2.3} and  Lemma \ref{prop3.1}  imply the following theorem, which will be crucially used in section 4 to describe the categories of short exact sequences.

\begin{thm}\label{thm3.2.1}
Let $(\mathcal{C},\mathcal{E})$ be an exact category with enough projectives. If $\mathcal{M}$ is a full subcategory of $\mathcal{C}$ containing $\mathcal{P}$,  then

\textup{(a)} $\textup{Ad-Epi}(\mathcal{M})/[\mathcal{U}]\cong \textup{mod-}(\mathcal{M}/[\mathcal{P}])$.

\textup{(b)} $\textup{Ad-Epi}(\mathcal{M})/[\mathcal{V}]\cong ((\textup{mod-}(\mathcal{M}/[\mathcal{P}])^{\textup{op}})^{\textup{op}}$.
\end{thm}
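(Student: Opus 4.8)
The plan is to deduce the theorem by chaining the two preceding results: Lemma \ref{lem3.2.3} reduces each quotient of $\textup{Ad-Epi}(\mathcal{M})$ to a quotient of the morphism category of the additive category $\mathcal{M}/[\mathcal{P}]$, and Lemma \ref{prop3.1} identifies the latter with a module category. First I would observe that, since $\mathcal{P}\subseteq\mathcal{M}$ and $[\mathcal{P}]$ is an ideal of $\mathcal{M}$, the quotient $\mathcal{M}/[\mathcal{P}]$ is again an additive category, so Lemma \ref{prop3.1} applies verbatim with $\mathcal{C}$ replaced by $\mathcal{M}/[\mathcal{P}]$. This yields the two equivalences $\textup{Mor}(\mathcal{M}/[\mathcal{P}])/\mathcal{R}\cong\textup{mod-}(\mathcal{M}/[\mathcal{P}])$ and $\textup{Mor}(\mathcal{M}/[\mathcal{P}])/\mathcal{R}'\cong(\textup{mod-}(\mathcal{M}/[\mathcal{P}])^{\textup{op}})^{\textup{op}}$, where $\mathcal{R}$ and $\mathcal{R}'$ are the ideals of $\textup{Mor}(\mathcal{M}/[\mathcal{P}])$ defined at the beginning of Subsection 3.1.

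For part (a), I would then compose the equivalence $\textup{Ad-Epi}(\mathcal{M})/[\mathcal{U}]\cong\textup{Mor}(\mathcal{M}/[\mathcal{P}])/\mathcal{R}$ of Lemma \ref{lem3.2.3}(a) with the first equivalence above to obtain $\textup{Ad-Epi}(\mathcal{M})/[\mathcal{U}]\cong\textup{mod-}(\mathcal{M}/[\mathcal{P}])$. Tracing the functors through, the resulting equivalence sends $(M_1\xrightarrow{f}M_2)$ to $\textup{Coker}(\mathcal{M}/[\mathcal{P}](-,\underline{f}))$. For part (b), I would symmetrically compose Lemma \ref{lem3.2.3}(b) with the second equivalence to get $\textup{Ad-Epi}(\mathcal{M})/[\mathcal{V}]\cong(\textup{mod-}(\mathcal{M}/[\mathcal{P}])^{\textup{op}})^{\textup{op}}$.

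The one point requiring care is to confirm that the ideals named $\mathcal{R}$ and $\mathcal{R}'$ in the statement of Lemma \ref{lem3.2.3} are literally the ideals $\mathcal{R},\mathcal{R}'$ of $\textup{Mor}(\mathcal{M}/[\mathcal{P}])$ to which Lemma \ref{prop3.1} refers; this is immediate, since both are defined by the same factorization condition for an arbitrary additive category, here instantiated at $\mathcal{M}/[\mathcal{P}]$. Beyond this bookkeeping the argument is a purely formal composition of equivalences, so I do not anticipate a genuine obstacle: all the substantive work has already been carried out in Lemmas \ref{lem3.2.1}, \ref{lem3.2.2}, \ref{lem3.2.3} and \ref{prop3.1}.
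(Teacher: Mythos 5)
Your argument is exactly the paper's: the theorem is stated immediately after the remark that it follows from Lemma \ref{lem3.2.3} combined with Lemma \ref{prop3.1} applied to the additive category $\mathcal{M}/[\mathcal{P}]$, which is precisely the composition of equivalences you describe. The proposal is correct and matches the paper's proof.
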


\begin{lem}\label{3.2.5}
Let $\mathcal{M}$ be a full subcategory of $\mathcal{C}$ containing $\mathcal{P}$, then $\textup{Ad-Epi}(\mathcal{M})\cong\textup{Epi}(\mathcal{M})$.
\end{lem}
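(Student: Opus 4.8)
The plan is to show that the inclusion functor $\iota:\textup{Ad-Epi}(\mathcal{M})\hookrightarrow\textup{Epi}(\mathcal{M})$ is an equivalence. Since $\textup{Ad-Epi}(\mathcal{M})$ is by definition a \emph{full} subcategory of $\textup{Epi}(\mathcal{M})$, the functor $\iota$ is automatically full and faithful, so everything reduces to proving that $\iota$ is dense. Moreover, if an object $f$ of $\textup{Epi}(\mathcal{M})$ were isomorphic in $\textup{Epi}(\mathcal{M})$ to an admissible epimorphism $f'$, say via isomorphisms $a,b$ in $\mathcal{C}$ with $bf=f'a$, then $f=b^{-1}f'a$ would be a composite of isomorphisms with an admissible epimorphism, hence itself admissible. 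Thus density is equivalent to the assertion that \emph{every} epimorphism $f:M_1\to M_2$ with $M_1,M_2\in\mathcal{M}$ is already an admissible epimorphism, and this is the statement I would aim to establish.

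First I would exploit that $(\mathcal{C},\mathcal{E})$ has enough projectives together with $\mathcal{P}\subseteq\mathcal{M}$: choose an admissible epimorphism $\pi:P\to M_2$ with $P\in\mathcal{P}$. By axiom (E2)$^{\textup{op}}$ the pullback of $\pi$ along $f$ exists and fits into a short exact sequence; writing $E=M_1\times_{M_2}P$ with projections $p_1:E\to M_1$ and $p_2:E\to P$ satisfying $fp_1=\pi p_2$, the morphism $p_1$ is an admissible epimorphism, being the pullback of the admissible epimorphism $\pi$. The goal is then to manufacture a lift $h:P\to M_1$ with $fh=\pi$: once such an $h$ is available, the admissible epimorphism $\pi$ factors through $f$, and a standard consequence of the exact-category axioms (of the same flavour as the factorisation arguments in Lemma \ref{lem3.2.3}, using that admissible epimorphisms are stable under the operations in (E1) and (E2)$^{\textup{op}}$) upgrades $f$ itself to an admissible epimorphism.

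The step I expect to be the main obstacle is precisely the construction of the lift $h$, equivalently showing that the second projection $p_2:E\to P$ admits a section. The natural route is to prove that $p_2$ is an admissible epimorphism --- this is where the hypothesis that $f$ is an \emph{epimorphism} (rather than an arbitrary morphism) must be brought to bear --- and then to invoke the projectivity of $P$ to split it, setting $h=p_1s$ for a section $s$ of $p_2$; indeed $fh=fp_1s=\pi p_2 s=\pi$. Deducing the admissibility of $p_2$ from the mere epimorphism property of $f$ is the delicate point, and I would expect to need the interplay of the pullback square with projectivity (and, if the ambient category is not taken to be weakly idempotent complete, an additional argument excluding the purely categorical epimorphisms that fail to be deflations). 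Once density is secured, $\iota$ is an equivalence, and consequently Theorem \ref{thm3.2.1} transfers verbatim from $\textup{Ad-Epi}(\mathcal{M})$ to $\textup{Epi}(\mathcal{M})$.
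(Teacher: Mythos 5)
Your opening reduction is correct and is actually the sharpest part of the argument: since $\textup{Ad-Epi}(\mathcal{M})$ is a full subcategory, the inclusion is an equivalence iff it is dense, and since isomorphisms are admissible epimorphisms and admissible epimorphisms are closed under composition (E0), (E1), any object of $\textup{Epi}(\mathcal{M})$ isomorphic in $\textup{Mor}(\mathcal{C})$ to an admissible epimorphism is itself admissible. So your program stands or falls with the claim that every categorical epimorphism $f\colon M_1\to M_2$ between objects of $\mathcal{M}$ is an admissible epimorphism --- and that is exactly the step you leave open. It is a genuine gap, and it cannot be closed from the stated hypotheses: the exact-category axioms give no way to convert the purely categorical epimorphism property of $f$ into admissibility of $p_2\colon E\to P$. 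Concretely, take $\mathcal{C}=\textup{proj-}\mathbb{Z}$ with the split exact structure (so $\mathcal{P}=\mathcal{M}=\mathcal{C}$, enough projectives trivially, and weak idempotent completeness holds) and $f=2\colon\mathbb{Z}\to\mathbb{Z}$. This is an epimorphism of $\mathcal{C}$, it is not a split (hence not an admissible) epimorphism, and it is not isomorphic in $\textup{Mor}(\mathcal{C})$ to one; with $\pi=1$ the pullback yields $p_2=2$ again, so the intended lifting $h$ does not exist. Thus the ``delicate point'' you flag is not merely delicate but false in this generality.

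For comparison, the paper argues differently: it does not try to show that $f$ is admissible, but instead forms $(f,\pi)\colon M_1\oplus P\to M_2$, which is an admissible epimorphism by (E2)$^{\textup{op}}$ applied to $0\to\Omega M_2\to P\xrightarrow{\pi} M_2\to 0$ (note this works for an arbitrary morphism $f$, the epimorphism hypothesis is never used), and then asserts that $(M_1\xrightarrow{f}M_2)$ is isomorphic to $(M_1\oplus P\xrightarrow{(f,\pi)}M_2)$ in $\textup{Epi}(\mathcal{M})$. Your own reduction shows why this assertion is problematic: such an isomorphism would require $M_1\cong M_1\oplus P$ and would force $f$ to be admissible after all, which the example above rules out. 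So neither route establishes the lemma as literally stated; what survives, and what the later applications actually use, are identifications of the \emph{quotients} $\textup{Epi}(\mathcal{M})/[\mathcal{U}]$ and $\textup{Ad-Epi}(\mathcal{M})/[\mathcal{U}]$ (and likewise for $[\mathcal{V}]$), where the summand $(P\to 0)$ one adds to pass from $f$ to $(f,\pi)$ is killed. Your proposal localises the difficulty in the right place but does not resolve it, and the resolution requires either strengthening the hypotheses (e.g.\ working where every epimorphism is admissible) or weakening the conclusion to an equivalence of the relevant quotient categories.
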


\begin{proof}
We claim that the inclusion $\textup{Ad-Epi}(\mathcal{M})\hookrightarrow\textup{Epi}(\mathcal{M})$ is dense. Indeed, assume that $f:M_1\rightarrow M_2$ is an epimorphism, then we have the following commutative diagram
$$\xymatrix{
0 \ar[r] & \Omega M_2 \ar[r]\ar@{=}[d] & X \ar[r]\ar[d] & M_1\ar[r]\ar[d]^{f} & 0 \\
0 \ar[r] & \Omega M_2 \ar[r] & P \ar[r]^{\pi} & M_2\ar[r] & 0
}$$ with exact rows in $\mathcal{E}$ and $P\in\mathcal{P}$. Thus the exact sequence $0\rightarrow X\rightarrow M_1\oplus P\xrightarrow{(f,\pi)} M_2\rightarrow 0$ belongs to $\mathcal{E}$. Consequently, $(M_1\oplus P\xrightarrow{(f,\pi)} M_2)\in\textup{Ad-Epi}(\mathcal{M})$. A direct computation shows that $(M_1\xrightarrow{f} M_2)$ is isomorphic to $(M_1\oplus P\xrightarrow{(f,\pi)} M_2)$ in $\textup{Epi}(\mathcal{M})$.
\end{proof}

\begin{rem}\label{cor3.4}
Let $(\mathcal{C},\mathcal{E})$ be an exact category with enough projectives, then $\mathcal{C}(\mathcal{E})$, $\textup{Ad-Epi}(\mathcal{C})$ and $\textup{Epi}(\mathcal{C})$ are equivalent.
\end{rem}

\begin{cor}
Let  $\mathcal{M}$ be a full subcategory of $\mathcal{C}$ containing $\mathcal{P}$. If $\mathcal{M}$ is closed under kernel of epimorphisms, denote by $\textup{Mono}_C(\mathcal{M})$ the full subcategory of $\textup{Mono}(\mathcal{M})$ consisting of monomorphisms $f$ such that $\textup{Coker}(f)\in\mathcal{M}$, then

\textup{(a)} $\textup{Mono}_C(\mathcal{M})/[\mathcal{U'}]\cong\textup{mod-}(\mathcal{M}/[\mathcal{P}])$.

\textup{(b)} $\textup{Mono}_C(\mathcal{M})/[\mathcal{V'}]\cong(\textup{mod-}(\mathcal{M}/[\mathcal{P}])^{\textup{op}})^{\textup{op}}$.
\end{cor}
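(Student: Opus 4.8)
The plan is to reduce the statement to Theorem~\ref{thm3.2.1} by identifying $\textup{Mono}_C(\mathcal{M})$ with $\textup{Ad-Epi}(\mathcal{M})$ through the cokernel functor, in such a way that the subcategories $\mathcal{U'}$ and $\mathcal{V'}$ are carried onto $\mathcal{U}$ and $\mathcal{V}$ respectively. Once this is done, parts (a) and (b) follow at once from parts (a) and (b) of Theorem~\ref{thm3.2.1}, since an equivalence of categories carrying one full subcategory onto another induces an equivalence of the associated quotient categories.

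First I would set up the functor. Every object of $\textup{Mono}_C(\mathcal{M})$ is an admissible monomorphism $k\colon M\to M_1$ with $M,M_1\in\mathcal{M}$ and $\textup{Coker}(k)\in\mathcal{M}$, so it fits into a short exact sequence $0\to M\xrightarrow{k} M_1\xrightarrow{f} M_2\to 0$ in $\mathcal{E}$ with all three terms in $\mathcal{M}$. Sending $k$ to $f=\textup{Cok}(k)$ defines a functor $\textup{Cok}\colon\textup{Mono}_C(\mathcal{M})\to\textup{Ad-Epi}(\mathcal{M})$: on morphisms, a commutative square from $k$ to $k'$ induces a unique map on cokernels, which together with the middle component gives a morphism $(a,b)$ of $\textup{Ad-Epi}(\mathcal{M})$. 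In the reverse direction, the hypothesis that $\mathcal{M}$ is closed under kernels of epimorphisms is exactly what is needed: given $f\colon M_1\to M_2$ in $\textup{Ad-Epi}(\mathcal{M})$, its kernel $M$ lies in $\mathcal{M}$, so the kernel functor $\textup{Ker}$ lands in $\textup{Mono}_C(\mathcal{M})$. The universal properties of kernels and cokernels show that $\textup{Cok}$ and $\textup{Ker}$ are mutually quasi-inverse, whence $\textup{Cok}$ is an equivalence.

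Next I would trace the two distinguished subcategories through this equivalence. Taking cokernels, $(0\to M)$ becomes $(M\xrightarrow{1}M)$ and $(M'\xrightarrow{1}M')$ becomes $(M'\to 0)$; hence $\mathcal{U'}$, with objects $(0\to M)\oplus(M'\xrightarrow{1}M')$, is carried exactly onto $\mathcal{U}$, with objects $(M\xrightarrow{1}M)\oplus(M'\to 0)$. Likewise the object $(\Omega M'\to P)$ sits in the short exact sequence $0\to\Omega M'\to P\to M'\to 0$ with $P\in\mathcal{P}$, so its cokernel is $(P\to M')$; combined with $(0\to M)\mapsto(M\xrightarrow{1}M)$ this shows $\mathcal{V'}$ is carried onto $\mathcal{V}$. (Here $\Omega M'\in\mathcal{M}$ because $P,M'\in\mathcal{M}$ and $\mathcal{M}$ is closed under kernels of epimorphisms, so all the objects involved indeed lie in the relevant categories.) Therefore $\textup{Cok}$ induces equivalences $\textup{Mono}_C(\mathcal{M})/[\mathcal{U'}]\cong\textup{Ad-Epi}(\mathcal{M})/[\mathcal{U}]$ and $\textup{Mono}_C(\mathcal{M})/[\mathcal{V'}]\cong\textup{Ad-Epi}(\mathcal{M})/[\mathcal{V}]$, and combining these with Theorem~\ref{thm3.2.1} yields (a) and (b).

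The only genuinely delicate point is the construction of the equivalence $\textup{Cok}$, specifically verifying essential surjectivity and that the quasi-inverse really lands in $\textup{Mono}_C(\mathcal{M})$; this is precisely where the closure of $\mathcal{M}$ under kernels of epimorphisms is used and cannot be dropped. The remaining verifications—functoriality, full faithfulness via universal properties, and the bookkeeping identifying $\mathcal{U'},\mathcal{V'}$ with $\mathcal{U},\mathcal{V}$—are routine.
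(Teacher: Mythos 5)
Your proposal is correct and follows essentially the same route as the paper: the paper also identifies $\textup{Mono}_C(\mathcal{M})$ with the epimorphism side via the kernel/cokernel functors (passing through $\textup{Epi}(\mathcal{M})$ and Lemma \ref{3.2.5} rather than directly to $\textup{Ad-Epi}(\mathcal{M})$), matches $\mathcal{U}',\mathcal{V}'$ with $\mathcal{U},\mathcal{V}$, and invokes Theorem \ref{thm3.2.1}.
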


\begin{proof}
By assumption, the kernel functor $\textup{Ker}: \textup{Epi}(\mathcal{M})\rightarrow \textup{Mono}_C(\mathcal{M})$ induces two equivalences
$$\textup{Epi}(\mathcal{M})/[\mathcal{U}]\cong \textup{Mono}_C(\mathcal{M})/[\mathcal{U'}], \ \textup{Epi}(\mathcal{M})/[\mathcal{V}]\cong \textup{Mono}_C(\mathcal{M})/[\mathcal{V'}].$$
The corollary follows from Theorem \ref{thm3.2.1} and Lemma \ref{3.2.5}. We can compare (a) with \cite[Theorem 3.3]{[Haf]}.
\end{proof}

\begin{lem}\label{lem3.2.4}
Let  $\mathcal{M}$ be a full and rigid subcategory of $\mathcal{C}$ containing $\mathcal{P}$.  Then

\textup{(a)} $\mathcal{M}_L/[\mathcal{M}]\cong \textup{Ad-Epi}(\mathcal{M})/[\mathcal{U}]$.

\textup{(b)} $\mathcal{M}_L/[\Omega\mathcal{M}]\cong \textup{Ad-Epi}(\mathcal{M})/[\mathcal{V}]$.
\end{lem}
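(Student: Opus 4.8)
The plan is to realize both equivalences through a single functor, the kernel functor, and then transport the factorization criteria of Lemma~\ref{lem3.2.1} and Lemma~\ref{lem3.2.2} across it; rigidity of $\mathcal{M}$ is what makes everything fit.

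First I would introduce the additive functor
$$\textup{Ker}\colon \textup{Ad-Epi}(\mathcal{M})\longrightarrow \mathcal{M}_L,\qquad (M_1\xrightarrow{f}M_2)\mapsto \textup{Ker}(f),$$
which is well defined because every admissible epimorphism $f\colon M_1\to M_2$ with $M_i\in\mathcal{M}$ sits in a short exact sequence $0\to \textup{Ker}(f)\xrightarrow{k} M_1\xrightarrow{f} M_2\to 0$ in $\mathcal{E}$, whence $\textup{Ker}(f)\in\mathcal{M}_L$. This functor is dense straight from the definition of $\mathcal{M}_L$: any $X\in\mathcal{M}_L$ is by definition the kernel of some admissible epimorphism between objects of $\mathcal{M}$. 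The first genuine point is fullness. Given $g\colon \textup{Ker}(f)\to\textup{Ker}(f')$, I would use that $\mathcal{M}$ is rigid, so by Remark~\ref{rem3.2} the map $k\colon\textup{Ker}(f)\to M_1$ is a left $\mathcal{M}$-approximation; hence $k'g$ factors as $ak$ for some $a\colon M_1\to M_1'$, and since $f'ak=f'k'g=0$ the morphism $f'a$ descends along the cokernel $f$ to give $b\colon M_2\to M_2'$ with $bf=f'a$. Then $(a,b)$ is a morphism in $\textup{Ad-Epi}(\mathcal{M})$ with $\textup{Ker}(a,b)=g$, which proves fullness.

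For part (a) I would compose with the quotient functor $Q\colon\mathcal{M}_L\to\mathcal{M}_L/[\mathcal{M}]$. A direct computation shows $\textup{Ker}(\mathcal{U})\subseteq\mathcal{M}$ (indeed $\textup{Ker}\big((M\xrightarrow{1}M)\oplus(M'\to 0)\big)=M'$), so $Q\circ\textup{Ker}$ vanishes on $\mathcal{U}$ and, by the universal property of the quotient, factors through a functor $\bar F\colon \textup{Ad-Epi}(\mathcal{M})/[\mathcal{U}]\to\mathcal{M}_L/[\mathcal{M}]$. This $\bar F$ inherits fullness and density from $\textup{Ker}$ and $Q$; the remaining issue is faithfulness, and this is precisely the equivalence of (c) and (e) in Lemma~\ref{lem3.2.1} (which holds because $\mathcal{M}$ is rigid): if $\bar F(\underline{(a,b)})=0$ then the induced map $g$ factors through $\mathcal{M}$, forcing $(a,b)$ to factor through $\mathcal{U}$, i.e. $\underline{(a,b)}=0$. (The same equivalence, read in the other direction, gives that $\bar F$ is well defined on morphisms.) Hence $\bar F$ is an equivalence.

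Part (b) runs along identical lines with $\mathcal{U}$ replaced by $\mathcal{V}$ and $\mathcal{M}$ by $\Omega\mathcal{M}$: here $\textup{Ker}(\mathcal{V})\subseteq\Omega\mathcal{M}$, since $\textup{Ker}(P\to M')=\Omega M'$ for the admissible epimorphism $P\to M'$ with $P\in\mathcal{P}$, so $\textup{Ker}$ induces a full and dense functor $\textup{Ad-Epi}(\mathcal{M})/[\mathcal{V}]\to\mathcal{M}_L/[\Omega\mathcal{M}]$, and faithfulness (and well-definedness) is now supplied by the equivalence of (b) and (c) in Lemma~\ref{lem3.2.2}. I expect the fullness of $\textup{Ker}$ to be the main obstacle, as it is the one step that genuinely invokes rigidity, through the left-approximation property of $k$; once the two factorization lemmas are available, the remaining verifications are formal.
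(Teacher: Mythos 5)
Your proposal is correct and follows essentially the same route as the paper: both use the kernel functor $\textup{Ad-Epi}(\mathcal{M})\to\mathcal{M}_L$, establish density and (via rigidity and the left-$\mathcal{M}$-approximation property of Remark \ref{rem3.2}) fullness, and then invoke the equivalences (c)$\Leftrightarrow$(e) of Lemma \ref{lem3.2.1} and (b)$\Leftrightarrow$(c) of Lemma \ref{lem3.2.2} to descend to the stated equivalences of quotient categories. The only cosmetic difference is that you factor $Q\circ\textup{Ker}$ through $\textup{Ad-Epi}(\mathcal{M})/[\mathcal{U}]$ whereas the paper induces a functor into $\mathcal{M}_L/[\mathcal{M}]$ and argues it is objective; these are the same argument.
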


\begin{proof}
Define a functor by $$\beta: \textup{Ad-Epi}(\mathcal{M})\rightarrow\mathcal{M}_L, \ \ (M_1\xrightarrow{f} M_2) \mapsto \textup{Ker}(f).$$
Then $\beta$ is dense. For each morphism $g:X\rightarrow X'$ in $\mathcal{M}_L$, there exists the following diagram
$$\xymatrix{
0\ar[r]& X\ar[r]^{k}\ar[d]^{g} & M_1 \ar[r]^{f} \ar@{-->}[d]^{a} & M_2\ar[r]\ar@{-->}[d]^{b} & 0 \\
0\ar[r] & X'\ar[r]^{k'} & M_1' \ar[r]^{f'} &  M_2' \ar[r] & 0
}$$
 with rows in $\mathcal{E}$ and $M_i,M_i'\in\mathcal{M}$. Since $\mathcal{M}$ is rigid, by Remark \ref{rem3.2} there exists a morphism $a:M_1\rightarrow M_1'$ such that $ak=k'g$.
Then there is a morphism $b: M_2\rightarrow M_2'$ such that $bf=f'a$. Hence $\beta(a,b)=g$ and the functor $\beta$ is full.

(a) The functor $\beta$ induces a full and dense functor $\widetilde{\beta}: \textup{Ad-Epi}(\mathcal{M})\rightarrow\mathcal{M}_L/[\mathcal{M}]$. We note that $\widetilde{\beta}(\mathcal{U})=0$. 
The equivalence $\textup{Ad-Epi}(\mathcal{M})/[\mathcal{U}]\cong\mathcal{M}_L/[\mathcal{M}]$ follows from the equivalent statements of (c) and (e) in Lemma \ref{lem3.2.1}. 

(b) The functor $\beta$ induces a full and dense functor $\widehat{\beta}: \textup{Ad-Epi}(\mathcal{M})\rightarrow\mathcal{M}_L/[\Omega\mathcal{M}]$. Since $\widehat{\beta}(\mathcal{V})=0$,  the equivalence $\textup{Ad-Epi}(\mathcal{M})/[\mathcal{V}]\cong\mathcal{M}_L/[\Omega\mathcal{M}]$ follows from the equivalent statements of (b) and (c) in Lemma \ref{lem3.2.2}. 
\end{proof}

By Lemma \ref{lem3.2.4} and Theorem \ref{thm3.2.1}, we have the following corollary, where (a) was appeared in \cite[Theorem 3.2]{[DL]}.

\begin{cor}\label{cor3.1}
Let $(\mathcal{C},\mathcal{E})$ be an exact category with enough projectives. If $\mathcal{M}$ is a rigid and full subcategory of $\mathcal{C}$ containing $\mathcal{P}$, then

\textup{(a)} $\mathcal{M}_L/[\mathcal{M}]\cong \textup{mod-}(\mathcal{M}/[\mathcal{P}])$.

\textup{(b)} $\mathcal{M}_L/[\Omega\mathcal{M}]\cong (\textup{mod-}(\mathcal{M}/[\mathcal{P}])^{\textup{op}})^{\textup{op}}$.
\end{cor}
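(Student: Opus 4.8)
The plan is to obtain each equivalence as a composition of two already-established equivalences that share the common ``middle'' category $\textup{Ad-Epi}(\mathcal{M})$ together with its relevant quotients. Everything needed has been built in the preceding lemmas, so no fresh construction is required; the only work is to check that the two results glue together without friction.

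For part (a), I would first invoke Lemma \ref{lem3.2.4}(a), which uses the rigidity of $\mathcal{M}$ (this is exactly where the extra hypothesis of the corollary is consumed, via Remark \ref{rem3.2}) to produce an equivalence $\mathcal{M}_L/[\mathcal{M}]\cong \textup{Ad-Epi}(\mathcal{M})/[\mathcal{U}]$. Then I would apply Theorem \ref{thm3.2.1}(a), which needs only that $\mathcal{M}$ contain $\mathcal{P}$, to identify $\textup{Ad-Epi}(\mathcal{M})/[\mathcal{U}]\cong \textup{mod-}(\mathcal{M}/[\mathcal{P}])$. Composing the two equivalences yields the desired statement. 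The point that must be verified is that the subcategory $\mathcal{U}$ appearing in Lemma \ref{lem3.2.4} is literally the same $\mathcal{U}\subseteq\textup{Ad-Epi}(\mathcal{M})$ used in Theorem \ref{thm3.2.1}, namely the one consisting of the objects $(M\xrightarrow{1}M)\oplus(M'\to 0)$; this guarantees that the two quotient categories coincide and that the composition is legitimate.

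Part (b) follows the identical pattern, now passing through the quotient by $\mathcal{V}$: Lemma \ref{lem3.2.4}(b) supplies $\mathcal{M}_L/[\Omega\mathcal{M}]\cong \textup{Ad-Epi}(\mathcal{M})/[\mathcal{V}]$ (again invoking rigidity), and Theorem \ref{thm3.2.1}(b) supplies $\textup{Ad-Epi}(\mathcal{M})/[\mathcal{V}]\cong (\textup{mod-}(\mathcal{M}/[\mathcal{P}])^{\textup{op}})^{\textup{op}}$, and composing gives the result. Since the genuine effort---establishing that the functors $\alpha$ and $\beta$ are full, dense, and have the expected factorization kernels---was already carried out in the proofs of Theorem \ref{thm3.2.1} and Lemma \ref{lem3.2.4}, I do not expect a real obstacle here. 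The only matter requiring care is bookkeeping: confirming that the same $\mathcal{V}$ is used on both sides, and that the rigidity hypothesis is precisely what makes the implications (e)$\Rightarrow$(d) of Lemma \ref{lem3.2.1} and (c)$\Rightarrow$(a) of Lemma \ref{lem3.2.2} available, so that the equivalences of Lemma \ref{lem3.2.4} hold and may be spliced onto Theorem \ref{thm3.2.1}.
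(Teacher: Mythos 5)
Your proposal is correct and matches the paper's own derivation exactly: the paper obtains Corollary \ref{cor3.1} precisely by splicing Lemma \ref{lem3.2.4} (where rigidity is used) onto Theorem \ref{thm3.2.1} through the common quotients $\textup{Ad-Epi}(\mathcal{M})/[\mathcal{U}]$ and $\textup{Ad-Epi}(\mathcal{M})/[\mathcal{V}]$. Your bookkeeping remarks about the subcategories $\mathcal{U}$, $\mathcal{V}$ agreeing on both sides and about where rigidity enters are accurate and consistent with the paper.
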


Suppose that $\mathcal{M}$ is a contravariantly finite subcategory of $\mathcal{C}$ containing $\mathcal{P}$, then by Proposition \ref{prop3.1.3}, $\textup{Mor}(\mathcal{M})/[\mathcal{U}]\cong \textup{mod-}\mathcal{M}$ is abelian. Moreover, $\textup{Epi}(\mathcal{M})/[\mathcal{U}]\cong\textup{mod-}\mathcal{M}/[\mathcal{P}]$ is abelian by Theorem \ref{thm3.2.1}. The following result is a variant of \cite[Theorem 3.7]{[AHK]}.

\begin{prop}
Let $\mathcal{C}$ be an abelian category with enough projectives. If $\mathcal{M}$ is a contravariantly finite subcategory containing all projectives, then there exists a recollement
$$  \xymatrix @C=2.7pc {
      \textup{Epi}(\mathcal{M})/[\mathcal{U}] \ar[r]^{i_*} & \textup{Mor}(\mathcal{M})/[\mathcal{U}]   \ar@<+2.8ex>[l]_{i^!}\ar@<-2.8ex>[l]_{i^*}\ar[r]^{\ \ \ \ \ \ j^*}
      & \mathcal{C} \ar@<+2.8ex>[l]_{\ \ \ \ \ \ j_*}\ar@<-2.8ex>[l]_{\ \ \ \ \ \ j_!}
      } $$
of abelian categories.
\end{prop}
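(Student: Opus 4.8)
The plan is to transport the entire statement into functor categories, where it becomes the standard recollement attached to restriction along a subcategory inclusion, and then to verify that every functor involved preserves finite presentation. First I would fix three identifications. By Proposition \ref{prop3.1.3} we have $\textup{Mor}(\mathcal{M})/[\mathcal{U}]\cong\textup{mod-}\mathcal{M}$, and by Theorem \ref{thm3.2.1}(a) together with Lemma \ref{3.2.5} we have $\textup{Epi}(\mathcal{M})/[\mathcal{U}]\cong\textup{mod-}(\mathcal{M}/[\mathcal{P}])$; since $\mathcal{P}$ is contravariantly finite in $\mathcal{M}$ (every object of $\mathcal{M}$ receives an epimorphism from a projective), Proposition \ref{prop2.1}(b) further identifies the latter with the Serre subcategory $\textup{mod-}_0\mathcal{M}=\{F\in\textup{mod-}\mathcal{M}\mid F(\mathcal{P})=0\}$. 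Finally, because $\mathcal{C}$ is abelian with enough projectives, the restricted Yoneda functor $X\mapsto\mathcal{C}(-,X)|_{\mathcal{P}}$ is an equivalence $\mathcal{C}\cong\textup{mod-}\mathcal{P}$. Under these identifications the asserted diagram is exactly the recollement
$$\textup{mod-}(\mathcal{M}/[\mathcal{P}])\xrightarrow{\ i_*\ }\textup{mod-}\mathcal{M}\xrightarrow{\ j^*\ }\textup{mod-}\mathcal{P},$$
in which $i_*$ is the inclusion of $\textup{mod-}_0\mathcal{M}$ and $j^*$ is restriction along $\iota:\mathcal{P}\hookrightarrow\mathcal{M}$; concretely $j^*$ is the cokernel functor $(M_1\xrightarrow{f}M_2)\mapsto\textup{Coker}(f)$. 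All three categories are abelian: $\textup{mod-}\mathcal{M}$ because $\mathcal{M}$ has weak kernels (being contravariantly finite in an abelian category with enough projectives), $\textup{mod-}(\mathcal{M}/[\mathcal{P}])$ as a Serre subcategory thereof, and $\mathcal{C}$ by hypothesis.

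Next I would record the remaining four functors. The exact functor $j^*=(-)|_{\mathcal{P}}$ has the left Kan extension $j_!=\iota_!$ as a left adjoint and the right Kan extension $j_*=\iota_*$ as a right adjoint; explicitly $j_!$ sends $X$ with projective presentation $P_1\to P_0\to X\to 0$ to $\textup{Coker}(\mathcal{M}(-,P_1)\to\mathcal{M}(-,P_0))$, i.e. to the object $(P_1\to P_0)$ of $\textup{Mor}(\mathcal{M})$, while $j_*X=\mathcal{C}(-,X)|_{\mathcal{M}}$. Since $\iota$ is fully faithful one gets $j^*j_!\cong\textup{id}\cong j^*j_*$, so $j_!$ and $j_*$ are fully faithful, and $j^*i_*=0$ because the modules in the image of $i_*$ vanish on $\mathcal{P}$. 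The last two functors are then forced: $i^*$ is the left adjoint of $i_*$, computed as the cokernel of the counit $j_!j^*\to\textup{id}$, and $i^!$ is the right adjoint of $i_*$, computed as the kernel of the unit $\textup{id}\to j_*j^*$. With these definitions the defining exact sequences $j_!j^*F\to F\to i_*i^*F\to 0$ and $0\to i_*i^!F\to F\to j_*j^*F$ hold by construction, while $i_*i^*\cong\textup{id}\cong i_*i^!$ on $\textup{mod-}_0\mathcal{M}$ follows from $j^*i_*=0$.

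At the level of all (not necessarily finitely presented) modules, the adjoint triple $(j_!,j^*,j_*)$ together with $j^*i_*=0$ already yields this recollement by the standard formalism for restriction in functor categories, so the only genuine work — and the \emph{main obstacle} — is to check that each of the six functors preserves finite presentation, so that the recollement descends from $\textup{Mod}$ to $\textup{mod}$. Three cases are immediate: $i_*$ lands in $\textup{mod-}_0\mathcal{M}\subseteq\textup{mod-}\mathcal{M}$ by Proposition \ref{prop2.1}(b); $j^*$ carries a representable $\mathcal{M}(-,M)$ to $\mathcal{P}(-,M)=\mathcal{C}(-,M)|_{\mathcal{P}}$, which is finitely presented over $\mathcal{P}$ since $M$ has a projective presentation in $\mathcal{C}$, and $j^*$ is right exact; and $j_!$ sends representables to representables and is right exact. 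The delicate point is $j_*$: one must show that $j_*X=\mathcal{C}(-,X)|_{\mathcal{M}}$ is a finitely presented $\mathcal{M}$-module, and this is precisely where contravariant finiteness of $\mathcal{M}$ enters. A right $\mathcal{M}$-approximation $M_0\to X$ gives an epimorphism $\mathcal{M}(-,M_0)\twoheadrightarrow\mathcal{C}(-,X)|_{\mathcal{M}}$, and a right $\mathcal{M}$-approximation $M_1\to\textup{Ker}(M_0\to X)$ supplies the relations, producing a presentation $\mathcal{M}(-,M_1)\to\mathcal{M}(-,M_0)\to\mathcal{C}(-,X)|_{\mathcal{M}}\to 0$. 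Once $j_*$ is known to preserve finite presentation, $i^*$ and $i^!$ do too, being a cokernel and a kernel of maps between finitely presented modules in the abelian category $\textup{mod-}\mathcal{M}$. Verifying that $i^*,i^!$ are genuinely adjoint to $i_*$ on $\textup{mod-}\mathcal{M}$ and that the two exact sequences restrict correctly is then routine recollement bookkeeping, completing this variant of \cite[Theorem 3.7]{[AHK]}.
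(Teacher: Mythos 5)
Your proof is correct, and the three outer functors you construct are literally the same as the paper's: $j^*$ is the cokernel functor (restriction to $\mathcal{P}$ under the identification $\mathcal{C}\cong\textup{mod-}\mathcal{P}$), $j_!$ sends $X$ to a projective presentation $(P_1\to P_0)$, and $j_*$ sends $X$ to the pair of right $\mathcal{M}$-approximations $(M_2\to M_1)$, i.e.\ to the presentation $\mathcal{M}(-,M_1)\to\mathcal{M}(-,M_0)\to\mathcal{C}(-,X)|_{\mathcal{M}}\to 0$. Where you differ is in the verification strategy: the paper stays inside $\textup{Mor}(\mathcal{M})/[\mathcal{U}]$, checks the adjunctions $(j_*,j^*)$ and $(j^*,j_!)$ by hand, identifies $\textup{Ker}(j^*)=\textup{Epi}(\mathcal{M})/[\mathcal{U}]$, and then invokes \cite[Remark 2.3]{[Ps]} to produce $i^*,i_*,i^!$; you instead transport everything through the equivalences $\textup{Mor}(\mathcal{M})/[\mathcal{U}]\cong\textup{mod-}\mathcal{M}$, $\textup{Epi}(\mathcal{M})/[\mathcal{U}]\cong\textup{mod-}_0\mathcal{M}$ and $\mathcal{C}\cong\textup{mod-}\mathcal{P}$, quote the standard Kan-extension recollement for $\textup{Mod}$, and reduce the whole proof to showing that all six functors preserve finite presentation. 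Your route buys a cleaner conceptual picture and, usefully, isolates exactly where contravariant finiteness of $\mathcal{M}$ is indispensable (finite presentation of $j_*X=\mathcal{C}(-,X)|_{\mathcal{M}}$), which in the paper is hidden inside the construction of $j_*$ via approximations; incidentally your use of \emph{right} $\mathcal{M}$-approximations there is what the paper means where it writes ``left $\mathcal{M}$-approximations''. The paper's route is more self-contained at the level of morphism categories and yields the explicit object-level formulas for $i^*$ and $i^!$ directly. One small notational slip on your side: $\mathcal{M}(-,M)$ restricts to $\mathcal{M}(-,M)|_{\mathcal{P}}=\mathcal{C}(-,M)|_{\mathcal{P}}$, which is what you intend by ``$\mathcal{P}(-,M)$''; the argument is unaffected.
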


\begin{proof}
Consider the cokernel functor
$$\textup{Cok}:\textup{Mor}(\mathcal{M})\rightarrow\mathcal{C},\ \ \ (M_1\xrightarrow{f} M_2)\mapsto \textup{Coker}(f).$$
Since $\textup{Cok}(X\xrightarrow{1} X)=0$ and $\textup{Cok}(X\rightarrow 0)=0$ for each $X\in \mathcal{M}$, the functor $\textup{Cok}$ induces a functor $j^\ast: \textup{Mor}(\mathcal{M})/[\mathcal{U}]\rightarrow\mathcal{C}$. For each $X\in\mathcal{C}$, there exists an exact sequence $P_1\xrightarrow{g} P_0\rightarrow X\rightarrow 0$ with $P_i\in\mathcal{P}$ since $\mathcal{C}$ has enough projectives. It is easy to check that the functor $$j_!:\mathcal{C}\rightarrow \textup{Mor}(\mathcal{M})/[\mathcal{U}],\ \ \  X\mapsto (P_1\xrightarrow{g} P_0)$$
is well defined. For each $X\in \mathcal{C}$, there exist two left $\mathcal{M}$-approximations $a:M_1\rightarrow X$ and $b:M_2\rightarrow \textup{Ker}(a)$ with $M_i\in\mathcal{M}$. Since $\mathcal{M}$ contains all projectives, $a$ and $b$ are epimorphisms. Thus we have an exact sequence $M_2\xrightarrow{h} M_1\xrightarrow{a} X\rightarrow 0$ where $h$ is the composition of $b$ and  the natural inclusion Ker$(a)\hookrightarrow M_1$. Define a functor by
$$j_\ast:\mathcal{C}\rightarrow \textup{Mor}(\mathcal{M})/[\mathcal{U}],\ \ \  X\mapsto (M_2\xrightarrow{h} M_1).$$
It is routine to prove that $(j_\ast,j^\ast)$ and $(j^\ast,j_!)$ are adjoint pairs. Moreover, $j_\ast$ and $j_!$ are fully-faithful. We note that $\textup{Ker}(j^\ast)=\textup{Epi}(\mathcal{M})/[\mathcal{U}]$, so by \cite[Remark 2.3]{[Ps]} we complete the proof. Actually, the functors $i^\ast,i_\ast$ and $i^!$ are described as follows:
$$i^\ast:\textup{Mor}(\mathcal{M})/[\mathcal{U}]\rightarrow \textup{Epi}(\mathcal{M})/[\mathcal{U}], \ \ \ (M_1\xrightarrow{f} M_2)\mapsto (M_1\oplus P\xrightarrow{(f,\pi)} M_2)$$
$$i_\ast:\textup{Epi}(\mathcal{M})/[\mathcal{U}]\rightarrow \textup{Mor}(\mathcal{M})/[\mathcal{U}], \ \ \ (M_1\xrightarrow{f} M_2)\mapsto (M_1\xrightarrow{f} M_2)$$
$$i^!:\textup{Mor}(\mathcal{M})/[\mathcal{U}]\rightarrow \textup{Epi}(\mathcal{M})/[\mathcal{U}], \ \ \ (M_1\xrightarrow{f} M_2)\mapsto (M_1\xrightarrow{f} \textup{Im}(f))$$
where $\pi:P\rightarrow M_2$ is an epimorphism with $P\in\mathcal{P}$.
\end{proof}

\begin{cor}\label{cor3.2}
 Let $\mathcal{C}$ be an abelian category with enough projectives.
Then there exists a recollement
$$  \xymatrix @C=2.7pc {
      \textup{mod-}(\mathcal{C}/[\mathcal{P}]) \ar[r]^{\ \ \ i_*} & \textup{mod-}\mathcal{C}  \ar@<+2.8ex>[l]_{\ \ \ i^!}\ar@<-2.8ex>[l]_{\ \  \ i^*}\ar[r]^{\ \ \ \ \ j^*}
      & \mathcal{C} \ar@<+2.8ex>[l]_{\ \ \ \ \ j_*}\ar@<-2.8ex>[l]_{ \ \ \ \ \  j_!}
      } $$
of abelian categories. Therefore, we have an equivalence $\textup{mod-}\mathcal{C}/[\textup{mod-}\mathcal{C}/[\mathcal{P}]]\cong \mathcal{C}$.
\end{cor}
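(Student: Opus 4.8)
The plan is to obtain the recollement by specializing the preceding Proposition to the case $\mathcal{M}=\mathcal{C}$ and then translating each of its three terms into the desired module categories. First I would check that $\mathcal{C}$ itself meets the hypotheses imposed on $\mathcal{M}$: being abelian, $\mathcal{C}$ is trivially a contravariantly finite subcategory of itself, since every identity $\textup{id}_X$ is a right $\mathcal{C}$-approximation, and it contains all projectives because $\mathcal{C}$ has enough projectives by assumption. Applying the preceding Proposition with $\mathcal{M}=\mathcal{C}$ then produces a recollement whose middle term is $\textup{Mor}(\mathcal{C})/[\mathcal{U}]$, whose left term is $\textup{Epi}(\mathcal{C})/[\mathcal{U}]$ and whose right term is $\mathcal{C}$, with $j^{\ast}$ induced by the cokernel functor $\textup{Cok}$.

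Next I would replace the two quotient categories on the left and in the middle by their module-category descriptions. By Proposition \ref{prop3.1.3}(a) the cokernel-of-Yoneda functor yields $\textup{Mor}(\mathcal{C})/[\mathcal{U}]\cong\textup{mod-}\mathcal{C}$. For the left term I would first observe that in $\mathcal{C}$, regarded as an exact category with all kernel-cokernel pairs, every epimorphism is admissible, so $\textup{Epi}(\mathcal{C})=\textup{Ad-Epi}(\mathcal{C})$ (this is Lemma \ref{3.2.5} in the case $\mathcal{M}=\mathcal{C}$); Theorem \ref{thm3.2.1}(a) then gives $\textup{Epi}(\mathcal{C})/[\mathcal{U}]\cong\textup{mod-}(\mathcal{C}/[\mathcal{P}])$. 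Substituting these two equivalences into the recollement produces precisely the asserted recollement of abelian categories, in which $i_{\ast}$ is the inclusion and $j^{\ast}$ is the functor induced by $\textup{Cok}$.

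The step needing care is the compatibility of these identifications with the recollement data: one must verify that the equivalence $\textup{Mor}(\mathcal{C})/[\mathcal{U}]\cong\textup{mod-}\mathcal{C}$ carries the subcategory $\textup{Epi}(\mathcal{C})/[\mathcal{U}]$ exactly onto $\textup{mod-}(\mathcal{C}/[\mathcal{P}])$, so that the relation $\textup{Im}(i_{\ast})=\textup{Ker}(j^{\ast})$ is transported correctly. This is clear once one notes that the functor $\alpha$ sends $(M_{1}\xrightarrow{f}M_{2})$ to $\textup{Coker}(\mathcal{C}(-,f))$, and that this functor vanishes on $\mathcal{P}$ if and only if $f$ is an epimorphism; hence the restriction of $\alpha$ to $\textup{Epi}(\mathcal{C})$ is the very equivalence of Theorem \ref{thm3.2.1}(a), identifying the effaceable functors (those vanishing on $\mathcal{P}$) with $\textup{mod-}(\mathcal{C}/[\mathcal{P}])$. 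Finally, for the concluding equivalence I would invoke the standard fact that in any recollement of abelian categories the quotient functor $j^{\ast}$ exhibits the right-hand term as the localization of the middle term at the Serre subcategory $\textup{Ker}(j^{\ast})=\textup{Im}(i_{\ast})$; here this reads $\textup{mod-}\mathcal{C}/[\textup{mod-}(\mathcal{C}/[\mathcal{P}])]\cong\mathcal{C}$, which is exactly Auslander's formula. Thus the main obstacle is not a hard computation but the bookkeeping of the previous paragraph: transporting the whole recollement, together with its two pairs of adjoints, along the two equivalences and matching $\textup{Im}(i_{\ast})$ with the subcategory of functors vanishing on $\mathcal{P}$.
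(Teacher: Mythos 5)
Your proposal is correct and follows exactly the route the paper intends: specialize the preceding recollement to $\mathcal{M}=\mathcal{C}$, identify the middle term via Proposition \ref{prop3.1.3}(a) and the left term via Lemma \ref{3.2.5} and Theorem \ref{thm3.2.1}(a), check that the equivalence matches $\textup{Epi}(\mathcal{C})/[\mathcal{U}]$ with the functors vanishing on $\mathcal{P}$, and deduce the last assertion from the standard fact that $j^*$ realizes the right-hand term as the quotient of the middle term by $\textup{Ker}(j^*)=\textup{Im}(i_*)$. The compatibility bookkeeping you single out is indeed the only point requiring care, and your justification of it (that $\textup{Coker}\,\mathcal{C}(-,f)$ vanishes on $\mathcal{P}$ precisely when $f$ is an epimorphism) is the right one.
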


\begin{rem}
Following Lenzing \cite{[L]}, the equivalence $\textup{mod-}\mathcal{C}/[\textup{mod-}\mathcal{C}/[\mathcal{P}]]\cong \mathcal{C}$ is called {\em Auslander's formula}; see \cite{[Aus]}.
\end{rem}

\subsection{Third case: triangulated categories}

Let $\mathcal{C}$ be a right triangulated category with suspension functor $\Sigma$ and $\mathcal{M}$ be a full subcategory of $\mathcal{C}$. We denote by $\mathcal{M}\ast\Sigma\mathcal{M}$ the full subcategory of $\mathcal{C}$ consisting of objects $X$ such that there is a right triangle $M_1\rightarrow M_2\rightarrow X\rightarrow\Sigma M_1$ with $M_i\in\mathcal{M}$. A full subcategory $\mathcal{M}$ is called {\em rigid} if $\mathcal{C}(M,\Sigma M')=0$ for each $M,M'\in\mathcal{M}$.

\begin{lem}\label{3.3.1}(\cite[Lemma 1.3]{[ABM]}) Let $X_{1}\xrightarrow{f_1} X_2\xrightarrow{f_2} X_3\xrightarrow{f_3}\Sigma X_1$ be a right triangle, then the following holds.

\textup{(a)} $f_{i+1}$ is a weak cokernel of $f_i$ for $i=1,2$.

\textup{(b)} If $\Sigma$ is fully-faithful, then $f_i$ is a weak kernel of $f_{i+1}$ for $i=1,2$.
\end{lem}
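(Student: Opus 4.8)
The plan is to run everything off the two structural axioms of a right triangulated category that survive when $\Sigma$ is not invertible, namely rotation and the fill-in (morphism) axiom. Rotation sends the given right triangle to $X_2\xrightarrow{f_2}X_3\xrightarrow{f_3}\Sigma X_1\xrightarrow{-\Sigma f_1}\Sigma X_2$, which is again a right triangle; consequently the case $i=2$ of either part is literally the case $i=1$ for this rotated triangle. So throughout I argue only the case $i=1$ and invoke rotation for $i=2$.

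For (a) I first record that $f_2f_1=0$: extending the commuting square $(1_{X_1},f_1)$ to a morphism from the trivial triangle $X_1\xrightarrow{1}X_1\rightarrow 0\rightarrow\Sigma X_1$ into the given one forces $f_2f_1=0$ through the middle square. The real content is the lifting: given $h\colon X_2\to W$ with $hf_1=0$, I must produce $p\colon X_3\to W$ with $pf_2=h$. I would complete the zero morphism $X_1\xrightarrow{0}W$ to a right triangle $X_1\xrightarrow{0}W\xrightarrow{\binom{1}{0}}W\oplus\Sigma X_1\xrightarrow{(0,1)}\Sigma X_1$. Since $hf_1=0$, the pair $(1_{X_1},h)$ is a morphism on the first two terms from the given triangle into this one, so the fill-in axiom supplies $\phi\colon X_3\to W\oplus\Sigma X_1$ with $\phi f_2=\binom{h}{0}$; post-composing with the projection onto $W$ gives $p$ with $pf_2=h$.

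The step I expect to be delicate is the shape of the cone used above: I need the cone of the zero map $X_1\to W$ to split as $W\oplus\Sigma X_1$ with the stated structure maps. In the two-sided triangulated case this is routine, but here I may only rotate forwards, so I would justify it using forward rotations alone, rotating the trivial triangle on $X_1$, forming its biproduct with the trivial triangle on $W$, and verifying through the fill-in axiom that the middle map of the resulting cone triangle is a split monomorphism with the correct cokernel. This bookkeeping, rather than the fill-in itself, is where care is required.

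For (b) I would reduce to (a) and call on full-faithfulness of $\Sigma$ only at the end. Given $g\colon W\to X_2$ with $f_2g=0$, complete $g$ to a right triangle $W\xrightarrow{g}X_2\xrightarrow{s}V\xrightarrow{t}\Sigma W$. By part (a) the map $s$ is a weak cokernel of $g$, so from $f_2g=0$ I obtain $r\colon V\to X_3$ with $rs=f_2$. Rotating both this triangle and the given one, the pair $(1_{X_2},r)$ is a morphism of the rotated triangles whose first square commutes precisely because $rs=f_2$; the fill-in axiom then yields $c\colon\Sigma W\to\Sigma X_1$ with $(\Sigma f_1)c=\Sigma g$. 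Now full-faithfulness enters: fullness gives $c=\Sigma k$ for some $k\colon W\to X_1$, and faithfulness promotes $\Sigma(f_1k)=\Sigma g$ to $f_1k=g$, so $k$ factors $g$ through $f_1$. Together with $f_2f_1=0$ from (a), this makes $f_1$ a weak kernel of $f_2$, and rotation again handles $i=2$.
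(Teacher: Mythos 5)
The paper does not prove this lemma at all: it is quoted verbatim from \cite[Lemma 1.3]{[ABM]}, so there is no in-paper argument to compare against. Judged on its own terms, your treatment of (b) is exactly the standard argument (factor $f_2$ through the cone of $g$ using (a), fill in on the rotated triangles to get $c\colon\Sigma W\to\Sigma X_1$, then use fullness and faithfulness of $\Sigma$ to descend $c$ to $k\colon W\to X_1$), and your reduction of the $i=2$ cases to $i=1$ by forward rotation is sound, since neither conclusion involves the sign on $-\Sigma f_1$.

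The one soft spot is the lifting step in (a), and you have correctly identified where it is. Your route requires knowing that $X_1\xrightarrow{0}W\xrightarrow{\binom{1}{0}}W\oplus\Sigma X_1\xrightarrow{(0,1)}\Sigma X_1$ is a right triangle, and the justification you sketch --- form the biproduct of two right triangles --- leans on closure of right triangles under direct sums. In the one-sided axiomatics that closure is not an axiom but a proposition, and its usual proof runs through the long exact Hom-sequences, the five lemma and Yoneda, i.e.\ through the very weak-cokernel property you are in the middle of establishing; so as written the bookkeeping you defer is not obviously non-circular. The fix is to discard the detour entirely: the axioms for a right triangulated category (as in \cite{[ABM]}) include that $0\rightarrow W\xrightarrow{1}W\rightarrow 0$ is a right triangle, so given $h\colon X_2\to W$ with $hf_1=0$ you can apply the fill-in axiom directly to the commutative square with vertical maps $0\colon X_1\to 0$ and $h\colon X_2\to W$, from the given triangle into this trivial one; the resulting map $p\colon X_3\to W$ satisfies $pf_2=1\cdot h=h$ with no auxiliary cone needed. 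With that replacement the whole proof is complete and elementary.
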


The following result generalizes \cite[Proposition 6.2]{[IY]} from triangulated categories to right triangulated categories.

\begin{prop} \label{prop3.0}
Let $\mathcal{C}$ be a right triangulated category and $\mathcal{M}$ be a rigid subcategory. If $\Sigma$ is fully-faithful, then $(\mathcal{M}\ast \Sigma\mathcal{M})/[\Sigma\mathcal{M}]\cong \textup{mod-}\mathcal{M}$.
\end{prop}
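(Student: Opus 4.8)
The plan is to mimic the strategy already established in Lemma~\ref{lem3.2.4} and Theorem~\ref{thm3.2.1}: realize $\textup{mod-}\mathcal{M}$ as an objective quotient of a morphism-type category, then match up the two sides. Concretely, I would first define a functor $\beta:(\mathcal{M}\ast\Sigma\mathcal{M})\rightarrow\textup{mod-}\mathcal{M}$ as follows. Any object $X\in\mathcal{M}\ast\Sigma\mathcal{M}$ sits in a right triangle $M_1\xrightarrow{f_1}M_2\xrightarrow{f_2}X\xrightarrow{f_3}\Sigma M_1$ with $M_i\in\mathcal{M}$. Applying $\mathcal{C}(-,?)$ restricted to $\mathcal{M}$ and using Lemma~\ref{3.3.1}(a) (so that $f_2$ is a weak cokernel of $f_1$, which says $f_1$ becomes a weak kernel after restricting Hom to $\mathcal{M}$, by rigidity killing the $\mathcal{C}(-,\Sigma M_1)$ term), I obtain an exact sequence $\mathcal{M}(-,M_1)\rightarrow\mathcal{M}(-,M_2)\rightarrow F\rightarrow 0$ in $\textup{mod-}\mathcal{M}$, and I set $\beta(X)=F=\textup{Coker}(\mathcal{M}(-,f_1))$. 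The key preliminary point is that rigidity forces $\mathcal{C}(M,\Sigma M_1)=0$, so $\mathcal{M}(-,M_2)\rightarrow\mathcal{M}(-,X)$ is surjective and $X$ genuinely represents the cokernel.

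Next I would check that $\beta$ is dense and full. Density is immediate: given any $F\in\textup{mod-}\mathcal{M}$ with presentation $\mathcal{M}(-,M_1)\rightarrow\mathcal{M}(-,M_2)\rightarrow F\rightarrow 0$, lift the map $M_1\rightarrow M_2$ (via Yoneda) to a morphism $f_1$ in $\mathcal{C}$ and complete it to a right triangle $M_1\rightarrow M_2\rightarrow X\rightarrow\Sigma M_1$; then $X\in\mathcal{M}\ast\Sigma\mathcal{M}$ and $\beta(X)\cong F$. For fullness, given $X,X'$ with chosen triangles and a morphism $F\rightarrow F'$ in $\textup{mod-}\mathcal{M}$, I lift it to a morphism of presentations (a pair $(a,b):(f_1)\rightarrow(f_1')$ commuting in $\mathcal{C}$ after applying Yoneda, using that $\mathcal{M}(-,M_i)$ are projective), and then the weak-cokernel property of $f_2$ (Lemma~\ref{3.3.1}(a)) produces a morphism $c:X\rightarrow X'$ completing the commutative ladder; one verifies $\beta(c)$ is the prescribed map $F\rightarrow F'$.

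The crucial step is then computing $\textup{Ker}\,\beta$ and showing $\beta$ is objective, i.e.\ that $\beta(c)=0$ forces $c$ to factor through $\Sigma\mathcal{M}$, and conversely that every object of $\Sigma\mathcal{M}$ is sent to $0$. The vanishing $\beta(\Sigma M)=0$ holds because for $X=\Sigma M$ we may take the triangle $M\xrightarrow{1}M\rightarrow 0\rightarrow \Sigma M$ shifted appropriately, or more directly because $\mathcal{C}(-,\Sigma M)|_\mathcal{M}=0$ by rigidity, so $F=0$. For the converse, suppose $\beta(c)=0$ for $c:X\rightarrow X'$. Then the induced map $\mathcal{M}(-,X)\rightarrow\mathcal{M}(-,X')$ lands in the image of $\mathcal{M}(-,f_2')$, so $c\circ f_2$ factors as $f_2'\circ(\text{something})$ through $M_2'$ in a way that, after a diagram chase using that $f_2$ is a weak cokernel of $f_1$ and $f_3$ is the connecting map, lets me factor $c$ through the cone term $\Sigma M_1$. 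This is exactly where the hypothesis that $\Sigma$ is fully faithful does its work — it is needed (via Lemma~\ref{3.3.1}(b)) to guarantee $f_2$ is a weak kernel of $f_3$, which is the input that converts ``$c$ kills the cokernel'' into ``$c$ factors through $\Sigma M_1\in\Sigma\mathcal{M}$.''

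I expect this last objectivity/factorization step to be the main obstacle, since it is the only place where both the rigidity of $\mathcal{M}$ and the full-faithfulness of $\Sigma$ are simultaneously essential, and it requires carefully threading a morphism through the connecting maps of two right triangles rather than a routine lift. Once $\beta$ is shown to be full, dense, and objective with $\textup{Ker}\,\beta=\Sigma\mathcal{M}$, the equivalence $(\mathcal{M}\ast\Sigma\mathcal{M})/[\Sigma\mathcal{M}]\cong\textup{mod-}\mathcal{M}$ follows immediately from the characterization of objective functors recalled in Subsection~2.3. This parallels the exact-category argument of Lemma~\ref{lem3.2.4}, with weak (co)kernels in right triangles playing the role that projective resolutions and approximations played there.
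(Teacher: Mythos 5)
Your overall strategy is the same as the paper's, just packaged as a single functor: the paper defines the cone functor $\textup{Mor}(\mathcal{M})\rightarrow\mathcal{M}\ast\Sigma\mathcal{M}$, proves it full and dense, identifies the ideal it kills modulo $[\Sigma\mathcal{M}]$ with $\mathcal{R}$, and then invokes Lemma \ref{prop3.1}; your $\beta$ is precisely the composite (restricted Yoneda), and your density, fullness and objectivity checks are the same computations read in the opposite direction. So the architecture is sound. The problem is that your account of the ``crucial step'' mislocates where the hypotheses actually do their work, and as written its justification does not hold up. The assertion that $\beta(c)=0$ means the induced map $\mathcal{M}(-,X)\rightarrow\mathcal{M}(-,X')$ ``lands in the image of $\mathcal{M}(-,f_2')$'' is vacuous: once you know $\mathcal{M}(-,f_2')$ is surjective (which you need anyway to identify $\beta(X')$ with $\mathcal{M}(-,X')$), every natural transformation lands in its image. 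What $\beta(c)=0$ actually gives is $c f_2=0$: either evaluate the zero transformation at $f_2\in\mathcal{C}(M_2,X)$, or note that a lift $(a,b)$ of $c$ has $b$ factoring through $f_1'$, whence $cf_2=f_2'b=0$. And the passage from $cf_2=0$ to ``$c$ factors through $\Sigma M_1$'' uses that $f_3$ is a weak \emph{cokernel} of $f_2$ --- Lemma \ref{3.3.1}(a), valid in any right triangulated category --- not that $f_2$ is a weak kernel of $f_3$. So full-faithfulness of $\Sigma$ is \emph{not} what drives the objectivity step.

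Where full-faithfulness is genuinely indispensable is in the parts you treated as routine. To identify $\textup{Coker}(\mathcal{M}(-,f_1))$ with $\mathcal{M}(-,X)$ --- equivalently, to make $\beta$ independent of the chosen triangle and of the chosen lift $(a,b)$ of a morphism $c$, so that ``$\beta(c)$ is the prescribed map'' can even be verified --- you need $f_1$ to be a weak kernel of $f_2$ and $f_2$ to be a weak kernel of $f_3$, both of which are exactly Lemma \ref{3.3.1}(b). In the paper's formulation the same point surfaces as the fullness of the cone functor: rigidity gives $f_3'gf_2=0$, and one needs $f_2'$ to be a weak kernel of $f_3'$ to convert this into a factorization $gf_2=f_2'b$. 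Once you relocate the use of Lemma \ref{3.3.1}(b) to these places and replace the ``lands in the image'' step by the correct ``$cf_2=0$, hence $c$ factors through the weak cokernel $f_3$'' argument, your proof closes up and coincides with the paper's.
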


\begin{proof}
By Lemma \ref{prop3.1}, we only need to show that $(\mathcal{M}\ast \Sigma\mathcal{M})/[\Sigma\mathcal{M}]\cong \textup{Mor}(\mathcal{M})/\mathcal{R}$. For each morphism $f_1:M_1\rightarrow M_2$ in $\mathcal{M}$, we assume that $M_1\xrightarrow{f_1} M_2\xrightarrow{f_2} X\xrightarrow{f_3}\Sigma M_1$ is a right triangle. The assignment $(M_1\xrightarrow{f_1} M_2)\mapsto X$ defines a dense functor $F:\textup{Mor}(\mathcal{M})\rightarrow\mathcal{M}\ast \Sigma\mathcal{M}$. Assume that $g: X\rightarrow X'$ is a morphism in $\mathcal{M}\ast \Sigma\mathcal{M}$. Since $\mathcal{M}$ is rigid, $f_3'gf_2=0$. Thus by Lemma \ref{3.3.1}, there exists a morphism $b:M_2\rightarrow M_1$ such that $gf_2=f_2'b$. Since $f_2'bf_1=0$, there is a morphism $a: M_1\rightarrow M_2$ such that $bf_1=f'_1a$ by Lemma \ref{3.3.1}.  Hence, $F(a,b)=g$ and the functor $F$ is full.
$$\xymatrix{
M_1 \ar[r]^{f_1}\ar@{-->}[d]^{a} & M_2 \ar[r]^{f_2}\ar@{-->}[d]^{b} & X \ar[r]^{f_3}\ar[d]^{g} & \Sigma M_1\ar@{-->}[d]^{\Sigma a}\\ M_1' \ar[r]^{f_1'} & M_2' \ar[r]^{f_2'} & X' \ar[r]^{f_3'} & \Sigma M_1'
}$$ We note that $F$ induces a full and dense functor $\widetilde{F}:\textup{Mor}(\mathcal{M})\rightarrow \mathcal{M}\ast \Sigma\mathcal{M}/[\Sigma\mathcal{M}]$. As in the above diagram, we assume that $\widetilde{F}(a,b)=\underline{g}=0$. Since  $\mathcal{M}$ is rigid,  $f_3$ is a left $\Sigma\mathcal{M}$-approximation, thus $g$ factors through $f_3$. Hence, $f_2'b=gf_2=0$ and $b$ factors through $f_1'$, which implies that $\textup{Mor}(\mathcal{M})/\mathcal{R}\cong(\mathcal{M}\ast \Sigma\mathcal{M})/[\Sigma\mathcal{M}]$. We complete the proof.
\end{proof}

We recall that a full subcategory $\mathcal{M}$ of a triangulated category $\mathcal{C}$ is {\em cluster-tilting} if $\mathcal{M}$ is rigid and $\mathcal{C}=\mathcal{M}\ast\Sigma\mathcal{M}$.

\begin{cor} (\cite{[BMR],[KR],[KZ]}) \label{cor3.3}
Let $\mathcal{C}$ be a triangulated category with suspension functor $\Sigma$ and $\mathcal{M}$ be a cluster-tilting subcategory, then there is an equivalence of categories $\mathcal{C}/[\Sigma \mathcal{M}]\cong \textup{mod-}\mathcal{M}$.
\end{cor}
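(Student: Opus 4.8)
The plan is to deduce this corollary as an immediate specialization of Proposition \ref{prop3.0}, so almost all of the work has already been carried out and what remains is simply to verify that the hypotheses of that proposition are met in the triangulated setting. First I would observe that every triangulated category is in particular a right triangulated category, with the right triangles being exactly the distinguished triangles, and that in a triangulated category the suspension functor $\Sigma$ is an autoequivalence and hence in particular fully-faithful. This checks the standing assumption on $\Sigma$ that Proposition \ref{prop3.0} requires.

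Next I would unwind the definition of cluster-tilting given just before the statement. By hypothesis $\mathcal{M}$ is rigid, meaning $\mathcal{C}(M,\Sigma M')=0$ for all $M,M'\in\mathcal{M}$, which is precisely the rigidity condition appearing in Proposition \ref{prop3.0}. Moreover, cluster-tilting also asserts the covering condition $\mathcal{C}=\mathcal{M}\ast\Sigma\mathcal{M}$, so the ambient category already coincides with the extension subcategory $\mathcal{M}\ast\Sigma\mathcal{M}$ that figures in the proposition. With both hypotheses verified, I would apply Proposition \ref{prop3.0} directly.

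Substituting the equality $\mathcal{M}\ast\Sigma\mathcal{M}=\mathcal{C}$ into the conclusion of Proposition \ref{prop3.0} then gives
$$\mathcal{C}/[\Sigma\mathcal{M}]=(\mathcal{M}\ast\Sigma\mathcal{M})/[\Sigma\mathcal{M}]\cong\textup{mod-}\mathcal{M},$$
which is exactly the desired equivalence. I do not expect any genuine obstacle here: the only point requiring any care is the compatibility between the weaker right-triangulated framework of Proposition \ref{prop3.0} and the full triangulated structure of $\mathcal{C}$, namely confirming that distinguished triangles play the role of right triangles and that invertibility of $\Sigma$ supplies the fully-faithfulness hypothesis for free. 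Once these bookkeeping identifications are in place, the corollary follows with no further computation.
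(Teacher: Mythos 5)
Your proposal is correct and follows exactly the route the paper intends: the corollary is stated immediately after Proposition \ref{prop3.0} and the definition of cluster-tilting precisely so that it falls out by specialization, with a triangulated category regarded as a right triangulated one whose suspension is an autoequivalence (hence fully faithful) and with $\mathcal{C}=\mathcal{M}\ast\Sigma\mathcal{M}$ substituted into the conclusion. Nothing further is needed.
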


\subsection{Examples}

Let $A$ be an Artin $k$-algebra and $ \mathcal{C}$ be a full subcategory of mod-$A$ containing $A_A$. Denote by $\mathcal{U}$ the full subcategory of $\textup{Epi}(\mathcal{C})$ consisting of $(X\xrightarrow{1} X)\oplus (Y\rightarrow 0)$. Assume that all the indecomposable objects in $\mathcal{C}$ are $M_1,M_2,\cdots, M_n$. Set $M=\oplus_{i=1}^nM_i$. Then $B=\textup{End}_A(M)$ is called {\em Auslander algebra} of $\mathcal{C}$ and $\underline{B}=\underline{\textup{End}}_A(M)$ is called {\em stable Auslander algebra} of $\mathcal{C}$. It is easy to see that  $\underline{B}=B/BeB$, where $e$ is the idempotent given by Hom$_A(M,A)$.

\begin{prop} With notations as above. Then

\textup{(a)} $\textup{Mor}(\mathcal{C})/[\mathcal{U}]\cong\textup{mod-}B$.

\textup{(b)} $\textup{Epi}(\mathcal{C})/[\mathcal{U}]\cong\textup{mod-}\underline{B}$.

\end{prop}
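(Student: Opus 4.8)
The plan is to obtain both equivalences by specializing the general machinery already established in this section and then translating the resulting functor categories over $\mathcal{C}$ (and over its stable quotient) into genuine module categories by projectivization. The elementary input I would isolate first is the standard fact that if $\mathcal{A}$ is an additive category with $\mathcal{A}=\textup{add}(N)$ and $\Gamma=\textup{End}_{\mathcal{A}}(N)$, then $X\mapsto\mathcal{A}(N,X)$ exhibits an equivalence $\mathcal{A}\cong\textup{proj-}\Gamma$, and evaluation at $N$ induces an equivalence $\textup{mod-}\mathcal{A}\cong\textup{mod-}\Gamma$ (a finitely presented contravariant functor $F$ goes to the finitely presented right $\Gamma$-module $F(N)$). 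Since all indecomposables of $\mathcal{C}$ are $M_1,\dots,M_n$, we have $\mathcal{C}=\textup{add}(M)$ with $\textup{End}_A(M)=B$, so this gives $\textup{mod-}\mathcal{C}\cong\textup{mod-}B$ at once.

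For (a) I would simply compose. Proposition~\ref{prop3.1.3}(a) yields $\textup{Mor}(\mathcal{C})/[\mathcal{U}]\cong\textup{mod-}\mathcal{C}$, and the projectivization equivalence above identifies $\textup{mod-}\mathcal{C}$ with $\textup{mod-}B$; chaining the two gives $\textup{Mor}(\mathcal{C})/[\mathcal{U}]\cong\textup{mod-}B$.

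For (b) I would regard $\textup{mod-}A$ as an abelian, hence exact, category with enough projectives, whose subcategory of projectives is $\mathcal{P}=\textup{proj-}A=\textup{add}(A_A)$. Because $\mathcal{C}$ contains $A_A$ and is closed under summands, $\mathcal{P}\subseteq\mathcal{C}$, so I may apply Theorem~\ref{thm3.2.1}(a) with $\mathcal{M}=\mathcal{C}$ to get $\textup{Ad-Epi}(\mathcal{C})/[\mathcal{U}]\cong\textup{mod-}(\mathcal{C}/[\mathcal{P}])$, and then Lemma~\ref{3.2.5} to replace $\textup{Ad-Epi}(\mathcal{C})$ by $\textup{Epi}(\mathcal{C})$. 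It then remains to identify the right-hand side: the quotient $\mathcal{C}/[\mathcal{P}]$ is again of the form $\textup{add}(\underline{M})$ with endomorphism algebra $\textup{End}_{\mathcal{C}/[\mathcal{P}]}(\underline{M})=B/[\mathcal{P}](M,M)$, and $[\mathcal{P}](M,M)$---the morphisms $M\to M$ factoring through a projective $A$-module---is precisely $BeB$ for the idempotent $e$ attached to $\textup{Hom}_A(M,A)$; hence $\textup{End}_{\mathcal{C}/[\mathcal{P}]}(\underline{M})=\underline{B}$. A final application of projectivization gives $\textup{mod-}(\mathcal{C}/[\mathcal{P}])\cong\textup{mod-}\underline{B}$, completing (b).

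The main obstacle is bookkeeping in this last step rather than any conceptual difficulty. Two points deserve care: first, verifying the ideal identity $[\mathcal{P}](M,M)=BeB$, which amounts to checking that a morphism factors through some projective $A$-module if and only if it factors through the projective summands of $M$ (these being exactly the indecomposable summands lying in $\textup{add}(A_A)$, encoded by $e$); and second, confirming that the projectivization equivalence still applies to $\mathcal{C}/[\mathcal{P}]$, for which one observes that $\underline{M}$ remains an additive generator and that passing to finitely presented functors only depends on $\textup{End}_{\mathcal{C}/[\mathcal{P}]}(\underline{M})=\underline{B}$. Everything else is a direct assembly of Proposition~\ref{prop3.1.3}, Theorem~\ref{thm3.2.1} and Lemma~\ref{3.2.5}.
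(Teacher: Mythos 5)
Your argument is correct, and part (a) is essentially the paper's own proof: both reduce $\textup{Mor}(\mathcal{C})/[\mathcal{U}]$ to $\textup{mod-}\mathcal{C}$ (via Proposition \ref{prop3.1.3}, equivalently via $\mathcal{R}$) and then projectivize using $\mathcal{C}=\textup{add}(M)$ and $\textup{Hom}_A(M,-)$. For part (b), however, you take a genuinely different route. The paper stays on the module side: it reuses the functor $\alpha$ from (a) and characterizes its essential image on $\textup{Epi}(\mathcal{C})$ by the chain of equivalences ``$f$ is an epimorphism $\Leftrightarrow \textup{Hom}_B(eB,\alpha(f))=0 \Leftrightarrow \alpha(f)e=0$'' (the first step quoted from Ringel--Zhang), concluding directly that the image is $\textup{mod-}B/BeB=\textup{mod-}\underline{B}$. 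You instead apply the general machinery of Theorem \ref{thm3.2.1}(a) with ambient exact category $\textup{mod-}A$ and $\mathcal{M}=\mathcal{C}\supseteq\textup{proj-}A$, pass from $\textup{Ad\textup{-}Epi}$ to $\textup{Epi}$ by Lemma \ref{3.2.5}, and then projectivize the stable category $\mathcal{C}/[\mathcal{P}]=\textup{add}(\underline{M})$ with $\textup{End}_{\mathcal{C}/[\mathcal{P}]}(\underline{M})=B/[\mathcal{P}](M,M)=B/BeB=\underline{B}$. The two approaches trade the same idempotent bookkeeping between the functor side ($\alpha(f)e=0$) and the category side ($[\mathcal{P}](M,M)=BeB$, which holds because every indecomposable projective is a summand of $A_A\in\mathcal{C}$, so $\textup{add}(eM)=\textup{proj-}A$). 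Yours has the advantage of not needing the external reference for the ``epimorphism iff $\textup{Hom}_B(eB,-)$ vanishes'' step and of exhibiting the proposition as a direct specialization of Theorem \ref{thm3.2.1}, i.e.\ the same pattern used later for Theorem \ref{thm4.1}; the paper's is more self-contained at the level of elementary module computations and makes the equivalence $f\mapsto\textup{Coker}\,\textup{Hom}_A(M,f)$ completely explicit.
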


\begin{proof}
(a) Since $\mathcal{C}=\textup{add}M$ and the functor $\textup{Hom}_A(M,-)$ induces an equivalence $\textup{add-}M\cong \textup{proj-}B$,  we have the following equivalence
$$\alpha: \textup{Mor}(\mathcal{C})/[\mathcal{U}]=\textup{Mor}(\mathcal{C})/\mathcal{R}\cong\textup{Mor}(\textup{proj-}B)/\mathcal{R}\cong\textup{mod-}B$$
which mapping a morphism $ f: X\rightarrow Y$ to CokerHom$_A(M,f)$.

(b)   Assume that $f: X\rightarrow Y$ is a morphism in $\mathcal{C}$, then
$$\begin{array}{rl}
      & f\ \textup{is\ an\ epimorphism} \\
    \Leftrightarrow &\textup{Hom}_B(\textup{Hom}_A(M,A),\textup{Coker}\textup{Hom}_A(M,f))=0 \\
     \Leftrightarrow &\textup{Hom}_B(eB, \alpha(f))=0 \\
    \Leftrightarrow & \alpha(f)e=0 \\
    \Leftrightarrow & \alpha(f)\in\textup{mod-}B/BeB=\textup{mod-}\underline{B}\\
  \end{array}$$
where the first if and only if condition follows from \cite[Section 6]{[RZ]}. Thus, the functor $\alpha$ induces an equivalence $\textup{Epi}(\mathcal{C})/[\mathcal{U}]\cong\textup{mod-}\underline{B}$.
\end{proof}

\begin{example}
Let $A$ be a representation-finite Artin $k$-algebra. Then $$\textup{Epi}(\textup{mod-}A)/[\mathcal{U}]\cong\textup{mod-}\underline{B}$$
 where $\underline{B}$ is the stable Auslander algebra of $A$. In the case when $A$ is self-injective, this equivalence was proved in \cite[Theorem 1]{[Eir]} using the language of submodule category.
\end{example}

\begin{example}
Let $A$ be an Artin algebra of CM-finite type. By definition, an algebra $A$ is called {\em CM-finite type} if the number of indecomposable Gorenstein projective modules up to isomorphisms is finite. Denote by  $\textup{Gproj-}A$ the full subcategory of $\textup{mod-}A$ formed by Gorenstein projective modules. Then  $$\textup{Epi}(\textup{Gproj-}A)/[\mathcal{U}]\cong\textup{mod-}\underline{B}$$ where $\underline{B}$ is the sable Auslander Cohen-Macaulay algebra of $A$.
\end{example}



\section{Abelian quotients of the categories of short exact sequences}

In this section, we assume that $(\mathcal{C},\mathcal{E})$ is an exact category.
We always view a short exact sequence $0\rightarrow X_1\xrightarrow{f_1} X_2\xrightarrow{f_2} X_3\rightarrow 0$, sometimes $(X_1\rightarrow X_2\rightarrow X_3)$ for short, as a complex $X_\bullet$ concentrated on degree 1,2 and 3. When we say $\varphi_\bullet: X_\bullet\rightarrow Y_\bullet$ is a morphism between two short exact sequences $X_\bullet$ and $Y_\bullet$, we means that the following diagram
$$\xymatrix{ 0\ar[r] & X_1 \ar[r]^{f_1}\ar[d]^{\varphi_1} & X_2 \ar[r]^{f_2}\ar[d]^{\varphi_2} & X_3 \ar[r]\ar[d]^{\varphi_3} & 0\\  0\ar[r] & Y_1 \ar[r]^{g_1} & Y_2 \ar[r]^{g_2} & Y_3 \ar[r] & 0\\
}$$ is commutative. We denote by $C^b(\mathcal{C})$ the category of bounded complexes over $\mathcal{C}$, by $\mathcal{E}(\mathcal{C})$ the full subcategory of $C^b(\mathcal{C})$ consisting of short exact sequences in $\mathcal{E}$, and by $S\mathcal{E}(\mathcal{C})$ the full subcategory of $\mathcal{E}(\mathcal{C})$ formed by split short exact sequences over $\mathcal{C}$.

Throughout this section, if $(\mathcal{C},\mathcal{E})$ has enough projectives, we always denote by $\mathcal{P}$ the full subcategory of $\mathcal{C}$ formed by all projectives. Similarly, if $(\mathcal{C},\mathcal{E})$ has enough injectives, we always denote by $\mathcal{I}$ the full subcategory of $\mathcal{C}$ formed by all injectives.

\subsection{Realizing quotients of the categories of short exact sequences as module categories}





\begin{thm}\label{thm4.1}
Let $(\mathcal{C},\mathcal{E})$ be an exact category and $X_\bullet:0\rightarrow X_1\xrightarrow{f_1} X_2\xrightarrow{f_2} X_3\rightarrow 0$ be a short exact sequence in $\mathcal{E}$.

\textup{(a)} If $(\mathcal{C},\mathcal{E})$ has enough projectives, 
 then we have the following equivalences:
$$\alpha_1:\mathcal{E}(\mathcal{C})/[S\mathcal{E(}\mathcal{C})]\cong \textup{mod-}\mathcal{C}/[\mathcal{P}],\ \ X_\bullet \mapsto \textup{Coker}(\mathcal{C}/[\mathcal{P}](-,f_2))$$
$$\alpha_2:\mathcal{E}(\mathcal{C})/[P\mathcal{E}(\mathcal{C})]\cong (\textup{mod-}(\mathcal{C}/[\mathcal{P}])^{\textup{op}})^{\textup{op}}, \ \ X_\bullet \mapsto \textup{Coker}(\mathcal{C}/[\mathcal{P}](f_2,-))$$ where $P\mathcal{E}(\mathcal{C})$ is the full subcategory of $\mathcal{E}(\mathcal{C})$ formed by $(0\rightarrow X\rightarrow X)\oplus(\Omega Y\rightarrow P\rightarrow Y)$.

\textup{(b)} If $(\mathcal{C},\mathcal{E})$ has enough injectives, 
then we have the following equivalences:
$$\beta_1:\mathcal{E}(\mathcal{C})/[S\mathcal{E}(\mathcal{C})]\cong (\textup{mod-}(\mathcal{C}/[\mathcal{I}])^{\textup{op}})^{\textup{op}}, \ \ X_\bullet\mapsto \textup{Coker}(\mathcal{C}/[\mathcal{I}](f_1,-))$$
$$\beta_2:\mathcal{E}(\mathcal{C})/[I\mathcal{E}(\mathcal{C})]\cong \textup{mod-}\mathcal{C}/[\mathcal{I}], \ \ X_\bullet\mapsto\textup{Coker}(\mathcal{C}/[\mathcal{I}](-,f_1))$$ where $I\mathcal{E}(\mathcal{C})$ is the full subcategory of $\mathcal{E}(\mathcal{C})$ formed by $(X\rightarrow X\rightarrow 0)\oplus( Y\rightarrow I\rightarrow \Omega^{-1}Y)$.
\end{thm}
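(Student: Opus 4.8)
The plan is to reduce everything to the admissible epimorphism category and then invoke Theorem \ref{thm3.2.1} with $\mathcal{M}=\mathcal{C}$. First I would use Remark \ref{cor3.4}: since $(\mathcal{C},\mathcal{E})$ has enough projectives, the assignment sending a short exact sequence $X_\bullet:0\to X_1\xrightarrow{f_1}X_2\xrightarrow{f_2}X_3\to 0$ to its admissible epimorphism $f_2:X_2\to X_3$ gives an equivalence $\mathcal{E}(\mathcal{C})\cong\textup{Ad-Epi}(\mathcal{C})$, with quasi-inverse reconstructing $X_\bullet$ from $f_2$ by taking the kernel. The whole point is that this equivalence carries the two distinguished subcategories of $\mathcal{E}(\mathcal{C})$ onto the subcategories $\mathcal{U}$ and $\mathcal{V}$ appearing in Theorem \ref{thm3.2.1}.

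Next I would match up the subcategories. A split short exact sequence decomposes as $(0\to M\to M)\oplus(M'\to M'\to 0)$, and under $X_\bullet\mapsto f_2$ these two summands go to $(M\xrightarrow{1}M)$ and $(M'\to 0)$ respectively; hence $S\mathcal{E}(\mathcal{C})$ is carried precisely onto $\mathcal{U}$. Similarly $(0\to X\to X)\mapsto(X\xrightarrow{1}X)$ and $(\Omega Y\to P\to Y)\mapsto(P\to Y)$ with $P\in\mathcal{P}$, so $P\mathcal{E}(\mathcal{C})$ is carried onto $\mathcal{V}$. Applying Theorem \ref{thm3.2.1} with $\mathcal{M}=\mathcal{C}$ then yields
$$\mathcal{E}(\mathcal{C})/[S\mathcal{E}(\mathcal{C})]\cong\textup{Ad-Epi}(\mathcal{C})/[\mathcal{U}]\cong\textup{mod-}(\mathcal{C}/[\mathcal{P}])$$
and
$$\mathcal{E}(\mathcal{C})/[P\mathcal{E}(\mathcal{C})]\cong\textup{Ad-Epi}(\mathcal{C})/[\mathcal{V}]\cong(\textup{mod-}(\mathcal{C}/[\mathcal{P}])^{\textup{op}})^{\textup{op}},$$
which proves the two equivalences of part (a) once the functors are identified. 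For the explicit description of $\alpha_1$, I would trace $X_\bullet\mapsto f_2$ through the functor of Lemma \ref{lem3.2.3}(a) (passing to $\underline{f_2}$ in $\textup{Mor}(\mathcal{C}/[\mathcal{P}])$) and then through the functor of Lemma \ref{prop3.1}(a), landing on $\textup{Coker}(\mathcal{C}/[\mathcal{P}](-,f_2))$; the computation for $\alpha_2$ is the covariant analogue using Lemma \ref{lem3.2.3}(b) and Lemma \ref{prop3.1}(b).

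For part (b) I would not redo the argument but instead pass to the opposite exact category $(\mathcal{C}^{\textup{op}},\mathcal{E}^{\textup{op}})$, which has enough projectives precisely because $(\mathcal{C},\mathcal{E})$ has enough injectives, and in which the projectives are the injectives $\mathcal{I}$ of $\mathcal{C}$. Under the canonical identification $\mathcal{E}(\mathcal{C}^{\textup{op}})\cong\mathcal{E}(\mathcal{C})^{\textup{op}}$ obtained by reversing each sequence, the admissible epimorphism of the reversed sequence is $f_1$, split sequences go to split sequences, and $I\mathcal{E}(\mathcal{C})$ corresponds to the $P\mathcal{E}$-subcategory of $\mathcal{C}^{\textup{op}}$. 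Applying part (a) to $\mathcal{C}^{\textup{op}}$ and then taking opposite categories converts $\alpha_1,\alpha_2$ into $\beta_1,\beta_2$, with $\textup{Coker}(\mathcal{C}/[\mathcal{P}](-,f_2))$ turning into $\textup{Coker}(\mathcal{C}/[\mathcal{I}](f_1,-))$ and likewise for the covariant defect.

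I expect the main obstacle to be the bookkeeping in these identifications rather than any deep new idea: one must check that $X_\bullet\mapsto f_2$ is genuinely an equivalence $\mathcal{E}(\mathcal{C})\cong\textup{Ad-Epi}(\mathcal{C})$ respecting morphisms and not merely objects, that it sends $S\mathcal{E}(\mathcal{C})$ and $P\mathcal{E}(\mathcal{C})$ exactly onto $\mathcal{U}$ and $\mathcal{V}$ (both inclusions), and that the composite functors reproduce the stated cokernel formulas up to natural isomorphism. The duality step in (b) also demands care to confirm that the opposite of the module category $\textup{mod-}(\mathcal{C}/[\mathcal{P}])$ built over $\mathcal{C}^{\textup{op}}$ is exactly $(\textup{mod-}(\mathcal{C}/[\mathcal{I}])^{\textup{op}})^{\textup{op}}$, keeping the two layers of ``op'' consistent throughout.
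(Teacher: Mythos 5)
Your proposal follows exactly the paper's own route: the paper proves (a) by identifying $\mathcal{E}(\mathcal{C})/[S\mathcal{E}(\mathcal{C})]$ with $\textup{Ad-Epi}(\mathcal{C})/[\mathcal{U}]$ and $\mathcal{E}(\mathcal{C})/[P\mathcal{E}(\mathcal{C})]$ with $\textup{Ad-Epi}(\mathcal{C})/[\mathcal{V}]$ via $X_\bullet\mapsto f_2$ and then invoking Theorem \ref{thm3.2.1} with $\mathcal{M}=\mathcal{C}$, and it obtains (b) by duality. Your write-up is correct and in fact spells out the subcategory matching and the tracing of the cokernel formulas in more detail than the paper does.
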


\begin{proof}
We only prove (a).  We have two equivalences $\mathcal{E}(\mathcal{C})/[S\mathcal{E(}\mathcal{C})]\cong \textup{Ad-Epi}(\mathcal{C})/[\mathcal{U}]$ and $\mathcal{E}(\mathcal{C})/[P\mathcal{E}(\mathcal{C})]\cong \textup{Ad-Epi}(\mathcal{C})/[\mathcal{V}]$, where $\mathcal{U}$ (resp. $\mathcal{V}$) is the full subcategory of $\textup{Ad-Epi}(\mathcal{C})$ consisting of $(X\xrightarrow{1} X)\oplus (Y\rightarrow 0)$ (resp. $(X\xrightarrow{1} X)\oplus (P\rightarrow Y)$ with $P\in\mathcal{P}$). Then (a) follows from Theorem  \ref{thm3.2.1}.
\end{proof}

\begin{cor}\label{cor4.2}
Let $(\mathcal{C},\mathcal{E})$ be a Frobenius  category, then all the quotient categories $\mathcal{E}(\mathcal{C})/[S\mathcal{E}(\mathcal{C})]$, $\mathcal{E}(\mathcal{C})/[P\mathcal{E}(\mathcal{C})]$  and $\mathcal{E}(\mathcal{C})/[I\mathcal{E}(\mathcal{C})]$ are equivalent to abelian category $\textup{mod-}\mathcal{C}/[\mathcal{P}]$.
\end{cor}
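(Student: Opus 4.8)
The plan is to deduce all three equivalences directly from Theorem~\ref{thm4.1}, the point being that a Frobenius category satisfies the hypotheses of \emph{both} parts (a) and (b) simultaneously, and moreover has $\mathcal{P}=\mathcal{I}$, so that $\mathcal{C}/[\mathcal{P}]=\mathcal{C}/[\mathcal{I}]$ is one and the same stable category. First I would dispose of the two immediate cases. The equivalence $\alpha_1$ of Theorem~\ref{thm4.1}(a) gives at once $\mathcal{E}(\mathcal{C})/[S\mathcal{E}(\mathcal{C})]\cong\textup{mod-}\mathcal{C}/[\mathcal{P}]$, whose target is abelian as recalled in Section~2. Next, the equivalence $\beta_2$ of Theorem~\ref{thm4.1}(b), combined with the Frobenius identity $\mathcal{I}=\mathcal{P}$, yields $\mathcal{E}(\mathcal{C})/[I\mathcal{E}(\mathcal{C})]\cong\textup{mod-}\mathcal{C}/[\mathcal{I}]=\textup{mod-}\mathcal{C}/[\mathcal{P}]$.

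The only case requiring an extra move is $P\mathcal{E}(\mathcal{C})$, for which $\alpha_2$ only delivers $\mathcal{E}(\mathcal{C})/[P\mathcal{E}(\mathcal{C})]\cong(\textup{mod-}(\mathcal{C}/[\mathcal{P}])^{\textup{op}})^{\textup{op}}$. Here I would invoke the observation forced by the Frobenius hypothesis: since $S\mathcal{E}(\mathcal{C})$ is the subcategory appearing in both $\alpha_1$ and $\beta_1$, and since $\mathcal{P}=\mathcal{I}$, the equivalences $\alpha_1$ and $\beta_1$ share the common source $\mathcal{E}(\mathcal{C})/[S\mathcal{E}(\mathcal{C})]$. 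Composing a quasi-inverse of $\beta_1$ with $\alpha_1$ therefore produces an equivalence $(\textup{mod-}(\mathcal{C}/[\mathcal{P}])^{\textup{op}})^{\textup{op}}\cong\textup{mod-}\mathcal{C}/[\mathcal{P}]$, and composing this with $\alpha_2$ gives $\mathcal{E}(\mathcal{C})/[P\mathcal{E}(\mathcal{C})]\cong\textup{mod-}\mathcal{C}/[\mathcal{P}]$. Transitivity then makes the three quotients pairwise equivalent.

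The conceptual heart of the argument, and the step I expect to be the main obstacle to explain cleanly, is precisely this coincidence $\textup{mod-}\mathcal{C}/[\mathcal{P}]\cong(\textup{mod-}(\mathcal{C}/[\mathcal{P}])^{\textup{op}})^{\textup{op}}$: it is not a formality for a general stable category, but becomes available exactly because the Frobenius condition renders $\mathcal{C}/[\mathcal{P}]$ triangulated and at the same time validates the projective-side construction $\alpha_1$ and the injective-side construction $\beta_1$ on the identical quotient. There is no serious computation; all the effort is bookkeeping, keeping track of the sources and targets of the four functors $\alpha_1,\alpha_2,\beta_1,\beta_2$ and matching them up under $\mathcal{P}=\mathcal{I}$. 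Finally, to support the accompanying claim that these equivalences are induced by left and right rotations of short exact sequences, I would, separately, write down the rotation functors that send a sequence $(X_1\to X_2\to X_3)$ to its rotates built from projective covers and injective envelopes, and check that they interchange $S\mathcal{E}(\mathcal{C})$, $P\mathcal{E}(\mathcal{C})$ and $I\mathcal{E}(\mathcal{C})$; this is the content of Remark~\ref{rem4.1} and is not needed for the bare equivalence asserted here.
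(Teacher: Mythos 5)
Your proposal is correct and follows exactly the route the paper intends: the corollary is stated without proof as an immediate consequence of Theorem~\ref{thm4.1} under $\mathcal{P}=\mathcal{I}$, with the $P\mathcal{E}(\mathcal{C})$ case handled by composing $\alpha_2^{-1}$ with $\beta_1$ (precisely the rotation $\alpha=\alpha_2^{-1}\beta_1$ recorded in Remark~\ref{rem4.1}), which is the same composition you describe. The only quibble is that your closing gloss attributes the coincidence $\textup{mod-}\mathcal{C}/[\mathcal{P}]\cong(\textup{mod-}(\mathcal{C}/[\mathcal{P}])^{\textup{op}})^{\textup{op}}$ to the triangulated structure of the stable category, whereas all that is actually used is that $\alpha_1$ and $\beta_1$ share the source $\mathcal{E}(\mathcal{C})/[S\mathcal{E}(\mathcal{C})]$ once $\mathcal{I}=\mathcal{P}$ — but this does not affect the correctness of the argument.
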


\begin{rem} \label{rem4.1}
Let $(\mathcal{C},\mathcal{E})$ be a Frobenius category. Suppose that $X_\bullet:0\rightarrow X_1\rightarrow X_2\rightarrow X_3\rightarrow 0$ is a short exact sequence in $\mathcal{E}$, then we have the following commutative diagram
 $$\xymatrix{
 0\ar[r] & \Omega X_3\ar[r]\ar[d] & P\ar[r]\ar[d] & X_3 \ar[r]\ar@{=}[d] & 0\\
 0\ar[r] & X_1\ar[r]\ar@{=}[d] & X_2\ar[r]\ar[d] & X_3 \ar[r]\ar[d] & 0\\
 0\ar[r] & X_1\ar[r] & I\ar[r] & \Omega^{-1}X_1 \ar[r] & 0}$$ with rows in $\mathcal{E}$. Hence, the equivalences between the quotient categories in Corollary \ref{cor4.2} can be described as the following rotations:
$$\begin{array}{ll}
    \alpha=\alpha_2^{-1}\beta_1: & \mathcal{E}(\mathcal{C})/[S\mathcal{E}(\mathcal{C})]\cong \mathcal{E}(\mathcal{C})/[P\mathcal{E}(\mathcal{C})], \\
     & X_\bullet \mapsto (\Omega X_3\rightarrow P\oplus X_1\rightarrow X_2)
  \end{array}
$$$$\begin{array}{ll}
    \beta=\beta_2^{-1}\alpha_1: & \mathcal{E}(\mathcal{C})/[S\mathcal{E}(\mathcal{C})]\cong \mathcal{E}(\mathcal{C})/[I\mathcal{E}(\mathcal{C})]. \\
     & X_\bullet \mapsto ( X_2\rightarrow X_3\oplus I\rightarrow \Omega^{-1}X_1)
  \end{array}
$$
\end{rem}

By the dual of Lemma \ref{3.2.5}, the following is an equivalent statement of Corollary \ref{cor4.2}. We can compare it with \cite[Theorem 1]{[Eir]}.

\begin{cor}
Let $(\mathcal{C},\mathcal{E})$ be a Frobenius category. Denote by $\mathcal{P}$ the full subcategory of projective-injective objects in $\mathcal{C}$, by $\mathcal{U}_1$ the full subcategory of $\textup{Mono}(\mathcal{C})$ consisting of $(X\xrightarrow{1} X)\oplus (0\rightarrow Y)$, by $\mathcal{U}_2$ the full subcategory of $\textup{Mono}(\mathcal{C})$ consisting of $(X\xrightarrow{1} X)\oplus (Y\rightarrow P)$ with $P\in\mathcal{P}$ and by $\mathcal{U}_3$ the full subcategory of $\textup{Mono}(\mathcal{C})$ consisting of $(0\rightarrow X)\oplus (Y\rightarrow P)$ with $P\in\mathcal{P}$. Then all the quotient categories $\textup{Mono}(\mathcal{C})/[\mathcal{U}_1]$, $\textup{Mono}(\mathcal{C})/[\mathcal{U}_2]$  and $\textup{Mono}(\mathcal{C})/[\mathcal{U}_3]$ are equivalent to $\textup{mod-}\mathcal{C}/[\mathcal{P}]$.
\end{cor}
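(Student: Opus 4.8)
The plan is to deduce this statement from Corollary~\ref{cor4.2} by transporting it along an equivalence $\mathcal{E}(\mathcal{C})\cong\textup{Mono}(\mathcal{C})$ and then checking that the three distinguished subcategories of $\mathcal{E}(\mathcal{C})$ are carried precisely to $\mathcal{U}_1,\mathcal{U}_2,\mathcal{U}_3$.

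First I would set up the equivalence. As $(\mathcal{C},\mathcal{E})$ is Frobenius it has enough injectives, so the dual of Remark~\ref{cor3.4} together with the dual of Lemma~\ref{3.2.5} supplies an equivalence
$$\Psi:\mathcal{E}(\mathcal{C})\xrightarrow{\ \sim\ }\textup{Mono}(\mathcal{C}),\qquad (X_1\xrightarrow{f_1}X_2\xrightarrow{f_2}X_3)\mapsto (X_1\xrightarrow{f_1}X_2),$$
a morphism of short exact sequences being sent to its left-hand square; this is full and faithful because $f_2$ is a cokernel of $f_1$, so the third component of any commutative ladder is uniquely determined by the first two. Since an equivalence sends the ideal generated by a subcategory $\mathcal{D}$ to the ideal generated by $\Psi(\mathcal{D})$, it induces $\mathcal{E}(\mathcal{C})/[\mathcal{D}]\cong\textup{Mono}(\mathcal{C})/[\Psi(\mathcal{D})]$ for every full subcategory $\mathcal{D}$.

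Second I would match the subcategories summand by summand, using $\mathcal{P}=\mathcal{I}$. A split sequence $0\to A\to A\oplus B\to B\to0$ goes to $(A\xrightarrow{1}A)\oplus(0\to B)$, so $\Psi(S\mathcal{E}(\mathcal{C}))=\mathcal{U}_1$. For $I\mathcal{E}(\mathcal{C})$ the generator $(X\to X\to 0)$ goes to $(X\xrightarrow{1}X)$ and $(Y\to I\to\Omega^{-1}Y)$ goes to the inclusion $(Y\to I)$ with $I$ injective, hence projective-injective; as $I$ ranges over $\mathcal{I}=\mathcal{P}$ these exhaust the admissible monomorphisms into projective-injectives, so $\Psi(I\mathcal{E}(\mathcal{C}))=\mathcal{U}_2$. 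For $P\mathcal{E}(\mathcal{C})$ the generator $(0\to X\to X)$ goes to $(0\to X)$ and $(\Omega Y\to P\to Y)$ goes to $(\Omega Y\to P)$ with $P$ projective; here I would observe that every admissible monomorphism into a projective-injective $P$ is, up to isomorphism, of the form $\Omega Y\to P$ (take $Y$ to be its cokernel, so that the source is a syzygy of $Y$), whence $\Psi(P\mathcal{E}(\mathcal{C}))=\mathcal{U}_3$.

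Finally I would combine these to obtain $\textup{Mono}(\mathcal{C})/[\mathcal{U}_1]\cong\mathcal{E}(\mathcal{C})/[S\mathcal{E}(\mathcal{C})]$, $\textup{Mono}(\mathcal{C})/[\mathcal{U}_2]\cong\mathcal{E}(\mathcal{C})/[I\mathcal{E}(\mathcal{C})]$ and $\textup{Mono}(\mathcal{C})/[\mathcal{U}_3]\cong\mathcal{E}(\mathcal{C})/[P\mathcal{E}(\mathcal{C})]$, each right-hand side being $\textup{mod-}\mathcal{C}/[\mathcal{P}]$ by Corollary~\ref{cor4.2}. The routine part is the construction of $\Psi$ and the split-sequence case; the step demanding genuine care is the Frobenius bookkeeping identifying $I\mathcal{E}(\mathcal{C})$ with $\mathcal{U}_2$ and $P\mathcal{E}(\mathcal{C})$ with $\mathcal{U}_3$, namely verifying that $\{\,Y\to I : I\in\mathcal{I}\,\}$ and $\{\,\Omega Y\to P : P\in\mathcal{P}\,\}$ both describe exactly the admissible monomorphisms into projective-injective objects.
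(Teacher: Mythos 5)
Your proposal is correct and follows essentially the same route as the paper, which simply asserts that the corollary is an equivalent restatement of Corollary \ref{cor4.2} via the dual of Lemma \ref{3.2.5}; you have merely spelled out the details the paper leaves implicit, namely the equivalence $\mathcal{E}(\mathcal{C})\cong\textup{Mono}(\mathcal{C})$ and the identification of $S\mathcal{E}(\mathcal{C})$, $I\mathcal{E}(\mathcal{C})$, $P\mathcal{E}(\mathcal{C})$ with $\mathcal{U}_1$, $\mathcal{U}_2$, $\mathcal{U}_3$ using $\mathcal{P}=\mathcal{I}$. The subcategory matching, including the observation that admissible monomorphisms into projective-injectives are exactly the $(\Omega Y\rightarrow P)$ (respectively $(Y\rightarrow I)$) up to isomorphism, is accurate.
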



\subsection{Abelian structure}
Let $(\mathcal{C},\mathcal{E})$ be an exact category with enough projectives. Then by Theorem \ref{thm4.1}, $\mathcal{E}(\mathcal{C})/[S\mathcal{E}(\mathcal{C})]$ is equivalent to $\textup{mod-}\mathcal{C}/[\mathcal{P}]$ thus  has an abelian structure.  In this subsection, we will prove that for general exact category $(\mathcal{C},\mathcal{E})$, the quotient category $\mathcal{E}(\mathcal{C})/[S\mathcal{E}(\mathcal{C})]$ always has an abelian structure given by pushout and pullback diagrams. 

\begin{lem}\label{lem0}
Assume that the following diagram
$$\xymatrix{X_\bullet\ar[d]^{\varphi_\bullet} & 0\ar[r] & X_1 \ar[r]^{f_1}\ar[d]^{\varphi_1} & X_2 \ar[r]^{f_2}\ar[d]^{\varphi_2} & X_3 \ar[r]\ar[d]^{\varphi_3} & 0\\
Y_\bullet & 0\ar[r] & Y_1 \ar[r]^{g_1} & Y_2 \ar[r]^{g_2} & Y_3 \ar[r] & 0\\
}$$
is commutative with rows in $\mathcal{E}$. Then the following diagram
$$\begin{gathered}\xymatrix{
K(\varphi_\bullet)\ar[d]^{k_\bullet} & 0\ar[r] & X_1 \ar[r]^{\left(
                                                                                        \begin{smallmatrix}
                                                                                          f_1  \\
                                                                                           \varphi_1\\
                                                                                        \end{smallmatrix}
                                                                                      \right)}\ar@{=}[d] & X_2\oplus Y_1 \ar[r]^{\left(
                                                                                        \begin{smallmatrix}
                                                                                          a_1 & -h_1 \\
                                                                                        \end{smallmatrix}
                                                                                      \right)}\ar[d]^{(1,0)
 } & Z \ar[r]\ar[d]^{h_2} & 0\\
X_\bullet\ar[d]^{\pi_\bullet} & 0\ar[r] & X_1 \ar[r]^{f_1}\ar[d]^{\varphi_1\ \ \  (I)} & X_2 \ar[r]^{f_2}\ar[d]^{a_1} & X_3 \ar[r]\ar@{=}[d] & 0\\
I(\varphi_\bullet)\ar[d]^{i_\bullet} & 0\ar[r] & Y_1 \ar[r]^{h_1}\ar@{=}[d] & Z \ar[r]^{h_2}\ar[d]^{a_2\ \ \ \  (II)} & X_3 \ar[r]\ar[d]^{\varphi_3} & 0 \\
Y_\bullet\ar[d]^{c_\bullet} & 0\ar[r] & Y_1 \ar[r]^{g_1}\ar[d]^{h_1} & Y_2 \ar[r]^{g_2}\ar[d]^{\left(
                                                                                                                            \begin{smallmatrix}
                                                                                                                              0 \\
                                                                                                                              1 \\
                                                                                                                            \end{smallmatrix}
                                                                                                                          \right)} & Y_3 \ar[r]\ar@{=}[d] & 0\\
C(\varphi_\bullet) & 0\ar[r] &  Z \ar[r]^{\left(
                                                                                        \begin{smallmatrix}
                                                                                          h_2  \\
                                                                                          a_2\\
                                                                                        \end{smallmatrix}
                                                                                      \right)} & X_3\oplus Y_2 \ar[r]^{(-\varphi_3, g_2)} & Y_3 \ar[r] & 0\\
}\end{gathered}\eqno{(4.1)}$$
is commutative with rows in $\mathcal{E}$, moreover,  $\varphi_\bullet=i_\bullet\pi_\bullet$.
\end{lem}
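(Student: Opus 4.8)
The plan is to build the entire diagram (4.1) from a single object, namely the object $Z$ together with the maps $a_1,h_1,h_2$ that axiom (E2) attaches to the short exact sequence $X_\bullet$ and the morphism $\varphi_1\colon X_1\to Y_1$. Concretely, applying (E2) to $0\to X_1\xrightarrow{f_1}X_2\xrightarrow{f_2}X_3\to 0$ and $\varphi_1$ produces the pushout square $(I)$ together with the sequence $I(\varphi_\bullet)\colon 0\to Y_1\xrightarrow{h_1}Z\xrightarrow{h_2}X_3\to 0$ in $\mathcal{E}$ (the ``second row'' in (E2)) and the sequence $K(\varphi_\bullet)\colon 0\to X_1\xrightarrow{\binom{f_1}{\varphi_1}}X_2\oplus Y_1\xrightarrow{(a_1,-h_1)}Z\to 0$ in $\mathcal{E}$ (the explicit sequence named in (E2)). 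These are rows one, two and three of (4.1). The map $h_2$ is characterized by $h_2a_1=f_2$ and $h_2h_1=0$, and a routine check then shows that $k_\bullet=(1,(1,0),h_2)$ and $\pi_\bullet=(\varphi_1,a_1,1)$ are morphisms of short exact sequences.

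Next I would produce the map $a_2\colon Z\to Y_2$ from the universal property of the pushout $(I)$ (pushouts along admissible monomorphisms exist in an exact category). The morphism $\varphi_\bullet$ supplies $\varphi_2\colon X_2\to Y_2$ and $g_1\colon Y_1\to Y_2$ with $\varphi_2 f_1=g_1\varphi_1$, which is exactly the commutativity of the left-hand square of $\varphi_\bullet$, so there is a unique $a_2$ with $a_2a_1=\varphi_2$ and $a_2h_1=g_1$. From these two identities I would verify that square $(II)$ commutes, i.e. $g_2a_2=\varphi_3h_2$, by testing against the jointly epic pair $a_1,h_1$ (jointly epic because $(a_1,-h_1)$ is an admissible epimorphism): on $a_1$ one gets $g_2\varphi_2=\varphi_3 f_2=\varphi_3 h_2a_1$, and on $h_1$ one gets $g_2g_1=0=\varphi_3h_2h_1$. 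The same identities immediately give that $i_\bullet=(1,a_2,\varphi_3)$ and $c_\bullet=(h_1,\binom{0}{1},1)$ are morphisms of short exact sequences, and comparing components degreewise ($\varphi_1=1\cdot\varphi_1$, $\varphi_2=a_2a_1$, $\varphi_3=\varphi_3\cdot 1$) shows $\varphi_\bullet=i_\bullet\pi_\bullet$.

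It remains to show that the bottom row $C(\varphi_\bullet)\colon 0\to Z\xrightarrow{\binom{h_2}{a_2}}X_3\oplus Y_2\xrightarrow{(-\varphi_3,g_2)}Y_3\to 0$ lies in $\mathcal{E}$; this is the main obstacle, because $Z$ was built from the ``top'' data as a pushout, whereas this bottom row is naturally the sequence that axiom (E2)$^{\mathrm{op}}$ attaches to the ``bottom'' data $(Y_\bullet,\varphi_3)$. To reconcile the two I would apply (E2)$^{\mathrm{op}}$ to $0\to Y_1\xrightarrow{g_1}Y_2\xrightarrow{g_2}Y_3\to 0$ and $\varphi_3$, obtaining a pullback $Y'$ together with $0\to Y_1\to Y'\xrightarrow{p'}X_3\to 0$ and $0\to Y'\xrightarrow{\binom{p'}{\phi'}}X_3\oplus Y_2\xrightarrow{(\varphi_3,-g_2)}Y_3\to 0$, both in $\mathcal{E}$. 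The universal property of this pullback yields a map $u\colon Z\to Y'$ with $p'u=h_2$ and $\phi'u=a_2$, and since $u$ restricts to the identity on the common kernel $Y_1$ and induces the identity on the common quotient $X_3$, the short five lemma for exact categories \cite{[B]} forces $u$ to be an isomorphism. Hence $C(\varphi_\bullet)$ is isomorphic, up to the harmless overall sign in the cokernel map (note $(-\varphi_3,g_2)=-(\varphi_3,-g_2)$ and $\mathcal{E}$ is closed under isomorphism), to the sequence produced by (E2)$^{\mathrm{op}}$, so it belongs to $\mathcal{E}$. This completes the verification that all five rows of (4.1) lie in $\mathcal{E}$, that the diagram commutes, and that $\varphi_\bullet=i_\bullet\pi_\bullet$.
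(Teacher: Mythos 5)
Your proof is correct, and the construction is the same one the paper uses; the difference is purely in how much is delegated to the literature. The paper's own proof is two sentences: it invokes \cite[Proposition 3.1]{[B]} to obtain the factorization $\varphi_\bullet=i_\bullet\pi_\bullet$ through $I(\varphi_\bullet)$ with squares $(I)$ and $(II)$ bicartesian, and then observes that $K(\varphi_\bullet)$ and $C(\varphi_\bullet)$ lie in $\mathcal{E}$ precisely because bicartesian squares along admissible monomorphisms (resp.\ epimorphisms) have short exact diagonal sequences. You instead reprove that cited result from the axioms: $(\mathrm{E2})$ hands you rows one through three for free, the cokernel property of $(a_1,-h_1)$ produces $a_2$, and the joint epimorphy of $a_1,h_1$ gives the commutativity of $(II)$. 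The one genuinely substantive step you supply that the citation hides is the last one: identifying the pushout $Z$ with the pullback $Y'$ coming from $(\mathrm{E2})^{\mathrm{op}}$ applied to $(Y_\bullet,\varphi_3)$ via the comparison map $u$ and the short five lemma, which is exactly the content of ``$(II)$ is also a pullback'' and is what puts $C(\varphi_\bullet)$ in $\mathcal{E}$. The only point worth spelling out more carefully is why $u h_1$ equals the kernel inclusion $Y_1\to Y'$ (test against the monomorphism $\left(\begin{smallmatrix} p' \\ \phi' \end{smallmatrix}\right)$ using $h_2h_1=0$ and $a_2h_1=g_1$), so that the short five lemma applies; everything else is routine and checks out. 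Your version buys self-containedness at the cost of length; the paper's buys brevity by outsourcing the pushout--pullback comparison to B\"uhler.
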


\begin{proof}
By \cite[Proposition 3.1]{[B]}, the morphism $\varphi_\bullet$ factors through some short exact sequence $I(\varphi_\bullet)$ in $\mathcal{E}$ in such a way that $\varphi_\bullet=i_\bullet\pi_\bullet$ and the squares (I) and (II) are both pushout and pullback diagrams. The sequences $K(\varphi_\bullet)$ and $C(\varphi_\bullet)$ belong to $\mathcal{E}$ since the squares (I) and (II) are both pushout and pullback diagrams.
\end{proof}

\begin{lem}\label{lem1}(\cite[Proposition 1.1]{[Gen]})
Let $\varphi_\bullet: X_\bullet\rightarrow Y_\bullet$ be a morphism in $\mathcal{E}(\mathcal{C})$. Then the following statements are equivalent.

\textup{(a)} There is a morphism $p_1:X_2\rightarrow Y_1$ such that $\varphi_1=p_1f_1$.

\textup{(b)} There is a morphism $p_2:X_3\rightarrow Y_2$ such that $\varphi_3=g_2p_2$.

\textup{(c)} The morphism $\varphi_\bullet$ is homotopic to zero.

\textup{(d)} The morphism $\varphi_\bullet$ factors through a split short exact sequence.

\textup{(e)} The morphism $\underline{\varphi_\bullet}=0$ in $\mathcal{E}(\mathcal{C})/[S\mathcal{E}(\mathcal{C})]$.
\end{lem}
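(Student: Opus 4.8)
The plan is to fix, once and for all, the equations that a null-homotopy of a three-term complex must satisfy, and then route all five conditions through them. Viewing $X_\bullet$ and $Y_\bullet$ as complexes concentrated in degrees $1,2,3$, a homotopy $\varphi_\bullet\simeq 0$ is precisely a pair of maps $p_1\colon X_2\to Y_1$ and $p_2\colon X_3\to Y_2$ with
\[
\varphi_1=p_1f_1,\qquad \varphi_2=g_1p_1+p_2f_2,\qquad \varphi_3=g_2p_2 .
\]
Reading off the first and third relations gives (c)$\Rightarrow$(a) and (c)$\Rightarrow$(b) for free, so the real work is to manufacture a full homotopy from each of the partial data in (a) and (b), and to connect (c) with (d) and (e).

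For (a)$\Rightarrow$(c) I would set $\psi:=\varphi_2-g_1p_1\colon X_2\to Y_2$ and check $\psi f_1=\varphi_2 f_1-g_1p_1f_1=g_1\varphi_1-g_1\varphi_1=0$, using commutativity of the left square together with $\varphi_1=p_1f_1$. Since $(f_1,f_2)$ is a kernel--cokernel pair, $f_2$ is a cokernel of $f_1$, so $\psi$ factors uniquely as $\psi=p_2f_2$, which is exactly the middle homotopy relation. The remaining relation $\varphi_3=g_2p_2$ then follows by cancelling the epimorphism $f_2$ in $g_2p_2f_2=g_2\psi=g_2\varphi_2=\varphi_3f_2$ (where $g_2g_1=0$ and the right square commutes). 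The implication (b)$\Rightarrow$(c) is strictly dual: starting from $p_2$ one forms $\varphi_2-p_2f_2$, factors it through the kernel $g_1=\ker g_2$ to obtain $p_1$, and cancels the monomorphism $g_1$. Together these give (a)$\Leftrightarrow$(b)$\Leftrightarrow$(c).

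The equivalence (d)$\Leftrightarrow$(e) is immediate, since $\underline{\varphi_\bullet}=0$ in $\mathcal{E}(\mathcal{C})/[S\mathcal{E}(\mathcal{C})]$ means exactly that $\varphi_\bullet$ factors through an object of $S\mathcal{E}(\mathcal{C})$. For (d)$\Rightarrow$(a), if $\varphi_\bullet=\beta_\bullet\alpha_\bullet$ factors through a split sequence $S_\bullet\colon 0\to S_1\xrightarrow{t_1}S_2\xrightarrow{t_2}S_3\to 0$ with retraction $rt_1=1_{S_1}$, then $p_1:=\beta_1 r\alpha_2$ satisfies $p_1f_1=\beta_1 r t_1\alpha_1=\beta_1\alpha_1=\varphi_1$, using only that $\alpha_\bullet$ is a chain map.

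The one step demanding a genuine construction, and which I expect to be the main obstacle, is (c)$\Rightarrow$(d): I would exhibit the split short exact sequence
\[
S_\bullet\colon\quad 0\to X_2\xrightarrow{\left(\begin{smallmatrix}1\\ f_2\end{smallmatrix}\right)}X_2\oplus X_3\xrightarrow{(-f_2,\,1)}X_3\to 0
\]
together with the chain maps $\alpha_\bullet=\bigl(f_1,\,\left(\begin{smallmatrix}1\\ 0\end{smallmatrix}\right),\,-1\bigr)\colon X_\bullet\to S_\bullet$ and $\beta_\bullet=\bigl(p_1,\,(\varphi_2,\,-p_2),\,-\varphi_3\bigr)\colon S_\bullet\to Y_\bullet$. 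After verifying that $S_\bullet$ is genuinely split exact and that $\alpha_\bullet,\beta_\bullet$ are chain maps, a direct check gives $\beta_\bullet\alpha_\bullet=\varphi_\bullet$, where the only two nontrivial identities reduce precisely to the homotopy relations $\varphi_2=g_1p_1+p_2f_2$ and $\varphi_3=g_2p_2$ supplied by (c). This closes the cycle and yields the equivalence of all five statements; the subtlety to watch is keeping the signs in the matrix entries consistent so that both squares of $\alpha_\bullet$ and of $\beta_\bullet$ commute simultaneously.
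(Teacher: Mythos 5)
Your argument is correct and complete. Note, though, that the paper does not prove this lemma at all: it is quoted from Gentle's \cite[Proposition 1.1]{[Gen]} as a black box, so there is no in-paper proof to match your route against. What you have written is a self-contained verification, and every step checks out: the null-homotopy relations $\varphi_1=p_1f_1$, $\varphi_2=g_1p_1+p_2f_2$, $\varphi_3=g_2p_2$ are the right ones for complexes concentrated in degrees $1,2,3$; the passage (a)$\Rightarrow$(c) correctly uses that $f_2$ is a cokernel of $f_1$ (to produce $p_2$) and an epimorphism (to cancel in $g_2p_2f_2=\varphi_3f_2$), and (b)$\Rightarrow$(c) is its exact dual via $g_1=\ker g_2$; the implication (d)$\Rightarrow$(a) via $p_1=\beta_1 r\alpha_2$ is clean. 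For (c)$\Rightarrow$(d), your sequence $S_\bullet$ is genuinely an object of $S\mathcal{E}(\mathcal{C})$ --- it is the image of the canonical split sequence $0\to X_2\to X_2\oplus X_3\to X_3\to 0$ under the automorphism $\left(\begin{smallmatrix}1&0\\ f_2&1\end{smallmatrix}\right)$ of the middle term --- and I verified that both $\alpha_\bullet$ and $\beta_\bullet$ are chain maps (the only inputs being the two squares of $\varphi_\bullet$ and the three homotopy identities) and that $\beta_\bullet\alpha_\bullet=\varphi_\bullet$ degreewise. The implication cycle (a)$\Leftrightarrow$(b)$\Leftrightarrow$(c)$\Rightarrow$(d)$\Rightarrow$(a) together with (d)$\Leftrightarrow$(e) closes all five equivalences. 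A standard alternative for (c)$\Rightarrow$(d) is to factor through the canonical split sequence on $X_2\oplus Y_1$ or through the mapping-cylinder-type object; your choice of $X_2\oplus X_3$ with the shear matrix is equally valid and arguably the most economical, since it reuses only data already present in $X_\bullet$.
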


\begin{lem}\label{lem4.2}
 Let $\varphi_\bullet:X_\bullet\rightarrow Y_\bullet$ be a morphism in $\mathcal{E}(\mathcal{C})$.  Then $\underline{\varphi_\bullet}$ is a monomorphism in $\mathcal{E}(\mathcal{C})/[S\mathcal{E}(\mathcal{C})]$ if and only if
$\left(
                                                                                        \begin{smallmatrix}
                                                                                          f_1 \\
                                                                                          \varphi_1 \\
                                                                                        \end{smallmatrix}
                                                                                      \right)$ is a section.
\end{lem}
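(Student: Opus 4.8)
The plan is to reduce both implications to the homotopy criterion of Lemma \ref{lem1}, which tells us that $\underline{\psi_\bullet}=0$ in $\mathcal{E}(\mathcal{C})/[S\mathcal{E}(\mathcal{C})]$ precisely when the first component $\psi_1$ factors through the first differential of the source. Since in any additive category $\underline{\varphi_\bullet}$ is monic exactly when $\underline{\varphi_\bullet}\,\underline{\psi_\bullet}=0$ forces $\underline{\psi_\bullet}=0$, I would test this condition directly, never invoking the abelian structure (Theorem \ref{thm4.3.1}), which is not yet available at this point.

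For the ``if'' direction I would start from a retraction $(s,t)\colon X_2\oplus Y_1\to X_1$ of $\left(\begin{smallmatrix} f_1 \\ \varphi_1 \end{smallmatrix}\right)$, so that $sf_1+t\varphi_1=1_{X_1}$. Given any $\psi_\bullet\colon W_\bullet\to X_\bullet$ with $\underline{\varphi_\bullet\psi_\bullet}=0$, Lemma \ref{lem1}(a) supplies a morphism $p_1\colon W_2\to Y_1$ with $\varphi_1\psi_1=p_1w_1$, where $w_1$ denotes the first differential of $W_\bullet$. Substituting $1_{X_1}=sf_1+t\varphi_1$ and using the commutativity $f_1\psi_1=\psi_2w_1$ of $\psi_\bullet$, I would obtain $\psi_1=(s\psi_2+tp_1)w_1$; thus $\psi_1$ factors through $w_1$ and Lemma \ref{lem1}(a)$\Rightarrow$(e) yields $\underline{\psi_\bullet}=0$. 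This is a one-line factorization once the retraction is fixed.

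The ``only if'' direction is where the genuine work lies: I must produce a single test morphism whose vanishing encodes the section property. The natural candidate is the morphism $k_\bullet\colon K(\varphi_\bullet)\to X_\bullet$ constructed in Lemma \ref{lem0}, whose source has first differential $\left(\begin{smallmatrix} f_1 \\ \varphi_1 \end{smallmatrix}\right)$ and whose first component is $(k_\bullet)_1=1_{X_1}$. First I would observe that $\varphi_\bullet k_\bullet$ is null-homotopic: its first component equals $\varphi_1=(0,1)\left(\begin{smallmatrix} f_1 \\ \varphi_1 \end{smallmatrix}\right)$, so the projection $(0,1)\colon X_2\oplus Y_1\to Y_1$ witnesses Lemma \ref{lem1}(a) and gives $\underline{\varphi_\bullet k_\bullet}=0$. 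As $\underline{\varphi_\bullet}$ is monic, this forces $\underline{k_\bullet}=0$, and Lemma \ref{lem1}(a) applied to $k_\bullet$ now provides $p\colon X_2\oplus Y_1\to X_1$ with $1_{X_1}=(k_\bullet)_1=p\left(\begin{smallmatrix} f_1 \\ \varphi_1 \end{smallmatrix}\right)$, i.e.\ a retraction, so $\left(\begin{smallmatrix} f_1 \\ \varphi_1 \end{smallmatrix}\right)$ is a section.

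I expect the main obstacle to be conceptual rather than computational, namely recognizing that the ``kernel'' object $K(\varphi_\bullet)$ of Lemma \ref{lem0} is the right morphism to test monicity against, and that precomposing $\varphi_\bullet$ with $k_\bullet$ annihilates it up to homotopy. Once this is seen, each implication collapses to an application of the factorization criterion of Lemma \ref{lem1}, and the apparent asymmetry between ``section'' and ``monomorphism'' dissolves.
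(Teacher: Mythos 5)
Your proof is correct and follows essentially the same route as the paper: the ``if'' direction uses the retraction of $\left(\begin{smallmatrix} f_1 \\ \varphi_1 \end{smallmatrix}\right)$ together with the factorization criterion of Lemma \ref{lem1} to kill any test morphism, and the ``only if'' direction tests monicity against the morphism $k_\bullet\colon K(\varphi_\bullet)\to X_\bullet$ from Lemma \ref{lem0}, exactly as in the paper. The only differences are notational.
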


\begin{proof}
For the ``if" part, assume that there exists a morphism $(f_1',\varphi_1'): X_2\oplus Y_1 \rightarrow X_1$ such that $(f_1',\varphi_1')\left(
                                                                                        \begin{smallmatrix}
                                                                                          f_1 \\
                                                                                          \varphi_1 \\
                                                                                        \end{smallmatrix}
                                                                                      \right)=1$. Suppose that $\psi_\bullet: Z_\bullet\rightarrow X_\bullet$ is
a morphism such that $\underline{\varphi_\bullet\psi_\bullet}=0$.
$$\xymatrix{
Z_\bullet\ar[d]^{\psi_\bullet} & 0\ar[r] & Z_1 \ar[r]^{h_1}\ar[d]^{\psi_1} & Z_2 \ar[r]^{h_2}\ar[d]^{\psi_2} & Z_3 \ar[r]\ar[d]^{\psi_3} & 0\\
X_\bullet\ar[d]^{\varphi_\bullet} & 0\ar[r] & X_1 \ar[r]^{f_1}\ar[d]^{\varphi_1} & X_2 \ar[r]^{f_2}\ar[d]^{\varphi_2} & X_3 \ar[r]\ar[d]^{\varphi_3} & 0\\
Y_\bullet & 0\ar[r] & Y_1 \ar[r]^{g_1} & Y_2 \ar[r]^{g_2} & Y_3 \ar[r] & 0\\
}$$
By Lemma \ref{lem1}, there is a morphism $p_1:Z_2\rightarrow Y_1$ such that $\varphi_1\psi_1=p_1h_1$. Thus there exists a morphism $q_1=(f_1',\varphi'_1)\left(
                                 \begin{smallmatrix}
                                   \psi_2 \\
                                   p_1 \\
                                 \end{smallmatrix}
                               \right):Z_2\rightarrow X_1$ such that
$q_1h_1=(f_1'\psi_2+\varphi'_1p_1)h_1=(f_1'f_1+\varphi'_1\varphi_1)\psi_1=\psi_1$. We infer that $\underline{\psi_\bullet}=0$ by Lemma \ref{lem1} again.

For the ``only if" part, there is a morphism $p_1=(0,1):X_2\oplus Y_1\rightarrow Y_1$ such that $\varphi_1=p_1\left(
                                                                                        \begin{smallmatrix}
                                                                                          f_1 \\
                                                                                          \varphi_1 \\
                                                                                        \end{smallmatrix}
 \right)$, so we have $\underline{\varphi_\bullet k_\bullet}=\underline{i_\bullet\pi_\bullet k_\bullet}=0$ by Lemma \ref{lem0} and Lemma \ref{lem1}. Since $\underline{\varphi_\bullet}$ is a monomorphism, we have $\underline{k_\bullet}=0$, thus $\left(
                                                                                        \begin{smallmatrix}
                                                                                          f_1 \\
                                                                                          \varphi_1 \\
                                                                                        \end{smallmatrix}
                                                                                      \right)$ is a section by Lemma \ref{lem1}. 
\end{proof}

When $\mathcal{C}$ is a certain abelian category, the following theorem was appeared in \cite[Theorem 2.5]{[Gen1]}.

\begin{thm}\label{thm4.3.1}
Let $(\mathcal{C},\mathcal{E})$ be an exact category. Then the quotient $\mathcal{E}(\mathcal{C})/[S\mathcal{E}(\mathcal{C})]$ is an abelian category whose kernels and cokernels are given by pullback and pushout diagrams.
\end{thm}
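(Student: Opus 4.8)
The strategy is to exhibit $\mathcal{A}:=\mathcal{E}(\mathcal{C})/[S\mathcal{E}(\mathcal{C})]$ as a pre-abelian category (additive with all kernels and cokernels) whose kernels and cokernels are computed by diagram (4.1) of Lemma \ref{lem0}, and then to apply the standard criterion that such a category is abelian precisely when the canonical comparison $\mathrm{coim}(\underline{\varphi_\bullet})\to\mathrm{im}(\underline{\varphi_\bullet})$ is invertible for every morphism. Additivity is automatic, $\mathcal{A}$ being the quotient of the additive category $\mathcal{E}(\mathcal{C})$ by an ideal. For $\underline{\varphi_\bullet}:X_\bullet\to Y_\bullet$ I take the candidate kernel $\underline{k_\bullet}:K(\varphi_\bullet)\to X_\bullet$ and the candidate cokernel $\underline{c_\bullet}:Y_\bullet\to C(\varphi_\bullet)$ of (4.1); here $K(\varphi_\bullet)$ and $C(\varphi_\bullet)$ genuinely belong to $\mathcal{E}(\mathcal{C})$ because the defining squares are simultaneously pullbacks and pushouts.

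First I would check that $\underline{k_\bullet}$ is a kernel of $\underline{\varphi_\bullet}$. The identity $\underline{\varphi_\bullet k_\bullet}=0$ is exactly the computation in the proof of Lemma \ref{lem4.2}, using $\varphi_\bullet=i_\bullet\pi_\bullet$ together with the projection $(0,1)$ and Lemma \ref{lem1}. For the universal property, given $\underline{\psi_\bullet}:W_\bullet\to X_\bullet$ with $\underline{\varphi_\bullet\psi_\bullet}=0$, Lemma \ref{lem1} yields $p_1$ with $\varphi_1\psi_1=p_1w_1$; I then assemble a genuine chain map $\theta_\bullet:W_\bullet\to K(\varphi_\bullet)$ with degree-two component $\binom{\psi_2}{p_1}$, using the pushout relation $a_1f_1=h_1\varphi_1$ of square (I) to produce its degree-three component, and verify $k_\bullet\theta_\bullet=\psi_\bullet$ on the nose. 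Uniqueness holds because $\underline{k_\bullet}$ is a monomorphism: the degree-one component of $k_\bullet$ is $1_{X_1}$, so the column appearing in the section criterion of Lemma \ref{lem4.2} has an identity entry and is therefore a section. The claim that $\underline{c_\bullet}$ is a cokernel is strictly dual.

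The heart of the proof is to realize the image factorization of $\underline{\varphi_\bullet}$ as the factorization $\varphi_\bullet=i_\bullet\pi_\bullet$ through the middle term $I(\varphi_\bullet)$ of (4.1). I would show that $\underline{\pi_\bullet}:X_\bullet\to I(\varphi_\bullet)$ is a cokernel of $\underline{k_\bullet}$ and, dually, that $\underline{i_\bullet}:I(\varphi_\bullet)\to Y_\bullet$ is a kernel of $\underline{c_\bullet}$; the vanishings $\underline{\pi_\bullet k_\bullet}=0$ and $\underline{c_\bullet i_\bullet}=0$ are immediate from Lemma \ref{lem1}, and the universal properties are established by the same kind of chain-level construction as above. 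Moreover the dual forms of Lemma \ref{lem4.2} show that $\underline{\pi_\bullet}$ is epic and $\underline{i_\bullet}$ is monic, since their outer components are identities. Therefore $\mathrm{coim}(\underline{\varphi_\bullet})=I(\varphi_\bullet)=\mathrm{im}(\underline{\varphi_\bullet})$, and the comparison morphism, being the unique map $\sigma$ with $\underline{i_\bullet}\,\sigma\,\underline{\pi_\bullet}=\underline{\varphi_\bullet}=\underline{i_\bullet}\,\underline{\pi_\bullet}$, equals $\mathrm{id}_{I(\varphi_\bullet)}$ because $\underline{\pi_\bullet}$ is epic and $\underline{i_\bullet}$ monic. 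Thus every comparison is an isomorphism and $\mathcal{A}$ is abelian, with kernels and cokernels given by the pullback and pushout squares of (4.1).

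The main obstacle I anticipate lies in the two universal-property verifications of the previous paragraph, namely that $I(\varphi_\bullet)$ serves simultaneously as coimage and as image. The subtlety is that all the identities involved hold only modulo morphisms factoring through split short exact sequences, so each required factorization must be built at the level of honest chain maps in $\mathcal{E}(\mathcal{C})$ and then certified by one of the five equivalent conditions of Lemma \ref{lem1}, rather than by a naive diagram chase. Establishing that $\underline{\pi_\bullet}$ is epic and $\underline{i_\bullet}$ monic is precisely what makes the comparison literally the identity and avoids the circular appeal to ``monic and epic imply iso'' in a category not yet known to be balanced.
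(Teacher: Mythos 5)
Your proposal is correct and follows essentially the same route as the paper: both take the candidate kernel $\underline{k_\bullet}$ and cokernel $\underline{c_\bullet}$ from diagram (4.1) of Lemma~\ref{lem0}, certify them via Lemma~\ref{lem1}, the section criterion of Lemma~\ref{lem4.2}, and a chain-level construction of the induced map $\theta_\bullet$, and then identify coimage with image through the middle term $I(\varphi_\bullet)$. The only divergence is in that last step, where the paper computes $\textup{Coker}(\underline{k_\bullet})$ and $\textup{Ker}(\underline{c_\bullet})$ by the pushout/pullback recipe and writes down explicit isomorphisms of short exact sequences with $I(\varphi_\bullet)$ up to split summands, while you instead verify directly that $\underline{\pi_\bullet}$ and $\underline{i_\bullet}$ satisfy the universal properties of $\textup{Coker}(\underline{k_\bullet})$ and $\textup{Ker}(\underline{c_\bullet})$; both arguments are sound.
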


\begin{proof} 
Suppose that $\varphi_\bullet: X_\bullet\rightarrow Y_\bullet$ is a morphism in $\mathcal{E}(\mathcal{C})$. As notations in diagram (4.1), we claim that $\underline{k_\bullet}: K(\varphi_\bullet)\rightarrow X_\bullet$ is a kernel of $\underline{\varphi_\bullet}$.
By Lemma \ref{lem4.2}, $\underline{k_\bullet}$ is a monomorphism. Since $\varphi_1=(0\ 1)\left(
                                                                                        \begin{smallmatrix}
                                                                                          f_1 \\
                                                                                          \varphi_1 \\
                                                                                        \end{smallmatrix}
                                                                                      \right)$, it follows from Lemma \ref{lem1} that $\underline{\varphi_\bullet k_\bullet}=0$.
Assume that there is a morphism $\psi_\bullet: Z_\bullet\rightarrow X_\bullet$ such that $\underline{\varphi_\bullet\psi_\bullet}=0$, then by Lemma \ref{lem1}, there is a morphism $p_1:Z_2\rightarrow Y_1$ such that $\varphi_1\psi_1=p_1h_1$. Since $\left(
                                                                                        \begin{smallmatrix}
                                                                                          \psi_2  \\
                                                                                           p_1\\
                                                                                        \end{smallmatrix}
                                                                                      \right)h_1=\left(
                                                                                        \begin{smallmatrix}
                                                                                          f_1  \\
                                                                                           \varphi_1\\
                                                                                        \end{smallmatrix}
                                                                                      \right)\psi_1$, we obtain the following commutative diagram:
$$\xymatrix{
\psi_\bullet \ar[d]^{\theta_\bullet} & 0 \ar[r] & Z_1 \ar[r]^{h_1} \ar[d]^{\psi_1} & Z_2 \ar[r]^{h_2}\ar[d]^{\left(
                                                                                        \begin{smallmatrix}
                                                                                          \psi_2  \\
                                                                                           p_1\\
                                                                                        \end{smallmatrix}
                                                                                      \right)} & Z_3 \ar[r] \ar[d]^{\theta_3} & 0\\
K(\varphi_\bullet) & 0\ar[r] & X_1 \ar[r]^{\left(
                                                                                        \begin{smallmatrix}
                                                                                          f_1  \\
                                                                                           \varphi_1\\
                                                                                        \end{smallmatrix}
                                                                                      \right)} & X_2\oplus Y_1 \ar[r]^{\left(
                                                                                        \begin{smallmatrix}
                                                                                          a_1 & -h_1 \\
                                                                                        \end{smallmatrix}
                                                                                      \right)}
 & Z \ar[r] & 0\\
 }$$
By a direct checking, we have  $\underline{\psi_\bullet}=\underline{k_\bullet}\underline{\theta_\bullet}$.

Dually we can show that $\underline{c_\bullet}:Y_\bullet\rightarrow C(\varphi_\bullet)$ is a cokernel of $\underline{\varphi_\bullet}$.

It remains to show that $\textup{Coker}(\textup{Ker}(\underline{\varphi_\bullet}))\cong \textup{Ker}(\textup{Coker}(\underline{\varphi_\bullet}))$, that is,
$\textup{Coker}(\underline{k_\bullet})\cong \textup{Ker}(\underline{c_\bullet})$. Indeed, the following  commutative diagram
$$\xymatrix{
\textup{Coker}(\underline{k_\bullet})\ar[d] & 0 \ar[r] & X_2\oplus Y_1 \ar[r]^{\left(
                                          \begin{smallmatrix}
                                            1 & 0 \\
                                            a_1 & -h_1 \\
                                          \end{smallmatrix}
                                        \right)} \ar@{=}[d] & X_2\oplus Z \ar[r]^{\left(
                                          \begin{smallmatrix}
                                            f_2 & -h_2 \\
                                          \end{smallmatrix}
                                        \right)}\ar[d]^{\left(
                                          \begin{smallmatrix}
                                           1 & 0 \\
                                           a_1 & -1 \\
                                          \end{smallmatrix}
                                        \right)} & X_3 \ar[r]\ar@{=}[d] & 0\\
I(\varphi_\bullet) & 0 \ar[r] & X_2\oplus Y_1 \ar[r]^{\left(
                                          \begin{smallmatrix}
                                           1 & 0 \\
                                            0 & h_1\\
                                          \end{smallmatrix}
                                        \right)}  & X_2\oplus Z \ar[r]^{\left(
                                          \begin{smallmatrix}
                                           0 & h_2 \\
                                          \end{smallmatrix}
                                        \right)} & X_3 \ar[r] & 0\\
}$$ shows that $\textup{Coker}(\underline{k_\bullet})\cong I(\varphi_\bullet)$. The following commutative diagram
$$\xymatrix{
I(\varphi_\bullet)\ar[d] & 0 \ar[r] & Y_1 \ar[r]^{\left(
                                          \begin{smallmatrix}
                                           h_1\\
                                            0 \\
                                          \end{smallmatrix}
                                        \right)}\ar@{=}[d]  & Z\oplus Y_2 \ar[r]^{\left(
                                          \begin{smallmatrix}
                                            h_2 & 0 \\
                                            0 & 1 \\
                                          \end{smallmatrix}
                                        \right)}\ar[d]^{\left(
                                          \begin{smallmatrix}
                                           1 & 0 \\
                                            a_2 & -1\\
                                          \end{smallmatrix}
                                        \right)} & X_3\oplus Y_2 \ar[r]\ar@{=}[d] & 0\\
\textup{Ker}(\underline{c_\bullet}) & 0 \ar[r] &  Y_1 \ar[r]^{\left(
                                          \begin{smallmatrix}
                                           h_1 \\
                                           g_1 \\
                                          \end{smallmatrix}
                                        \right)}  & Z\oplus Y_2 \ar[r]^{\left(
                                          \begin{smallmatrix}
                                            h_2 & 0 \\
                                           a_2 & -1\\
                                          \end{smallmatrix}
                                        \right)} & X_3\oplus Y_2 \ar[r] & 0\\
}$$
implies that $I(\varphi_\bullet)\cong\textup{Ker}(\underline{c_\bullet})$. We are done.
\end{proof}

\begin{rem}
Let $(\mathcal{C},\mathcal{E})$ be an exact category with enough projectives. Then  $\mathcal{E}(\mathcal{C})/[S\mathcal{E}(\mathcal{C})]\cong \textup{mod-}\mathcal{C}/[\mathcal{P}]$ thus has an abelian structure.  Theorem \ref{thm4.3.1} tells us that the quotient $\mathcal{E}(\mathcal{C})/[S\mathcal{E}(\mathcal{C})]$ has an abelian structure given by pushout and pullback diagrams. In fact, the two abelian structures are the same by Remark \ref{rem2.2}.
\end{rem}

\begin{rem}\label{rem1}
Let $\underline{\varphi_\bullet}: X_\bullet\rightarrow Y_\bullet$ be a monomorphism in $\mathcal{E}(\mathcal{C})/[S\mathcal{E}(\mathcal{C})]$. Then by Lemma \ref{lem0}, we have $\underline{\varphi_\bullet}=\underline{i_\bullet\pi_\bullet}$. Note that $\underline{\pi_\bullet}:X_\bullet\rightarrow I(\varphi_\bullet)$ is both a monomorphism and an epimorphism, thus it is an isomorphism. Therefore, for convenience when we mention a monomorphism $\underline{\varphi_\bullet}: X_\bullet\rightarrow Y_\bullet$ in $\mathcal{E}(\mathcal{C})/[S\mathcal{E}(\mathcal{C})]$, we can assume that $\varphi_1=1$.
\end{rem}

\subsection{ Projective objects, injective objects and Hilton-Rees Theorem}

 Let $(\mathcal{C},\mathcal{E})$ be an exact category. We first provide the projective objects and injective objects in $\mathcal{E}(\mathcal{C})/[S\mathcal{E}(\mathcal{C})]$.

\begin{prop}\label{prop1}
Let $(\mathcal{C},\mathcal{E})$ be an exact category.

\textup{(a)} Each short exact sequence $P_X: 0\rightarrow \Omega X\xrightarrow{f_1} P\xrightarrow{f_2} X\rightarrow 0$  in $\mathcal{E}$  with $P$ projective is a projective  object in $\mathcal{E}(\mathcal{C})/[S\mathcal{E}(\mathcal{C})]$.

\textup{(b)} If $(\mathcal{C},\mathcal{E})$ has enough projectives, then each projective object in $\mathcal{E}(\mathcal{C})/[S\mathcal{E}(\mathcal{C})]$ is of the form $P_X$ for some object $X$ in $\mathcal{C}$. In this case, $\mathcal{E}(\mathcal{C})/[S\mathcal{E}(\mathcal{C})]$ has enough projectives.
\end{prop}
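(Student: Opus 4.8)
The plan is to handle the two parts by different routes: part (a) directly inside the abelian structure furnished by Theorem~\ref{thm4.3.1} (so that no global assumption of enough projectives is needed), and part (b) by transporting the question through the equivalence $\alpha_1$ of Theorem~\ref{thm4.1}.

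For (a) I would verify the lifting property of $P_X$ by hand. Fix an epimorphism $\underline{\psi_\bullet}\colon Y_\bullet\to W_\bullet$ in $\mathcal{E}(\mathcal{C})/[S\mathcal{E}(\mathcal{C})]$ and an arbitrary morphism $\underline{h_\bullet}\colon P_X\to W_\bullet$, writing $Y_\bullet\colon 0\to Y_1\xrightarrow{g_1}Y_2\xrightarrow{g_2}Y_3\to 0$ and $W_\bullet\colon 0\to W_1\xrightarrow{w_1}W_2\xrightarrow{w_2}W_3\to 0$. By the dual of Lemma~\ref{lem4.2} (which one reads off the explicit cokernel description in Theorem~\ref{thm4.3.1}) the epimorphism $\underline{\psi_\bullet}$ provides a map $s\colon W_3\to Y_3$ such that $1_{W_3}-\psi_3 s$ factors through $w_2$. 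I would then assemble a genuine chain map $\ell_\bullet\colon P_X\to Y_\bullet$: put $\ell_3=s h_3$; lift $\ell_3 f_2\colon P\to Y_3$ along the admissible epimorphism $g_2$ to a map $\ell_2\colon P\to Y_2$ with $g_2\ell_2=\ell_3 f_2$ — this single step is exactly where the projectivity of $P$ in $\mathcal{C}$ is used; and since $g_2\ell_2 f_1=\ell_3 f_2 f_1=0$, factor $\ell_2 f_1$ through the kernel $g_1$ of $g_2$ to obtain $\ell_1\colon \Omega X\to Y_1$. The degree-$3$ component of $h_\bullet-\psi_\bullet\ell_\bullet$ then equals $(1_{W_3}-\psi_3 s)h_3$, which factors through $w_2$, so $h_\bullet-\psi_\bullet\ell_\bullet$ is null-homotopic by Lemma~\ref{lem1}. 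Hence $\underline{\psi_\bullet}\,\underline{\ell_\bullet}=\underline{h_\bullet}$, and $P_X$ is projective.

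For (b), assuming enough projectives, I would first observe that under $\alpha_1$ the object $P_X$ goes to $\textup{Coker}(\mathcal{C}/[\mathcal{P}](-,f_2))$; since $f_2\colon P\to X$ factors through $P\in\mathcal{P}$ we have $\underline{f_2}=0$ in $\mathcal{C}/[\mathcal{P}]$, so $\alpha_1(P_X)\cong\mathcal{C}/[\mathcal{P}](-,X)$ is representable. Representable functors are precisely the projective generators of $\textup{mod-}\mathcal{C}/[\mathcal{P}]$ (Section~2.4), so transporting back through $\alpha_1$ shows at once that $\mathcal{E}(\mathcal{C})/[S\mathcal{E}(\mathcal{C})]$ has enough projectives and that every projective object is a direct summand of some $P_X$. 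Enough projectives can also be seen directly: given $W_\bullet$, choose an admissible epimorphism $p\colon P\to W_3$ with $P$ projective, complete it to a sequence $P_{W_3}$, lift $p$ through $w_2$ (projectivity of $P$) to build a morphism $P_{W_3}\to W_\bullet$ whose degree-$3$ component is $1_{W_3}$, and note this is an epimorphism by the dual of Lemma~\ref{lem4.2}.

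It remains to upgrade ``direct summand of some $P_X$'' to ``of the form $P_{X'}$'', and this is the step I expect to be the main obstacle. A projective $Q_\bullet$ is a summand of some $P_X$, so via $\alpha_1$ and Yoneda it corresponds to an idempotent $\underline{e}\in\mathcal{C}/[\mathcal{P}](X,X)$, and identifying $Q_\bullet$ with $P_{X'}$ for $X'$ the image of $\underline{e}$ requires that this idempotent split in $\mathcal{C}/[\mathcal{P}]$ (equivalently in $\mathcal{C}$). I would therefore complete this last identification under the standing idempotent-completeness of $\mathcal{C}$; everything else in the argument is formal.
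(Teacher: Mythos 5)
Your part (a) is correct and is essentially the paper's own argument: the paper normalizes the epimorphism so that its third component is the identity (via the dual of Remark \ref{rem1}) and then lifts the third component of the given map through the projective $P$, whereas you keep the retraction datum $s$ explicit and absorb it into $\ell_3=sh_3$; the computation is the same, and both arguments close with Lemma \ref{lem1}.

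For part (b) you take a genuinely different route. The paper never passes through $\alpha_1$: for an arbitrary $X_\bullet$ it constructs the comparison map $\underline{\varphi_\bullet}\colon P_{X_3}\to X_\bullet$ with $\varphi_3=1$ (exactly your ``direct'' construction of enough projectives), and then argues that if $X_\bullet$ is projective this epimorphism splits, concluding that $X_\bullet$ is of the form $P_X$. Your detour through $\textup{mod-}\mathcal{C}/[\mathcal{P}]$ and Yoneda buys a cleaner picture of what a projective must be (a summand of a representable, hence of some $P_X$) at the cost of invoking Theorem \ref{thm4.1}, but it lands in exactly the same place. The obstacle you flag --- upgrading ``direct summand of $P_X$'' to ``isomorphic to some $P_{X'}$'', which amounts to splitting an idempotent of $\mathcal{C}/[\mathcal{P}](X,X)$ --- is real, and the paper's printed proof does not address it either: it passes directly from ``$\underline{\varphi_\bullet}$ is split'' to the stated conclusion. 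So your proposal is not weaker than the paper's; it is merely more candid about the one step that needs either idempotent-completeness of $\mathcal{C}/[\mathcal{P}]$ (automatic in the Krull--Schmidt settings where the result is applied) or a supplementary argument.
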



\begin{proof}
(a) Assume that $\underline{\varphi_\bullet}:Y_\bullet\rightarrow Z_\bullet$ is an epimorphism and  $\underline{\psi_\bullet}:P_X\rightarrow Z_\bullet$  is a morphism in $\mathcal{E}(\mathcal{C})/[S\mathcal{E}(\mathcal{C})]$. By the dual version of Remark \ref{rem1}, we assume that $\varphi_3=1$. Since $P$ is projective, we obtain a morphism $\underline{\phi_\bullet}:P_X\rightarrow Y_\bullet$ such that $\phi_3=\psi_3$. Since $\psi_3=\varphi_3\phi_3$, it follows that $\underline{\psi_\bullet}=\underline{\varphi_\bullet\phi_\bullet}$ by Lemma \ref{lem1}.
 Therefore, $P_X$ is projective.

(b) Suppose that $X_\bullet: 0\rightarrow X_1\xrightarrow{f_1} X_2\xrightarrow{f_2} X_3\rightarrow 0$  is an object in $\mathcal{E}(\mathcal{C})$. Since $\mathcal{C}$ has enough projectives, there exists a short exact sequence $P_{X_3}: 0\rightarrow \Omega X_3\xrightarrow{g_1} P\xrightarrow{g_2} X_3\rightarrow 0$ in $\mathcal{E}$ with $P$ projective.
Thus we have the following commutative diagram:
$$\xymatrix{P_{X_3}\ar[d]^{\varphi_\bullet} & 0\ar[r] & \Omega X_3 \ar[r]^{g_1}\ar@{-->}[d]^{\varphi_1} & P \ar[r]^{g_2}\ar@{-->}[d]^{\varphi_2} & X_3 \ar[r]\ar@{=}[d] & 0\\
X_\bullet & 0\ar[r] & X_1 \ar[r]^{f_1} & X_2 \ar[r]^{f_2} & X_3 \ar[r] & 0\\
}$$ Consequently, $\underline{\varphi_\bullet}:P_{X_3}\rightarrow X_\bullet$ is an epimorphism where $P_{X_3}$ is projective by (a). In particular, assume that $X_\bullet$ is a projective object in $\mathcal{E}(\mathcal{C})/[S\mathcal{E}(\mathcal{C})]$, then there is an epimorphism $\underline{\varphi_\bullet}:P_{X_3}\rightarrow X_\bullet$. Since $X_\bullet$ is projective, $\underline{\varphi_\bullet}$ is split. Thus each projective object of $\mathcal{E}(\mathcal{C})/[S\mathcal{E}(\mathcal{C})]$ is of the form $P_{X}$ for some object $X$ in $\mathcal{C}$.
\end{proof}

\begin{cor}
Let $(\mathcal{C},\mathcal{E})$ be a Frobenius category, then  $\mathcal{E}(\mathcal{C})/[S\mathcal{E}(\mathcal{C})]$ is a Frobenius abelian category.
\end{cor}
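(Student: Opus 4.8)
The plan is to verify directly the three defining properties of a Frobenius abelian category: that $\mathcal{E}(\mathcal{C})/[S\mathcal{E}(\mathcal{C})]$ is abelian, that it has enough projectives and enough injectives, and that its projective and injective objects coincide. The first property is already in hand: by Theorem \ref{thm4.3.1} the quotient is abelian for any exact $(\mathcal{C},\mathcal{E})$ (and in the present case it is moreover equivalent to $\textup{mod-}\mathcal{C}/[\mathcal{P}]$ by Corollary \ref{cor4.2}). Since a Frobenius category has enough projectives, Proposition \ref{prop1}(b) shows the quotient has enough projectives, the projective objects being exactly the sequences $P_X:0\rightarrow\Omega X\rightarrow P\rightarrow X\rightarrow 0$ with $P$ projective-injective.

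First I would record the injective counterpart of Proposition \ref{prop1}. Dualizing its proof, using the identification $\mathcal{E}(\mathcal{C})^{\textup{op}}\cong\mathcal{E}(\mathcal{C}^{\textup{op}})$ (obtained by reversing the complex) under which $S\mathcal{E}(\mathcal{C})$ is preserved, so that $(\mathcal{E}(\mathcal{C})/[S\mathcal{E}(\mathcal{C})])^{\textup{op}}\cong\mathcal{E}(\mathcal{C}^{\textup{op}})/[S\mathcal{E}(\mathcal{C}^{\textup{op}})]$, one sees that each sequence $I^X:0\rightarrow X\rightarrow I\rightarrow\Omega^{-1}X\rightarrow 0$ with $I$ injective is an injective object, and, since the Frobenius category has enough injectives, every injective object of the quotient is of this form; in particular the quotient has enough injectives.

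It then remains to identify the two explicit families of objects. Here I would use that in a Frobenius category $\mathcal{P}=\mathcal{I}$ and that $\Omega,\Omega^{-1}$ are mutually inverse on the stable category. A short exact sequence $0\rightarrow A\rightarrow M\rightarrow B\rightarrow 0$ in $\mathcal{E}$ whose middle term $M$ is projective-injective is simultaneously of the form $P_B$ (its kernel is a syzygy $A=\Omega B$, because $M$ is projective) and of the form $I^A$ (its cokernel is a cosyzygy $B=\Omega^{-1}A$, because $M$ is injective). Consequently each projective object $P_X$ is, as an object of $\mathcal{E}(\mathcal{C})/[S\mathcal{E}(\mathcal{C})]$, equal to the injective object $I^{\Omega X}$, and conversely each $I^X$ equals $P_{\Omega^{-1}X}$. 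Thus the classes of projective and injective objects coincide, and $\mathcal{E}(\mathcal{C})/[S\mathcal{E}(\mathcal{C})]$ is a Frobenius abelian category.

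The main obstacle is the middle step: making the dual of Proposition \ref{prop1} rigorous, i.e. checking that passing to the opposite category sends $\mathcal{E}(\mathcal{C})$, $S\mathcal{E}(\mathcal{C})$ and the syzygy construction to their cosyzygy analogues, so that the statement about injective objects is genuinely the formal dual of the statement about projective objects. Once this bookkeeping is settled, the coincidence of the two families is an immediate consequence of the $\Omega/\Omega^{-1}$ bijection, and no further computation is needed.
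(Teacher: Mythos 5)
Your argument is correct and is precisely the reasoning the paper intends: the corollary is stated as an immediate consequence of Proposition \ref{prop1} and its dual, and you supply exactly that — abelianness from Theorem \ref{thm4.3.1}, enough projectives of the form $P_X$ from Proposition \ref{prop1}(b), the dual statement for injectives $I^X$, and the identification of the two classes via $\mathcal{P}=\mathcal{I}$ and the $\Omega/\Omega^{-1}$ correspondence. Your extra care in justifying the dualization step (that $(\mathcal{E}(\mathcal{C})/[S\mathcal{E}(\mathcal{C})])^{\textup{op}}\cong\mathcal{E}(\mathcal{C}^{\textup{op}})/[S\mathcal{E}(\mathcal{C}^{\textup{op}})]$) is a welcome addition that the paper leaves implicit.
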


\begin{rem}
Assume that $(\mathcal{C},\mathcal{E})$ is an exact category with enough projectives. If $\mathcal{C}$ admits an additive generator $M$, then $P_M$ is a projective generator for $\mathcal{E}(\mathcal{C})/[S\mathcal{E}(\mathcal{C})]$. Therefore, $\mathcal{E}(\mathcal{C})/[S\mathcal{E}(\mathcal{C})]\cong \textup{mod-End}P_M\cong \textup{mod-}\underline{B}$, where $\underline{B}$ is the stable Auslander algebra of $\mathcal{C}$. See subsection 3.4 for more details.
\end{rem}

Recall that given a short exact sequence $\delta:  0\rightarrow X_1\xrightarrow{f_1} X_2\xrightarrow{f_2} X_3\rightarrow 0$ in $\mathcal{E}$, we define  the {\em contravariant defect} $\delta^\ast$  and the {\em covariant defect} $\delta_\ast$ by the following exact sequence of functors
$$0\rightarrow\mathcal{C}(-,X_{1})\xrightarrow{\mathcal{C}(-,f_{1})}\mathcal{C}(-,X_{2})\xrightarrow{\mathcal{C}(-,f_{2})}\mathcal{C}(-,X_3)\rightarrow \delta^\ast\rightarrow 0,$$
$$0\rightarrow\mathcal{C}(X_{3},-)\xrightarrow{\mathcal{C}(f_{2},-)}\mathcal{C}(X_{2},-)\xrightarrow{\mathcal{C}(f_{1},-)}\mathcal{C}(X_1,-)\rightarrow \delta_\ast\rightarrow 0.$$

\begin{example}\label{ex4.1}
(a) Let $\delta=P_X: 0\rightarrow \Omega X\rightarrow P\rightarrow X\rightarrow 0$ with $P\in\mathcal{P}$. Then  $\delta^\ast=\mathcal{C}/[\mathcal{P}](-,X)$ and $\delta_\ast=\textup{Ext}^1_\mathcal{C}(X,-)$.

(b) Let $\delta=I_X: 0\rightarrow X\rightarrow I\rightarrow \Omega^{-1}X\rightarrow 0$ with $I\in\mathcal{I}$. Then $\delta^\ast=\textup{Ext}^1_\mathcal{C}(-,X)$ and $\delta_\ast=\mathcal{C}/[\mathcal{I}](X,-)$.
\end{example}

\begin{rem}\label{rem4.2}
Let $(\mathcal{C},\mathcal{E})$ be an exact category with enough projectives and injectives.

 (a) In Theorem \ref{thm4.1}, the equivalence  $\alpha_1:\mathcal{E}(\mathcal{C})/[S\mathcal{E}(\mathcal{C})]\cong \textup{mod-}\mathcal{C}/[\mathcal{P}]$ is given by $\delta\mapsto \delta^\ast$, and the equivalence $\beta_1:\mathcal{E}(\mathcal{C})/[S\mathcal{E}(\mathcal{C})]\cong (\textup{mod-}(\mathcal{C}/[\mathcal{I}])^{\textup{op}})^{\textup{op}}$ is given by $\delta\mapsto \delta_\ast$.

(b) In $\textup{mod-}\mathcal{C}/[\mathcal{P}]$, each projective object is of the form $\mathcal{C}/[\mathcal{P}](-,X)$, and each injective object is of the form $\textup{Ext}^1_\mathcal{C}(-,X)$.
\end{rem}

\begin{proof}
(a) Assume that $\delta:  0\rightarrow X_1\xrightarrow{f_1} X_2\xrightarrow{f_2} X_3\rightarrow 0$ is a short exact sequence in $\mathcal{E}$. Recall that $\alpha_1(\delta)=\textup{Coker}(\mathcal{C}/[\mathcal{P}](-,f_2))$ and $\delta^\ast=\textup{Coker}(\mathcal{C}(-,f_2))$. Since $\delta^\ast(\mathcal{P})=0$, we can view $\delta^\ast$ as a finitely presented $\mathcal{C}/[\mathcal{P}]$-module by Proposition 2.4 (b). Thus $\alpha_1(\delta)=\delta^\ast$. Similarly, we have $\beta_1(\delta)=\delta_\ast$.

(b) It follows from  Proposition \ref{prop1} and Example \ref{ex4.1} since $\mathcal{E}(\mathcal{C})/[S\mathcal{E}(\mathcal{C})]\cong \textup{mod-}\mathcal{C}/[\mathcal{P}]$.
\end{proof}

\begin{prop}\label{prop4.3.1}
Let $(\mathcal{C},\mathcal{E})$ be an exact category with enough projectives and injectives. Then there is a duality
$$\Phi:\textup{mod-}\mathcal{C}/[\mathcal{P}]\rightarrow\textup{mod-}(\mathcal{C}/[\mathcal{I}])^{\textup{op}},\ \ \ \delta^\ast\mapsto \delta_\ast.$$
Moreover, by restrictions, we obtain the following two dualities
$$\Phi:\textup{proj-}\mathcal{C}/[\mathcal{P}]\rightarrow\textup{inj-}(\mathcal{C}/[\mathcal{I}])^{\textup{op}}, \ \ \ \mathcal{C}/[\mathcal{P}](-,X)\mapsto \textup{Ext}^1_\mathcal{C}(X,-).$$
$$\Phi:\textup{inj-}\mathcal{C}/[\mathcal{P}]\rightarrow\textup{proj-}(\mathcal{C}/[\mathcal{I}])^{\textup{op}}, \ \ \ \textup{Ext}^1_\mathcal{C}(-,X)\mapsto\mathcal{C}/[\mathcal{I}](X,-).$$
\end{prop}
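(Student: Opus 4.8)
The plan is to realize $\Phi$ as a composite of the two equivalences furnished by Theorem \ref{thm4.1}. Writing $\alpha_1\colon \mathcal{E}(\mathcal{C})/[S\mathcal{E}(\mathcal{C})]\xrightarrow{\sim}\textup{mod-}\mathcal{C}/[\mathcal{P}]$ and $\beta_1\colon \mathcal{E}(\mathcal{C})/[S\mathcal{E}(\mathcal{C})]\xrightarrow{\sim}(\textup{mod-}(\mathcal{C}/[\mathcal{I}])^{\textup{op}})^{\textup{op}}$ for the equivalences of Theorem \ref{thm4.1}(a) and (b), I would set $\Phi:=\beta_1\alpha_1^{-1}$. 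Since $\alpha_1^{-1}$ is a (covariant) equivalence and $\beta_1$ is a covariant equivalence whose target is the opposite of $\textup{mod-}(\mathcal{C}/[\mathcal{I}])^{\textup{op}}$, the composite is a covariant equivalence $\textup{mod-}\mathcal{C}/[\mathcal{P}]\to(\textup{mod-}(\mathcal{C}/[\mathcal{I}])^{\textup{op}})^{\textup{op}}$, which is exactly the data of a duality $\Phi\colon\textup{mod-}\mathcal{C}/[\mathcal{P}]\to\textup{mod-}(\mathcal{C}/[\mathcal{I}])^{\textup{op}}$. The formula on objects is immediate from Remark \ref{rem4.2}(a): there $\alpha_1(\delta)=\delta^\ast$ and $\beta_1(\delta)=\delta_\ast$ for every short exact sequence $\delta$ in $\mathcal{E}$, so $\Phi(\delta^\ast)=\beta_1(\alpha_1^{-1}(\delta^\ast))=\beta_1(\delta)=\delta_\ast$, as claimed.

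For the two restrictions I would invoke the general principle that a duality interchanges projective and injective objects: if $P$ is projective in $\textup{mod-}\mathcal{C}/[\mathcal{P}]$ then $\textup{Hom}(P,-)$ is exact, and transporting the natural isomorphism $\textup{Hom}(P,M)\cong\textup{Hom}(\Phi M,\Phi P)$ along the contravariant $\Phi$ turns this into exactness of $\textup{Hom}(-,\Phi P)$, so $\Phi P$ is injective; dually $\Phi$ carries injectives to projectives. By Remark \ref{rem4.2}(b) the projectives of $\textup{mod-}\mathcal{C}/[\mathcal{P}]$ are precisely the representables $\mathcal{C}/[\mathcal{P}](-,X)$ and its injectives are precisely the $\textup{Ext}^1_\mathcal{C}(-,X)$. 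Evaluating $\Phi$ on these via Example \ref{ex4.1}: taking $\delta=P_X$ gives $\delta^\ast=\mathcal{C}/[\mathcal{P}](-,X)$ and $\delta_\ast=\textup{Ext}^1_\mathcal{C}(X,-)$, hence $\Phi(\mathcal{C}/[\mathcal{P}](-,X))=\textup{Ext}^1_\mathcal{C}(X,-)$; taking $\delta=I_X$ gives $\delta^\ast=\textup{Ext}^1_\mathcal{C}(-,X)$ and $\delta_\ast=\mathcal{C}/[\mathcal{I}](X,-)$, hence $\Phi(\textup{Ext}^1_\mathcal{C}(-,X))=\mathcal{C}/[\mathcal{I}](X,-)$.

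Finally I would identify the targets. Since $\Phi$ sends projectives onto injectives and injectives onto projectives and both $\Phi,\Phi^{-1}$ are equivalences, the objects $\textup{Ext}^1_\mathcal{C}(X,-)$ exhaust the injectives of $\textup{mod-}(\mathcal{C}/[\mathcal{I}])^{\textup{op}}$ and the objects $\mathcal{C}/[\mathcal{I}](X,-)=(\mathcal{C}/[\mathcal{I}])^{\textup{op}}(-,X)$ exhaust its representable projectives; thus $\Phi$ restricts to dualities $\textup{proj-}\mathcal{C}/[\mathcal{P}]\to\textup{inj-}(\mathcal{C}/[\mathcal{I}])^{\textup{op}}$ and $\textup{inj-}\mathcal{C}/[\mathcal{P}]\to\textup{proj-}(\mathcal{C}/[\mathcal{I}])^{\textup{op}}$ with the stated formulas. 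The only genuinely delicate point is the opposite-category bookkeeping: keeping straight that $\beta_1$ lands in $(\textup{mod-}(\mathcal{C}/[\mathcal{I}])^{\textup{op}})^{\textup{op}}$ so that the composite is contravariant, and that the slogan ``a duality swaps projectives and injectives'' is applied with the correct variance. Everything else is a direct transcription of Theorem \ref{thm4.1}, Remark \ref{rem4.2}, Proposition \ref{prop1} and Example \ref{ex4.1}.
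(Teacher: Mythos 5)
Your proposal is correct and takes essentially the same route as the paper, whose entire proof is the one-line observation that the statement is a direct consequence of Remark \ref{rem4.2}(a) and Example \ref{ex4.1} --- that is, defining $\Phi=\beta_1\alpha_1^{-1}$ from the two equivalences of Theorem \ref{thm4.1} and evaluating on $\delta=P_X$ and $\delta=I_X$. Your write-up merely makes explicit the variance bookkeeping and the fact that a duality interchanges projectives and injectives, details the paper leaves implicit.
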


\begin{proof}
It is a direct consequence of Remark \ref{rem4.2}(a) and Example \ref{ex4.1}.
\end{proof}

The following result is implied in Proposition \ref{prop4.3.1}.

\begin{thm} (Hilton-Rees Theorem, see \cite{[HR],[Ma]})
Let $(\mathcal{C},\mathcal{E})$ be an exact category with enough projectives and injectives.

\textup{(a)} There is an isomorphism between $\mathcal{C}/[\mathcal{P}](Y,X)$ and the group of natural transformations from $\textup{Ext}^1_\mathcal{C}(X,-)$ to $\textup{Ext}^1_\mathcal{C}(Y,-)$.

\textup{(b)} There is an isomorphism between $\mathcal{C}/[\mathcal{I}](X,Y)$ and the group of natural transformations from $\textup{Ext}^1_\mathcal{C}(-,X)$ to $\textup{Ext}^1_\mathcal{C}(-,Y)$.
\end{thm}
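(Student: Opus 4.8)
The plan is to deduce the Hilton--Rees Theorem directly from the dualities established in Proposition \ref{prop4.3.1}, exploiting that a duality (contravariant equivalence) induces isomorphisms on all relevant Hom-groups. I will prove part (a) in detail; part (b) follows by the evident dual argument (working with injectives in place of projectives and reversing arrows).

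First I would recall the content of Proposition \ref{prop4.3.1}: the duality $\Phi:\textup{mod-}\mathcal{C}/[\mathcal{P}]\rightarrow\textup{mod-}(\mathcal{C}/[\mathcal{I}])^{\textup{op}}$ restricts to a duality $\textup{proj-}\mathcal{C}/[\mathcal{P}]\rightarrow\textup{inj-}(\mathcal{C}/[\mathcal{I}])^{\textup{op}}$ sending $\mathcal{C}/[\mathcal{P}](-,X)\mapsto\textup{Ext}^1_\mathcal{C}(X,-)$. Being a duality, $\Phi$ induces, for every pair of objects $X,Y$, a group isomorphism
$$
\textup{Hom}\bigl(\mathcal{C}/[\mathcal{P}](-,Y),\,\mathcal{C}/[\mathcal{P}](-,X)\bigr)\;\cong\;\textup{Hom}\bigl(\Phi\,\mathcal{C}/[\mathcal{P}](-,X),\,\Phi\,\mathcal{C}/[\mathcal{P}](-,Y)\bigr),
$$
where the right-hand Hom is taken in $\textup{mod-}(\mathcal{C}/[\mathcal{I}])^{\textup{op}}$ and the reversal of the arrow is exactly the contravariance of $\Phi$. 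Substituting the images of $\Phi$, the right-hand side becomes the group of natural transformations from $\textup{Ext}^1_\mathcal{C}(X,-)$ to $\textup{Ext}^1_\mathcal{C}(Y,-)$ (a Hom in $\textup{mod-}(\mathcal{C}/[\mathcal{I}])^{\textup{op}}$ from $\textup{Ext}^1_\mathcal{C}(X,-)$ to $\textup{Ext}^1_\mathcal{C}(Y,-)$ is precisely such a natural transformation).

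Next I would identify the left-hand side with $\mathcal{C}/[\mathcal{P}](Y,X)$. This is the Yoneda lemma for the module category $\textup{mod-}\mathcal{C}/[\mathcal{P}]$: since $\mathcal{C}/[\mathcal{P}](-,X)$ is the representable functor at $X$ (viewed in $\textup{mod-}\mathcal{C}/[\mathcal{P}]$ via Proposition \ref{prop2.1}), one has
$$
\textup{Hom}\bigl(\mathcal{C}/[\mathcal{P}](-,Y),\,\mathcal{C}/[\mathcal{P}](-,X)\bigr)\;\cong\;\mathcal{C}/[\mathcal{P}](Y,X).
$$
Composing this Yoneda isomorphism with the isomorphism induced by $\Phi$ yields precisely the asserted isomorphism between $\mathcal{C}/[\mathcal{P}](Y,X)$ and the group of natural transformations $\textup{Ext}^1_\mathcal{C}(X,-)\to\textup{Ext}^1_\mathcal{C}(Y,-)$, proving (a).

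The only genuine subtlety — and the step I would treat most carefully — is the bookkeeping of variances and the precise sense in which a natural transformation between the Ext-functors is computed in $\textup{mod-}(\mathcal{C}/[\mathcal{I}])^{\textup{op}}$ rather than in $\textup{mod-}\mathcal{C}/[\mathcal{I}]$. One must check that $\textup{Ext}^1_\mathcal{C}(X,-)$, as the covariant defect $\delta_\ast$ of $P_X$ (Example \ref{ex4.1}(a)), lands in $\textup{mod-}(\mathcal{C}/[\mathcal{I}])^{\textup{op}}$ and that Hom-sets there correspond to ordinary natural transformations of the covariant functors $\textup{Ext}^1_\mathcal{C}(X,-)$; once this identification is made, no further computation is needed since both remaining isomorphisms (Yoneda and $\Phi$) are formal. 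Everything else reduces to the two standard facts — Yoneda and the Hom-isomorphism induced by a duality — applied to the explicit objects furnished by Proposition \ref{prop4.3.1}.
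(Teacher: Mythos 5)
Your proposal is correct and follows exactly the route the paper intends: the paper states that the theorem "is implied in Proposition \ref{prop4.3.1}," and your argument (Yoneda for $\textup{proj-}\mathcal{C}/[\mathcal{P}]$ composed with the Hom-isomorphism induced by the duality $\Phi$, which sends $\mathcal{C}/[\mathcal{P}](-,X)$ to $\textup{Ext}^1_\mathcal{C}(X,-)$) is precisely the omitted verification. Your care with the variance bookkeeping and with identifying Hom-sets in $\textup{mod-}(\mathcal{C}/[\mathcal{I}])^{\textup{op}}$ with ordinary natural transformations of the Ext-functors is the only nontrivial point, and you handle it correctly.
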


The following is a variant of \cite[Section 7]{[AR]}.

\begin{prop}
Let $(\mathcal{C},\mathcal{E})$ be an exact category with enough projectives and $F$ be an object in $\textup{mod-}\mathcal{C}/[\mathcal{P}]$.
Then there exists a short exact sequence $0\rightarrow X_1\rightarrow X_2\rightarrow X_3\rightarrow 0$ in $\mathcal{E}$, such that the following sequence
$$\cdots\rightarrow \mathcal{C}/[\mathcal{P}](-,\Omega^2 X_3)\rightarrow \mathcal{C}/[\mathcal{P}](-,\Omega X_1)\rightarrow \mathcal{C}/[\mathcal{P}](-,\Omega X_2)\rightarrow\mathcal{C}/[\mathcal{P}](-,\Omega X_3)\rightarrow$$
$$\begin{gathered}\rightarrow\mathcal{C}/[\mathcal{P}](-,X_1)\rightarrow\mathcal{C}/[\mathcal{P}](-,X_2)\rightarrow\mathcal{C}/[\mathcal{P}](-,X_3)\rightarrow F\rightarrow 0 \end{gathered}\eqno{(4.2)}$$
is a projective resolution of $F$.
Moreover, if $(\mathcal{C},\mathcal{E})$ has enough injectives, then the following sequence
$$0\rightarrow F\rightarrow\textup{Ext}^1_\mathcal{C}(-,X_1)\rightarrow\textup{Ext}^1_\mathcal{C}(-,X_2)\rightarrow
\textup{Ext}^1_\mathcal{C}(-,X_3)\rightarrow\textup{Ext}^2_\mathcal{C}(-,X_1)\rightarrow$$
$$\begin{gathered}\rightarrow\textup{Ext}^2_\mathcal{C}(-,X_2)\rightarrow\textup{Ext}^2_\mathcal{C}(-,X_3)
\rightarrow\textup{Ext}^3_\mathcal{C}(-,X_1)\rightarrow\cdots \end{gathered}\eqno{(4.3)}$$
is an injective resolution of $F$.
\end{prop}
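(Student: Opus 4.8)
The plan is to build the resolution inside the equivalent category $\mathcal{E}(\mathcal{C})/[S\mathcal{E}(\mathcal{C})]$, where projective objects and kernels are both explicit, and then transport it along the equivalence $\alpha_1$ of Theorem \ref{thm4.1}. By Theorem \ref{thm4.1} and Remark \ref{rem4.2}, $\alpha_1\colon\mathcal{E}(\mathcal{C})/[S\mathcal{E}(\mathcal{C})]\xrightarrow{\sim}\textup{mod-}\mathcal{C}/[\mathcal{P}]$ carries $\delta\mapsto\delta^\ast$ and $P_Y\mapsto\mathcal{C}/[\mathcal{P}](-,Y)$; since $\alpha_1$ is dense I first choose a short exact sequence $\delta\colon 0\to X_1\to X_2\to X_3\to 0$ in $\mathcal{E}$ with $\alpha_1(\delta)\cong F$. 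It then suffices to produce a projective resolution of $\delta$ in $\mathcal{E}(\mathcal{C})/[S\mathcal{E}(\mathcal{C})]$, because $\alpha_1$ is an exact equivalence. The engine is a single syzygy computation: for any $\eta\colon 0\to Y_1\to Y_2\to Y_3\to 0$ in $\mathcal{E}$, Proposition \ref{prop1}(b) supplies an epimorphism $\underline{\varphi_\bullet}\colon P_{Y_3}\to\eta$ from the projective $P_{Y_3}\colon 0\to\Omega Y_3\xrightarrow{g_1}P\xrightarrow{g_2}Y_3\to 0$ lifting $1_{Y_3}$, and by the kernel description in Theorem \ref{thm4.3.1} (the top row of diagram (4.1)) its kernel is $0\to\Omega Y_3\xrightarrow{\binom{g_1}{\varphi_1}}P\oplus Y_1\to Z\to 0$. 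I would then identify $Z$ with $Y_2$, so that this kernel is the left rotation $\eta^{(1)}\colon 0\to\Omega Y_3\to P\oplus Y_1\to Y_2\to 0$, whose cokernel term is $Y_2$, middle term is (stably) $Y_1$, and left term is $\Omega Y_3$.

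Granting this, I iterate: set $\eta_0=\delta$ and $\eta_{k+1}=\eta_k^{(1)}$, at each stage presenting the previous kernel by Proposition \ref{prop1}(b). Each left rotation turns the middle term into the new cokernel term and sends the old cokernel term to its syzygy, so the cokernel terms run through $X_3,X_2,X_1,\Omega X_3,\Omega X_2,\Omega X_1,\Omega^2X_3,\dots$. This yields a projective resolution $\cdots\to P_{X_1}\to P_{X_2}\to P_{X_3}\to\delta\to 0$ in $\mathcal{E}(\mathcal{C})/[S\mathcal{E}(\mathcal{C})]$; applying $\alpha_1$ and using $\alpha_1(P_Y)=\mathcal{C}/[\mathcal{P}](-,Y)$ (a projective summand of $Y$ contributing nothing) turns it into precisely (4.2), with exactness preserved since $\alpha_1$ is an equivalence of abelian categories.

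For the injective resolution I would not redo the construction but deduce (4.3) by duality. Assuming now enough injectives, I apply the first part to $(\mathcal{C}^{\textup{op}},\mathcal{E}^{\textup{op}})$ (which has enough projectives, namely $\mathcal{I}$) and to the opposite sequence $\delta^{\textup{op}}\colon 0\to X_3\to X_2\to X_1\to 0$; as $(\delta^{\textup{op}})^\ast=\delta_\ast$, this produces a projective resolution of $\delta_\ast$ in $\textup{mod-}(\mathcal{C}/[\mathcal{I}])^{\textup{op}}$ with terms $\mathcal{C}/[\mathcal{I}](X_1,-),\mathcal{C}/[\mathcal{I}](X_2,-),\mathcal{C}/[\mathcal{I}](X_3,-),\mathcal{C}/[\mathcal{I}](\Omega^{-1}X_1,-),\dots$. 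Applying the inverse of the duality $\Phi$ of Proposition \ref{prop4.3.1}, which sends $\delta_\ast\mapsto\delta^\ast=F$ and $\mathcal{C}/[\mathcal{I}](X,-)\mapsto\textup{Ext}^1_\mathcal{C}(-,X)$ and converts a projective resolution into an injective coresolution, gives $0\to F\to\textup{Ext}^1_\mathcal{C}(-,X_1)\to\textup{Ext}^1_\mathcal{C}(-,X_2)\to\textup{Ext}^1_\mathcal{C}(-,X_3)\to\textup{Ext}^1_\mathcal{C}(-,\Omega^{-1}X_1)\to\cdots$, and the dimension shifts $\textup{Ext}^1_\mathcal{C}(-,\Omega^{-k}X)\cong\textup{Ext}^{k+1}_\mathcal{C}(-,X)$ rewrite this as (4.3).

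The main obstacle is the syzygy computation in the first paragraph, namely showing the kernel of $P_{Y_3}\to\eta$ is the left rotation $\eta^{(1)}$. The delicate step is identifying the pushout object $Z$ in diagram (4.1) with $Y_2$: this uses that $P$ is projective, so the morphism $\varphi_1\colon\Omega Y_3\to Y_1$ coming from the lift classifies $\eta$ as the pushout of $P_{Y_3}$ along $\varphi_1$, which forces the bottom row of that pushout to be $\eta$ and hence $Z\cong Y_2$. Everything after this — the cycling of cokernel terms, the behavior of $\alpha_1$ on projectives, and the opposite-category and duality translation in the second part — is routine bookkeeping.
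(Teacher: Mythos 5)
Your argument is correct, but it takes a genuinely different route from the paper's. The paper works directly in $\textup{mod-}\mathcal{C}/[\mathcal{P}]$: it checks exactness of the tail $\mathcal{C}/[\mathcal{P}](-,X_1)\to\mathcal{C}/[\mathcal{P}](-,X_2)\to\mathcal{C}/[\mathcal{P}](-,X_3)\to F\to 0$ by hand, obtains exactness of the remaining portion by observing that the standard diagram comparing $P_{X_3}$ with $\delta$ exhibits $\Omega X_3\to X_1\to X_2\to X_3$ as a left triangle in the stable category $\mathcal{C}/[\mathcal{P}]$ and then invoking the homological-functor property of $\mathcal{C}/[\mathcal{P}](-,?)$, and gets projectivity of the terms from Remark \ref{rem4.2}(b). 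You instead build the resolution inside $\mathcal{E}(\mathcal{C})/[S\mathcal{E}(\mathcal{C})]$ by iterating the explicit projective cover $P_{Y_3}\to\eta$ of Proposition \ref{prop1}(b) together with the explicit kernel of Theorem \ref{thm4.3.1}, and your key syzygy step is sound: since the epimorphism from $P_{Y_3}$ has third component the identity, square (II) of diagram (4.1) is a pullback along $1_{Y_3}$, forcing $Z\cong Y_2$, so the kernel is indeed the left rotation $0\to\Omega Y_3\to P\oplus Y_1\to Y_2\to 0$; the projective summands then disappear under $\alpha_1$. The two mechanisms encode the same rotation phenomenon (compare Remark \ref{rem4.1}), but yours trades the triangulated-structure argument for concrete computations in the sequence category. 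For the injective half your derivation is in fact more complete than the paper's: the paper asserts exactness of (4.3) and only verifies injectivity of $\textup{Ext}^i_\mathcal{C}(-,X)$ for $i\geq 2$ via the dimension shift $\textup{Ext}^i_\mathcal{C}(-,X)\cong\textup{Ext}^1_\mathcal{C}(-,\Omega^{-i+1}X)$, whereas you obtain both exactness and injectivity formally by applying the first half to $(\mathcal{C}^{\textup{op}},\mathcal{E}^{\textup{op}})$ and transporting through the duality $\Phi$ of Proposition \ref{prop4.3.1}, using the same dimension shift only at the very end.
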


\begin{proof}
The existence of short exact sequence $0\rightarrow X_1\rightarrow X_2\rightarrow X_3\rightarrow 0$ follows from the equivalence $\mathcal{E}(\mathcal{C})/[S\mathcal{E}(\mathcal{C})]\cong\textup{mod-}\mathcal{C}/[\mathcal{P}]$. Thus we have an exact sequence
$$0\rightarrow\mathcal{C}(-,X_1)\rightarrow\mathcal{C}(-,X_2)\rightarrow \mathcal{C}(-,X_3)\rightarrow F\rightarrow 0.$$
A direct checking proves that the sequence $$\begin{gathered}\mathcal{C}/[\mathcal{P}](-,X_1)\rightarrow\mathcal{C}/[\mathcal{P}](-,X_2)\rightarrow \mathcal{C}/[\mathcal{P}](-,X_3)\rightarrow F\rightarrow 0 \end{gathered}\eqno{(4.4)}$$ is exact.
The following commutative diagram
$$\xymatrix{
0\ar[r] & \Omega X_3 \ar[r]\ar[d] & P \ar[r]\ar[d] & X_3\ar[r]\ar@{=}[d] & 0\\
0\ar[r] & X_1\ar[r] & X_2 \ar[r] & X_3 \ar[r] & 0
}$$ implies that $\Omega X_3\rightarrow X_1\rightarrow X_2\rightarrow X_3$ is a left triangle in $\mathcal{C}/[\mathcal{P}]$.
Since $\mathcal{C}/[\mathcal{P}](X,-)$ is a homological functor, the sequence
$$\cdots\rightarrow \mathcal{C}/[\mathcal{P}](-,\Omega X_2)\rightarrow\mathcal{C}/[\mathcal{P}](-,\Omega X_3)
\rightarrow\mathcal{C}/[\mathcal{P}](-,X_1)\rightarrow$$
$$\begin{gathered}\rightarrow\mathcal{C}/[\mathcal{P}](-,X_2)\rightarrow
\mathcal{C}/[\mathcal{P}](-,X_3)\end{gathered}\eqno{(4.5)}$$ is exact. The sequences (4.4) and (4.5) together show that sequence (4.2) is a projective resolution of $F$.

Since the sequence (4.3) is exact, it remains to show that $\textup{Ext}^i_\mathcal{C}(-,X)$ is injective for $i\geq 2$. Indeed, since $(\mathcal{C},\mathcal{E})$ has enough injectives, 
by choosing injective envelopes, we have $\textup{Ext}^i_\mathcal{C}(-,X)\cong\textup{Ext}^{i-1}_\mathcal{C}(-,\Omega^{-1}X)\cong\cdots\cong
\textup{Ext}^1_\mathcal{C}(-,\Omega^{-i+1}X).$
\end{proof}

\subsection{Simple objects and Auslander-Reiten theory}

In this subsection, we always assume that $(\mathcal{C},\mathcal{E})$ is an Ext-finite $k$-linear exact category, where Ext-finite means that all morphism and extension modules $\mathcal{C}(X,Y)$ and Ext$^i_\mathcal{C}(X,Y)$ have finite length over $k$.

Recall that a non-split exact sequence $0\rightarrow X_1\xrightarrow{f_1} X_2\xrightarrow{f_2} X_3\rightarrow 0$ is called an {\em Auslander-Reiten sequence} if the following two conditions are satisfied:

(a) If $g:X_1\rightarrow Y$ is not a section, then $g$ factors through $f_1$.

(b) If $h:Z\rightarrow X_3$ is not a retraction, then $h$ factors through $f_2$.

We say  $\mathcal{C}$ has {\em right }(resp. {\em left}){\em Auslander-Reiten sequences} if  each non-projective (resp. non-injective) object is the ending (resp. starting) term of  an Auslander-Reiten sequence. We say $\mathcal{C}$ has {\em  Auslander-Reiten sequences} if it has both right and left Auslander-Reiten sequences.

\begin{lem}\label{lem4.4.1}
 Let $X_\bullet$ be a simple object in $\mathcal{E}(\mathcal{C})/[S\mathcal{E}(\mathcal{C})]$, then $X_\bullet$ is isomorphic to $X'_\bullet: 0\rightarrow X'_1\xrightarrow{f'_1} X'_2\xrightarrow{f'_2} X'_3\rightarrow 0$, where $X_1'$ and $X_3'$ are indecomposable.
\end{lem}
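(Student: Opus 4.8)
The plan is to prove the statement for the third term $X_3$ first, producing an isomorphic copy of $X_\bullet$ with indecomposable third term, and then to run the dual argument for the first term. Since $X_\bullet$ is simple it is in particular nonzero in $\mathcal{E}(\mathcal{C})/[S\mathcal{E}(\mathcal{C})]$, hence a non-split short exact sequence; moreover, being simple in an abelian category, its only subobjects are $0$ and $X_\bullet$ itself, and dually its only quotients are $0$ and $X_\bullet$. I will exploit this dichotomy together with the description of monomorphisms from Lemma \ref{lem4.2} and Remark \ref{rem1}. Throughout I write $\delta\in\textup{Ext}^1_\mathcal{C}(X_3,X_1)$ for the class of $X_\bullet$, and I use that $\mathcal{C}$ has Krull--Schmidt decompositions (every object is a finite direct sum of indecomposables), which is guaranteed by the Ext-finiteness hypothesis.

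First I would set up the basic construction. Suppose $X_3=U\oplus V$ with $U,V\neq 0$. Applying axiom (E2)$^{\textup{op}}$ to $X_\bullet$ and the inclusion $\iota_U:U\to X_3$ produces a short exact sequence $Q^U_\bullet:0\to X_1\to Q_U\to U\to 0$ together with a morphism $\varphi_\bullet:Q^U_\bullet\to X_\bullet$ whose first component is $1_{X_1}$. By Lemma \ref{lem4.2} the class $\underline{\varphi_\bullet}$ is a monomorphism in $\mathcal{E}(\mathcal{C})/[S\mathcal{E}(\mathcal{C})]$, so $Q^U_\bullet$ is a subobject of $X_\bullet$. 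Under the canonical isomorphism $\textup{Ext}^1_\mathcal{C}(U\oplus V,X_1)\cong\textup{Ext}^1_\mathcal{C}(U,X_1)\oplus\textup{Ext}^1_\mathcal{C}(V,X_1)$ the class of $Q^U_\bullet$ is the $U$-component $\iota_U^\ast\delta$ of $\delta$; in particular $Q^U_\bullet=0$ in the quotient if and only if $\iota_U^\ast\delta=0$.

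Now I would invoke simplicity, so that the subobject $Q^U_\bullet$ is either $0$ or all of $X_\bullet$. If it is all of $X_\bullet$, then the monomorphism $\underline{\varphi_\bullet}$ is an isomorphism and $X_\bullet\cong Q^U_\bullet$, a sequence with third term $U$. If it is $0$, then $\iota_U^\ast\delta=0$, so $\delta$ has trivial $U$-component and $X_\bullet$ decomposes in $\mathcal{E}(\mathcal{C})$ as $(0\to 0\to U\xrightarrow{1}U\to 0)\oplus Q^V_\bullet$, where $Q^V_\bullet$ has class $\iota_V^\ast\delta$ and third term $V$; the first summand is split, hence zero in the quotient, so $X_\bullet\cong Q^V_\bullet$. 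In either case $X_\bullet$ is isomorphic to a short exact sequence whose third term is a proper direct summand of $X_3$, so inducting on the number of indecomposable summands of the third term gives $X_\bullet\cong X'_\bullet$ with $X'_3$ indecomposable (and $X'_\bullet$ still simple). I would then repeat the argument dually: writing $X'_1=U'\oplus V'$ with $U',V'\neq 0$, axiom (E2) applied to the projection $\pi_{U'}:X'_1\to U'$ yields a pushout quotient $X'_\bullet\to B_\bullet$ with third component $1_{X'_3}$, which by the dual of Lemma \ref{lem4.2} is an epimorphism; simplicity forces it to be $0$ or an isomorphism, the two cases being governed by the $U'$- and $V'$-components of $\delta$ under $\textup{Ext}^1_\mathcal{C}(X'_3,U'\oplus V')\cong\textup{Ext}^1_\mathcal{C}(X'_3,U')\oplus\textup{Ext}^1_\mathcal{C}(X'_3,V')$. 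Exactly as before, each case exhibits $X'_\bullet$ as isomorphic to a sequence whose first term is a proper summand of $X'_1$, while leaving the third term $X'_3$ unchanged, and a second induction yields both end terms indecomposable.

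I expect the main obstacle to be the bookkeeping in the dichotomy: one must verify that both alternatives—the subobject being all of $X_\bullet$ and the subobject being zero—genuinely reduce the relevant end term, and that the dual pass over $X_1$ leaves the indecomposability already achieved for $X_3$ undisturbed (which holds because pushouts and the splitting-off of summands of $X_1$ do not alter the third term). The technical heart is the identification of the subobject $Q^U_\bullet$ and of its vanishing with a direct-summand component of the extension class, via Lemma \ref{lem4.2}, Remark \ref{rem1} and the additivity of $\textup{Ext}^1_\mathcal{C}$; once this is in place, simplicity does the rest and no explicit computation of kernels or cokernels is required.
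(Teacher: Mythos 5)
Your proof is correct, and it takes a route that overlaps with the paper's but diverges at one decisive point. Both arguments split an end term of $X_\bullet$ into direct summands, form the comparison morphism to the sequence supplied by (E2) or (E2)$^{\textup{op}}$, use Lemma \ref{lem4.2} (and its dual) to see that this morphism is automatically a monomorphism (resp.\ epimorphism), and let simplicity reduce to a dichotomy. The difference is in how the degenerate branch is handled. The paper first normalizes $X_\bullet$ so that $f_1,f_2\in J_\mathcal{C}$, splits off a single indecomposable summand $X_1'$ of $X_1$, and rules out the zero branch outright: if the comparison map were zero, Lemma \ref{lem1} would give $p$ with $pf_1i=1$, contradicting $f_1\in J_\mathcal{C}$; so the comparison map is an isomorphism in one stroke and no induction is needed. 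You make no radical assumption; instead, when the subobject (or quotient) is zero you identify this with the vanishing of a component of the class $\delta$ under the additivity of $\textup{Ext}^1_\mathcal{C}$, split off a contractible direct summand of $X_\bullet$, pass to the complementary component, and iterate. These are essentially dual expenditures of effort: the paper's normalization $f_1,f_2\in J_\mathcal{C}$ itself requires splitting off contractible summands and is left implicit there, and that reduction is precisely the content of your second case. Your version is thus more self-contained on that point, at the cost of invoking the pullback/pushout description of $\textup{Ext}^1$ in a general exact category (legitimate, and needed anyway for the Ext-finiteness hypothesis to make sense) and of running an induction on the number of indecomposable summands; both versions share the same implicit Krull--Schmidt assumption used to decompose the end terms.
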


\begin{proof}
Assume that $X_\bullet$ is of the form $0\rightarrow X_1\xrightarrow{f_1} X_2\xrightarrow{f_2} X_3\rightarrow 0$ with $f_1, f_2\in J_\mathcal{C}$. Suppose that $X_1=X_1'\oplus X_1''$, where $X_1'$ is indecomposable. Then there exist two canonical morphisms $i:X_1'\rightarrow X_1$ and $\pi:X_1\rightarrow X_1'$ such that $\pi i=1$. Considering the pushout of $f_1$ and $\pi$, we have the following commutative diagram
$$\xymatrix{X_\bullet\ar[d]^{\varphi_\bullet} & 0\ar[r] & X_1 \ar[r]^{f_1}\ar[d]^{\pi} & X_2 \ar[r]^{f_2}\ar[d]^{\varphi_2} & X_3 \ar[r]\ar@{=}[d] & 0\\
X'_\bullet & 0\ar[r] & X'_1 \ar[r]^{g_1} & X'_2 \ar[r]^{g_2} & X_3 \ar[r] & 0\\
}$$
whose second row belongs to $\mathcal{E}$. Since $X_\bullet$ is a simple object, $\underline{\varphi_\bullet}$ is either zero or a monomorphism. Noting $\underline{\varphi_\bullet}$ is an epimorphism, we claim that $\underline{\varphi_\bullet}$ is  a monomorphism thus is an isomorphism. Otherwise, $\underline{\varphi_\bullet}=0$, thus there exists a morphism $p:X_2\rightarrow X_1'$ such that $\pi=pf_1$. The following commutative diagram
$$\xymatrix{
 0\ar[r] & X'_1 \ar[r]^{1}\ar[d]^{i} & X'_1 \ar[r]\ar[d]^{f_1i} & 0 \ar[r]\ar[d] & 0\\
0\ar[r] & X_1 \ar[r]^{f_1}\ar[d]^{\pi} & X_2 \ar[r]^{f_2}\ar[d]^{p} & X_3 \ar[r]\ar[d] & 0\\
 0\ar[r] & X'_1 \ar[r]^{1} & X'_1 \ar[r]  & 0 \ar[r] & 0\\
}$$ implies that $f_1\notin J_\mathcal{C}$. It is a contradiction. Therefore, $X_\bullet$ is isomorphic to $X_\bullet'$.
\end{proof}

We compare the following result with \cite[Propostion 14]{[Rump]} and \cite[Proposition 4.1]{[BJ]}.

\begin{thm}\label{thm4.4.1}
Let $(\mathcal{C},\mathcal{E})$ be an Ext-finite $k$-linear exact category.

\textup{(a)} Assume that $X_\bullet: 0\rightarrow X_1\xrightarrow{f_1} X_2\xrightarrow{f_2} X_3\rightarrow 0$ is a non-split short exact sequence in $\mathcal{E}$ where $X_1$ and $X_3$ are indecomposable. Then $X_\bullet$ is a simple object in $\mathcal{E}(\mathcal{C})/[S\mathcal{E}(\mathcal{C})]$ if and only if $X_\bullet$ is an Auslander-Reiten sequence in $\mathcal{C}$.

\textup{(b)} There is a bijection between the set of isoclasses of simple objects in $\mathcal{E}(\mathcal{C})/[S\mathcal{E}(\mathcal{C})]$ and the set of isoclasses of Auslander-Reiten sequences in $\mathcal{C}$.
\end{thm}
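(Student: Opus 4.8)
The plan is to work directly with the abelian structure on $\mathcal{E}(\mathcal{C})/[S\mathcal{E}(\mathcal{C})]$ furnished by Theorem \ref{thm4.3.1}, translating the two defining properties of an Auslander-Reiten sequence into statements about epimorphisms and monomorphisms in the quotient, and then to deduce (b) from (a) together with Lemma \ref{lem4.4.1}. For the implication \emph{Auslander-Reiten $\Rightarrow$ simple} in (a), I would test $X_\bullet$ against its quotients. Since $X_\bullet$ is non-split it is nonzero by Lemma \ref{lem1}, so it suffices to show that every epimorphism $\underline{\varphi_\bullet}\colon X_\bullet\to Z_\bullet$ has either zero target or is an isomorphism. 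By the dual of Remark \ref{rem1} I may assume $\varphi_3=1_{X_3}$, so $Z_\bullet$ is $0\to Z_1\xrightarrow{g_1} Z_2\xrightarrow{g_2} X_3\to 0$ with $g_2\varphi_2=f_2$. If $g_2$ is a retraction then $Z_\bullet$ splits, hence $Z_\bullet=0$ in the quotient. If $g_2$ is not a retraction, the right almost split property (b) yields $s\colon Z_2\to X_2$ with $f_2s=g_2$; since $f_2sg_1=g_2g_1=0$, the map $sg_1$ factors through $f_1=\ker f_2$, producing a chain map $\eta_\bullet=(\eta_1,s,1_{X_3})\colon Z_\bullet\to X_\bullet$. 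Both composites $\eta_\bullet\varphi_\bullet$ and $\varphi_\bullet\eta_\bullet$ are the identity in degree $3$, so by Lemma \ref{lem1} they equal the respective identities in the quotient; thus $\underline{\varphi_\bullet}$ is an isomorphism and $X_\bullet$ is simple.

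For \emph{simple $\Rightarrow$ Auslander-Reiten} I would verify the right almost split property (b) and obtain (a) by the same argument applied in $\mathcal{C}^{\mathrm{op}}$, using $\mathcal{E}(\mathcal{C}^{\mathrm{op}})/[S\mathcal{E}(\mathcal{C}^{\mathrm{op}})]\cong(\mathcal{E}(\mathcal{C})/[S\mathcal{E}(\mathcal{C})])^{\mathrm{op}}$ and the self-duality of simplicity. Non-splitness gives that $f_2$ is not a retraction; fixing a non-retraction $h\colon Z\to X_3$, I would form the pullback of $f_2$ along $h$ (available since $f_2$ is an admissible epimorphism), producing $W_\bullet$ and a morphism $\underline{\varphi_\bullet}\colon W_\bullet\to X_\bullet$ with $\varphi_1=1_{X_1}$ and $\varphi_3=h$. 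By Lemma \ref{lem4.2} this $\underline{\varphi_\bullet}$ is a monomorphism, so simplicity forces it to be zero or an isomorphism. If it is zero, Lemma \ref{lem1} supplies $p_2\colon Z\to X_2$ with $h=f_2p_2$, i.e.\ $h$ factors through $f_2$. The remaining task is to exclude the isomorphism case for non-retraction $h$: if $\underline{\varphi_\bullet}$ is invertible with inverse $\underline{\eta_\bullet}$, then comparing degree $3$ and applying Lemma \ref{lem1} gives $h\eta_3=1_{X_3}+f_2p$ for some $p$; as $\mathrm{End}(X_3)$ is local ($X_3$ indecomposable, Hom-spaces of finite length) and $f_2p$ is a non-unit (else $f_2$ would be a retraction), the element $1_{X_3}+f_2p$ is invertible, forcing $h$ to be a retraction, a contradiction. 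Hence every non-retraction factors through $f_2$, which is (b).

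For part (b), I would let $\Psi$ send an Auslander-Reiten sequence to its isoclass in $\mathcal{E}(\mathcal{C})/[S\mathcal{E}(\mathcal{C})]$, which is simple by (a). Surjectivity is immediate from Lemma \ref{lem4.4.1}: any simple object is isomorphic in the quotient to some $X'_\bullet$ with $X'_1,X'_3$ indecomposable, and such $X'_\bullet$ is an Auslander-Reiten sequence by (a). For injectivity, suppose two Auslander-Reiten sequences $X_\bullet,Y_\bullet$ are isomorphic in the quotient via $\underline{\varphi_\bullet}$ with inverse $\underline{\eta_\bullet}$. Comparing degrees $1$ and $3$ and invoking Lemma \ref{lem1} yields factorizations $\eta_1\varphi_1=1_{X_1}+p_1f_1$ and $\eta_3\varphi_3=1_{X_3}+f_2p$ (and symmetrically for $\varphi\eta$); since $X_1,X_3$ are indecomposable with local endomorphism rings and $p_1f_1$, $f_2p$ are non-units (otherwise $f_1$ is a section or $f_2$ a retraction), these are units, so $\varphi_1$ and $\varphi_3$ are genuine isomorphisms. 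The short five lemma for exact categories then upgrades $\varphi_\bullet$ to an honest isomorphism of short exact sequences, so $\Psi$ is a bijection onto isoclasses of simple objects.

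I expect the main obstacle to be the isomorphism-case analysis in the \emph{simple $\Rightarrow$ Auslander-Reiten} direction and its reuse for injectivity in (b): the entire argument hinges on converting the ``equality up to $[S\mathcal{E}(\mathcal{C})]$'' provided by Lemma \ref{lem1} into honest invertibility, and this is precisely where indecomposability and the local endomorphism rings of $X_1$ and $X_3$ (guaranteed by the Ext-finiteness hypothesis, through $1+(\text{non-unit})$ being a unit) are indispensable.
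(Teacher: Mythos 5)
Your proof is correct, and it reaches the statement by the same circle of ideas as the paper (Lemma \ref{lem1}, Lemma \ref{lem4.2}, Remark \ref{rem1}, Lemma \ref{lem4.4.1}, pullback/pushout along the end terms, locality of $\mathrm{End}(X_1)$ and $\mathrm{End}(X_3)$), but dualized at the key steps, so it is worth recording where the mechanisms differ. For \emph{simple $\Rightarrow$ AR}, the paper tests a non-section $\varphi_1\colon X_1\to Y_1$ against the pushout sequence: indecomposability of $X_1$ shows $\left(\begin{smallmatrix}f_1\\ \varphi_1\end{smallmatrix}\right)$ is \emph{not} a section, so by Lemma \ref{lem4.2} the induced map is not a monomorphism, hence is zero by simplicity, and Lemma \ref{lem1} yields the factorization directly. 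You instead test a non-retraction $h\colon Z\to X_3$ against the pullback, where Lemma \ref{lem4.2} gives a monomorphism for free, and you must then \emph{exclude} the isomorphism alternative by the computation $h\eta_3=1_{X_3}+f_2p$ together with locality of $\mathrm{End}(X_3)$; this is an extra step the paper's route avoids, but it is sound (and the implicit use of ``indecomposable with artinian endomorphism ring, hence local'' is exactly the same appeal the paper makes when it asserts $\left(\begin{smallmatrix}f_1\\ \varphi_1\end{smallmatrix}\right)$ is not a section). For \emph{AR $\Rightarrow$ simple}, the paper's observation that every morphism out of $X_\bullet$ is a monomorphism or zero according to whether $\varphi_1$ is a section is slightly leaner than your normalization of epimorphisms via the dual of Remark \ref{rem1} and explicit construction of a two-sided inverse from the right almost split property, but both arguments are complete. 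Finally, your treatment of injectivity in (b) --- converting a quotient-isomorphism of Auslander--Reiten sequences into an honest isomorphism by showing $\varphi_1$ and $\varphi_3$ are genuine isomorphisms and invoking the five lemma for exact categories --- supplies precisely the detail that the paper compresses into ``(b) follows from Lemma \ref{lem4.4.1} and (a)'', and is a welcome addition.
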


\begin{proof}
(a) For the ``only if" part, suppose that $\varphi_1:X_1\rightarrow Y_1$ is not a section, then we have the following commutative diagram
$$\xymatrix{X_\bullet\ar[d]^{\varphi_\bullet} & 0\ar[r] & X_1 \ar[r]^{f_1}\ar[d]^{\varphi_1} & X_2 \ar[r]^{f_2}\ar[d]^{\varphi_2} & X_3 \ar[r]\ar@{=}[d] & 0\\
Y_\bullet & 0\ar[r] & Y_1 \ar[r]^{g_1} & Y_2 \ar[r]^{g_2} & X_3 \ar[r] & 0\\
}$$
which is induced by the pushout of $f_1$ and $\varphi_1$. Since $f_1$ and $\varphi_1$ are not sections and $X_1$ is indecomposable, the morphism $\left(
                                                                                                                      \begin{smallmatrix}
                                                                                                                        f_1 \\
                                                                                                                        \varphi_1 \\
                                                                                                                      \end{smallmatrix}
                                                                                                                    \right)$
is not a section. Thus $\underline{\varphi_\bullet}$ is not a monomorphism by Lemma \ref{lem4.2}. We infer that $\underline{\varphi_\bullet}=0$ since $X_\bullet$ is a simple object.  It follow from Lemma \ref{lem1} that $\varphi_1$ factors through $f_1$. Dually, we can prove that if $\varphi_3: Z_3\rightarrow X_3$ is not a retraction, then $\varphi_3$ factors through $f_2$. Thus $X_\bullet$ is an Auslander-Reiten sequence.

For the `` if" part, assume that $\varphi_\bullet: X_\bullet\rightarrow Y_\bullet$ is a morphism in $\mathcal{E}$. If $\varphi_1$ is a section, then $\left(
                                                                                                                      \begin{smallmatrix}
                                                                                                                        f_1 \\
                                                                                                                        \varphi_1 \\
                                                                                                                      \end{smallmatrix}
                                                                                                                    \right)$ is a section, thus $\underline{\varphi_\bullet}$ is a monomorphism. If $\varphi_1$ is not a section, then $\varphi_1$ factors through $f_1$ since $X_\bullet$ is an Auslander-Reiten sequence, thus $\underline{\varphi_\bullet}=0$. Therefore, each morphism $\underline{\varphi_\bullet}: X_\bullet\rightarrow Y_\bullet$ is either a monomorphism or a zero morphism. It means that $X_\bullet$ is a simple object in $\mathcal{E}(\mathcal{C})/[S\mathcal{E}(\mathcal{C})]$.

(b) It follows from Lemma \ref{lem4.4.1} and (a).
\end{proof}

From now on to the end of this subsection, we assume that $(\mathcal{C},\mathcal{E})$ is an exact category with enough projectives and  injectives. If $\mathcal{C}$ is also a dualizing $k$-variety, then $\mathcal{C}/[\mathcal{P}]$ and $\mathcal{C}/[\mathcal{I}]$ are also dualizing $k$-varieties (see Example \ref{ex2.1}(d)). We have two dualities $\Phi:\textup{mod-}\mathcal{C}/[\mathcal{P}]\rightarrow \textup{mod-}(\mathcal{C}/[\mathcal{I}])^{\textup{op}}$ and $D:\textup{mod-}(\mathcal{C}/[\mathcal{I}])^{\textup{op}}\rightarrow\textup{mod-}\mathcal{C}/[\mathcal{I}]$. The composition of $\Phi$ and $D$ defines an equivalence
$$\Theta: \ \textup{mod-}\mathcal{C}/[\mathcal{P}] \xrightarrow{\Phi}  \textup{mod-}(\mathcal{C}/[\mathcal{I}])^{\textup{op}} \xrightarrow{D}\textup{mod-}\mathcal{C}/[\mathcal{I}].$$
We consider the following restriction
$$\Theta: \ \textup{proj-}\mathcal{C}/[\mathcal{P}] \xrightarrow{\Phi}  \textup{inj-}(\mathcal{C}/[\mathcal{I}])^{\textup{op}} \xrightarrow{D}\textup{proj-}\mathcal{C}/[\mathcal{I}].$$
Since the projective object in mod-$\mathcal{C}/[\mathcal{I}]$ is of the form $\mathcal{C}/[\mathcal{I}](-,Y)$, we have
$$\Theta(\mathcal{C}/[\mathcal{P}](-,X))=D\textup{Ext}^1_\mathcal{C}(X,-)\cong\mathcal{C}/[\mathcal{I}](-,Y)$$
for some $Y\in\mathcal{C}$.
Therefore, there is an equivalence $\tau: \mathcal{C}/[\mathcal{P}]\cong\mathcal{C}/[\mathcal{I}]$ mapping $X$ to $Y$. The functor $\tau$ induces an equivalence $\tau^{-1}_\ast: \textup{mod-}\mathcal{C}/[\mathcal{P}]\cong \textup{mod-}\mathcal{C}/[\mathcal{I}], F\mapsto F\tau^{-1}$, such that $D\Phi=\tau^{-1}_\ast$. Assume that $\delta$ is a short exact sequence in $\mathcal{E}$, then $D\Phi(\delta^\ast)=D\delta_\ast$. On the other hand, $\tau^{-1}_\ast(\delta^\ast)=\delta^\ast\tau^{-1}$. Hence, we have $D\delta_\ast=\delta^\ast\tau^{-1}$.


\vspace{1mm}
To summarize,  we have the following generalized Auslander-Reiten duality and defect formula.

\begin{thm}\label{thm4.4.2}
Let $(\mathcal{C},\mathcal{E})$ be an Ext-finite exact category with enough projectives and injectives. Assume that $\mathcal{C}$ is a dualizing $k$-variety. Then there is an equivalence $\tau:\mathcal{C}/[\mathcal{P}]\cong\mathcal{C}/[\mathcal{I}]$ satisfying the following properties:

\textup{(a)} $D\textup{Ext}^1_\mathcal{C}(-,X)\cong\mathcal{C}/[\mathcal{P}](\tau^{-1} X,-)$, $D\textup{Ext}^1_\mathcal{C}(X,-)\cong\mathcal{C}/[\mathcal{I}](-,\tau X)$.

\textup{(b)} $D\delta_\ast=\delta^\ast\tau^{-1}$, $D\delta^\ast=\delta_\ast\tau$ for each short exact sequence $\delta$ in $\mathcal{E}$.

\noindent Therefore, $\mathcal{C}$ has Auslander-Reiten sequences.
\end{thm}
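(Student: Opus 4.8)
The plan is to regard parts (a) and (b) as assembled from the construction carried out immediately before the statement, and to reserve the genuine work for the final assertion that $\mathcal{C}$ has Auslander--Reiten sequences.

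For (a), the preceding discussion already produces the equivalence $\tau:\mathcal{C}/[\mathcal{P}]\cong\mathcal{C}/[\mathcal{I}]$ together with the second isomorphism, since $\Theta(\mathcal{C}/[\mathcal{P}](-,X))=D\textup{Ext}^1_\mathcal{C}(X,-)\cong\mathcal{C}/[\mathcal{I}](-,\tau X)$. To get the first isomorphism I would apply the very same construction to the opposite exact category $(\mathcal{C}^{\textup{op}},\mathcal{E}^{\textup{op}})$, which is again Ext-finite with enough projectives and injectives and again dualizing; there the roles of $\mathcal{P}$ and $\mathcal{I}$ are exchanged and the two defects swap, so one obtains an equivalence $\sigma:\mathcal{C}/[\mathcal{I}]\cong\mathcal{C}/[\mathcal{P}]$ with $D\textup{Ext}^1_\mathcal{C}(-,X)\cong\mathcal{C}/[\mathcal{P}](\sigma X,-)$. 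The only point requiring care is the identification $\sigma\cong\tau^{-1}$; since both $\sigma$ and $\tau$ are read off from the single equivalence $\Theta=D\Phi$ and its analogue on injectives supplied by Proposition \ref{prop4.3.1}, uniqueness of quasi-inverses forces $\sigma\cong\tau^{-1}$, yielding $D\textup{Ext}^1_\mathcal{C}(-,X)\cong\mathcal{C}/[\mathcal{P}](\tau^{-1}X,-)$.

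For (b), the relation $D\delta_\ast=\delta^\ast\tau^{-1}$ is already derived from $D\Phi=\tau^{-1}_{\ast}$ and $\Phi(\delta^\ast)=\delta_\ast$. The companion formula I would get by applying $D$ once more: because $D$ is computed objectwise it commutes with restriction along $\tau^{-1}$, so from $\delta^\ast\tau^{-1}=D\Phi(\delta^\ast)=D\delta_\ast$ one finds $\delta_\ast=\Phi(\delta^\ast)=D(\delta^\ast\tau^{-1})=(D\delta^\ast)\tau^{-1}$, and precomposing with $\tau$ gives $D\delta^\ast=\delta_\ast\tau$. This step is only variance bookkeeping, guided by Example \ref{ex4.1} and Remark \ref{rem4.2}.

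The substantive step is the last clause, which I would deduce from Theorem \ref{thm4.4.1} transported across $\alpha_1:\mathcal{E}(\mathcal{C})/[S\mathcal{E}(\mathcal{C})]\cong\textup{mod-}\mathcal{C}/[\mathcal{P}]$: simple objects on the left correspond to Auslander--Reiten sequences in $\mathcal{C}$. Since $\mathcal{C}$ is a dualizing $k$-variety, so is $\mathcal{C}/[\mathcal{P}]$, hence $\textup{mod-}\mathcal{C}/[\mathcal{P}]$ is a Hom-finite Krull--Schmidt abelian category admitting projective covers and injective envelopes. For a non-projective indecomposable $X$ the indecomposable projective $\mathcal{C}/[\mathcal{P}](-,X)$ then has a simple top $S_X$, and $\alpha_1^{-1}(S_X)$ is an Auslander--Reiten sequence; comparing projective covers (the contravariant defect of an almost split sequence ending at $X$ is exactly the simple functor whose projective cover is $\mathcal{C}/[\mathcal{P}](-,X)$) shows that this sequence ends at $X$, with starting term $\tau X$ by (a). Dually, for a non-injective indecomposable $X$ the indecomposable injective $\textup{Ext}^1_\mathcal{C}(-,X)$ of Remark \ref{rem4.2}(b) has a simple socle, producing a left Auslander--Reiten sequence starting at $X$. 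Hence $\mathcal{C}$ has Auslander--Reiten sequences. I expect the main obstacle to lie precisely here: establishing that simple tops and socles exist in $\textup{mod-}\mathcal{C}/[\mathcal{P}]$, which is exactly where the dualizing hypothesis is indispensable, and pinning down the end terms via the defect description so that the translate is correctly matched with $\tau$.
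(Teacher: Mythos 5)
Your outline matches the paper for the easy parts --- the second isomorphism in (a) and the derivation of $D\delta^\ast=\delta_\ast\tau$ from $D\delta_\ast=\delta^\ast\tau^{-1}$ by applying $D$ again are exactly what the paper does --- but you diverge in two places. First, for $D\textup{Ext}^1_\mathcal{C}(-,X)\cong\mathcal{C}/[\mathcal{P}](\tau^{-1}X,-)$ the paper does not pass to the opposite category at all: it simply specializes the already-proved formula $D\delta^\ast=\delta_\ast\tau$ to $\delta=I_X$, where $\delta^\ast=\textup{Ext}^1_\mathcal{C}(-,X)$ and $\delta_\ast\tau=\mathcal{C}/[\mathcal{I}](X,\tau-)\cong\mathcal{C}/[\mathcal{P}](\tau^{-1}X,-)$. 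This is both shorter and safer than your route. Your step ``uniqueness of quasi-inverses forces $\sigma\cong\tau^{-1}$'' is the genuine soft spot: $\sigma$ and $\tau$ are produced by two independent constructions, and uniqueness of quasi-inverses only applies once you already know one is quasi-inverse to the other. To salvage your route you would need an actual comparison, e.g.\ combining the two formulas to get a natural isomorphism $\mathcal{C}/[\mathcal{I}](Y,\tau X)\cong\mathcal{C}/[\mathcal{P}](\sigma Y,X)$, which exhibits $\sigma$ as a left adjoint (hence quasi-inverse) of the equivalence $\tau$. Since the paper's one-line derivation is available, this detour is unnecessary.

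Second, for the final clause the paper does not prove existence of Auslander--Reiten sequences from scratch; it cites \cite[Theorem 1.1]{[LZ]} (see \cite[Theorem 3.6]{[LNP]} for the exact-category version), which says precisely that the duality in (a) implies existence of Auslander--Reiten sequences. Your sketch --- transporting Theorem \ref{thm4.4.1} across $\alpha_1$ and producing the almost split sequence ending at a non-projective indecomposable $X$ as the preimage of the simple top of $\mathcal{C}/[\mathcal{P}](-,X)$ --- is essentially a proof of that cited result, and it is the standard Auslander argument; it is correct in outline, but it carries real obligations the paper delegates to the references (existence of projective covers and simple tops in $\textup{mod-}\mathcal{C}/[\mathcal{P}]$, and the identification of the end terms of the corresponding sequence via the defect). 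What your approach buys is a self-contained argument internal to the machinery of Sections 3--4; what the paper's approach buys is brevity and a clean separation between the duality formula (the new content here) and its classical consequence.
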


\begin{proof}
We have shown that there is an equivalence $\tau:\mathcal{C}/[\mathcal{P}]\cong\mathcal{C}/[\mathcal{I}]$ such that $D\textup{Ext}^1_\mathcal{C}(X,-)=\mathcal{C}/[\mathcal{I}](-,\tau X)$ and $D\delta_\ast=\delta^\ast\tau^{-1}$. Then $D\delta^\ast=\delta_\ast\tau$ immediately follows from  $D\delta_\ast=\delta^\ast\tau^{-1}$.
If $\delta=I_X$, then $\delta^\ast=\textup{Ext}^1_\mathcal{C}(-,X)$ and $\delta_\ast\tau=\mathcal{C}/[\mathcal{I}](X,\tau-)\cong\mathcal{C}/[\mathcal{P}](\tau^{-1}X,-)$. Since $D\delta^\ast=\delta_\ast\tau$, we have $D\textup{Ext}^1_\mathcal{C}(-,X)\cong\mathcal{C}/[\mathcal{P}](\tau^{-1}X,-)$.

The last assertion follows from (a) and \cite[Theorem 1.1]{[LZ]}; see \cite[Theorem 3.6]{[LNP]} for the exact version.
\end{proof}

\begin{cor}(\cite[Theorem 7.1.3]{[Rei]})
Let $\mathcal{C}$ be a dualizing $k$-variety, then mod-$\mathcal{C}$ has Auslander-Reiten sequences.
\end{cor}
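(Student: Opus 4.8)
The plan is to obtain this as a direct instance of Theorem \ref{thm4.4.2}, applied to the exact category $\textup{mod-}\mathcal{C}$ itself equipped with its canonical abelian exact structure, rather than by reproving it from scratch. First I would invoke Example \ref{ex2.1}(b): since $\mathcal{C}$ is a dualizing $k$-variety, $\textup{mod-}\mathcal{C}$ is again a dualizing $k$-variety, and in addition it is an abelian $k$-linear category with enough projective and enough injective objects. Viewing $\textup{mod-}\mathcal{C}$ as an exact category whose short exact sequences are the ordinary short exact sequences of finitely presented functors, the subcategory $\mathcal{P}$ of projectives is the additive closure of the representable functors $\mathcal{C}(-,X)$, and the hypotheses ``enough projectives,'' ``enough injectives,'' ``$k$-linear'' and ``dualizing $k$-variety'' are thus all supplied verbatim by that example.

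The one hypothesis of Theorem \ref{thm4.4.2} not handed to us directly is Ext-finiteness, so this is the step I would spend care on. Because $\mathcal{C}$ is a dualizing $k$-variety over the commutative artinian ring $k$, every value $G(X)$ of a finitely presented functor $G$ is a finite length $k$-module, whence by Yoneda every Hom-space $\textup{mod-}\mathcal{C}(\mathcal{C}(-,X),G)\cong G(X)$, and hence every $\textup{mod-}\mathcal{C}(F,G)$, has finite length. Taking a resolution of $F$ by finitely generated projectives (available since $\textup{mod-}\mathcal{C}$ is abelian with enough projectives) and applying $\textup{mod-}\mathcal{C}(-,G)$ exhibits each $\textup{Ext}^i_{\textup{mod-}\mathcal{C}}(F,G)$ as a subquotient of a finite-length $k$-module, so $\textup{mod-}\mathcal{C}$ is Ext-finite.

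With all hypotheses verified, Theorem \ref{thm4.4.2} applies to $\textup{mod-}\mathcal{C}$; its concluding assertion ``$\mathcal{C}$ has Auslander-Reiten sequences'' reads here as ``$\textup{mod-}\mathcal{C}$ has Auslander-Reiten sequences,'' which is precisely the claim. I expect the only non-formal point to be the Ext-finiteness verification above; everything else is bookkeeping, so the corollary is essentially a specialization of the generalized Auslander-Reiten duality established in Theorem \ref{thm4.4.2}.
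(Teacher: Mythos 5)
Your proposal is correct and follows essentially the same route as the paper, which likewise deduces the corollary from Example \ref{ex2.1}(b) together with Theorem \ref{thm4.4.2}. The only difference is that you explicitly verify the Ext-finiteness hypothesis (via Yoneda and finite length of values of finitely presented functors over the artinian ring $k$), a point the paper's two-line proof leaves implicit; that check is sound and is a welcome addition rather than a deviation.
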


\begin{proof}
By Example 2.5(b), mod-$\mathcal{C}$ is a dualizing $k$-variety, moreover, it is an abelian category with enough projectives and enough injectives. Thus the consequence follows from Theorem \ref{thm4.4.2}.
\end{proof}

\begin{rem}
Let $A$ be an Artin $k$-algebra and $\mathcal{C}=\textup{mod-}A$. Then $\mathcal{C}$ satisfies all the conditions in Theorem \ref{thm4.4.2} and the functor $\tau:\mathcal{C}/[\mathcal{P}]\cong\mathcal{C}/[\mathcal{I}]$  is given by $DTr$.
\end{rem}

\subsection{Higher version}

In this subsection, we assume that $n$ is an integer greater than or equal to 1 and $(\mathcal{C},\mathcal{E}_n)$ is an $n$-exact category in the sense of Jasso (see \cite{[J]}). Let  $X_\bullet: 0\rightarrow X_1\xrightarrow{f_1} X_2\xrightarrow{f_2}\cdots\xrightarrow{f_{n+1}} X_{n+2}\rightarrow 0$ be an $n$-exact sequence in $\mathcal{E}_n$, we say $f_1$ is an {\em admissible monomorphism} and $f_{n+1}$ is an {\em admissible epimorphism}. We always view $X_\bullet$ as a complex over $\mathcal{C}$ concentrated on degree $1,2,\cdots,n+2$. We denote by $\mathcal{E}_n(\mathcal{C})$ the category of all $n$-exact sequences in $\mathcal{E}_n$ where the morphisms between two $n$-exact sequences are given by morphisms of complexes, and by  $C\mathcal{E}_n(\mathcal{C})$ the full subcategory of $\mathcal{E}_n(\mathcal{C})$ formed by contractible $n$-exact sequences.

Assume that $(\mathcal{C},\mathcal{E}_n)$ is an $n$-exact category. An object $P\in\mathcal{C}$ is {\em projective} if for each admissible epimorphism $f:X\rightarrow Y$ the sequence $\mathcal{C}(P,X)\rightarrow\mathcal{C}(P,Y)\rightarrow 0$ is exact. The full subcategory of $\mathcal{C}$ formed by projectives is denoted by $\mathcal{P}$. We say  $(\mathcal{C},\mathcal{E}_n)$ {\em has enough projectives} if for each object $X\in\mathcal{C}$ there is an $n$-exact sequence $P_X: 0\rightarrow Y\rightarrow P_n\rightarrow\cdots\rightarrow P_1\rightarrow X\rightarrow 0$ in $\mathcal{E}_n$ with $P_i\in \mathcal{P}$.
We denote by $P\mathcal{E}_n(\mathcal{C})$ the full subcategory of $\mathcal{E}_n(\mathcal{C})$ formed by $P_X$ and contractible $n$-exact sequences $X_\bullet$ with $f_{n+1}=1$. Dually, we define {\em injective objects} and $(\mathcal{C},\mathcal{E}_n)$ {\em has enough injectives}. The full subcategory of $\mathcal{C}$ formed by injectives is denoted by $\mathcal{I}$ . We denote by $I\mathcal{E}_n(\mathcal{C})$ the full subcategory of $\mathcal{E}_n(\mathcal{C})$ formed by $I_X: 0\rightarrow X\rightarrow I_1\rightarrow\cdots\rightarrow I_n\rightarrow Y\rightarrow 0$ with $I_i\in \mathcal{I}$ and contractible $n$-exact sequences $X_\bullet$ with $f_{1}=1$.

With notations as above, we have the following higher version of Theorem \ref{thm4.1}.

\begin{thm} \label{thm4.6}
Let $(\mathcal{C},\mathcal{E}_n)$ be an $n$-exact category.

\textup{(a)} If $(\mathcal{C},\mathcal{E}_n)$ has enough projectives, 
then we have the following equivalences:
$$\mathcal{E}_n(\mathcal{C})/[C\mathcal{E}_n(\mathcal{C})]\cong \textup{mod-}\mathcal{C}/[\mathcal{P}],
\mathcal{E}_n(\mathcal{C})/[P\mathcal{E}_n(\mathcal{C})]\cong (\textup{mod-}(\mathcal{C}/[\mathcal{P}])^{\textup{op}})^{\textup{op}}.$$

\textup{(b)} If $(\mathcal{C},\mathcal{E}_n)$ has enough injectives, 
 then we have the following equivalences:
$$\mathcal{E}_n(\mathcal{C})/[C\mathcal{E}_n(\mathcal{C})]\cong (\textup{mod-}(\mathcal{C}/[\mathcal{I}])^{\textup{op}})^{\textup{op}},
\mathcal{E}_n(\mathcal{C})/[I\mathcal{E}_n(\mathcal{C})]\cong\textup{mod-}\mathcal{C}/[\mathcal{I}].$$
\end{thm}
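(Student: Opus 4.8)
The plan is to imitate the proof of Theorem \ref{thm4.1}, replacing the two short‑exact rows used there by the single terminal admissible epimorphism of an $n$‑exact sequence, and replacing the one‑step factorisation lemmas by their $n$‑term analogues. I will treat only part (a), and within it the first equivalence $\mathcal{E}_n(\mathcal{C})/[C\mathcal{E}_n(\mathcal{C})]\cong\textup{mod-}\mathcal{C}/[\mathcal{P}]$; the second equivalence of (a) and all of (b) are obtained by the evident dualisation (covariant defect and $n$‑term injective coresolutions in place of the $P_X$). The starting point is the functor
$$\alpha_1:\mathcal{E}_n(\mathcal{C})\longrightarrow \textup{mod-}\mathcal{C}/[\mathcal{P}],\qquad X_\bullet\mapsto \textup{Coker}\bigl(\mathcal{C}/[\mathcal{P}](-,f_{n+1})\bigr),$$
sending an $n$‑exact sequence to the contravariant defect of its terminal admissible epimorphism $f_{n+1}\colon X_{n+1}\to X_{n+2}$. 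The key structural observation, which makes this reduction work, is that this defect depends only on $f_{n+1}$, not on the interior of the sequence. Well‑definedness is then immediate: since $f_{n+1}$ is an admissible epimorphism, $\mathcal{C}(P,f_{n+1})$ is surjective for every $P\in\mathcal{P}$, so the defect vanishes on $\mathcal{P}$ and, by Proposition \ref{prop2.1}(b), is a finitely presented $\mathcal{C}/[\mathcal{P}]$‑module. Conceptually this is the same factorisation $\mathcal{E}_n(\mathcal{C})\to\textup{Ad-Epi}(\mathcal{C})\to\textup{mod-}\mathcal{C}/[\mathcal{P}]$ through the ``last map'' functor that underlies the case $n=1$.

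Next I would verify density and fullness. For density, given $F\in\textup{mod-}\mathcal{C}/[\mathcal{P}]$ with presentation $\mathcal{C}/[\mathcal{P}](-,Z)\to\mathcal{C}/[\mathcal{P}](-,W)\to F\to 0$, lift the map to $\bar h\colon Z\to W$ in $\mathcal{C}$, choose an admissible epimorphism $\pi\colon P\to W$ with $P\in\mathcal{P}$ (enough projectives), and set $h=(\bar h,\pi)\colon Z\oplus P\to W$, an admissible epimorphism with $\textup{Coker}(\mathcal{C}/[\mathcal{P}](-,h))\cong F$; since every admissible epimorphism is the terminal map of an $n$‑exact sequence, $F$ lies in the image of $\alpha_1$. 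For fullness, a morphism $\eta\colon\alpha_1(X_\bullet)\to\alpha_1(Y_\bullet)$ lifts along the projective presentations (Yoneda) to a pair $(\underline a,\underline b)$ with $\underline b\,\underline{f_{n+1}}=\underline{g_{n+1}}\,\underline a$ in $\mathcal{C}/[\mathcal{P}]$; one corrects this to an honest commuting pair $(a,b)$ in $\mathcal{C}$ by absorbing the projective error term through $g_{n+1}$ exactly as in the proof of Lemma \ref{lem3.2.3}, and then extends $(a,b)$ to a full chain map $\varphi_\bullet\colon X_\bullet\to Y_\bullet$ with $\alpha_1(\varphi_\bullet)=\eta$.

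It then remains to identify the kernel ideal, i.e. to show $\alpha_1$ is objective with $\textup{Ker}\,\alpha_1=C\mathcal{E}_n(\mathcal{C})$. The inclusion $\supseteq$ is immediate, since a chain map factoring through a contractible $n$‑exact sequence is killed by $\alpha_1$. For the converse I would first prove the $n$‑exact analogue of Lemma \ref{lem1}: a chain map between $n$‑exact sequences is null‑homotopic if and only if it factors through a contractible object, characterised by the condition that $\varphi_{n+2}$ factors through $g_{n+1}$ (equivalently $\varphi_1$ factors through $f_1$). Combining this with the analogue of Lemma \ref{lem3.2.1}, the hypothesis $\alpha_1(\varphi_\bullet)=0$ forces $\underline{\varphi_{n+2}}$ to factor through $\underline{g_{n+1}}$, hence $\varphi_\bullet$ to be null‑homotopic and so to factor through $C\mathcal{E}_n(\mathcal{C})$. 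Being full, dense and objective, $\alpha_1$ induces the desired equivalence.

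The hard part will be two $n$‑term replacements of the $n=1$ machinery. First, the chain‑map extension used for fullness is an $n$‑exact comparison lemma: unlike the case $n=1$, where a compatible pair on the end terms extends automatically, for $n>1$ the extension must be built by iterated pullback/pushout steps governed by Jasso's axioms and is only canonical up to homotopy, which is why the factorisation through contractibles (rather than through a literal split sequence) is the right formulation. Second, for the remaining equivalence $\mathcal{E}_n(\mathcal{C})/[P\mathcal{E}_n(\mathcal{C})]\cong(\textup{mod-}(\mathcal{C}/[\mathcal{P}])^{\textup{op}})^{\textup{op}}$ the subcategory $P\mathcal{E}_n(\mathcal{C})$ is built from the $n$‑step syzygy sequences $P_X\colon 0\to Y\to P_n\to\cdots\to P_1\to X\to 0$ rather than a single short exact sequence, so the analogue of Lemma \ref{lem3.2.2}—matching factorisations of $\underline{a}$ through $\underline{f_{n+1}}$ with factorisations through $P\mathcal{E}_n(\mathcal{C})$—requires propagating a factorisation across an $(n{+}2)$‑row diagram instead of the three‑row diagram used there. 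Once these two lemmas are established, the covariant and injective versions needed for (b) follow by dualisation.
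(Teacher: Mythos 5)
The paper states Theorem \ref{thm4.6} without proof (subsection 4.5 only ``lists'' the higher versions), and your plan is precisely the intended adaptation of Theorem \ref{thm4.1} via Theorem \ref{thm3.2.1}: reduce to the terminal admissible epimorphism $f_{n+1}$, prove density by adding a projective summand, fullness by the $n$-kernel comparison lemma, and identify the kernel ideal with $[C\mathcal{E}_n(\mathcal{C})]$. The sub-lemmas you flag are all true and fillable — in particular, for ``null-homotopic $\Rightarrow$ factors through a contractible $n$-exact sequence'' the naive $n=1$ factorisation through $(Y_1\to Y_1\oplus X_3\to X_3)$ does not generalise, but factoring through the contractible object with components $Y_{j-1}\oplus Y_j$ (a direct sum of shifted trivial sequences, as in Jasso's Proposition 4.9 which the paper invokes in Lemma \ref{lem 3.1}) does the job.
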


\begin{lem} (\cite[Proposition 4.8]{[J]})\label{lem4.5.1}
Let $(\mathcal{C},\mathcal{E}_n)$ be an $n$-exact category. If the sequence
$$0\rightarrow X_1\xrightarrow{f_1}X_2\xrightarrow{f_2}\cdots\xrightarrow{f_n}X_{n+1}\xrightarrow{f_{n+1}} X_{n+2}\rightarrow 0$$
 is an $n$-exact sequence in $\mathcal{E}_n$, then the following statements are equivalent.

\textup{(a)} The diagram
$$\xymatrix{
 X_1 \ar[r]^{f_1}\ar[d]^{\varphi_1} & X_2 \ar[r]^{f_2}\ar[d]^{\varphi_2} & \cdots \ar[r]^{f_{n-1}}& X_{n}\ar[r]^{f_{n}}\ar[d]^{\varphi_{n}}& X_{n+1} \ar[d]^{\varphi_{n+1}} \\
 Y_1 \ar[r]^{g_1} & Y_2 \ar[r]^{g_2} & \cdots\ar[r]^{g_{n-1}}& Y_{n} \ar[r]^{g_{n}} & Y_{n+1} \\
}$$  is an $n$-pushout and $n$-pullback diagram.

\textup{(b)} The sequence $$\xymatrixcolsep{3.5pc}\xymatrix{
0\ar[r] & X_1\ar[r]^{\left(
             \begin{smallmatrix}
               -f_1 \\
               \varphi_1 \\
             \end{smallmatrix}
           \right)
}& X_2\oplus Y_1\ar[r]^{\left(
             \begin{smallmatrix}
              -f_2 & 0 \\
               \varphi_2 & g_1 \\
             \end{smallmatrix}
           \right)}& X_3\oplus Y_2\ar[r]^{\left(
             \begin{smallmatrix}
              -f_3 & 0 \\
               \varphi_3 & g_2 \\
             \end{smallmatrix}
           \right)}& \cdots\\
}$$ $$\begin{gathered}\xymatrixcolsep{3.5pc}\xymatrix{
\cdots\ar[r]^{\left(
             \begin{smallmatrix}
              -f_{n} & 0 \\
               \varphi_{n} & g_{n-1} \\
             \end{smallmatrix}
           \right)\ \ \ \ }& X_{n+1}\oplus Y_{n}\ar[r]^{\ \ \ \left(
             \begin{smallmatrix}
               \varphi_{n+1} & g_{n} \\
             \end{smallmatrix}
           \right)}& Y_{n+1} \ar[r] & 0
}\end{gathered}$$ is an $n$-exact sequence in $\mathcal{E}_n$.

\textup{(c)} There exists a commutative diagram
$$\xymatrix{
0\ar[r] & X_1 \ar[r]^{f_1}\ar[d]^{\varphi_1} & X_2 \ar[r]^{f_2}\ar[d]^{\varphi_2} & \cdots \ar[r]^{f_{n-1}}& X_{n}\ar[r]^{f_{n}}\ar[d]^{\varphi_{n}}& X_{n+1} \ar[d]^{\varphi_{n+1}}\ar[r]^{f_{n+1}} & X_{n+2}\ar@{=}[d]\ar[r] & 0\\
0\ar[r] & Y_1 \ar[r]^{g_1} & Y_2 \ar[r]^{g_2} & \cdots\ar[r]^{g_{n-1}}& Y_{n} \ar[r]^{g_{n}} & Y_{n+1}\ar[r]^{g_{n+1}} & X_{n+2} \ar[r] & 0 \\
}$$
such that the second row is an $n$-exact sequence in $\mathcal{E}_n$.
\end{lem}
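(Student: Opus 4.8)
The plan is to treat this as the $n$-dimensional analogue of the pushout--pullback lemma for ordinary exact categories, the case $n=1$ being \cite[Proposition 3.1]{[B]} (used above as Lemma \ref{lem0}), and to transport that argument through Jasso's axiomatics. Throughout I would lean on the defining closure of $\mathcal{E}_n$ under the formation of $n$-pushouts along admissible monomorphisms and $n$-pullbacks along admissible epimorphisms, together with the explicit alternating-sign shape of the total complex appearing in (b).

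The equivalence (a) $\Leftrightarrow$ (b) I would read off from the definition of $n$-pushout/$n$-pullback diagrams: in Jasso's framework a commutative ladder of length $n+1$ is simultaneously an $n$-pushout and an $n$-pullback diagram precisely when its associated total complex --- the sequence in (b) with differentials $\left(\begin{smallmatrix}-f_{i+1} & 0 \\ \varphi_{i+1} & g_i\end{smallmatrix}\right)$ --- lies in $\mathcal{E}_n$. So this step amounts to matching the total (mapping-cone) complex of the morphism with the complex displayed in (b) and invoking Jasso's characterization.

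For (b) $\Rightarrow$ (c) I would construct the final comparison map $g_{n+1}\colon Y_{n+1}\to X_{n+2}$ as follows. Consider the morphism $(f_{n+1},0)\colon X_{n+1}\oplus Y_n\to X_{n+2}$; it annihilates the image of the preceding differential of (b) because $f_{n+1}f_n=0$. Since the last differential $(\varphi_{n+1},g_n)$ of the $n$-exact sequence in (b) is a cokernel of that preceding differential, $(f_{n+1},0)$ factors uniquely as $g_{n+1}(\varphi_{n+1},g_n)$, yielding $g_{n+1}$ with $g_{n+1}\varphi_{n+1}=f_{n+1}$ and $g_{n+1}g_n=0$. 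One then checks, using closure of $\mathcal{E}_n$ under $n$-pushout, that the resulting bottom row $0\to Y_1\to\cdots\to Y_{n+1}\xrightarrow{g_{n+1}} X_{n+2}\to 0$ is again $n$-exact and that the right-hand square commutes with the identity on $X_{n+2}$. Conversely, for (c) $\Rightarrow$ (a) I would begin with the given ladder of two $n$-exact sequences sharing the last term $X_{n+2}$ and splice them, via the higher comparison (mapping-cone) construction, into the total complex of (b); since the morphism is the identity in the top degree, the relative cone is $n$-exact, which via (a) $\Leftrightarrow$ (b) says the truncated ladder is an $n$-pushout and $n$-pullback diagram.

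The main obstacle I anticipate is the bookkeeping in the middle degrees: verifying that the total complex in (b) is genuinely $n$-exact (acyclic in Jasso's sense), rather than merely a complex, requires the higher analogues of the obscure axiom and of closure of $\mathcal{E}_n$ under composition of the relevant admissible maps, and one cannot fall back on naive diagram chases since kernels and cokernels need not exist in $\mathcal{C}$. For a fully self-contained treatment these verifications are exactly the content of \cite[Proposition 4.8]{[J]}, which is why I would ultimately cite it; the sketch above records the shape of the proof one would otherwise carry out.
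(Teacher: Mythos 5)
The paper gives no proof of this lemma at all: it is quoted verbatim as \cite[Proposition 4.8]{[J]}, and your proposal likewise defers the substantive verifications to that same citation. Your accompanying sketch (reading (a)$\Leftrightarrow$(b) off the mapping-cone characterization of $n$-pushout/$n$-pullback diagrams, obtaining $g_{n+1}$ by factoring $(f_{n+1},0)$ through the cokernel $(\varphi_{n+1},g_n)$, and splicing for (c)$\Rightarrow$(a)) is consistent with how Jasso's argument actually runs, so this is essentially the same approach as the paper and is, if anything, more informative.
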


The following lemma is a higher version of Lemma \ref{lem0}.

\begin{lem}\label{lem 3.1}
Assume that the following diagram
$$\xymatrix{X_\bullet\ar[d]^{\varphi_\bullet} & 0\ar[r] & X_1 \ar[r]^{f_1}\ar[d]^{\varphi_1} & X_2 \ar[r]^{f_2}\ar[d]^{\varphi_2}  &\cdots \ar[r]^{f_n}  & X_{n+1} \ar[r]^{f_{n+1}}\ar[d]^{\varphi_{n+1}} & X_{n+2} \ar[r]\ar[d]^{\varphi_{n+2}} & 0\\
Y_\bullet & 0\ar[r] & Y_1 \ar[r]^{g_1} & Y_2 \ar[r]^{g_2} & \cdots \ar[r]^{g_n} & Y_{n+1} \ar[r]^{g_{n+1}}& Y_{n+2} \ar[r] & 0 \\
}$$
is commutative with rows in $\mathcal{E}_n$. Then we get the following commutative diagram
$$\xymatrixcolsep{1.6pc}
\xymatrix{
K(\varphi_\bullet)\ar[d]^{k_\bullet} & 0\ar[r] & X_1 \ar[r]^{\left(
                                                                                        \begin{smallmatrix}
                                                                                          -f_1  \\
                                                                                           \varphi_1\\
                                                                                        \end{smallmatrix}
                                                                                      \right)}\ar@{=}[d] & X_2\oplus Y_1 \ar[r]^{\left(
                                                                                        \begin{smallmatrix}
                                                                                        -f_2 & 0 \\
                                                                                          \pi_2 & h_1 \\
                                                                                        \end{smallmatrix}
                                                                                      \right)}\ar[d]^{(-1,0)}
& \cdots \ar[r]^{\left(
                                                                                        \begin{smallmatrix}
                                                                                        -f_n & 0 \\
                                                                                          \pi_n & h_{n-1} \\
                                                                                        \end{smallmatrix}
                                                                                      \right)} & X_{n+1}\oplus Z_n \ar[r]^{(\pi_{n+1,h_n})}\ar[d]^{((-1)^n,0)} & Z_{n+1}\ar[r]\ar[d]^{(-1)^nh_{n+1}} & 0\\
X_\bullet\ar[d]^{\pi_\bullet} & 0\ar[r] & X_1 \ar[r]^{f_1}\ar[d]^{\varphi_1} & X_2 \ar[r]^{f_2}\ar[d]^{\pi_2} &\cdots \ar[r]^{f_n} & X_{n+1} \ar[r]^{f_{n+1}}\ar[d]^{\pi_{n+1}} & X_{n+2} \ar[r]\ar@{=}[d] & 0\\
I(\varphi_\bullet)\ar[d]^{i_\bullet} & 0\ar[r] & Y_1 \ar[r]^{h_1}\ar@{=}[d] & Z_2 \ar[r]^{h_2}\ar[d]^{i_2} &\cdots \ar[r]^{h_n} & Z_{n+1}\ar[r]^{h_{n+1}}\ar[d]^{i_{n+1}} & X_{n+2} \ar[r]\ar[d]^{\varphi_{n+2}} & 0\\
Y_\bullet\ar[d]^{c_\bullet} & 0\ar[r] & Y_1 \ar[r]^{g_1}\ar[d]^{h_1} & Y_2 \ar[r]^{g_2}\ar[d]^{\left(
                                                                                                                            \begin{smallmatrix}
                                                                                                                              0 \\
                                                                                                                              1 \\
                                                                                                                            \end{smallmatrix}
                                                                                                                          \right)}
& \cdots \ar[r]^{g_n} & Y_{n+1}\ar[r]^{g_{n+1}}\ar[d]^{\left(
                                                         \begin{smallmatrix}
                                                           0 \\
                                                           1 \\
                                                         \end{smallmatrix}
                                                       \right)
} & Y_{n+2} \ar[r]\ar@{=}[d] & 0\\
C(\varphi_\bullet) & 0 \ar[r] &  Z_2 \ar[r]^{\left(
                                                                                        \begin{smallmatrix}
                                                                                          -h_2  \\
                                                                                           i_2\\
                                                                                        \end{smallmatrix}
                                                                                      \right)} & Z_3\oplus Y_2 \ar[r]^{\left(
                                                                                        \begin{smallmatrix}
                                                                                          -h_3 & 0 \\
                                                                                          i_3 & g_2\\
                                                                                        \end{smallmatrix}
                                                                                      \right)} & \cdots \ar[r]^{\left(
                                                                                        \begin{smallmatrix}
                                                                                          -h_{n+1} & 0 \\
                                                                                          i_{n+1} & g_{n}\\
                                                                                        \end{smallmatrix}
                                                                                      \right)}  & X_{n+2}\oplus Y_{n+1} \ar[r]^{(\varphi_{n+2},g_{n+1})} & Y_{n+2} \ar[r] & 0\\
}$$
 with rows in $\mathcal{E}_n$, moreover,  $\underline{\varphi_\bullet}=\underline{i_\bullet}\underline{\pi_\bullet}$ in $\mathcal{E}_n(\mathcal{C})/[C\mathcal{E}_n(\mathcal{C})]$.
\end{lem}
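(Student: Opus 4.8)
The plan is to follow the strategy of Lemma \ref{lem0}, replacing Bühler's pushout--pullback factorization \cite[Proposition 3.1]{[B]} by its $n$-exact analogue, namely Jasso's characterization of $n$-pushout/$n$-pullback diagrams recorded in Lemma \ref{lem4.5.1}. First I would construct the middle row $I(\varphi_\bullet)$ together with the factorization. Concretely, I form the $n$-pushout of the $n$-exact sequence $X_\bullet$ along the morphism $\varphi_1\colon X_1\to Y_1$; by Jasso's axioms for $n$-exact categories such an $n$-pushout exists and is simultaneously an $n$-pullback, and it yields a morphism $\pi_\bullet\colon X_\bullet\to I(\varphi_\bullet)$ whose last component is the identity on $X_{n+2}$, where $I(\varphi_\bullet)$ is the sequence $0\to Y_1\to Z_2\to\cdots\to Z_{n+1}\to X_{n+2}\to 0$. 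That this middle row is genuinely $n$-exact is exactly the implication (a)$\Rightarrow$(c) of Lemma \ref{lem4.5.1} applied to the $n$-pushout ladder $\varphi_1,\pi_2,\dots,\pi_{n+1}$.

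Next, the morphism $i_\bullet\colon I(\varphi_\bullet)\to Y_\bullet$ with $i_1=1$ is produced from the universal property of the $n$-pushout: since $\varphi_\bullet$ restricts to $\varphi_1$ on the first term, it factors through the pushout, and the resulting comparison supplies the components $i_j$. Symmetrically, one may build $I(\varphi_\bullet)$ as the $n$-pullback of $Y_\bullet$ along $\varphi_{n+2}$, which produces the same sequence and delivers the $i_j$ directly. The identity $\underline{\varphi_\bullet}=\underline{i_\bullet}\,\underline{\pi_\bullet}$ in $\mathcal{E}_n(\mathcal{C})/[C\mathcal{E}_n(\mathcal{C})]$ then follows once I check that $\varphi_\bullet-i_\bullet\pi_\bullet$ factors through a contractible $n$-exact sequence. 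In contrast to the $1$-exact situation of Lemma \ref{lem0}, where equality holds on the nose, the non-uniqueness of $n$-pushouts forces us into the quotient here, and verifying that the discrepancy lands in $[C\mathcal{E}_n(\mathcal{C})]$ is the crux of this step.

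For the outer rows I would read off $K(\varphi_\bullet)$ as the total sequence attached to $\pi_\bullet$ (the $n$-exact analogue of a mapping cone): applying the implication (a)$\Rightarrow$(b) of Lemma \ref{lem4.5.1} to the ladder defining $\pi_\bullet$ shows that the top row $0\to X_1\to X_2\oplus Y_1\to\cdots\to X_{n+1}\oplus Z_n\to Z_{n+1}\to 0$, with the displayed signed block differentials, lies in $\mathcal{E}_n$, the role of the target differentials being played by the maps $h_j$ of $I(\varphi_\bullet)$; the vertical maps $k_\bullet$ are then forced to constitute a morphism of $n$-exact sequences. Dually, $C(\varphi_\bullet)$ is the total sequence of $i_\bullet$ and is $n$-exact by the same lemma applied on the other side, yielding $c_\bullet$. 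The remaining commutativities reduce to routine matrix identities once $Z_2,\dots,Z_{n+1}$ and the auxiliary maps $h_j,\pi_j,i_j$ are fixed.

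The main obstacle is bookkeeping. Unlike the exact case, where Bühler's single result packages both the pushout square (I) and the pullback square (II) and yields $\varphi_\bullet=i_\bullet\pi_\bullet$ on the nose, here I must track the intermediate terms $Z_2,\dots,Z_{n+1}$ produced by the $n$-fold pushout, reconcile the two descriptions of $I(\varphi_\bullet)$ as a pushout of $X_\bullet$ and as a pullback of $Y_\bullet$, and make the signs and block forms of the differentials in $K(\varphi_\bullet)$ and $C(\varphi_\bullet)$ agree with the statement. Confirming that the factorization is correct only modulo $[C\mathcal{E}_n(\mathcal{C})]$, rather than strictly, is precisely where the real work lies.
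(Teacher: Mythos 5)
Your plan is correct and follows essentially the same route as the paper: the paper simply cites Jasso's Proposition 4.9 for the existence of $\pi_\bullet$, $i_\bullet$ and the factorization $\underline{\varphi_\bullet}=\underline{i_\bullet}\,\underline{\pi_\bullet}$ modulo $[C\mathcal{E}_n(\mathcal{C})]$, and then invokes Lemma \ref{lem4.5.1} to conclude that $K(\varphi_\bullet)$ and $C(\varphi_\bullet)$ lie in $\mathcal{E}_n$. The one step you single out as the crux --- that the discrepancy $\varphi_\bullet-i_\bullet\pi_\bullet$ factors through a contractible $n$-exact sequence --- is exactly the content of that cited proposition, so no new argument is needed there.
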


\begin{proof}
By \cite[Proposition 4.9]{[J]}, there exist two morphisms $\pi_\bullet:X_\bullet\rightarrow I(\varphi_\bullet)$ and $i_\bullet:I(\varphi_\bullet)\rightarrow Y_\bullet$ such that $I(\varphi_\bullet)\in \mathcal{E}_n$ and $\underline{\varphi_\bullet}=\underline{i_\bullet}\underline{\pi_\bullet}$ in $\mathcal{E}_n(\mathcal{C})/[C\mathcal{E}_n(\mathcal{C})]$. The sequences $K(\varphi_\bullet)$ and $C(\varphi_\bullet)$ are $n$-exact sequences in $\mathcal{E}_n$ by Lemma \ref{lem4.5.1}.
\end{proof}

\begin{thm}
Let $(\mathcal{C},\mathcal{E}_n)$ be an $n$-exact category. Then the  category $\mathcal{E}_n(\mathcal{C})/[C\mathcal{E}_n(\mathcal{C})]$
is an abelian category whose kernels and cokernels are given by $n$-pullback and $n$-pushout diagrams.
\end{thm}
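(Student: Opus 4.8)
The plan is to run exactly the argument used in the proof of Theorem~\ref{thm4.3.1}, with the three-term diagram (4.1) of Lemma~\ref{lem0} replaced by the large $(n+2)$-term diagram produced in Lemma~\ref{lem 3.1}. Before doing so I would record the two $n$-exact analogues of the auxiliary lemmas that drive the case $n=1$. The first is the higher version of Lemma~\ref{lem1}: for a morphism $\varphi_\bullet\colon X_\bullet\to Y_\bullet$ in $\mathcal{E}_n(\mathcal{C})$ the following are equivalent: $\underline{\varphi_\bullet}=0$ in $\mathcal{E}_n(\mathcal{C})/[C\mathcal{E}_n(\mathcal{C})]$; $\varphi_\bullet$ is null-homotopic as a morphism of complexes; there is $s_1\colon X_2\to Y_1$ with $\varphi_1=s_1 f_1$; and dually there is $s_{n+1}\colon X_{n+2}\to Y_{n+1}$ with $\varphi_{n+2}=g_{n+1}s_{n+1}$. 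The implications ``null-homotopic $\Rightarrow$ factors through a contractible $n$-exact sequence $\Rightarrow$ $\underline{\varphi_\bullet}=0$'' and the corner consequences are immediate; the one substantive direction is that a single corner factorization $\varphi_1=s_1 f_1$ extends inductively to a full contracting homotopy, which is precisely where the lifting properties of $n$-exact sequences (the weak kernel/cokernel structure recorded in Lemma~\ref{lem4.5.1}) are used. The second auxiliary fact is the higher analogue of Lemma~\ref{lem4.2}: $\underline{\varphi_\bullet}$ is a monomorphism if and only if $\left(\begin{smallmatrix}-f_1\\\varphi_1\end{smallmatrix}\right)$ is a section; its proof copies that of Lemma~\ref{lem4.2} verbatim, using the homotopy criterion above.

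Granting these, the core of the argument is formal. In the notation of Lemma~\ref{lem 3.1} I would show that $\underline{k_\bullet}\colon K(\varphi_\bullet)\to X_\bullet$ is a kernel of $\underline{\varphi_\bullet}$ and, dually, that $\underline{c_\bullet}\colon Y_\bullet\to C(\varphi_\bullet)$ is a cokernel. For the kernel: $\underline{k_\bullet}$ is a monomorphism by the higher analogue of Lemma~\ref{lem4.2} (the first component of $k_\bullet$ is $1_{X_1}$, so the relevant matrix trivially contains an identity and is a section); the composite $\underline{\varphi_\bullet k_\bullet}$ vanishes because $\varphi_1$ factors through the first differential $\left(\begin{smallmatrix}-f_1\\\varphi_1\end{smallmatrix}\right)$ of $K(\varphi_\bullet)$; and any test morphism $\psi_\bullet$ with $\underline{\varphi_\bullet\psi_\bullet}=0$ yields, via the homotopy criterion, the required factorization through $\underline{k_\bullet}$, exactly as in the proof of Theorem~\ref{thm4.3.1}. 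The cokernel assertion is strictly dual.

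It then remains to identify $\textup{Coker}(\textup{Ker}\,\underline{\varphi_\bullet})$ with $\textup{Ker}(\textup{Coker}\,\underline{\varphi_\bullet})$, which I would do by showing both are isomorphic to the intermediate $n$-exact sequence $I(\varphi_\bullet)$ of Lemma~\ref{lem 3.1}. This is achieved by writing down two explicit isomorphisms of $n$-exact sequences, the higher analogues of the two final commutative diagrams in the proof of Theorem~\ref{thm4.3.1}; the comparison maps are block matrices assembled from the components $\pi_i,h_i,i_j,g_j$ appearing in Lemma~\ref{lem 3.1}, and the facts that these squares commute and that their rows lie in $\mathcal{E}_n$ follow from Lemma~\ref{lem4.5.1}. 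I expect the main obstacle to be twofold: first, the inductive extension of a corner factorization to a full homotopy in the $n$-exact setting, and second, the now considerably longer block-matrix bookkeeping required to check that the cokernel-of-kernel and kernel-of-cokernel $(n+2)$-term complexes both coincide with $I(\varphi_\bullet)$. Both are conceptually routine given Lemma~\ref{lem4.5.1} and Lemma~\ref{lem 3.1}, but it is in the second that the higher-dimensional computations become genuinely delicate.
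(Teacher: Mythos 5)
Your proposal is correct and follows exactly the route the paper intends: the paper's own proof of this theorem consists of the single sentence ``The proof is similar to that of Theorem~\ref{thm4.3.1},'' and what you have written is precisely the elaboration of that, substituting the diagram of Lemma~\ref{lem 3.1} for diagram (4.1) and supplying the higher analogues of Lemma~\ref{lem1} and Lemma~\ref{lem4.2} (whose inductive homotopy construction rests on the weak cokernel property of $n$-exact sequences, as you note). You in fact give more detail than the paper does.
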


\begin{proof}
The proof is similar to that of Theorem \ref{thm4.3.1}.
\end{proof}

\begin{defn}(see \cite{[Iy]})
An $n$-exact sequence $0\rightarrow X_1\xrightarrow{f_1} X_2\xrightarrow{f_2}\cdots\xrightarrow{f_{n+1}} X_{n+2}\rightarrow 0$ is called an {\em $n$-Auslander-Reiten sequence} if the following conditions are satisfied:

(a) All $f_i\in J_\mathcal{C}$.

(b) If $g:X_1\rightarrow Y$ is not a section, then $g$ factors through $f_1$.

(c) If $h:Z\rightarrow X_{n+2}$ is not a retraction, then $h$ factors through $f_{n+1}$.

\end{defn}

The following is a higher analogue of Theorem \ref{thm4.4.1}.

\begin{thm}
Let $(\mathcal{C},\mathcal{E}_n)$ be an Ext-finite $k$-linear $n$-exact category.

\textup{(a)} Assume that $X_\bullet: 0\rightarrow X_1\xrightarrow{f_1} X_2\xrightarrow{f_2}\cdots\xrightarrow{f_{n+1}} X_{n+2}\rightarrow 0$ is an $n$-exact sequence in $\mathcal{E}_n$ where $f_i\in J_\mathcal{C}$, $X_1$ and $X_{n+2}$ are indecomposable. Then $X_\bullet$ is simple in $\mathcal{E}_n(\mathcal{C})/[C\mathcal{E}_n(\mathcal{C})]$ if and only if $X_\bullet$ is an $n$-Auslander-Reiten sequence in $\mathcal{C}$.

\textup{(b)} There is a bijection between the set of isoclasses of simple objects in $\mathcal{E}_n(\mathcal{C})/[C\mathcal{E}_n(\mathcal{C})]$ and the set of isoclasses of $n$-Auslander-Reiten sequences in $\mathcal{C}$.
\end{thm}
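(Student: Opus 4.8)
The plan is to transcribe the proof of Theorem \ref{thm4.4.1} into the $n$-exact setting, replacing the factorization Lemma \ref{lem1}, the monomorphism criterion Lemma \ref{lem4.2} and the reduction Lemma \ref{lem4.4.1} by their $n$-analogues. The abelian structure of $\mathcal{E}_n(\mathcal{C})/[C\mathcal{E}_n(\mathcal{C})]$, together with the explicit kernel $K(\varphi_\bullet)$ and cokernel $C(\varphi_\bullet)$ supplied by Lemma \ref{lem 3.1}, will play the role that diagram (4.1) played when $n=1$; in particular the kernel of $\underline{\varphi_\bullet}$ is $\underline{K(\varphi_\bullet)}$ exactly as in the proof of Theorem \ref{thm4.3.1}.

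The first step is the $n$-analogue of Lemma \ref{lem1}: for $\varphi_\bullet\colon X_\bullet\rightarrow Y_\bullet$ in $\mathcal{E}_n(\mathcal{C})$ one has $\underline{\varphi_\bullet}=0$ in the quotient if and only if $\varphi_1$ factors through $f_1$, if and only if $\varphi_{n+2}$ factors through $f_{n+1}$. The easy direction is immediate: if $\varphi_\bullet$ factors through a contractible $C_\bullet$, then the first differential of $C_\bullet$ is a section (read off the $i=1$ component of its contracting homotopy), and a short diagram chase then writes $\varphi_1$ through $f_1$. The nontrivial direction, that $\varphi_1=wf_1$ forces $\underline{\varphi_\bullet}=0$, is the \emph{main obstacle}: after subtracting the null-homotopic morphism determined by $w$ one reduces to $\varphi_1=0$, whence $\varphi_2 f_1=g_1\varphi_1=0$, and a contracting homotopy is then built inductively up the complex from the exactness of $Y_\bullet$ and the weak kernel/weak cokernel behaviour of $n$-exact sequences via the comparison lemma \cite[Proposition 4.9]{[J]}. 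This ingredient is already implicit in the abelian theorem above (whose proof is ``similar to that of Theorem \ref{thm4.3.1}''), so it may be invoked directly.

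With this in hand, part (a) follows the two-case argument of Theorem \ref{thm4.4.1}. For ``only if'', given $\varphi_1\colon X_1\rightarrow Y$ not a section, I form the $n$-pushout of $X_\bullet$ along $\varphi_1$ via Lemma \ref{lem4.5.1}, obtaining $\varphi_\bullet$ whose kernel $K(\varphi_\bullet)$ has first differential $\binom{-f_1}{\varphi_1}$. Since $X_1$ is indecomposable, $\mathrm{End}(X_1)$ is local; as $f_1\in J_\mathcal{C}$ and $\varphi_1$ is not a section, a retraction $(a,b)$ would give $-af_1+b\varphi_1=1$, exhibiting $1$ as a sum of two non-units of $\mathrm{End}(X_1)$, which is impossible. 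Hence $\binom{-f_1}{\varphi_1}$ is not a section, $K(\varphi_\bullet)$ is not contractible, and $\underline{\varphi_\bullet}$ is not a monomorphism; simplicity forces $\underline{\varphi_\bullet}=0$, so $\varphi_1$ factors through $f_1$. The dual $n$-pullback argument handles $h\colon Z\rightarrow X_{n+2}$, giving condition (c), and with $f_i\in J_\mathcal{C}$ this makes $X_\bullet$ an $n$-Auslander--Reiten sequence. Conversely, for any $\varphi_\bullet\colon X_\bullet\rightarrow Y_\bullet$: if $\varphi_1$ is a section with retraction $r$ and $\underline{\varphi_\bullet\psi_\bullet}=0$, then $\varphi_1\psi_1$ factors through the first differential of $Z_\bullet$, whence $\psi_1=r\varphi_1\psi_1$ does too and $\underline{\psi_\bullet}=0$, so $\underline{\varphi_\bullet}$ is a monomorphism; if $\varphi_1$ is not a section, the $n$-Auslander--Reiten property makes $\varphi_1$ factor through $f_1$, so $\underline{\varphi_\bullet}=0$. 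Thus every morphism out of $X_\bullet$ is zero or mono, i.e.\ $X_\bullet$ is simple.

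For part (b) I would prove the $n$-analogue of Lemma \ref{lem4.4.1}: every simple object is isomorphic in the quotient to an $n$-exact sequence with all $f_i\in J_\mathcal{C}$ and with $X_1,X_{n+2}$ indecomposable. One first splits off contractible summands to arrange $f_i\in J_\mathcal{C}$, then, writing $X_1=X_1'\oplus X_1''$ with $X_1'$ indecomposable, forms the $n$-pushout along the projection $\pi\colon X_1\rightarrow X_1'$; the resulting $\varphi_\bullet$ has $\varphi_{n+2}=1$, hence is an epimorphism (dual to the argument above), and simplicity forces it to be an isomorphism, since the alternative $\underline{\varphi_\bullet}=0$ would make $\pi$, and hence $f_1 i$, a section, contradicting $f_1\in J_\mathcal{C}$. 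Iterating this and its pullback dual reduces both end terms to indecomposables. Combined with part (a) this yields the asserted bijection, sending the isoclass of a simple object to the isoclass of the corresponding $n$-Auslander--Reiten sequence; it is well defined and injective because two $n$-exact sequences with radical differentials are isomorphic in $\mathcal{E}_n(\mathcal{C})/[C\mathcal{E}_n(\mathcal{C})]$ precisely when they are isomorphic as complexes (a Krull--Schmidt reduction argument), and surjective because every $n$-Auslander--Reiten sequence has indecomposable end terms and is simple by (a).
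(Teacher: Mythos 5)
Your proposal is correct and takes essentially the approach the paper intends: the paper states this theorem without proof, presenting it only as the ``higher analogue of Theorem \ref{thm4.4.1}'', and your argument is exactly the transcription of that proof (together with the $n$-analogues of Lemma \ref{lem1}, Lemma \ref{lem4.2} and Lemma \ref{lem4.4.1}, with the inductive null-homotopy construction along the weak cokernels supplying the key zero-morphism criterion). The only quibble is a slip of phrasing in part (b) --- $\underline{\varphi_\bullet}=0$ makes $f_1 i$ a section, not $\pi$ --- but the contradiction with $f_1\in J_\mathcal{C}$ is drawn correctly.
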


\begin{prop} Let $(\mathcal{C},\mathcal{E}_n)$ be an $n$-exact category.

\textup{(a)} Each $n$-exact sequence $P_{X}: 0\rightarrow Y\xrightarrow{f_1}P_n\xrightarrow{f_2}\cdots\xrightarrow{f_{n-1}} P_2\xrightarrow{f_n} P_1\xrightarrow{f_{n+1}} X\rightarrow 0$ in $\mathcal{E}_n$ with $P_i$  projective  is a projective object in $\mathcal{E}_n(\mathcal{C})/[C\mathcal{E}_n(\mathcal{C})]$.

\textup{(b)} If $(\mathcal{C},\mathcal{E}_n)$ has enough projectives, then each projective object in $\mathcal{E}_n(\mathcal{C})/[C\mathcal{E}_n(\mathcal{C})]$ is of the form $P_X$ for some object $X$ in $\mathcal{C}$. In this case, $\mathcal{E}_n(\mathcal{C})/[C\mathcal{E}_n(\mathcal{C})]$ has enough projectives.
\end{prop}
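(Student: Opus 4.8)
The plan is to run the proof of Proposition \ref{prop1} one degree at a time, using the projective $n$-presentation $P_X$ in place of the single projective cover and lifting through all of the maps of a target $n$-exact sequence rather than through one admissible epimorphism. Before starting I would record the evident $n$-analogues of Lemma \ref{lem1} and Lemma \ref{lem4.2}, proved by the same homotopy arguments together with Lemma \ref{lem 3.1}: for a chain map $\chi_\bullet$ between $n$-exact sequences, $\underline{\chi_\bullet}=0$ in $\mathcal{E}_n(\mathcal{C})/[C\mathcal{E}_n(\mathcal{C})]$ iff its top component $\chi_{n+2}$ factors through the last map of the target (equivalently $\chi_1$ factors through the first map of the source), and $\underline{\varphi_\bullet}$ is a monomorphism iff $\left(\begin{smallmatrix}-f_1\\ \varphi_1\end{smallmatrix}\right)$ is a section, so dually $\underline{\varphi_\bullet}$ is an epimorphism iff $(\varphi_{n+2},\,g_{n+1})$ is a retraction. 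In particular any $\varphi_\bullet$ whose top component $\varphi_{n+2}$ is an isomorphism is an epimorphism, and the $n$-version of Remark \ref{rem1} lets me normalise a given epimorphism to have $\varphi_{n+2}=1$.

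For part (a), let $\underline{\varphi_\bullet}\colon Y_\bullet\to Z_\bullet$ be an epimorphism and $\underline{\psi_\bullet}\colon P_X\to Z_\bullet$ a morphism; normalising as above I may assume $\varphi_{n+2}=1$, so $Y_{n+2}=Z_{n+2}$. I then construct a genuine chain map $\phi_\bullet\colon P_X\to Y_\bullet$ with $\phi_{n+2}=\psi_{n+2}$ by descending induction on the degree. At the top, projectivity of the term $P_1$ of $P_X$ lets me lift $\psi_{n+2}f_{n+1}$ along the admissible epimorphism $g_{n+1}\colon Y_{n+1}\to Y_{n+2}$ to get $\phi_{n+1}$. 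For each inner degree $i\le n$ the map $\phi_{i+1}f_i$ is annihilated by $g_{i+1}$ (since $g_{i+1}\phi_{i+1}=\phi_{i+2}f_{i+1}$ and $f_{i+1}f_i=0$), so the left-exactness of the $n$-exact sequence $Y_\bullet$ — each $g_i$ is a weak kernel of $g_{i+1}$, cf. \cite{[J]} — supplies $\phi_i$ with $g_i\phi_i=\phi_{i+1}f_i$; the degree-$1$ component is forced through the monomorphism $g_1$. Now $\varphi_\bullet\phi_\bullet$ and $\psi_\bullet$ have the same top component $\psi_{n+2}$, so their difference is zero in the quotient by the $n$-version of Lemma \ref{lem1}; hence $\underline{\psi_\bullet}=\underline{\varphi_\bullet}\,\underline{\phi_\bullet}$ factors through $\underline{\varphi_\bullet}$, proving $P_X$ projective.

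For part (b), assume $(\mathcal{C},\mathcal{E}_n)$ has enough projectives and take any $X_\bullet\colon 0\to X_1\to\cdots\to X_{n+2}\to 0$ in $\mathcal{E}_n(\mathcal{C})$. Choosing a projective $n$-presentation $P_{X_{n+2}}$ of the top term and running the same comparison construction as in (a) produces a chain map $\varphi_\bullet\colon P_{X_{n+2}}\to X_\bullet$ with $\varphi_{n+2}=1$; by the dual of the $n$-version of Lemma \ref{lem4.2} this $\underline{\varphi_\bullet}$ is an epimorphism, and by (a) its source is projective, so $\mathcal{E}_n(\mathcal{C})/[C\mathcal{E}_n(\mathcal{C})]$ has enough projectives. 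If $X_\bullet$ is itself projective, this epimorphism splits, exhibiting $X_\bullet$ as a direct summand of $P_{X_{n+2}}$; under the equivalence $\mathcal{E}_n(\mathcal{C})/[C\mathcal{E}_n(\mathcal{C})]\cong\textup{mod-}\mathcal{C}/[\mathcal{P}]$ of Theorem \ref{thm4.6} this summand is a representable functor $\mathcal{C}/[\mathcal{P}](-,X)$, i.e. the image of some $P_X$, so every projective object is of the claimed form.

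The new content over the case $n=1$ is concentrated in the comparison step: in Proposition \ref{prop1} one lifts through a single admissible epimorphism, whereas here the inner connecting maps $g_i$ ($i\le n$) are not epimorphisms, so projectivity alone does not suffice and the construction must instead alternate projectivity of $P_1$ at the cokernel end with the weak-kernel exactness of the $n$-exact target at the inner spots. Making this precise — verifying that each $\phi_{i+1}f_i$ really is a weak cycle, that the weak-kernel property of \cite{[J]} applies, and that the resulting $\phi_\bullet$ is a bona fide morphism of complexes with all squares commuting — is the technical heart; the remaining normalisation and splitting arguments are formal transcriptions of the $n=1$ proof once the $n$-analogues of Lemma \ref{lem1} and Lemma \ref{lem4.2} are in hand.
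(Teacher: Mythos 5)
Your proof is correct and matches the paper's approach: the paper states this proposition without proof in the higher-version subsection, leaving it as the evident analogue of Proposition \ref{prop1}, and your argument is precisely that analogue, with the one genuinely new ingredient (building the comparison map $\phi_\bullet$ degree by degree using projectivity of $P_1$ at the top and the weak-kernel exactness of the target $n$-exact sequence at the inner degrees) correctly identified and carried out. The normalisation via the $n$-analogues of Lemma \ref{lem1} and Lemma \ref{lem4.2} and the splitting argument in (b) are the same as in the paper's proof of the case $n=1$.
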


The following is a combination of \cite[Theorem 1.3]{[Kva]} and \cite[Lemma 3.5]{[Iy0]}.

\begin{lem} \label{lem4.6.1}
Let $\mathcal{C}$ be an $n$-abelian category with enough projectives.

\textup{(a)} There exists an abelian category $\mathcal{A}$ with enough projectives such that $\mathcal{C}$ is an $n$-cluster-tilting subcategory of $\mathcal{A}$, moreover, the class of  projectives in $\mathcal{C}$ and the class of projectives in $\mathcal{A}$ coincide.

\textup{(b)} For each $n$-exact sequence $$ 0\rightarrow X_1\xrightarrow{f_1} X_2\xrightarrow{f_2}\cdots\xrightarrow{f_n} X_{n+1}\xrightarrow{f_{n+1}}X_{n+2}\rightarrow 0,$$
there exist two long exact sequences
$$\textup{(i)}\ 0\rightarrow\mathcal{C}(-,X_{1})\rightarrow\mathcal{C}(-,X_{2})\rightarrow\cdots
\rightarrow\mathcal{C}(-,X_{n+2})\rightarrow$$
$$\rightarrow\textup{Ext}^n_\mathcal{A}(-,X_1)|_{\mathcal{C}}\rightarrow\textup{Ext}^n_\mathcal{A}(-,X_2)|_{\mathcal{C}}\rightarrow
\cdots\rightarrow\textup{Ext}^n_\mathcal{A}(-,X_{n+2})|_{\mathcal{C}}.$$
$$\textup{(ii)}\ 0\rightarrow\mathcal{C}(X_{n+2},-)\rightarrow\mathcal{C}(X_{n+1},-)\rightarrow\cdots
\rightarrow\mathcal{C}(X_{1},-)\rightarrow$$
$$\rightarrow\textup{Ext}^n_\mathcal{A}(X_{n+2},-)|_{\mathcal{C}}\rightarrow\textup{Ext}^n_\mathcal{A}(X_{n+1},-)|_{\mathcal{C}}\rightarrow
\cdots\rightarrow\textup{Ext}^n_\mathcal{A}(X_{1},-)|_{\mathcal{C}}.$$ 
\end{lem}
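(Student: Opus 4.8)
The plan is to obtain (a) as an immediate application of Kvamme's embedding theorem and then to read off (b) from the homological behaviour of an $n$-cluster-tilting subcategory. For (a), I would invoke \cite[Theorem 1.3]{[Kva]}: since $\mathcal{C}$ is $n$-abelian with enough projectives, that theorem produces an abelian category $\mathcal{A}$ with enough projectives together with a fully faithful embedding $\mathcal{C}\hookrightarrow\mathcal{A}$ realizing $\mathcal{C}$ as a functorially finite $n$-cluster-tilting subcategory, and doing so in such a way that the projectives of $\mathcal{C}$ are exactly the projectives of $\mathcal{A}$. The only thing left to check is that the abstract $n$-exact structure $\mathcal{E}_n$ agrees with the one inherited from $\mathcal{A}$, that is, that the $n$-exact sequences of $\mathcal{C}$ are precisely the sequences $0\to X_1\to\cdots\to X_{n+2}\to 0$ that are exact in $\mathcal{A}$ with all terms lying in $\mathcal{C}$; this is part of the conclusion of the embedding theorem.

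For (b), I would exploit the defining rigidity of an $n$-cluster-tilting subcategory, namely $\textup{Ext}^i_\mathcal{A}(C,C')=0$ for all $C,C'\in\mathcal{C}$ and $1\le i\le n-1$. Given an $n$-exact sequence I regard it as an exact sequence $0\to X_1\to\cdots\to X_{n+2}\to 0$ in $\mathcal{A}$ and split it into the short exact sequences $0\to Z_i\to X_{i+1}\to Z_{i+1}\to 0$ along its images $Z_i=\textup{im}(f_i)$, so that $Z_1=X_1$ and $Z_{n+1}=X_{n+2}$. Applying the cohomological functor $\textup{Ext}^\ast_\mathcal{A}(C,-)$ for $C\in\mathcal{C}$ to each piece and splicing the resulting long exact sequences produces, after letting $C$ vary over $\mathcal{C}$, sequence (i); the covariant sequence (ii) is obtained symmetrically from $\textup{Ext}^\ast_\mathcal{A}(-,C)$. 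This is precisely the computation carried out in \cite[Lemma 3.5]{[Iy0]}, which I would cite once the ambient category $\mathcal{A}$ of (a) is in hand.

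The main obstacle is the bookkeeping in the dimension shift, because the intermediate images $Z_i$ need not lie in $\mathcal{C}$, and so their Ext-groups need not vanish. The mechanism is that the vanishing $\textup{Ext}^i_\mathcal{A}(C,X_j)=0$ for $1\le i\le n-1$ applies to the genuine terms $X_j\in\mathcal{C}$, and this is exactly what is needed to make the relevant connecting homomorphisms isomorphisms, so that all information concentrates in cohomological degrees $0$ and $n$; the two ends of the spliced complex then stitch the $\textup{Hom}$-part to the single surviving $\textup{Ext}^n$-part, collapsing the sequence to the stated shape. Keeping track of the precise degrees in which each connecting map is an isomorphism, and of the endpoint where $\textup{Hom}$ meets $\textup{Ext}^n$, is the only genuinely technical step, and it is what \cite[Lemma 3.5]{[Iy0]} records. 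A secondary point requiring care is compatibility at the junction of the two cited results: one must ensure that the functors $\mathcal{C}(-,-)$ and $\textup{Ext}^n_\mathcal{A}(-,-)|_{\mathcal{C}}$ appearing in (b) are computed inside the specific $\mathcal{A}$ furnished by (a), and that the identification of projectives there is the one that makes these restricted Ext-functors well-defined $\mathcal{C}$-modules.
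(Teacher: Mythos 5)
Your proposal is correct and follows exactly the route the paper takes: the paper offers no proof of its own but presents the lemma as a combination of \cite[Theorem 1.3]{[Kva]} for part (a) and \cite[Lemma 3.5]{[Iy0]} for part (b), which are precisely the two results you invoke. The additional detail you supply about splicing the short exact sequences along the images and using the rigidity $\textup{Ext}^i_\mathcal{A}(C,C')=0$ for $1\le i\le n-1$ is a faithful account of how the cited lemma is proved, so nothing is missing.
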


From now on to the end of this section, we assume that $\mathcal{C}$ is an $n$-abelian category with enough projectives and  injectives.  The full subcategory of $\mathcal{C}$ consisting of projectives (resp. injectvies) is denoted by $\mathcal{P}$ (resp. $\mathcal{I}$).  By Lemma \ref{lem4.6.1}, we always view $\mathcal{C}$ as an $n$-cluster tilting subcategory of an abelian category $\mathcal{A}$. For convenience, we denote $\textup{Ext}^n_\mathcal{A}(-,X)|_\mathcal{C}$ by $\textup{Ext}^n_\mathcal{C}(-,X)$ and denote $\textup{Ext}^n_\mathcal{A}(X,-)|_\mathcal{C}$ by $\textup{Ext}^n_\mathcal{C}(X,-)$ for short.

\begin{defn} (\cite[Definition 3.1]{[JK]}) Given an $n$-exact sequence
$$\delta:  0\rightarrow X_1\xrightarrow{f_1} X_2\xrightarrow{f_2}\cdots\xrightarrow{f_n} X_{n+1}\xrightarrow{f_{n+1}}X_{n+2}\rightarrow 0,$$
we define  the {\em contravariant defect} $\delta^\ast$  and the {\em covariant defect} $\delta_\ast$ by the following exact sequence of functors
$$0\rightarrow\mathcal{C}(-,X_{1})\xrightarrow{\mathcal{C}(-,f_{1})}\mathcal{C}(-,X_{2})\xrightarrow{\mathcal{C}(-,f_{2})}\cdots
\xrightarrow{\mathcal{C}(-,f_{n+1})}\mathcal{C}(-,X_{n+2})\rightarrow \delta^\ast\rightarrow 0,$$
$$0\rightarrow\mathcal{C}(X_{n+2},-)\xrightarrow{\mathcal{C}(f_{n+1},-)}\cdots\xrightarrow{\mathcal{C}(f_{2},-)}
\mathcal{C}(X_{2},-)\xrightarrow{\mathcal{C}(f_{1},-)}\mathcal{C}(X_1,-)\rightarrow \delta_\ast\rightarrow 0.$$
\end{defn}

\begin{example}
(a) Let $\delta=P_{X}: 0\rightarrow Y\xrightarrow{f_1}P_n\xrightarrow{f_2}\cdots\xrightarrow{f_n} P_1\xrightarrow{f_{n+1}} X\rightarrow 0$ with $P_i\in\mathcal{P}$, then $\delta^*=\mathcal{C}/[\mathcal{P}](-,X)$ and $\delta_*=\textup{Ext}^n_\mathcal{C}(X,-)$.

(b) Let $\delta=I_{X}: 0\rightarrow X\xrightarrow{f_1}I_1\xrightarrow{f_2}\cdots\xrightarrow{f_n} I_n\xrightarrow{f_{n+1}} Y\rightarrow 0$ with $I_i\in\mathcal{I}$, then $\delta^*=\textup{Ext}^n_\mathcal{C}(-,X)$ and $\delta_*=\mathcal{C}/[\mathcal{I}](X,-)$.
\end{example}

\begin{rem}
 (a) In Theorem \ref{thm4.6}, the equivalence  $\alpha:\mathcal{E}_n(\mathcal{C})/[C\mathcal{E}_n(\mathcal{C})]\cong \textup{mod-}\mathcal{C}/[\mathcal{P}]$ is given by $\delta\mapsto \delta^\ast$, and the equivalence $\beta:\mathcal{E}_n(\mathcal{C})/[C\mathcal{E}_n(\mathcal{C})]\cong (\textup{mod-}(\mathcal{C}/[\mathcal{I}])^{\textup{op}})^{\textup{op}}$ is given by $\delta\mapsto \delta_\ast$.

(b) In $\textup{mod-}\mathcal{C}/[\mathcal{P}]$, each projective object is of the form $\mathcal{C}/[\mathcal{P}](-,X)$, and each injective object is of the form $\textup{Ext}^n_\mathcal{C}(-,X)$.
\end{rem}


\begin{prop}
Let $\mathcal{C}$ be an $n$-abelian category with enough projectives and injectives. Then there is a duality
$$\Phi:\textup{mod-}\mathcal{C}/[\mathcal{P}]\rightarrow\textup{mod-}(\mathcal{C}/[\mathcal{I}])^{\textup{op}},\ \ \ \delta^\ast\mapsto \delta_\ast.$$
Moreover, by restrictions, we obtain the following two dualities
$$\Phi:\textup{proj-}\mathcal{C}/[\mathcal{P}]\rightarrow\textup{inj-}(\mathcal{C}/[\mathcal{I}])^{\textup{op}}, \ \ \ \mathcal{C}/[\mathcal{P}](-,X)\mapsto \textup{Ext}^n_\mathcal{C}(X,-).$$
$$\Phi:\textup{inj-}\mathcal{C}/[\mathcal{P}]\rightarrow\textup{proj-}(\mathcal{C}/[\mathcal{I}])^{\textup{op}}, \ \ \ \textup{Ext}^n_\mathcal{C}(-,X)\mapsto\mathcal{C}/[\mathcal{I}](X,-).$$
\end{prop}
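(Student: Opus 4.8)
The plan is to realize $\Phi$ as the composite of the two equivalences supplied by Theorem \ref{thm4.6}, exactly as in the $n=1$ case of Proposition \ref{prop4.3.1}. Since $\mathcal{C}$ is assumed to have enough projectives and injectives, both parts of Theorem \ref{thm4.6} apply to the \emph{same} quotient category $\mathcal{E}_n(\mathcal{C})/[C\mathcal{E}_n(\mathcal{C})]$, yielding (by the preceding Remark) a covariant equivalence $\alpha:\mathcal{E}_n(\mathcal{C})/[C\mathcal{E}_n(\mathcal{C})]\cong\textup{mod-}\mathcal{C}/[\mathcal{P}]$ with $\delta\mapsto\delta^\ast$ and a covariant equivalence $\beta:\mathcal{E}_n(\mathcal{C})/[C\mathcal{E}_n(\mathcal{C})]\cong(\textup{mod-}(\mathcal{C}/[\mathcal{I}])^{\textup{op}})^{\textup{op}}$ with $\delta\mapsto\delta_\ast$. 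First I would form $\beta\alpha^{-1}$, a covariant equivalence $\textup{mod-}\mathcal{C}/[\mathcal{P}]\cong(\textup{mod-}(\mathcal{C}/[\mathcal{I}])^{\textup{op}})^{\textup{op}}$, and then reinterpret a covariant equivalence into an opposite category as a contravariant equivalence, i.e.\ a duality $\Phi:\textup{mod-}\mathcal{C}/[\mathcal{P}]\rightarrow\textup{mod-}(\mathcal{C}/[\mathcal{I}])^{\textup{op}}$. On objects this reads $\delta^\ast=\alpha(\delta)\mapsto\beta(\delta)=\delta_\ast$, which is the asserted formula; the assignment is well defined precisely because $\alpha$ and $\beta$ share the single source $\mathcal{E}_n(\mathcal{C})/[C\mathcal{E}_n(\mathcal{C})]$, so that $\delta_\ast=\beta(\alpha^{-1}(\delta^\ast))$ depends only on $\delta^\ast$.

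For the two restrictions I would invoke the standard fact that a duality interchanges projective and injective objects, together with the explicit descriptions recorded in the preceding Example and Remark. By that Remark, the projectives of $\textup{mod-}\mathcal{C}/[\mathcal{P}]$ are exactly the modules $\mathcal{C}/[\mathcal{P}](-,X)$ and its injectives are exactly the modules $\textup{Ext}^n_\mathcal{C}(-,X)$, so $\Phi$ carries $\textup{proj-}\mathcal{C}/[\mathcal{P}]$ into the injectives of $\textup{mod-}(\mathcal{C}/[\mathcal{I}])^{\textup{op}}$ and $\textup{inj-}\mathcal{C}/[\mathcal{P}]$ into its projectives. To pin down the object maps I would evaluate $\Phi$ on the distinguished sequences $P_X$ and $I_X$: the Example gives $P_X^\ast=\mathcal{C}/[\mathcal{P}](-,X)$ and $(P_X)_\ast=\textup{Ext}^n_\mathcal{C}(X,-)$, whence $\Phi(\mathcal{C}/[\mathcal{P}](-,X))=\textup{Ext}^n_\mathcal{C}(X,-)$, and likewise $I_X^\ast=\textup{Ext}^n_\mathcal{C}(-,X)$ with $(I_X)_\ast=\mathcal{C}/[\mathcal{I}](X,-)$, whence $\Phi(\textup{Ext}^n_\mathcal{C}(-,X))=\mathcal{C}/[\mathcal{I}](X,-)$. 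Since these images exhaust the injectives, respectively the projectives, of $\textup{mod-}(\mathcal{C}/[\mathcal{I}])^{\textup{op}}$, the restrictions are themselves dualities $\textup{proj-}\mathcal{C}/[\mathcal{P}]\rightarrow\textup{inj-}(\mathcal{C}/[\mathcal{I}])^{\textup{op}}$ and $\textup{inj-}\mathcal{C}/[\mathcal{P}]\rightarrow\textup{proj-}(\mathcal{C}/[\mathcal{I}])^{\textup{op}}$ with the stated effect on objects.

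The argument is essentially formal once Theorem \ref{thm4.6} and the defect computations of the Example are in hand, so I anticipate no serious obstacle at this stage; the substantive input, namely the long exact sequences of Lemma \ref{lem4.6.1} that compute $\delta^\ast$ and $\delta_\ast$ and thereby justify the Example, has already been established. The one point demanding genuine care is the bookkeeping with opposite categories: one must check that the op-reversal turning $\beta\alpha^{-1}$ into $\Phi$ sends projective objects to injective objects with the correct variance inside $\textup{mod-}(\mathcal{C}/[\mathcal{I}])^{\textup{op}}$. Verifying this reduces to tracing the two objects $P_X$ and $I_X$ through the composite $\beta\alpha^{-1}$, exactly as carried out above, so the potential source of confusion is entirely notational rather than mathematical.
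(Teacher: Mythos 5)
Your proposal is correct and follows essentially the same route as the paper: the paper proves the $n=1$ case (Proposition \ref{prop4.3.1}) as ``a direct consequence'' of the remark identifying $\alpha_1(\delta)=\delta^\ast$ and $\beta_1(\delta)=\delta_\ast$ together with the defect computations for $P_X$ and $I_X$, and merely lists the higher version, which is obtained exactly as you do by composing the two equivalences of Theorem \ref{thm4.6} out of the common source $\mathcal{E}_n(\mathcal{C})/[C\mathcal{E}_n(\mathcal{C})]$ and restricting via the Example. No gaps.
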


\begin{thm} (Higher Hilton-Rees Theorem)
Let $\mathcal{C}$ be an $n$-abelian category with enough projectives and injectives.

\textup{(a)} There is an isomorphism between $\mathcal{C}/[\mathcal{P}](Y,X)$ and the group of natural transformations from $\textup{Ext}^n_\mathcal{C}(X,-)$ to $\textup{Ext}^n_\mathcal{C}(Y,-)$.

\textup{(b)} There is an isomorphism between $\mathcal{C}/[\mathcal{I}](X,Y)$ and the group of natural transformations from $\textup{Ext}^n_\mathcal{C}(-,X)$ to $\textup{Ext}^n_\mathcal{C}(-,Y)$.
\end{thm}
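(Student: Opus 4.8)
The plan is to obtain the whole statement as a purely formal consequence of the duality $\Phi$ recorded in the preceding proposition (the $n$-abelian analogue of Proposition \ref{prop4.3.1}), in exactly the same way that the classical Hilton--Rees Theorem \ref{thm1.4} is extracted from Proposition \ref{prop4.3.1}. The only two ingredients I will need are: (i) a duality (contravariant equivalence) induces bijections on morphism groups, reversing composition; and (ii) the Yoneda lemma in the relevant module categories. No Ext-finiteness is required, since the duality itself is available without it.

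For part (a), I would start from the restricted duality $\Phi:\textup{proj-}\mathcal{C}/[\mathcal{P}]\to\textup{inj-}(\mathcal{C}/[\mathcal{I}])^{\textup{op}}$, $\mathcal{C}/[\mathcal{P}](-,X)\mapsto\textup{Ext}^n_\mathcal{C}(X,-)$. Because $\Phi$ is a contravariant equivalence onto its image, it yields an isomorphism of abelian groups
\[
\textup{Hom}\big(\mathcal{C}/[\mathcal{P}](-,Y),\,\mathcal{C}/[\mathcal{P}](-,X)\big)\;\cong\;\textup{Hom}\big(\textup{Ext}^n_\mathcal{C}(X,-),\,\textup{Ext}^n_\mathcal{C}(Y,-)\big),
\]
where the left-hand Hom is taken in $\textup{mod-}\mathcal{C}/[\mathcal{P}]$ and the right-hand one in $\textup{mod-}(\mathcal{C}/[\mathcal{I}])^{\textup{op}}$; note the order of $X,Y$ is swapped on the right precisely because $\Phi$ reverses arrows. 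Applying Yoneda to the left side gives $\textup{Hom}(\mathcal{C}/[\mathcal{P}](-,Y),\mathcal{C}/[\mathcal{P}](-,X))\cong\mathcal{C}/[\mathcal{P}](Y,X)$, while the right side is by definition the group of natural transformations from $\textup{Ext}^n_\mathcal{C}(X,-)$ to $\textup{Ext}^n_\mathcal{C}(Y,-)$. Composing the three isomorphisms proves (a). Part (b) is entirely dual: I would instead use the restriction $\Phi:\textup{inj-}\mathcal{C}/[\mathcal{P}]\to\textup{proj-}(\mathcal{C}/[\mathcal{I}])^{\textup{op}}$, $\textup{Ext}^n_\mathcal{C}(-,X)\mapsto\mathcal{C}/[\mathcal{I}](X,-)$, transport the natural transformations from $\textup{Ext}^n_\mathcal{C}(-,X)$ to $\textup{Ext}^n_\mathcal{C}(-,Y)$ across $\Phi$ to morphisms between the representables $\mathcal{C}/[\mathcal{I}](Y,-)$ and $\mathcal{C}/[\mathcal{I}](X,-)$ in $\textup{mod-}(\mathcal{C}/[\mathcal{I}])^{\textup{op}}$, and then read off $\mathcal{C}/[\mathcal{I}](X,Y)$ by the Yoneda lemma for $(\mathcal{C}/[\mathcal{I}])^{\textup{op}}$.

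The step I expect to require the most care is the identification of the Hom-groups in the module categories with honest groups of natural transformations of the Ext-functors viewed as functors on $\mathcal{C}$ itself, and keeping the variance bookkeeping straight throughout. Concretely, $\textup{Ext}^n_\mathcal{C}(X,-)$ and $\textup{Ext}^n_\mathcal{C}(Y,-)$ vanish on $\mathcal{I}$, so by Proposition \ref{prop2.1} natural transformations between them as functors on $\mathcal{C}$ coincide with morphisms in $\textup{Mod-}(\mathcal{C}/[\mathcal{I}])^{\textup{op}}$, and since both are finitely presented this is the same as morphisms in the full subcategory $\textup{mod-}(\mathcal{C}/[\mathcal{I}])^{\textup{op}}$; the analogous remark applies to $\textup{Ext}^n_\mathcal{C}(-,X)$ vanishing on $\mathcal{P}$ for part (b). Once this matching is nailed down, the proof is just the concatenation of Yoneda with the duality, and I would simply write it as such, remarking that it parallels the deduction of Theorem \ref{thm1.4} from Proposition \ref{prop4.3.1}.
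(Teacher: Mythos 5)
Your proposal is correct and follows exactly the route the paper intends: the paper states the Higher Hilton--Rees Theorem as an immediate consequence of the preceding duality $\Phi$ (mirroring how Theorem \ref{thm1.4} is declared to be ``implied in'' Proposition \ref{prop4.3.1}), and your argument simply makes explicit the combination of that duality with the Yoneda lemma and the identification of Hom-groups in $\textup{mod-}(\mathcal{C}/[\mathcal{I}])^{\textup{op}}$ (resp.\ $\textup{mod-}\mathcal{C}/[\mathcal{P}]$) with natural transformations of the Ext-functors on $\mathcal{C}$. The variance bookkeeping and the observation that no Ext-finiteness is needed are both consistent with the paper.
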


\begin{thm}\label{thm4.5.3}
Let $\mathcal{C}$ be an $n$-abelian category with enough projectives and injectives. Assume that $\mathcal{C}$ is a dualizing $k$-variety. Then there is an equivalence $\tau_n:\mathcal{C}/[\mathcal{P}]\cong\mathcal{C}/[\mathcal{I}]$ satisfying the following properties:

\textup{(a)} $D\textup{Ext}^n_\mathcal{C}(-,X)=\mathcal{C}/[\mathcal{P}](\tau_n^{-1} X,-)$, $D\textup{Ext}^n_\mathcal{C}(X,-)=\mathcal{C}/[\mathcal{I}](-,\tau_n X)$.

\textup{(b)} $D\delta_\ast=\delta^\ast\tau_n^{-1}$, $D\delta^\ast=\delta_\ast\tau_n$ for each $n$-exact sequence $\delta$.
\end{thm}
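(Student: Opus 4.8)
The plan is to run the formal argument of Theorem \ref{thm4.4.2} in the $n$-exact setting, with $\textup{Ext}^1$ replaced by $\textup{Ext}^n$ and the contravariant and covariant defects $\delta^\ast,\delta_\ast$ now taken for $n$-exact sequences. First I would record that, since $\mathcal{C}$ is a dualizing $k$-variety and $\mathcal{P},\mathcal{I}$ are functorially finite, the quotients $\mathcal{C}/[\mathcal{P}]$ and $\mathcal{C}/[\mathcal{I}]$ are again dualizing $k$-varieties by Example \ref{ex2.1}(d); hence the $k$-dual $D$ restricts to a duality $D:\textup{mod-}(\mathcal{C}/[\mathcal{I}])^{\textup{op}}\to\textup{mod-}\mathcal{C}/[\mathcal{I}]$. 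Composing it with the duality $\Phi:\textup{mod-}\mathcal{C}/[\mathcal{P}]\to\textup{mod-}(\mathcal{C}/[\mathcal{I}])^{\textup{op}}$, $\delta^\ast\mapsto\delta_\ast$ of the preceding proposition produces an equivalence $\Theta=D\Phi:\textup{mod-}\mathcal{C}/[\mathcal{P}]\to\textup{mod-}\mathcal{C}/[\mathcal{I}]$.

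Next I would restrict $\Theta$ to projective objects. The projectives of $\textup{mod-}\mathcal{C}/[\mathcal{P}]$ are exactly the representables $\mathcal{C}/[\mathcal{P}](-,X)$, and those of $\textup{mod-}\mathcal{C}/[\mathcal{I}]$ the $\mathcal{C}/[\mathcal{I}](-,Y)$. Writing $\delta=P_X$ for a projective $n$-exact resolution of $X$, the preceding example gives $\delta^\ast=\mathcal{C}/[\mathcal{P}](-,X)$ and $\delta_\ast=\textup{Ext}^n_\mathcal{C}(X,-)$, so $\Theta(\mathcal{C}/[\mathcal{P}](-,X))=D\textup{Ext}^n_\mathcal{C}(X,-)$ is projective, hence of the form $\mathcal{C}/[\mathcal{I}](-,\tau_n X)$ for a well-defined object $\tau_n X$. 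By Yoneda the assignment $X\mapsto\tau_n X$ upgrades to an equivalence $\tau_n:\mathcal{C}/[\mathcal{P}]\cong\mathcal{C}/[\mathcal{I}]$, and the second isomorphism of (a) is its defining property. The induced functor $(\tau_n^{-1})_\ast:F\mapsto F\tau_n^{-1}$ is an equivalence $\textup{mod-}\mathcal{C}/[\mathcal{P}]\cong\textup{mod-}\mathcal{C}/[\mathcal{I}]$; it and $\Theta$ are both right exact and agree on the projective generators, hence are naturally isomorphic, so $\Theta=(\tau_n^{-1})_\ast$. Evaluating at $\delta^\ast$ yields $D\delta_\ast=\delta^\ast\tau_n^{-1}$ for every $n$-exact $\delta$, which is the first half of (b).

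To obtain the remaining identities I would push $D\delta_\ast=\delta^\ast\tau_n^{-1}$ through $D$: using $D^2\cong\mathrm{id}$ and the elementary fact that precomposition with the equivalence $\tau_n^{-1}$ commutes with $D$, one gets $\delta_\ast\cong(D\delta^\ast)\tau_n^{-1}$, and precomposing with $\tau_n$ gives $D\delta^\ast=\delta_\ast\tau_n$, the second half of (b). Finally, specializing to $\delta=I_X$, where $\delta^\ast=\textup{Ext}^n_\mathcal{C}(-,X)$ and $\delta_\ast=\mathcal{C}/[\mathcal{I}](X,-)$, and combining with the adjunction isomorphism $\mathcal{C}/[\mathcal{I}](X,\tau_n-)\cong\mathcal{C}/[\mathcal{P}](\tau_n^{-1}X,-)$ coming from $\tau_n$ being an equivalence, delivers the first isomorphism of (a).

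The genuinely $n$-dependent content lies not in this formal chain but in the two inputs it consumes: the duality $\Phi$ and the defect computations $\delta_\ast=\textup{Ext}^n_\mathcal{C}(X,-)$ for $\delta=P_X$ and $\delta_\ast=\mathcal{C}/[\mathcal{I}](X,-)$ for $\delta=I_X$. These rest on Lemma \ref{lem4.6.1}, which realizes $\mathcal{C}$ as an $n$-cluster-tilting subcategory of an abelian category $\mathcal{A}$ and supplies the long exact sequences identifying the covariant defect of a projective $n$-exact resolution with $\textup{Ext}^n_\mathcal{A}(X,-)|_\mathcal{C}$. Accordingly, the point most in need of care is the bookkeeping of variances between $\textup{mod-}(\mathcal{C}/[\mathcal{I}])^{\textup{op}}$ and $\textup{mod-}\mathcal{C}/[\mathcal{I}]$ under $D$ and $\Phi$; once that is pinned down, the argument is word-for-word the exact ($n=1$) case of Theorem \ref{thm4.4.2}.
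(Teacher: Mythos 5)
Your argument is correct and is essentially identical to the paper's: the paper gives no separate proof of Theorem \ref{thm4.5.3}, leaving it as the $n$-analogue of Theorem \ref{thm4.4.2}, and your proposal reproduces exactly that proof (compose $\Phi$ with $D$, restrict to projectives to define $\tau_n$ via $\Theta(\mathcal{C}/[\mathcal{P}](-,X))=D\mathrm{Ext}^n_\mathcal{C}(X,-)\cong\mathcal{C}/[\mathcal{I}](-,\tau_n X)$, identify $\Theta$ with $(\tau_n^{-1})_\ast$ on all of $\mathrm{mod}\textrm{-}\mathcal{C}/[\mathcal{P}]$, then specialize to $\delta=I_X$), with the only $n$-specific inputs being the defect computations supplied by Lemma \ref{lem4.6.1}. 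You even supply slightly more justification than the paper does for the identification $\Theta=(\tau_n^{-1})_\ast$.
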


\begin{rem}
Let $A$ be an Artin $k$-algebra and $\mathcal{C}$ be an $n$-cluster-tilting subcategory of mod-$A$. Then $\mathcal{C}$ contains proj-$A$ and inj-$A$, moreover, $\mathcal{C}$ is a dualizing $k$-variety since it is functorially finite. Indeed,  the functor $\tau_n:\mathcal{C}/[\mathcal{P}]\cong\mathcal{C}/[\mathcal{I}]$ in Theorem \ref{thm4.5.3} is given by $DTr\Omega^{n-1}$.
\end{rem}

\section{Abeian quotients of the categories of triangles}

Let $\mathcal{C}$ be a triangulated category with the suspension functor $\Sigma$. We denote by $\Delta(\mathcal{C})$ the category of triangles in $\mathcal{C}$, where the objects are the triangles $X_\bullet=(X_1\xrightarrow{f_1} X_2\xrightarrow{f_2} X_3\xrightarrow{f_3}\Sigma X_1)$ and the morphisms from $X_\bullet$ to $Y_\bullet$ are the triples $\varphi_\bullet=(\varphi_1,\varphi_2,\varphi_3)$ such that the following diagram is commutative:
$$\xymatrix{
X_1 \ar[r]^{f_1} \ar[d]^{\varphi_1} & X_2 \ar[r]^{f_2}\ar[d]^{\varphi_2} & X_3 \ar[r]^{f_3}\ar[d]^{\varphi_3} & \Sigma X_1 \ar[d]^{\Sigma\varphi_1}\\
Y_1\ar[r]^{g_1} & Y_2\ar[r]^{g_2} & Y_3\ar[r]^{g_3} & \Sigma Y_1
}$$
 Let $X_\bullet$ and $Y_\bullet$ be two triangles, we denote by $\mathcal{R}_2(X_\bullet,Y_\bullet)$ (resp. $\mathcal{R}'_2(X_\bullet,Y_\bullet)$) the class of morphisms $\varphi_\bullet: X_\bullet\rightarrow Y_\bullet$ such that there is a morphism $p:X_3\rightarrow Y_2$ such that $g_2p=\varphi_3$ (resp. $pf_2=\varphi_2$). It is easy to see that $\mathcal{R}_2$  and $\mathcal{R}_2'$  are ideals of $\Delta(\mathcal{C})$.

The first part of the following result is implied in \cite{[Nee]}.

 \begin{thm}\label{thm5.1}
Let $\mathcal{C}$ be a triangulated category, then we have the following two equivalences.

\textup{(a)} $\Delta(\mathcal{C})/\mathcal{R}_2\cong \textup{mod-}\mathcal{C}$.

\textup{(b)} $\Delta(\mathcal{C})/\mathcal{R}'_2\cong (\textup{mod-}\mathcal{C}^{\textup{op}})^{\textup{op}}$.
 \end{thm}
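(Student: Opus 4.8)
The plan is to adapt the proof of Lemma~\ref{prop3.1} from the morphism category to the category of triangles, using that the bifunctor $\mathcal{C}(-,-)$ is homological and, crucially, that the suspension $\Sigma$ is an equivalence. I would prove (a) in full and then deduce (b) by the evident duality. For (a), define
\[
\alpha\colon \Delta(\mathcal{C})\longrightarrow \mathrm{mod}\text{-}\mathcal{C},\qquad X_\bullet\mapsto \mathrm{Coker}\bigl(\mathcal{C}(-,f_2)\colon \mathcal{C}(-,X_2)\to\mathcal{C}(-,X_3)\bigr),
\]
sending a morphism $\varphi_\bullet=(\varphi_1,\varphi_2,\varphi_3)$ to the map induced on cokernels by $\mathcal{C}(-,\varphi_2)$ and $\mathcal{C}(-,\varphi_3)$. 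The presentation $\mathcal{C}(-,X_2)\to\mathcal{C}(-,X_3)\to\alpha(X_\bullet)\to 0$ shows $\alpha(X_\bullet)\in\mathrm{mod}\text{-}\mathcal{C}$, so $\alpha$ is well defined. Density is immediate: given $F$ with presentation $\mathcal{C}(-,A)\xrightarrow{\mathcal{C}(-,u)}\mathcal{C}(-,B)\to F\to 0$, I complete $u$ to a triangle and rotate it so that $u$ becomes the middle map; applying $\alpha$ to that triangle returns $F$.

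Next I would identify the kernel. Chasing the commutative ladder of presentations, $\alpha(\varphi_\bullet)=0$ holds exactly when $\mathcal{C}(-,\varphi_3)$ carries $\mathcal{C}(-,X_3)$ into $\mathrm{Im}\,\mathcal{C}(-,g_2)$; since $\mathcal{C}(-,X_3)$ is projective, this factors through $\mathcal{C}(-,Y_2)$, and by Yoneda it is equivalent to the existence of $p\colon X_3\to Y_2$ with $g_2p=\varphi_3$, that is, to $\varphi_\bullet\in\mathcal{R}_2$. Thus $\alpha$ annihilates precisely the ideal $\mathcal{R}_2$.

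The main obstacle is fullness, and this is the one place where the full triangulated structure, not merely the homological property, is needed. Given $\psi\colon\alpha(X_\bullet)\to\alpha(Y_\bullet)$, I would first lift the composite $\mathcal{C}(-,X_3)\twoheadrightarrow\alpha(X_\bullet)\xrightarrow{\psi}\alpha(Y_\bullet)$ along the epimorphism $\mathcal{C}(-,Y_3)\twoheadrightarrow\alpha(Y_\bullet)$, obtaining $\varphi_3\colon X_3\to Y_3$ by Yoneda. A second lifting, using that the composite through $\mathcal{C}(-,X_2)$ vanishes, produces $\varphi_2\colon X_2\to Y_2$ with $g_2\varphi_2=\varphi_3f_2$. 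The pair $(\varphi_2,\varphi_3)$ is exactly a commuting first square for the once-rotated triangles $X_2\xrightarrow{f_2}X_3\xrightarrow{f_3}\Sigma X_1\xrightarrow{-\Sigma f_1}\Sigma X_2$ and its $Y$-analogue; applying the completion axiom (TR3) to these rotated triangles yields a map $\chi\colon\Sigma X_1\to\Sigma Y_1$, and because $\Sigma$ is fully faithful I may write $\chi=\Sigma\varphi_1$ for a unique $\varphi_1\colon X_1\to Y_1$. One then checks that $\varphi_\bullet=(\varphi_1,\varphi_2,\varphi_3)$ is a genuine morphism of triangles with $\alpha(\varphi_\bullet)=\psi$. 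The delicate points are bookkeeping the signs introduced by rotation and invoking the equivalence $\Sigma$ to descend $\chi$ to $\varphi_1$; once these are handled, $\alpha$ is full. Being full, dense, and annihilating exactly $\mathcal{R}_2$, the functor $\alpha$ induces the desired equivalence $\Delta(\mathcal{C})/\mathcal{R}_2\cong\mathrm{mod}\text{-}\mathcal{C}$, just as in Lemma~\ref{prop3.1}.

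Finally, for (b) I would repeat the argument with the assignment $X_\bullet\mapsto\mathrm{Coker}\bigl(\mathcal{C}(f_2,-)\colon\mathcal{C}(X_3,-)\to\mathcal{C}(X_2,-)\bigr)$, which is valued in $\mathrm{mod}\text{-}\mathcal{C}^{\mathrm{op}}$. Since this assignment is contravariant in $\Delta(\mathcal{C})$, it is a covariant functor into $(\mathrm{mod}\text{-}\mathcal{C}^{\mathrm{op}})^{\mathrm{op}}$, and the same kernel computation, now producing $p\colon X_3\to Y_2$ with $pf_2=\varphi_2$, identifies the annihilated morphisms with $\mathcal{R}'_2$. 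Equivalently, (b) is simply (a) applied to the triangulated category $\mathcal{C}^{\mathrm{op}}$, whose suspension is $\Sigma^{-1}$, so no new argument is required.
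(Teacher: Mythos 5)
Your proposal is correct and is in substance the same as the paper's: the paper sends a triangle to its middle morphism $f_2$, asserts (as a routine check) that this induces $\Delta(\mathcal{C})/\mathcal{R}_2\cong\textup{Mor}(\mathcal{C})/\mathcal{R}$ and $\Delta(\mathcal{C})/\mathcal{R}'_2\cong\textup{Mor}(\mathcal{C})/\mathcal{R}'$, and then invokes Lemma~\ref{prop3.1}, whereas you compose these two steps into the single functor $X_\bullet\mapsto\textup{Coker}\,\mathcal{C}(-,f_2)$ and verify density (rotation), fullness (rotation plus (TR3) and full faithfulness of $\Sigma$), and the kernel ideal (Yoneda and projectivity of representables) directly. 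You have simply written out the ``routine'' verification the paper leaves implicit, so no substantive difference remains.
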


 \begin{proof}
 Define a functor $\alpha:\Delta(\mathcal{C})\rightarrow\textup{Mor}(\mathcal{C})$ by taking a triangle $X_1\xrightarrow{f_1} X_2\xrightarrow{f_2} X_3\xrightarrow{f_3}\Sigma X_1$ to $f_2:X_2\rightarrow X_3$. It is routine to check that $\Delta(\mathcal{C})/\mathcal{R}_2\cong\textup{Mor}(\mathcal{C})/\mathcal{R}$ and $\Delta(\mathcal{C})/\mathcal{R}'_2\cong\textup{Mor}(\mathcal{C})/\mathcal{R'}$. Then the result follows from Lemma \ref{prop3.1}.
 \end{proof}

Let $(\mathcal{C},\mathcal{E})$ be a Frobenius category. We denote by $\mathcal{P}$ the full subcategory of $\mathcal{C}$ formed by projectives. It is well known that the quotient category $\mathcal{C}/[\mathcal{P}]$ is a triangulated category. The following corollary follows from Theorem \ref{thm4.1} and Theorem \ref{thm5.1}.

\begin{cor}
Let $(\mathcal{C},\mathcal{E})$ be a Frobenius category, then the categories
$\mathcal{E}(\mathcal{C})/[S\mathcal{E}(\mathcal{C})]$, $\Delta(\mathcal{C}/[\mathcal{P}])/\mathcal{R}_2$ and $\textup{mod-}\mathcal{C}/[\mathcal{P}]$ are equivalent.
\end{cor}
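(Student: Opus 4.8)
The plan is to route all three categories through the single abelian category $\textup{mod-}\mathcal{C}/[\mathcal{P}]$, so that the statement reduces to a transitivity argument built on the two theorems already in hand. First I would observe that a Frobenius category has enough projectives by definition, so Theorem \ref{thm4.1}(a) applies directly and supplies the equivalence $\alpha_1:\mathcal{E}(\mathcal{C})/[S\mathcal{E}(\mathcal{C})]\cong \textup{mod-}\mathcal{C}/[\mathcal{P}]$ (this is also precisely what Corollary \ref{cor4.2} records). This disposes of the first of the three categories with no additional work.

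For the second category, I would invoke the standard fact, recalled in the text immediately preceding the statement, that the stable category $\mathcal{C}/[\mathcal{P}]$ of a Frobenius exact category is triangulated (with suspension functor $\Omega^{-1}$). Once $\mathcal{C}/[\mathcal{P}]$ is regarded as a triangulated category, Theorem \ref{thm5.1}(a) applies to it verbatim, yielding $\Delta(\mathcal{C}/[\mathcal{P}])/\mathcal{R}_2\cong \textup{mod-}(\mathcal{C}/[\mathcal{P}])$. The only point requiring a moment's attention is the notational identification $\textup{mod-}\mathcal{C}/[\mathcal{P}]=\textup{mod-}(\mathcal{C}/[\mathcal{P}])$, that is, that the target of $\alpha_1$ and the target of the Theorem \ref{thm5.1} equivalence are literally the same module category over the additive category $\mathcal{C}/[\mathcal{P}]$; once this is noted, both equivalences land in one and the same abelian category.

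Composing the two equivalences then gives $\mathcal{E}(\mathcal{C})/[S\mathcal{E}(\mathcal{C})]\cong \textup{mod-}\mathcal{C}/[\mathcal{P}]\cong \Delta(\mathcal{C}/[\mathcal{P}])/\mathcal{R}_2$, which is exactly the asserted three-way equivalence. I do not expect a genuine obstacle here, since all the substantive content has been absorbed into Theorem \ref{thm4.1} and Theorem \ref{thm5.1}, and what remains is purely formal. The single subtlety worth flagging is the applicability of Theorem \ref{thm5.1}, whose hypothesis demands an ambient \emph{triangulated} category; this is precisely where the Frobenius assumption is used, as it is exactly the condition that makes $\mathcal{C}/[\mathcal{P}]$ triangulated and thereby eligible as input to Theorem \ref{thm5.1}.
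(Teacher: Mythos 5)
Your proposal is correct and follows exactly the paper's own argument: the paper also notes that $\mathcal{C}/[\mathcal{P}]$ is triangulated for a Frobenius category and then cites Theorem \ref{thm4.1} for the equivalence $\mathcal{E}(\mathcal{C})/[S\mathcal{E}(\mathcal{C})]\cong\textup{mod-}\mathcal{C}/[\mathcal{P}]$ and Theorem \ref{thm5.1} applied to $\mathcal{C}/[\mathcal{P}]$ for the equivalence $\Delta(\mathcal{C}/[\mathcal{P}])/\mathcal{R}_2\cong\textup{mod-}(\mathcal{C}/[\mathcal{P}])$. Your extra remark about identifying the two targets $\textup{mod-}\mathcal{C}/[\mathcal{P}]$ and $\textup{mod-}(\mathcal{C}/[\mathcal{P}])$ is a harmless and reasonable clarification of what the paper leaves implicit.
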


\begin{rem}
Let $\mathcal{C}$ be a triangulated category.  Assume that $X_\bullet$ and $Y_\bullet$ are two triangles. We denote by $\mathcal{R}_1(X_\bullet,Y_\bullet)$ (resp. $\mathcal{R}_3(X_\bullet,Y_\bullet)$) the class of morphisms $\varphi_\bullet:X_\bullet\rightarrow Y_\bullet$ such that there is a morphism $p:X_2\rightarrow Y_1$ (resp. $p:\Sigma X_1\rightarrow Y_3$) such that $g_1p=\varphi_2$ (resp. $g_3p=\Sigma\varphi_1$). Then $\mathcal{R}_1$ and $\mathcal{R}_3$ are ideals of $\Delta(\mathcal{C})$. Moreover, we have equivalences $\Delta(\mathcal{C})/\mathcal{R}_1\cong \Delta(\mathcal{C})/\mathcal{R}_2\cong\Delta(\mathcal{C})/\mathcal{R}_3$, which are given by rotations. We can see Remark \ref{rem4.1} for comparison.

 Similarly, we have equivalences $\Delta(\mathcal{C})/\mathcal{R}'_1\cong \Delta(\mathcal{C})/\mathcal{R}'_2\cong\Delta(\mathcal{C})/\mathcal{R}'_3$, where  $\mathcal{R}'_1(X_\bullet,Y_\bullet)$ (resp. $\mathcal{R}'_3(X_\bullet,Y_\bullet)$) is the class of morphisms $\varphi_\bullet:X_\bullet\rightarrow Y_\bullet$ such that there is a morphism $p:X_2\rightarrow Y_1$ (resp. $p:\Sigma X_1\rightarrow Y_3$) such that $pf_1=\varphi_1$ (resp. $pf_3=\varphi_3$).
\end{rem}

From now on, we assume that $\mathcal{C}$ is a triangulated category. We will give some basic properties on the abelian category $\Delta(\mathcal{C})/\mathcal{R}_2$.

\begin{prop}\label{prop5.1}
Let $\varphi_\bullet:X_\bullet\rightarrow Y_\bullet$ be a morphism in $\Delta(\mathcal{C})$. Then we have

\textup{(a)} The following statements are equivalence:

\ \ \textup{(i)} $\underline{\varphi_\bullet}=0$ in $\Delta(\mathcal{C})/\mathcal{R}_2$.

\ \ \textup{(ii)} $\varphi_3$ factors through $g_2$.

\ \ \textup{(iii)} $g_3\varphi_3=0$.

\ \ \textup{(iv)} $(\Sigma\varphi_1)f_3=0$.

\ \ \textup{(v)} $\varphi_1$ factors through $f_1$.

\ \ \textup{(vi)} $\underline{\varphi_\bullet}=0$ in $\Delta(\mathcal{C})/\mathcal{R}'_1$.


\textup{(b)} The zero objects in $\Delta(\mathcal{C})/\mathcal{R}_2$ are of the form
$(X\xrightarrow{1} X\rightarrow 0\rightarrow \Sigma X)\oplus (0\rightarrow Y\xrightarrow{1} Y\rightarrow 0).$

\textup{(c)} $\underline{\varphi_\bullet}$ is a monomorphism in $\Delta(\mathcal{C})/\mathcal{R}_2$ if and only if $\left(
                                                                                     \begin{smallmatrix}
                                                                                       f_1 \\
                                                                                       \varphi_1 \\
                                                                                     \end{smallmatrix}
                                                                                   \right)
$ is a section.
\end{prop}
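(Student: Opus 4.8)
The plan is to handle the three parts in order, relying on Theorem \ref{thm5.1}, which identifies $\Delta(\mathcal{C})/\mathcal{R}_2$ with the abelian category $\textup{mod-}\mathcal{C}$, so that the notions of ``zero object'' and ``monomorphism'' are available. For part (a) I would prove the cycle of six equivalences by linking consecutive conditions. The links (i)$\Leftrightarrow$(ii) and (v)$\Leftrightarrow$(vi) are immediate from the definitions of the ideals $\mathcal{R}_2$ and $\mathcal{R}'_1$: by definition $\underline{\varphi_\bullet}=0$ in $\Delta(\mathcal{C})/\mathcal{R}_2$ means precisely that $\varphi_\bullet\in\mathcal{R}_2$, i.e. $\varphi_3=g_2p$ for some $p\colon X_3\to Y_2$, and similarly $\underline{\varphi_\bullet}=0$ in $\Delta(\mathcal{C})/\mathcal{R}'_1$ means $\varphi_1=pf_1$ for some $p\colon X_2\to Y_1$.

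The three remaining links are homological and use that $\Sigma$ is an autoequivalence, hence fully faithful. For (ii)$\Leftrightarrow$(iii) I would apply Lemma \ref{3.3.1}(b) to the triangle $Y_\bullet$: there $g_2$ is a weak kernel of $g_3$, so $g_3\varphi_3=0$ is equivalent to $\varphi_3$ factoring through $g_2$. The link (iii)$\Leftrightarrow$(iv) is simply the commutativity of the rightmost square of $\varphi_\bullet$, which reads $g_3\varphi_3=(\Sigma\varphi_1)f_3$. Finally, for (iv)$\Leftrightarrow$(v) I would rotate $X_\bullet$ to the triangle $X_3\xrightarrow{f_3}\Sigma X_1\xrightarrow{\Sigma f_1}\Sigma X_2$, in which $\Sigma f_1$ is a weak cokernel of $f_3$ by Lemma \ref{3.3.1}(a); thus $(\Sigma\varphi_1)f_3=0$ is equivalent to $\Sigma\varphi_1$ factoring through $\Sigma f_1$, and since $\Sigma$ is full and faithful this descends to a factorization $\varphi_1=qf_1$. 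Chaining these equivalences establishes the equivalence of (i)--(vi).

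For part (b) I would observe that $X_\bullet$ is a zero object of $\Delta(\mathcal{C})/\mathcal{R}_2$ exactly when $\underline{1_{X_\bullet}}=0$. Applying criterion (v) to $\varphi_\bullet=1_{X_\bullet}$ shows this holds iff $1_{X_1}$ factors through $f_1$, i.e. $f_1$ is a section; for a triangle this is equivalent to $f_3=0$, so $X_\bullet$ splits and is isomorphic to $X_1\xrightarrow{\binom{1}{0}}X_1\oplus X_3\xrightarrow{(0,1)}X_3\xrightarrow{0}\Sigma X_1$. This splits as a direct sum $(X_1\xrightarrow{1}X_1\to 0\to\Sigma X_1)\oplus(0\to X_3\xrightarrow{1}X_3\to 0)$; setting $X=X_1$ and $Y=X_3$ yields the asserted form, and conversely every such direct sum is a split triangle, hence zero.

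Part (c) is where the real work lies, and the \textbf{``only if'' direction is the main obstacle}. The ``if'' direction is a cancellation computation: given a retraction $(f_1',\varphi_1')$ of $\binom{f_1}{\varphi_1}$ and a test morphism $\psi_\bullet\colon Z_\bullet\to X_\bullet$ (with first map $h_1$ of $Z_\bullet$) satisfying $\underline{\varphi_\bullet\psi_\bullet}=0$, criterion (v) yields $\varphi_1\psi_1=ph_1$, whence $\psi_1=f_1'(f_1\psi_1)+\varphi_1'(\varphi_1\psi_1)=(f_1'\psi_2+\varphi_1'p)h_1$ using $f_1\psi_1=\psi_2h_1$; so $\underline{\psi_\bullet}=0$ and $\underline{\varphi_\bullet}$ is a monomorphism. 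For the converse I would form a triangle $K_\bullet$ on $\binom{f_1}{\varphi_1}$, namely $X_1\xrightarrow{\binom{f_1}{\varphi_1}}X_2\oplus Y_1\to K_3\to\Sigma X_1$, and invoke axiom (TR3) to lift the commuting square determined by the first two components $1_{X_1}$ and $(1,0)\colon X_2\oplus Y_1\to X_2$ to a morphism of triangles $k_\bullet\colon K_\bullet\to X_\bullet$. Since $\varphi_1=(0,1)\binom{f_1}{\varphi_1}$ factors through the first map of $K_\bullet$, criterion (v) gives $\underline{\varphi_\bullet k_\bullet}=0$; as $\underline{\varphi_\bullet}$ is a monomorphism this forces $\underline{k_\bullet}=0$, and criterion (v) applied once more says the first component $1_{X_1}$ of $k_\bullet$ factors through $\binom{f_1}{\varphi_1}$, i.e. $\binom{f_1}{\varphi_1}$ is a section. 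The delicate point is to confirm that (TR3) genuinely supplies $k_\bullet$ from its prescribed first two components and that the weak (co)kernel factorizations are correctly oriented; once the equivalences of part (a), and in particular criterion (v), are in hand these steps become routine.
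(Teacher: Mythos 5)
Your proposal is correct and follows essentially the same route as the paper, which disposes of (a) as "clear", proves (b) exactly as you do (identity zero forces $f_3=0$, hence the split form), and refers for (c) to the argument of Lemma \ref{lem4.2}, whose retraction computation and kernel-triangle trick you reproduce faithfully in the triangulated setting. The only difference is that you supply the details the paper omits; in particular your worry about (TR3) is unfounded, since completing the commuting square on the first two components to a morphism of triangles is exactly what that axiom provides.
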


\begin{proof}
(a) It is clear.

(b) Assume that $X_\bullet: X_1\xrightarrow{f_1} X_2\xrightarrow{f_2} X_3\xrightarrow{f_3}\Sigma X_1$ is a zero object in $\Delta(\mathcal{C})/\mathcal{R}_2$. Then $\underline{\mbox{Id}_{X_\bullet}}=0$, thus $f_3=0$ by (a). Therefore, $X_\bullet$ is isomorphic to $(X_1\xrightarrow{1} X_1\rightarrow 0\rightarrow \Sigma X_1)\oplus (0\rightarrow X_3\xrightarrow{1} X_3\rightarrow 0).$

(c) The proof is similar to that of Lemma \ref{lem4.2}.
\end{proof}

\begin{rem}
(a) Denote by $\mathcal{U}$ the full subcategory of $\Delta(\mathcal{C})$ formed by $(X\xrightarrow{1} X\rightarrow 0\rightarrow \Sigma X)\oplus (0\rightarrow Y\xrightarrow{1} Y\rightarrow 0)$. By Proposition \ref{prop5.1}(b) there is a dense functor $\beta:\Delta(\mathcal{C})/[\mathcal{U}]\rightarrow \Delta(\mathcal{C})/\mathcal{R}_2$. But we point out that $\beta$ is not an equivalence in general, because a morphism $\varphi_\bullet: X_\bullet\rightarrow Y_\bullet$ in $\Delta(\mathcal{C})$ such that $\underline{\varphi_\bullet}=0$ in $\Delta(\mathcal{C})/\mathcal{R}_2$ does not imply that $\varphi_\bullet$ factors through some object in $\mathcal{U}$.

(b) Assume that there is a commutative diagram
$$\xymatrix{
X_1 \ar[r]^{f_1} \ar[d]^{\varphi_1} & X_2 \ar[r]^{f_2}\ar[d]^{\varphi_2} & X_3 \ar[r]^{f_3} & \Sigma X_1 \ar[d]^{\Sigma\varphi_1}\\
Y_1\ar[r]^{g_1} & Y_2\ar[r]^{g_2} & Y_3\ar[r]^{g_3} & \Sigma Y_1
}$$
whose rows are triangles. It is well known that there is a morphism $\varphi_3:X_3\rightarrow Y_3$ such that the above diagram is commutative. But the morphism $\varphi_3$ is not unique in general. Assume that $\varphi'_3:X_3\rightarrow Y_3$ is another morphism satisfying required condition. Set $\varphi_\bullet=(\varphi_1,\varphi_2,\varphi_3)$ and $\varphi'_\bullet=(\varphi_1,\varphi_2,\varphi'_3)$. Then we have $\underline{\varphi_\bullet}=\underline{\varphi_\bullet'}$ in $\Delta(\mathcal{C})/\mathcal{R}_2$ by Proposition \ref{prop5.1}(a).
\end{rem}

Recall that a commutative diagram $$\xymatrix{
X_1 \ar[r]^{f_1} \ar[d]^{\varphi_1} & X_2 \ar[d]^{\varphi_2}\\
Y_1 \ar[r]^{g_1} & Y_2}$$ is called a {\em homotopy cartesian} if
$$X_1\xrightarrow{\left(
                    \begin{smallmatrix}
                      f_1 \\
                      \varphi_1 \\
                    \end{smallmatrix}
                  \right)
} X_2\oplus Y_1\xrightarrow{(\varphi_2,-g_1)} Y_2\xrightarrow{\delta}\Sigma X_1$$ is a triangle, where $\delta$ is called a {\em differential}.

The following result is well known, for example, see \cite[Appendix A]{[Kr]}.

\begin{lem}\label{lem5.1}
Let $(\mathcal{C},\Delta,\Sigma)$ be a pre-triangulated category. Then $\Delta$ satisfies axiom (TR4) if and only if for each commutative diagram
$$ \xymatrix{
 X_1 \ar[r]^{f_1}\ar@{=}[d] & X_2 \ar[r]^{f_2}\ar[d]^{\varphi_2} & X_3 \ar[r]^{f_3} & \Sigma X_1\ar@{=}[d]\\
 X_1 \ar[r]^{g_1} & Y_2 \ar[r]^{g_2} & Y_3 \ar[r]^{g_3} & \Sigma X_1\\
}$$ with rows in $\Delta$, there exists a morphism $\varphi_3:X_3\rightarrow Y_3$ such that the whole diagram is commutative and
the following diagram $$\xymatrix{
X_2\ar[r]^{f_2} \ar[d]^{\varphi_2} & X_3 \ar[d]^{\varphi_3}\\
Y_2 \ar[r]^{g_2} & Y_3}$$ is a homotopy cartesian.
\end{lem}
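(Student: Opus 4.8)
The plan is to prove the two implications separately, the substance being a dictionary between the octahedral axiom and the language of homotopy cartesian squares. Throughout I would regard the data of the statement as a \emph{composable pair}: since the left-hand vertical map is $\mathrm{id}_{X_1}$, the two rows are triangles over $f_1\colon X_1\to X_2$ and over $g_1=\varphi_2 f_1\colon X_1\to Y_2$, so the middle map $\varphi_2\colon X_2\to Y_2$ plays the role of the second arrow in the composable pair $X_1\xrightarrow{f_1}X_2\xrightarrow{\varphi_2}Y_2$ whose composite is $g_1$.

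First I would assume (TR4). Completing $\varphi_2$ to a triangle $X_2\xrightarrow{\varphi_2}Y_2\to Z\to\Sigma X_2$ by (TR1) and applying the octahedral axiom to the pair $(f_1,\varphi_2)$ with the chosen triangles on $f_1$, on $\varphi_2$ and on $g_1=\varphi_2 f_1$, I obtain a morphism $\varphi_3\colon X_3\to Y_3$ together with a fourth triangle $X_3\xrightarrow{\varphi_3}Y_3\to Z\to\Sigma X_3$, and the octahedral compatibilities give precisely $\varphi_3 f_2=g_2\varphi_2$ and $g_3\varphi_3=f_3$; thus $\varphi_3$ completes the diagram to a morphism of triangles. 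To see that the square is homotopy cartesian, I would rotate both rows and record that $(\varphi_2,\varphi_3,\mathrm{id}_{\Sigma X_1})$ is a morphism between the rotated triangles $X_2\to X_3\to\Sigma X_1\to\Sigma X_2$ and $Y_2\to Y_3\to\Sigma X_1\to\Sigma Y_2$ whose third component is the identity (the last square commutes because $\Sigma g_1=\Sigma(\varphi_2 f_1)$). The standard fact that a morphism of triangles with invertible third component has homotopy cartesian left-hand square then delivers the triangle $X_2\xrightarrow{\binom{f_2}{\varphi_2}}X_3\oplus Y_2\xrightarrow{(\varphi_3,-g_2)}Y_3\to\Sigma X_2$, which is the assertion.

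Conversely, assuming the homotopy cartesian completion property, I would deduce (TR4). Given a composable pair $X_1\xrightarrow{u}X_2\xrightarrow{v}Y_2$, I complete $u$, $v$ and $vu$ to triangles and feed the triangles over $u$ and over $vu$, together with the middle map $v$, into the property (the left-hand vertical being $\mathrm{id}_{X_1}$). This returns $\varphi_3$ and a homotopy cartesian triangle $X_2\xrightarrow{\binom{f_2}{v}}X_3\oplus Y_2\xrightarrow{(\varphi_3,-g_2)}Y_3\xrightarrow{\delta}\Sigma X_2$. The key point is that in a homotopy cartesian square the cones of the two vertical edges agree: the cone of $v$ and the cone of $\varphi_3$ are canonically isomorphic, so the chosen triangle $X_2\xrightarrow{v}Y_2\to Z\to\Sigma X_2$ on $v$ supplies a triangle $X_3\xrightarrow{\varphi_3}Y_3\to Z\to\Sigma X_3$. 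This last triangle is exactly the fourth edge of the octahedron, and the remaining octahedral commutativities — that $Y_2\to Z$ factors $Y_3\to Z$ through $g_2$, and that the connecting map $Z\to\Sigma X_3$ is $(\Sigma f_2)$ composed with $Z\to\Sigma X_2$ — are read off from $\delta$ together with the morphism-of-triangles relations $\varphi_3 f_2=g_2 v$ and $g_3\varphi_3=f_3$.

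The hard part will be the backward direction, specifically producing the fourth octahedral triangle with the correct connecting morphisms out of the single homotopy cartesian triangle \emph{without} invoking (TR4), which would be circular. Concretely, I expect the real work to lie in establishing that parallel edges of a homotopy cartesian square have isomorphic cones compatibly with the triangle structure, and in tracking the signs of the differentials so that the octahedral connecting map is identified correctly; this should be achievable using only (TR1)--(TR3) and a $3\times 3$-type diagram assembled from the homotopy cartesian triangle and the triangle on $v$. The forward direction is comparatively routine once the cited fact about morphisms of triangles with invertible third component is in place.
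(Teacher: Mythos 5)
The paper offers no proof of this lemma: it is stated as ``well known'' with a pointer to \cite[Appendix A]{[Kr]}, so there is no in-paper argument to compare with, and your proposal has to stand on its own. Your overall architecture is the standard one (read the data as the composable pair $X_1\xrightarrow{f_1}X_2\xrightarrow{\varphi_2}Y_2$ and translate between the octahedron and homotopy cartesian completions), and the production of $\varphi_3$ with $\varphi_3f_2=g_2\varphi_2$ and $g_3\varphi_3=f_3$ from (TR4) is correct. But each direction has a genuine gap at exactly the point where the content of the lemma sits.

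In the forward direction everything rests on ``the standard fact that a morphism of triangles with invertible third component has homotopy cartesian left-hand square.'' This cannot be cited. Note the dichotomy: if that statement held for an \emph{arbitrary} completing morphism and were provable from (TR1)--(TR3), then (TR3) alone would already produce a $\varphi_3$ with homotopy cartesian square, your use of the octahedron would be superfluous, and the ``only if'' half of the lemma would become vacuous --- so at best the ``fact'' is a theorem whose proof uses (TR4), and that proof is essentially the very implication you are establishing. Moreover the square $(f_2,\varphi_2,\varphi_3,g_2)$ genuinely depends on the choice of $\varphi_3$ (which may be altered by any $\rho f_3$ with $g_3\rho f_3=0$), and there is no general principle making every choice homotopy cartesian: this is exactly the circle of issues around Neeman's ``good'' morphisms of triangles, where it is well known that not every morphism of distinguished triangles qualifies. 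What must actually be shown is that the \emph{specific} $\varphi_3$ delivered by the octahedron works, and the standard arguments do this by identifying the candidate triangle $X_2\to X_3\oplus Y_2\to Y_3\to\Sigma X_2$ inside the mapping cone (splitting off a contractible summand) or by a second application of the octahedron; none of that appears in your proposal. In the backward direction you have correctly located the needed ingredient --- that parallel edges of a homotopy cartesian square have isomorphic cones, compatibly with the two remaining octahedral commutativities, using only (TR1)--(TR3) plus the hypothesis --- but you explicitly defer it as ``the hard part.'' As it stands, both halves are plans rather than proofs, and the two steps you leave open are where all the work of the lemma is concentrated.
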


\begin{lem}\label{lem5.2}
Assume that the following
$$\xymatrix{X_\bullet\ar[d]^{\varphi_\bullet}  & X_1 \ar[r]^{f_1}\ar[d]^{\varphi_1} & X_2 \ar[r]^{f_2}\ar[d]^{\varphi_2} & X_3 \ar[r]^{f_3}\ar[d]^{\varphi_3} & \Sigma X_1\ar[d]^{\Sigma\varphi_1}\\
Y_\bullet  & Y_1 \ar[r]^{g_1} & Y_2 \ar[r]^{g_2} & Y_3 \ar[r]^{g_3} & \Sigma Y_1\\
}$$
is a morphism of triangles. Then we have the following commutative diagram
$$\xymatrix{
K(\varphi_\bullet)\ar[d]^{k_\bullet} & X_1 \ar[r]^{\left(
                                                                                        \begin{smallmatrix}
                                                                                          f_1  \\
                                                                                           \varphi_1\\
                                                                                        \end{smallmatrix}
                                                                                      \right)}\ar@{=}[d] & X_2\oplus Y_1 \ar[r]^{\left(
                                                                                        \begin{smallmatrix}
                                                                                          a_1 & -h_1 \\
                                                                                        \end{smallmatrix}
                                                                                      \right)}\ar[d]^{(1,0)
 } & Z \ar[r]^{f_3h_2}\ar[d]^{h_2} & \Sigma X_1\ar@{=}[d]\\
X_\bullet\ar[d]^{\pi_\bullet} & X_1 \ar[r]^{f_1}\ar[d]^{\varphi_1} & X_2 \ar[r]^{f_2}\ar@{-->}[d]^{a_1} & X_3 \ar[r]^{f_3}\ar@{=}[d] & \Sigma X_1 \ar[d]^{\Sigma\varphi_1}\\
I(\varphi_\bullet)\ar[d]^{i_\bullet} &  Y_1 \ar[r]^{h_1}\ar@{=}[d] & Z \ar[r]^{h_2}\ar@{-->}[d]^{a_2} & X_3 \ar[r]^{h_3}\ar[d]^{\varphi_3} & \Sigma Y_1\ar@{=}[d]\\
Y_\bullet\ar[d]^{c_\bullet} &  Y_1 \ar[r]^{g_1}\ar[d]^{h_1} & Y_2 \ar[r]^{g_2}\ar[d]^{\left(
                                                                                                                            \begin{smallmatrix}
                                                                                                                              0 \\
                                                                                                                              1 \\
                                                                                                                            \end{smallmatrix}
                                                                                                                          \right)} & Y_3 \ar[r]^{g_3}\ar@{=}[d] & \Sigma Y_1\ar[d]^{\Sigma h_1}\\
C(\varphi_\bullet) &   Z \ar[r]^{\left(
                                                                                        \begin{smallmatrix}
                                                                                          h_2  \\
                                                                                          a_2\\
                                                                                        \end{smallmatrix}
                                                                                      \right)} & X_3\oplus Y_2 \ar[r]^{(-\varphi_3, g_2)} & Y_3 \ar[r]^{(\Sigma h_1)g_3} & \Sigma Z\\
}$$
such that each row is a triangle and $\underline{\varphi_\bullet}=\underline{i_\bullet\pi_\bullet}$ in $\Delta(\mathcal{C})/\mathcal{R}_2$.
\end{lem}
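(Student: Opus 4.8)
The plan is to mirror the proof of Lemma~\ref{lem0}, replacing Bühler's pushout/pullback machinery for exact categories by the octahedral axiom in the guise of homotopy cartesian squares. Concretely, I would build the central object $Z$ as a homotopy pushout and then read off all the rows of the displayed diagram from the associated triangles together with the weak (co)kernel properties of Lemma~\ref{3.3.1}, finishing with the factorization identity via Proposition~\ref{prop5.1}.

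First I would form the homotopy cartesian square $(\mathrm{I})$. Since $f_1$ and $\varphi_1$ share the source $X_1$, the homotopy pushout of $X_2 \xleftarrow{f_1} X_1 \xrightarrow{\varphi_1} Y_1$ yields an object $Z$ together with $a_1\colon X_2\to Z$, $h_1\colon Y_1\to Z$ and a triangle $X_1 \xrightarrow{\left(\begin{smallmatrix} f_1 \\ \varphi_1\end{smallmatrix}\right)} X_2\oplus Y_1 \xrightarrow{(a_1,\,-h_1)} Z \xrightarrow{\delta} \Sigma X_1$; this is the top row $K(\varphi_\bullet)$, and it records $a_1 f_1 = h_1\varphi_1$. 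Because $f_2 f_1 = 0$, the morphism $(f_2,0)\colon X_2\oplus Y_1\to X_3$ annihilates $\left(\begin{smallmatrix} f_1 \\ \varphi_1\end{smallmatrix}\right)$, so by the weak cokernel property (Lemma~\ref{3.3.1}) there is $h_2\colon Z\to X_3$ with $h_2 a_1 = f_2$ and $h_2 h_1 = 0$. These identities are exactly the commutativities making $\pi_\bullet = (\varphi_1, a_1, 1)\colon X_\bullet \to I(\varphi_\bullet)$ a morphism of triangles, once the middle row is known to be a triangle.

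The key step is to show that the middle row $I(\varphi_\bullet)\colon Y_1\xrightarrow{h_1} Z \xrightarrow{h_2} X_3 \xrightarrow{h_3}\Sigma Y_1$ is a triangle with $h_3 = (\Sigma\varphi_1)f_3$. This is where the octahedral axiom enters, in the form encoded by Lemma~\ref{lem5.1} and the theory of homotopy cartesian squares: the square $(\mathrm{I})$ identifies the cofiber of $h_1$ with the cofiber $X_3$ of $f_1$, produces $h_2$ and $h_3$ compatibly, forces the connecting map to be $(\Sigma\varphi_1)f_3$, and at the same time shows the top differential is $\delta = f_3 h_2$, matching the displayed diagram. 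Dually, forming the homotopy pushout of $Y_2 \xleftarrow{a_2} Z \xrightarrow{h_2} X_3$ (square $(\mathrm{II})$) produces $a_2\colon Z\to Y_2$ together with the bottom triangle $C(\varphi_\bullet)\colon Z \xrightarrow{\left(\begin{smallmatrix} h_2 \\ a_2\end{smallmatrix}\right)} X_3\oplus Y_2 \xrightarrow{(-\varphi_3,\,g_2)} Y_3 \xrightarrow{(\Sigma h_1)g_3}\Sigma Z$, and makes $i_\bullet = (1, a_2, \varphi_3)\colon I(\varphi_\bullet)\to Y_\bullet$ a morphism of triangles with $a_2 h_1 = g_1$ and $g_2 a_2 = \varphi_3 h_2$.

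Finally I would verify the factorization identity in $\Delta(\mathcal{C})/\mathcal{R}_2$. Composing components gives $i_\bullet\pi_\bullet = (\varphi_1,\, a_2 a_1,\, \varphi_3)$, so the difference $\varphi_\bullet - i_\bullet\pi_\bullet = (0,\, \varphi_2 - a_2 a_1,\, 0)$ is again a morphism of triangles whose first component vanishes; since $0$ trivially factors through $f_1$, Proposition~\ref{prop5.1}(a) yields $\underline{\varphi_\bullet - i_\bullet\pi_\bullet} = 0$, that is $\underline{\varphi_\bullet} = \underline{i_\bullet\pi_\bullet}$. I expect the main obstacle to be the third step: pinning down that the middle row is a genuine triangle with connecting map exactly $(\Sigma\varphi_1)f_3$, and that the top and bottom differentials are precisely $f_3 h_2$ and $(\Sigma h_1)g_3$, since this requires careful bookkeeping of the octahedral diagram and its signs rather than a purely formal manipulation; the remaining commutativities and the quotient identity are then routine.
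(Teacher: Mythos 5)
Your proposal is correct in outline and rests on the same underlying tool as the paper -- the homotopy-cartesian packaging of the octahedral axiom -- but you enter the octahedron from the opposite face. The paper first \emph{defines} the middle row: it extends $h_3=(\Sigma\varphi_1)f_3$ to a triangle $I(\varphi_\bullet)$, and only then invokes Lemma~\ref{lem5.1} and its dual to produce $a_1$ and $a_2$ so that the squares $(f_1,\varphi_1;a_1,h_1)$ and $(h_2,a_2;\varphi_3,g_2)$ are homotopy cartesian, which is exactly the statement that $K(\varphi_\bullet)$ and $C(\varphi_\bullet)$ are triangles with the displayed differentials $f_3h_2$ and $(\Sigma h_1)g_3$. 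You instead build $Z$ first as the homotopy pushout of $X_2\xleftarrow{f_1}X_1\xrightarrow{\varphi_1}Y_1$ and then derive the middle triangle from the square. Both are legitimate and the bookkeeping is comparable; the paper's order has the mild advantage that the connecting map $h_3=(\Sigma\varphi_1)f_3$ is built in by fiat rather than having to be extracted, and that the needed implication is literally the one stated in Lemma~\ref{lem5.1}, whereas your order uses the converse direction (homotopy cartesian square plus a completion of one edge yields the induced triangle on cofibers), which is an equivalent standard form of (TR4) but not the form quoted in the paper. One soft spot to repair: the $h_2$ you obtain from the weak cokernel property of $(a_1,-h_1)$ via Lemma~\ref{3.3.1} satisfies $h_2a_1=f_2$ and $h_2h_1=0$ but need not make $Y_1\xrightarrow{h_1}Z\xrightarrow{h_2}X_3\xrightarrow{(\Sigma\varphi_1)f_3}\Sigma Y_1$ a triangle for an arbitrary such choice; you must take the specific $h_2$ supplied by the octahedral axiom, as you implicitly do in your ``key step,'' so the preliminary weak-cokernel construction should be dropped or subordinated to it. Your final step, subtracting $i_\bullet\pi_\bullet$ from $\varphi_\bullet$ and applying Proposition~\ref{prop5.1}(a) to the difference $(0,\varphi_2-a_2a_1,0)$, is exactly the paper's argument.
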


\begin{proof}
We extend the morphism $h_3=\Sigma\varphi_1\cdot f_3:X_3\rightarrow\Sigma Y_1$ to a triangle $I(\varphi_\bullet)$.
By Lemma \ref{lem5.1} and its dual, we choose two morphisms $a_1:X_2\rightarrow Z$ and $a_2: Z\rightarrow Y_2$ such that the associated squares are commutative, and  $K(\varphi_\bullet)$ and $C(\varphi_\bullet)$ are triangles. We have $\underline{\varphi_\bullet}=\underline{i_\bullet\pi_\bullet}$ by  Proposition \ref{prop5.1}(a).
\end{proof}

\begin{thm}\label{thm5.2}
Let $\mathcal{C}$ be a triangulated category. Then the category $\Delta(\mathcal{C})/\mathcal{R}_2$  is an abelian category where the kernels and cokernels are given by homotopy cartesian diagrams.
\end{thm}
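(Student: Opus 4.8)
The plan is to follow the template of the proof of Theorem \ref{thm4.3.1}, replacing the pushout/pullback diagrams by homotopy cartesian diagrams and the vanishing criterion of Lemma \ref{lem1} by its triangulated counterpart Proposition \ref{prop5.1}(a). Abelianness is in fact already guaranteed: by Theorem \ref{thm5.1} we have $\Delta(\mathcal{C})/\mathcal{R}_2 \cong \textup{mod-}\mathcal{C}$, and $\textup{mod-}\mathcal{C}$ is abelian for any triangulated category $\mathcal{C}$. The real content is therefore the intrinsic identification of kernels and cokernels with the homotopy cartesian constructions recorded in Lemma \ref{lem5.2}, and for that I would argue directly inside the quotient.

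First I would fix a morphism $\varphi_\bullet\colon X_\bullet \to Y_\bullet$ and adopt the notation of Lemma \ref{lem5.2}, so that $\underline{\varphi_\bullet}=\underline{i_\bullet}\,\underline{\pi_\bullet}$ with $K(\varphi_\bullet)$, $I(\varphi_\bullet)$, $C(\varphi_\bullet)$ triangles. I would show that $\underline{k_\bullet}\colon K(\varphi_\bullet)\to X_\bullet$ is a kernel of $\underline{\varphi_\bullet}$. That $\underline{k_\bullet}$ is a monomorphism follows from Proposition \ref{prop5.1}(c), since the column formed by the first map $\bigl(\begin{smallmatrix} f_1 \\ \varphi_1 \end{smallmatrix}\bigr)$ of $K(\varphi_\bullet)$ and the identity first component of $k_\bullet$ is split by its last coordinate, hence is a section. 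As the first component of $\varphi_\bullet k_\bullet$ is $\varphi_1=(0,1)\bigl(\begin{smallmatrix} f_1 \\ \varphi_1 \end{smallmatrix}\bigr)$, it factors through the first map of $K(\varphi_\bullet)$, so $\underline{\varphi_\bullet k_\bullet}=0$ by Proposition \ref{prop5.1}(a). For the universal property, given $\psi_\bullet\colon W_\bullet\to X_\bullet$ with $\underline{\varphi_\bullet\psi_\bullet}=0$, Proposition \ref{prop5.1}(a) supplies a homotopy $p$ which I would assemble, exactly as in Theorem \ref{thm4.3.1}, into a morphism $\theta_\bullet\colon W_\bullet\to K(\varphi_\bullet)$ with $\underline{\psi_\bullet}=\underline{k_\bullet}\,\underline{\theta_\bullet}$. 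Dualizing yields that $\underline{c_\bullet}\colon Y_\bullet\to C(\varphi_\bullet)$ is a cokernel of $\underline{\varphi_\bullet}$.

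The remaining and most delicate step is the coimage--image comparison: I must check that $\textup{Coker}(\underline{k_\bullet})\cong\textup{Ker}(\underline{c_\bullet})$, both being identified with $I(\varphi_\bullet)$. As in Theorem \ref{thm4.3.1}, this would be carried out by exhibiting explicit isomorphisms of triangles between $\textup{Coker}(\underline{k_\bullet})$ and $I(\varphi_\bullet)$, and between $I(\varphi_\bullet)$ and $\textup{Ker}(\underline{c_\bullet})$, using elementary block-matrix maps and verifying that they descend to $\Delta(\mathcal{C})/\mathcal{R}_2$. Finally, the homotopy cartesian squares produced by Lemma \ref{lem5.1} (and its dual) in the construction of $K(\varphi_\bullet)$ and $C(\varphi_\bullet)$ in Lemma \ref{lem5.2} are precisely what realizes the kernel and the cokernel, which gives the asserted description.

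I expect the main obstacle to be the non-uniqueness of the third component of a morphism of triangles, together with the fact that $\mathcal{R}_2$ is strictly larger than the ideal $[\mathcal{U}]$ (as noted in the remark following Proposition \ref{prop5.1}). This prevents a naive transport of the split-exact arguments from the exact-category setting: every factorization and every vanishing must be read off the homotopy relation encoded by $\mathcal{R}_2$ rather than from an honest factorization through a contractible object. Consequently all the verifications --- especially that the block-matrix isomorphisms in the coimage--image step are well defined and invertible modulo $\mathcal{R}_2$ --- must be performed via the equivalent conditions of Proposition \ref{prop5.1}(a) rather than by direct diagram chases in $\mathcal{C}$.
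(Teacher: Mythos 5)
Your proposal is correct and follows exactly the route the paper intends: its proof of Theorem \ref{thm5.2} is a one-line reference to the constructions of Lemma \ref{lem5.2}, with the verifications (kernel, cokernel, coimage--image comparison) meant to be carried out as in Theorem \ref{thm4.3.1} but with Proposition \ref{prop5.1} replacing Lemma \ref{lem1}, which is precisely what you do. Your added observation that abelianness already follows from the equivalence $\Delta(\mathcal{C})/\mathcal{R}_2\cong\textup{mod-}\mathcal{C}$ of Theorem \ref{thm5.1} is a harmless and accurate complement.
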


\begin{proof}
Given a morphism $\varphi_\bullet: X_\bullet\rightarrow Y_\bullet$ in $\Delta(\mathcal{C})$, as notations in Lemma \ref{lem5.2}, we can show that $K(\varphi_\bullet)$ is a kernel of $\underline{\varphi_\bullet}$, $C(\varphi_\bullet)$ is a cokernel of $\underline{\varphi_\bullet}$ and $I(\varphi_\bullet)$ is the image of $\underline{\varphi_\bullet}$.
\end{proof}

The following result is a triangulated analogue of Theorem \ref{thm4.4.1}.

\begin{thm}\label{thm5.3}
Let $\mathcal{C}$ be a Hom-finite Krull-Smidt $k$-linear triangulated category.

\textup{(a)} Assume that $X_\bullet: X_1\xrightarrow{f_1} X_2\xrightarrow{f_2} X_3\xrightarrow{f_3}\Sigma X_1$ is a triangle such that $f_3\neq 0$. Then $X_\bullet$ is a simple object in $\Delta(\mathcal{C})/\mathcal{R}_2$ if and only if $X_\bullet$ is an Auslander-Reiten triangle in $\mathcal{C}$.

\textup{(b)} There is a bijection between the set of isoclasses of simple objects in $\Delta(\mathcal{C})/\mathcal{R}_2$ and the set of isoclasses of Auslander-Reiten triangles in $\mathcal{C}$.
\end{thm}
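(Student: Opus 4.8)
The plan is to follow the proof of Theorem~\ref{thm4.4.1} line by line, making three substitutions: the pullback/pushout construction of Lemma~\ref{lem0} is replaced by the homotopy cartesian squares of Lemma~\ref{lem5.2}, the homotopy criterion of Lemma~\ref{lem1} is replaced by Proposition~\ref{prop5.1}(a), and the monomorphism criterion of Lemma~\ref{lem4.2} is replaced by Proposition~\ref{prop5.1}(c). Concretely, the whole argument rests on two facts about a morphism of triangles $\varphi_\bullet\colon X_\bullet\to Y_\bullet$: it becomes a monomorphism in $\Delta(\mathcal{C})/\mathcal{R}_2$ exactly when $\left(\begin{smallmatrix} f_1\\ \varphi_1\end{smallmatrix}\right)$ is a section, and it becomes zero exactly when $\varphi_1$ factors through $f_1$. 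Both criteria depend only on the first component $\varphi_1$, which will make the non-canonical choices inherent to triangulated categories harmless. I use throughout that $\mathcal{C}$ is Krull--Schmidt, so that every indecomposable object has a local endomorphism ring.

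For the ``if'' part of (a), let $X_\bullet$ be an Auslander--Reiten triangle (in particular $f_3\neq 0$, so $X_\bullet$ is not a zero object by Proposition~\ref{prop5.1}(b)), and let $\varphi_\bullet\colon X_\bullet\to Y_\bullet$ be arbitrary. If $\varphi_1$ is a section then $\left(\begin{smallmatrix} f_1\\ \varphi_1\end{smallmatrix}\right)$ is a section, so $\underline{\varphi_\bullet}$ is a monomorphism by Proposition~\ref{prop5.1}(c); if $\varphi_1$ is not a section then the left almost split property gives a factorization of $\varphi_1$ through $f_1$, whence $\underline{\varphi_\bullet}=0$ by Proposition~\ref{prop5.1}(a). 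Thus every morphism out of $X_\bullet$ is monic or zero and $X_\bullet$ is simple. For the ``only if'' part, assume $X_\bullet$ is simple with $f_3\neq 0$; by the reduction described below we may take $X_1$ and $X_3$ indecomposable. Given $\varphi_1\colon X_1\to Y_1$ that is not a section, perform the base change of $X_\bullet$ along $\varphi_1$: complete $(\Sigma\varphi_1)f_3\colon X_3\to\Sigma Y_1$ to a triangle $Y_\bullet$ and fill in $\varphi_2$ by (TR3), producing $\varphi_\bullet\colon X_\bullet\to Y_\bullet$ with $\varphi_3=1$ (this is the construction of $I(\varphi_\bullet)$ in Lemma~\ref{lem5.2}). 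Since $\textup{End}(X_1)$ is local, a relation $sf_1+t\varphi_1=1_{X_1}$ would force $sf_1$ or $t\varphi_1$ to be invertible, i.e.\ $f_1$ or $\varphi_1$ to be a section; hence $\left(\begin{smallmatrix} f_1\\ \varphi_1\end{smallmatrix}\right)$ is not a section, $\underline{\varphi_\bullet}$ is not a monomorphism, and simplicity forces $\underline{\varphi_\bullet}=0$. Proposition~\ref{prop5.1}(a) then shows $\varphi_1$ factors through $f_1$. The dual base change on the third term shows every non-retraction $Z\to X_3$ factors through $f_2$, so $X_\bullet$ is an Auslander--Reiten triangle.

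The indecomposability used above is the triangulated analogue of Lemma~\ref{lem4.4.1}: every simple object of $\Delta(\mathcal{C})/\mathcal{R}_2$ is isomorphic to a triangle $X_\bullet$ with $f_3\neq 0$ and $X_1,X_3$ indecomposable. To prove it, first split off the zero summands of Proposition~\ref{prop5.1}(b) so that the representative is non-split. If $X_1=X_1'\oplus X_1''$ with $X_1'$ indecomposable, push out along the projection $X_1\to X_1'$ to get an epimorphism $\underline{\varphi_\bullet}$; by simplicity it is zero or an isomorphism. In the zero case Proposition~\ref{prop5.1}(a) makes the projection factor through $f_1$, which exhibits a zero summand $\bigl(X_1'\xrightarrow{1}X_1'\to 0\to\Sigma X_1'\bigr)$ of $X_\bullet$ that can be cancelled in the quotient; in the isomorphism case $X_1$ has been replaced by the indecomposable $X_1'$. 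Iterating, and dualizing for $X_3$, gives the asserted representative. Part (b) now follows: by (a) and this reduction each simple object determines, up to isomorphism, an Auslander--Reiten triangle, and conversely each Auslander--Reiten triangle is a simple object; since isomorphic simple objects yield isomorphic triangles, the assignment is a bijection on isomorphism classes.

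The step I expect to be the main obstacle is the ``only if'' direction of (a), and within it the base change construction. In the exact case the pushout along $\varphi_1$ is canonical, whereas here it must be replaced by a homotopy pushout built from the octahedral axiom (Lemma~\ref{lem5.1}); one has to verify that completing $(\Sigma\varphi_1)f_3$ and filling in $\varphi_2$ really yields a morphism of triangles whose first and third components are $\varphi_1$ and $1$. The saving grace is that both criteria of Proposition~\ref{prop5.1} that I apply depend only on $\varphi_1$ and $f_1$, so the non-uniqueness of the fill-in $\varphi_2$ never interferes. The other delicate point is the indecomposability reduction, where the cancellation of split summands must be carried out inside $\Delta(\mathcal{C})/\mathcal{R}_2$ rather than in $\mathcal{C}$, and where the hypothesis $f_3\neq 0$ is exactly what guarantees the reduced triangle stays non-split.
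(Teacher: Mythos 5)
Your proposal is correct and is essentially the proof the paper intends: Theorem~\ref{thm5.3} is stated without proof as the triangulated analogue of Theorem~\ref{thm4.4.1}, and you carry out exactly that analogy, substituting Proposition~\ref{prop5.1}(a) for Lemma~\ref{lem1}, Proposition~\ref{prop5.1}(c) for Lemma~\ref{lem4.2}, and the homotopy-pushout construction of Lemma~\ref{lem5.2} for the pushout of Lemma~\ref{lem0}, together with the analogue of the reduction Lemma~\ref{lem4.4.1}. Your observation that both criteria depend only on the first component $\varphi_1$, so the non-uniqueness of the fill-in $\varphi_2$ is harmless, is precisely the point that makes the transfer work.
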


Let $\delta: X_1\xrightarrow{f_1} X_2\xrightarrow{f_2} X_3\xrightarrow{f_3}\Sigma X_1$ be a triangle. The {\em contravariant defect} $\delta^*$ and the {\em covariant defect} $\delta_*$ are defined by the following exact sequence of functors
$$\mathcal{C}(-,X_{1})\xrightarrow{\mathcal{C}(-,f_{1})}\mathcal{C}(-,X_{2})\xrightarrow{\mathcal{C}(-,f_{2})}\mathcal{C}(-,X_3)\rightarrow \delta^\ast\rightarrow 0,$$
$$\mathcal{C}(X_{3},-)\xrightarrow{\mathcal{C}(f_{2},-)}\mathcal{C}(X_{2},-)\xrightarrow{\mathcal{C}(f_{1},-)}\mathcal{C}(X_1,-)\rightarrow \delta_\ast\rightarrow 0.$$

\begin{example}
Let $\delta=P_X:\Sigma^{-1}X\rightarrow 0\rightarrow X\xrightarrow{1} X$, then $\delta^*=\mathcal{C}(-,X)$ and $\delta_*=\mathcal{C}(\Sigma^{-1}X,-)$.
\end{example}

\begin{rem}
Let $\mathcal{C}$ be a triangulated category. Then the equivalence $\Delta(\mathcal{C})/\mathcal{R}_2\cong \textup{mod-}\mathcal{C}$ is given by $\delta\mapsto\delta^*$ and the equivalence
$\Delta(\mathcal{C})/\mathcal{R}'_1\cong (\textup{mod-}\mathcal{C}^{\textup{op}})^{\textup{op}}$ is given by $\delta\mapsto\delta_*$. Since $\Delta(\mathcal{C})/\mathcal{R}_2=\Delta(\mathcal{C})/\mathcal{R}'_1$ by Proposition \ref{prop5.1}(a), we have a duality $$\phi: \textup{mod-}\mathcal{C}\rightarrow\textup{mod-}\mathcal{C}^{\textup{op}}, \delta^*\mapsto\delta_*.$$ By restriction, we have two dualities
$$\phi: \textup{proj-}\mathcal{C}\rightarrow\textup{inj-}\mathcal{C}^{\textup{op}},\ \ \mathcal{C}(-,X)\mapsto\mathcal{C}(\Sigma^{-1}X,-).$$
$$\phi: \textup{inj-}\mathcal{C}\rightarrow \textup{proj-}\mathcal{C}^{\textup{op}}, \ \ \mathcal{C}(-,\Sigma X)\mapsto\mathcal{C}(X,-).$$
Therefore, $\textup{mod-}\mathcal{C}$ is a Frobenius abelian category. So is $\Delta(\mathcal{C})/\mathcal{R}_2$. Moreover, each projective-injective object in $\Delta(\mathcal{C})/\mathcal{R}_2$ is of the form $X\rightarrow 0\rightarrow \Sigma X\xrightarrow{1} \Sigma X$.
\end{rem}

\begin{thm}\label{thm5.4}
Let $\mathcal{C}$ be a Hom-finite Krull-Smidt $k$-linear triangulated category. Assume that $\mathcal{C}$ is a dualizing $k$-variety. Then there is an equivalence $\tau:\mathcal{C}\cong \mathcal{C}$ such that
$D\mathcal{C}(\Sigma^{-1}X,-)\cong \mathcal{C}(-,\tau X)$ for each $X\in\mathcal{C}$ and $D\delta^*=\delta_*\tau$ for each triangle $\delta$.
\end{thm}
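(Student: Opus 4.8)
The plan is to imitate the argument of Theorem~\ref{thm4.4.2}, with the single category $\mathcal{C}$ playing the roles of both $\mathcal{C}/[\mathcal{P}]$ and $\mathcal{C}/[\mathcal{I}]$, and with the duality $\phi:\textup{mod-}\mathcal{C}\to\textup{mod-}\mathcal{C}^{\textup{op}}$, $\delta^\ast\mapsto\delta_\ast$, of the preceding remark replacing the defect duality $\Phi$. Recall that $\textup{mod-}\mathcal{C}$ is abelian, since a triangulated category has weak kernels, and that, as $\mathcal{C}$ is a dualizing $k$-variety, the functor $D$ induces a duality $D:\textup{mod-}\mathcal{C}^{\textup{op}}\to\textup{mod-}\mathcal{C}$ quasi-inverse to $D:\textup{mod-}\mathcal{C}\to\textup{mod-}\mathcal{C}^{\textup{op}}$. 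Composing these I would form the equivalence
$$\Theta=D\phi:\ \textup{mod-}\mathcal{C}\xrightarrow{\ \phi\ }\textup{mod-}\mathcal{C}^{\textup{op}}\xrightarrow{\ D\ }\textup{mod-}\mathcal{C}.$$

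First I would restrict $\Theta$ to projectives. By the remark, $\phi$ sends $\textup{proj-}\mathcal{C}$ onto $\textup{inj-}\mathcal{C}^{\textup{op}}$ via $\mathcal{C}(-,X)\mapsto\mathcal{C}(\Sigma^{-1}X,-)$, and $D$ carries $\textup{inj-}\mathcal{C}^{\textup{op}}$ onto $\textup{proj-}\mathcal{C}$; hence $\Theta$ restricts to a self-equivalence of $\textup{proj-}\mathcal{C}$. As $\mathcal{C}$ is Krull--Schmidt, Yoneda gives $\mathcal{C}\cong\textup{proj-}\mathcal{C}$, so this self-equivalence has the form $\mathcal{C}(-,X)\mapsto\mathcal{C}(-,\tau X)$ for an auto-equivalence $\tau:\mathcal{C}\cong\mathcal{C}$, unique up to isomorphism. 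Reading off $\Theta(\mathcal{C}(-,X))=D\phi(\mathcal{C}(-,X))=D\,\mathcal{C}(\Sigma^{-1}X,-)$ then gives the first assertion $D\,\mathcal{C}(\Sigma^{-1}X,-)\cong\mathcal{C}(-,\tau X)$.

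For the defect formula I would identify $\Theta$ with the endofunctor of $\textup{mod-}\mathcal{C}$ induced by $\tau$. Precomposition $\widetilde{\tau}(F)=F\tau^{-1}$ is an exact equivalence with $\widetilde{\tau}(\mathcal{C}(-,X))\cong\mathcal{C}(\tau^{-1}-,X)\cong\mathcal{C}(-,\tau X)$, so $\Theta$ and $\widetilde{\tau}$ are exact functors agreeing on the projective generators $\mathcal{C}(-,X)$; since every finitely presented module admits a projective presentation, they are naturally isomorphic, that is $\Theta(F)\cong F\tau^{-1}$. Taking $F=\delta^\ast$ and using $\Theta(\delta^\ast)=D\phi(\delta^\ast)=D\delta_\ast$ yields $D\delta_\ast\cong\delta^\ast\tau^{-1}$, exactly parallel to the formula established in Theorem~\ref{thm4.4.2}. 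Finally, applying $D$ to this identity and using that $D$ is defined objectwise---so that $D^2\cong\textup{id}$ and $D(G\tau^{-1})\cong(DG)\tau^{-1}$---I obtain $\delta_\ast\cong(D\delta^\ast)\tau^{-1}$, whence $D\delta^\ast\cong\delta_\ast\tau$, as required.

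The step I expect to be the main obstacle is the identification $\Theta\cong\widetilde{\tau}$ over all of $\textup{mod-}\mathcal{C}$: the equivalence $\tau$ is produced a priori only from the restriction $\Theta|_{\textup{proj-}\mathcal{C}}$, and one must verify that it genuinely induces $\Theta$ on arbitrary finitely presented modules. This reduces to the standard fact that two right exact functors agreeing, compatibly with morphisms, on a projective generating subcategory are naturally isomorphic, applied here to the exact functors $\Theta$ and $\widetilde{\tau}$; the subsequent manipulations with $D$ are then formal consequences of its pointwise definition, requiring no input beyond the dualizing $k$-variety hypothesis.
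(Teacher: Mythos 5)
Your proposal is correct and follows essentially the same route as the paper: form $\Theta=D\phi$, restrict to $\textup{proj-}\mathcal{C}$ to extract $\tau$ via Yoneda, identify $\Theta$ with precomposition by $\tau^{-1}$, and evaluate on $\delta^\ast$ to get the defect formulas. The only difference is that you justify the identification $\Theta\cong\tau^{-1}_\ast$ on all of $\textup{mod-}\mathcal{C}$ (via exactness and projective presentations), a step the paper simply asserts.
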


\begin{proof}
The composition of $\phi: \textup{mod-}\mathcal{C}\rightarrow\textup{mod-}\mathcal{C}^{\textup{op}}$ and $D:\textup{mod-}\mathcal{C}^{\textup{op}}\rightarrow\textup{mod-}\mathcal{C}$ is an equivalence $\theta=D\phi:\textup{mod-}\mathcal{C}\cong\textup{mod-}\mathcal{C}$. By restriction, we have an equivalence $\theta:\textup{proj-}\mathcal{C}\cong\textup{proj-}\mathcal{C}$. Since $\theta(\mathcal{C}(-,X))=D\mathcal{C}(\Sigma^{-1}X,-)\cong \mathcal{C}(-,Y)$ for some $Y\in\mathcal{C}$, there is an equivalence $\tau:\mathcal{C}\cong \mathcal{C}$ mapping $X$ to $Y$. In this case, $D\mathcal{C}(\Sigma^{-1}X,-)\cong \mathcal{C}(-,\tau X)$. Since $\tau$ induces an equivalence $\tau_\ast^{-1}:\textup{mod-}\mathcal{C}\cong\textup{mod-}\mathcal{C}$ such that $\theta=\tau_*^{-1}$, we have $D\delta_*=D\phi(\delta^*)=\tau^{-1}_*(\delta^*)=\delta^*\tau^{-1}$ for each triangle $\delta$. Thus $D\delta^*=\delta_*\tau$.
\end{proof}

\begin{rem}
In Theorem \ref{thm5.4}, if we set $F=\tau\Sigma:\mathcal{C}\cong\mathcal{C}$, then $D\mathcal{C}(X,-)\cong\mathcal{C}(-,FX)$. Thus the functor $F$ is known as a Serre functor.
\end{rem}

\begin{rem}

 Given an $n$-angulated category $\mathcal{C}$ in the sense of Geiss-Keller-Oppermann (see \cite{[GKO]}), one can consider the quotient of the category of $n$-angles and obtain some similar results. We leave them to the readers.
\end{rem}

\vspace{2mm}\noindent {\bf Acknowledgements} Most of the work was done when the author visited University of Stuttgart in 2017. He wants to thank Steffen Koenig for warm hospitality and for helpful discussions and remarks.


\begin{thebibliography}{9}

\bibitem{[ABM]} I. Assem, A. Beligiannis, N. Marmaridis, Right triangulated categories with right semi-equivalences, CMS Conference Proceedings, 24(1998) 17-37.

\bibitem{[AHK]} J. Asadollahi, R. Hafezi, M. H. Keshavarz, Categorical Resolutions of Bounded Derived Categories, arXiv preprint arXiv:1701.00073, 2016.

\bibitem{[Aus]} M. Auslander, Coherent functors,  Proc. Conf. Categorical Algebra (La Jolla, Calif., 1965) 1966 pp. 189-231 Springer, New York.

\bibitem{[AR]} M. Auslander, I.Reiten, Stable equivalence of dualizing R-varieties, Adv. Math. 12(3)(1974) 306-366.


\bibitem{[AS]} M. Auslander, S. O. Smal$\varnothing$, Almost split sequences in subcategories, J. Algebra, 69(2)(1981) 426-454.

\bibitem{[BJ]} E. Backelin, O. Jaramillo, Auslander-Reiten sequences and t-structures on the homotopy category of an abelian category, J. Algebra, 339(1)(2011) 80-96.

\bibitem{[BM1]} A. Buan and R. Marsh, From triangulated categories to module categories via localisation,
Trans. Amer. Math. Soc. 365(6)(2013) 2845-2861.

\bibitem{[BM2]} A. Buan and R. Marsh, From triangulated categories to module categories via localisation II:
calculus of fractions. J. Lond. Math. Soc. 86(1)(2012) 152-170.

\bibitem{[BMR]} A. B. Buan, R. Marsh, I. Reiten Cluster-tilted algebras, Trans. Amer. Math. Soc. 359(1)(2007) 323-332.

\bibitem{[Be]} A. Beligiannis, Relative homological algebra and purity in triangulated categories, J. Algebra, 227(1)(2000) 268-361.

\bibitem{[B]} T. Buehler, Exact categories, Expositiones Mathematicae, 28(1)(2010) 1-69.

\bibitem{[DL]} L. Demonet, Y. Liu, Quotients of exact categories by cluster tilting subcategories as module categories, J. Pure Appl. Algebra, 217(12)(2013) 2282-2297.

\bibitem{[Eir]} \"{O}. Eir\'{\i}ksson, From submodule categories to the stable auslander algebra, J. Algebra, 486(2017) 98-118.

\bibitem{[Gen1]} R. Gentle, A study of the sequence category, University of British Columbia, 1982.

\bibitem{[Gen]} R.Gentle, A TTF theory for short exact sequences, Communications in Algebra, 16(5)(1988) 909-924.

\bibitem{[GKO]} C. Geiss, B. Keller, S. Oppermann, $n$-angulated categories,  J. Reine Angew. Math.  675(2013) 101-120.

\bibitem{[Haf]} R. Hafezi, On finitely presented functors over the stable categories, arXiv preprint arXiv:1705.06684, 2017.

\bibitem{[HR]} P. J. Hilton, D.Rees, Natural maps of extension functors and a theorem of R. G. Swan.
Proc. Cambridge Philos. Soc. 57(1961) 489-502.

\bibitem{[Iy0]} O. Iyama, Cluster tilting for higher Auslander algebras, Adv. Math. 226(1)(2011) 1-61.

\bibitem{[Iy]} O. Iyama, Higher-dimensional Auslander-Reiten theory on maximal orthogonal subcategories, Adv. Math. 210(1)(2007) 22-50.

\bibitem{[IY]} O. Iyama, Y. Yoshino,  Mutation in triangulated categories and rigid Cohen-Macaulay modules, Invent. Math. 172(1)(2008) 117-168.

\bibitem{[J]} G. Jasso,  $n$-abelian and $n$-exact categories, Math. Z. 283(3)(2016) 703-759.

\bibitem{[JK]} G. Jasso, S. Kvamme, An introduction to higher Auslander-Reiten theory, arXiv preprint arXiv:1610.05458, 2016.

\bibitem{[KR]} B. Keller, I. Reiten, Cluster-tilted algebras are Gorenstein and stably Calabi-Yau, Adv. Math. 211(1)(2007) 123-151.

\bibitem{[Kr]} H. Krause, Derived categories, resolutions, and Brown representability, in: Interactions between homotopy theory and algebra, in: Contemp. Math. 436(2007) 101-139.

\bibitem{[Kva]} S. Kvamme, Projectively generated $d$-abelian categories are $d$-cluster tilting, arXiv preprint arXiv:1608.07985, 2016.

\bibitem{[KZ]} S. Koenig, B. Zhu, From triangulated categories to abelian categories: cluster tilting in a general framework, Math. Z. 258(1)(2008) 143-160.

\bibitem{[L]} H. Lenzing, Auslander's work on Artin algebras, in Algebras and modules, I (Trondheim, 1996), 83105, CMS Conf. Proc. 23, Amer. Math. Soc. Providence, RI, 1998.

\bibitem{[LZ]} H. Lenzing, R. Zuazua, Auslander-Reiten duality for abelian categories, Bol. Soc. Mat. Mexicana 10(3)(2004) 169-177.

\bibitem{[Liu]} Y. Liu, Hearts of twin cotorsion pairs on exact categories, J. Algebra, 394(2013) 245-284.

\bibitem{[LNP]} S. Liu, P. Ng, C. Paquette, Almost split sequences and approximations, Algebr. Represent. Theory, 16(6)(2013) 1-19.

\bibitem{[Ma]} A. Martsinkovsky, On direct summands of homological functors on length categories, Appl. Categ. Structures, 24(4)(2016) 421-431.

\bibitem{[Na1]} H. Nakaoka, General heart construction on a triangulated category (I): unifying t-structures and cluster tilting subcategories. Appl. Categ. Structures, (6)19 (2011) 879-899.

\bibitem{[Na2]} H. Nakaoka, General heart construction for twin torsion pairs on triangulated categories, J. Algebra, 374(2013) 195-215.

\bibitem{[Nee]} A. Neeman, Triangulated Categories,  Annals of Mathematics Studies,  148, Princeton University Press, Princeton, NJ, 2001.


\bibitem{[Pa]} Y. Palu, From triangulated categories to module categories via homotopical algebra, Preprint, arXiv: 1412.7289.

\bibitem{[Ps]} C. Psaroudakis, Homological theory of recollements of abelian categories, J. Algebra, 398(2014) 63-110.

\bibitem{[Rei]} I. Reiten, The use of almost split sequences in the representation theory of Artin algebras, Representations of algebras, 944(1982) 29-104.

\bibitem{[RS1]} C. M. Ringel, M. Schmidmeier, The Auslander-Reiten translation in submodule categories, Trans. Amer. Math. Soc. 360(2)(2008), 691-716.

\bibitem{[RS2]} C. M. Ringel, M. Schmidmeier, Submodule categories of wild representation type, J. Pure Appl. Algebra 205 (2)(2006) 412-422.

\bibitem{[RZ]} C. M. Ringel, P. Zhang, From submodule categories to preprojective algebras, Math. Z. 278(1-2)(2014) 55-73.


\bibitem{[Rump]} W. Rump, Triads, J. Algebra, 280(2)(2004) 435-462.
\end{thebibliography}
\end{document}